\newtheorem{theorem}{Theorem}[section]
\newtheorem{lemma}[theorem]{Lemma}
\newtheorem{conjecture}{Conjecture}[section]
\theoremstyle{definition}
\newtheorem{remark}{Remark}[section]
\def\pmod #1{\ ({\rm{mod}}\ #1)}
\def\Z{\Bbb Z}
\def\N{\Bbb N}
\def\Q{\Bbb Q}
\def\l{\left}
\def\r{\right}
\def\bg{\bigg}
\def\({\bg(}
\def\){\bg)}
\def\t{\text}
\def\f{\frac}
\def\ls{\leq}
\def\gs{\geq}
\def\bi{\binom}
\def\ve{\varepsilon}
\def\eq{\equiv}
\def\da{\delta}
\def\Proof{\noindent{\it Proof}}
\begin{document}
\hbox{Electron. Res. Arch. 28 (2020), no.\,3, 1273--1342.}
\medskip

\title[New series for powers of $\pi$ and related congruences]
%Use the shortened version of the full title
      {New series for powers of $\pi$\\ and related congruences}
\author[Zhi-Wei Sun]{Zhi-Wei Sun}

% Place all authors' names in [ ] shown as running head, Leave { } empty
% Please use `and' to connect the last two names if applicable
% Use FirstNameInitial.  MiddleNameInitial. LastName, or only last names of authors if there are too many authors

% Enter the first author's name and address:
\address{Department of Mathematics, Nanjing
University, Nanjing 210093, People's Republic of China}
\email{zwsun@nju.edu.cn}

\keywords{Congruences, binomial coefficients,
Ramanujan-type series for $1/\pi$, binary quadratic forms, symbolic computation.
\newline \indent 2020 {\it Mathematics Subject Classification}. Primary 11A07, 11B65; Secondary 05A19, 11E25, 33F10.
\newline \indent Supported by the Natural Science Foundation of China (grant no. 11971222).}

%The abstract of your paper
\begin{abstract}
Via symbolic computation we deduce 97 new type series for powers of $\pi$ related to Ramanujan-type series.
Here are three typical examples:
 $$\sum_{k=0}^\infty\frac{P(k)\binom{2k}k\binom{3k}k\binom{6k}{3k}}{(k+1)(2k-1)(6k-1)(-640320)^{3k}}
=\frac{18\times557403^3\sqrt{10005}}{5\pi}$$
with
\begin{align*}P(k)=&637379600041024803108 k^2  + 657229991696087780968 k
\\&+ 19850391655004126179,\end{align*}
$$\sum_{k=1}^\infty\f{(3k+1)16^k}{(2k+1)^2k^3\bi{2k}k^3}=\f{\pi^2-8}2,$$
and $$\sum_{n=0}^\infty\frac{3n+1}{(-100)^n}\sum_{k=0}^n{n\choose k}^2T_k(1,25)T_{n-k}(1,25)=\frac{25}{8\pi},$$
where the generalized central trinomial coefficient $T_k(b,c)$ denotes the coefficient of $x^k$ in the expansion of $(x^2+bx+c)^k$.
 We also formulate a general characterization of rational Ramanujan-type series for $1/\pi$
 via congruences, and pose 117 new conjectural series for powers of $\pi$ via looking for corresponding congruences. For example, we conjecture that
  $$\sum_{k=0}^\infty\frac{39480k+7321}{(-29700)^k}T_k(14,1)T_k(11,-11)^2=\frac{6795\sqrt5}{\pi}.$$
  Eighteen of the new series in this paper involve
 some imaginary quadratic fields with class number $8$.
\end{abstract}
\maketitle

%The title of your section 1
\section{Introduction and our main results}

The classical rational Ramanujan-type series for $\pi^{-1}$ (cf. \cite{BB,Be,CC,R}
and a nice introduction by S. Cooper \cite[Chapter 14]{Co17}) have the form
\[\sum_{k=0}^\infty\f{bk+c}{m^k}a(k)=\f{\lambda\sqrt d}{\pi},\tag{$*$}\]
where $b,c,m$ are integers with $bm\not=0$, $d$ is a positive squarefree number,
 $\lambda$ is a nonzero rational number, and $a(k)$ is one of the products
$$\bi{2k}k^3,\ \bi{2k}k^2\bi{3k}k,\ \bi{2k}k^2\bi{4k}{2k},\ \bi{2k}k\bi{3k}k\bi{6k}{3k}.$$
In 1997 Van Hamme \cite{vH} conjectured that such a series $(*)$ has a $p$-adic analogue of the form
 $$\sum_{k=0}^{p-1}\f{bk+c}{m^k}a(k)\eq cp\l(\f{\ve_d d}p\r)\pmod{p^3},$$
 where $p$ is any odd prime with $p\nmid dm$ and $\lambda\in\Z_p$,
 $\ve_1\in\{\pm1\}$ and $\ve_d=-1$ if $d>1$. (As usual, $\Z_p$ denotes the ring of all $p$-adic integers, and $(\f{\cdot}p)$ stands for the Legendre symbol.) W. Zudilin \cite{Zu} followed Van Hamme's idea to provide more concrete examples. Sun \cite{S11e} realized that many Ramanujan-type congruences
 are related to Bernoulli numbers or Euler numbers.
 In 2016 the author \cite{S-u} thought that all Ramanujan-type congruences have their extensions
 like
$$\f{\sum_{k=0}^{pn-1}(21k+8)\bi{2k}k^3-p\sum_{k=0}^{n-1}(21k+8)\bi{2k}k^3}{(pn)^3\bi{2n}n^3}
\in\Z_p,$$
where $p$ is an odd prime, and $n\in\Z^+=\{1,2,3,\ldots\}$. See Sun \cite[Conjectures 21-24]{S19}
for more such examples and further refinements involving Bernoulli or Euler numbers.

During the period 2002--2010, some new Ramanujan-type series of the form $(*)$ with $a(k)$ not a product of three nontrivial parts were found (cf. \cite{CCL,ChCo,Co,Rogers}). For example, H. H. Chan, S. H. Chan and Z. Liu \cite{CCL}
proved that
$$\sum_{n=0}^\infty\f{5n+1}{64^n}D_n=\f 8{\sqrt3\,\pi},$$
where $D_n$ denotes the Domb number $\sum_{k=0}^n\bi nk^2\bi{2k}k\bi{2(n-k)}{n-k}$;
Zudilin \cite{Zu} conjectured its $p$-adic analogue:
$$\sum_{k=0}^{p-1}\f{5k+1}{64^k}D_k\eq p\l(\f p3\r)\pmod{p^3}\quad \t{for any prime}\ p>3.$$
The author \cite[Conjecture 77]{S19} conjectured further that
$$\f1{(pn)^3}\(\sum_{k=0}^{pn-1}\f{5k+1}{64^k}D_k-\l(\f p3\r)p\sum_{k=0}^{n-1}\f{5k+1}{64^r}D_k\)\in\Z_p$$
for each odd prime $p$ and positive integer $n$.

Let $b,c\in\Z$. For each $n\in\N=\{0,1,2,\ldots\}$,
we denote the coefficient of $x^n$ in the expansion of $(x^2+bx+c)^n$ by $T_n(b,c)$, and call it
a {\it generalized central trinomial coefficient}.
In view of the multinomial theorm, we have
$$T_n(b,c)=\sum_{k=0}^{\lfloor n/2\rfloor}\bi n{2k}\bi{2k}kb^{n-2k}c^k
=\sum_{k=0}^{\lfloor n/2\rfloor}\bi nk\bi{n-k}k b^{n-2k}c^k.$$
Note also that
$$T_0(b,c)=1,\ \ T_1(b,c)=b,$$
and
$$(n+1)T_{n+1}(b,c)=(2n+1)bT_n(b,c)-n(b^2-4c)T_{n-1}(b,c)$$
for all $n\in\Z^+.$
Clearly, $T_n(2,1)=\bi{2n}n$ for all $n\in\N$. Those $T_n:=T_n(1,1)$ with $n\in\N$
are the usual central trinomial coefficients, and they play important roles in enumerative combinatorics.
We view $T_n(b,c)$ as a natural generalization of central binomial and central trinomial coefficients.

For $n\in\N$ the Legendre polynomial of degree $n$
is defined by
$$P_n(x):=\sum_{k=0}^n\bi nk\bi{n+k}k\l(\f{x-1}2\r)^k.$$
It is well-known that if $b,c\in\Z$ and $b^2-4c\not=0$ then
$$T_n(b,c)=(\sqrt{b^2-4c})^nP_n\l(\f{b}{\sqrt{b^2-4c}}\r)\quad\t{for all}\ n\in\N.$$
Via the Laplace-Heine asymptotic formula for Legendre
polynomials, for any positive real numbers $b$
and $c$ we have
$$T_n(b,c)\sim\f{(b+2\sqrt{c})^{n+1/2}}{2\root{4}\of c\sqrt{n\pi}}\quad\t{as}\ n\to+\infty$$
(cf. \cite{S14c}).
For any real numbers $b$ and $c<0$, S. Wagner \cite{Wag} confirmed the author's conjecture that  $$\lim_{n\to\infty}\root n\of{|T_n(b,c)|}=\sqrt{b^2-4c}.$$

In 2011, the author posed over 60 conjectural series for $1/\pi$ of
the following new types with $a,b,c,d,m$ integers and $mbcd(b^2-4c)$ nonzero (cf. Sun \cite{S-11,S14c}).

\ \ {\tt Type I}. $\sum_{k=0}^\infty\f{a+dk}{m^k}\bi{2k}k^2T_k(b,c)$.

\ \ {\tt Type II}.
$\sum_{k=0}^\infty\f{a+dk}{m^k}\bi{2k}k\bi{3k}kT_k(b,c)$.

\ \ {\tt Type III}.
$\sum_{k=0}^\infty\f{a+dk}{m^k}\bi{4k}{2k}\bi{2k}kT_k(b,c)$.

\ \ {\tt Type IV}.
$\sum_{k=0}^\infty\f{a+dk}{m^k}\bi{2k}{k}^2T_{2k}(b,c)$.

\ \ {\tt Type V}.
$\sum_{k=0}^\infty\f{a+dk}{m^k}\bi{2k}{k}\bi{3k}kT_{3k}(b,c)$.

\ \ {\tt Type VI}.
$\sum_{k=0}^\infty\f{a+dk}{m^k}T_{k}(b,c)^3,$

\ \ {\tt Type VII}.
$\sum_{k=0}^\infty\f{a+dk}{m^k}\bi{2k}kT_{k}(b,c)^2,$
\medskip

In general, the corresponding $p$-adic congruences of these seven-type series involve linear combinations of two Legendre symbols.
The author's conjectural series of types I-V and VII
were studied in \cite{ChWZ,WZ,Z14}. The author's three conjectural series of type VI and two series of type VII
remain open. For example, the author conjectured that
$$\sum_{k=0}^\infty\f{3990k+1147}{(-288)^{3k}}T_k(62,95^2)^3=\f{432}{95\pi}(94\sqrt2+195\sqrt{14})$$
as well as its $p$-adic analogue
$$\sum_{k=0}^{p-1}\f{3990k+1147}{(-288)^{3k}}T_k(62,95^2)^3\eq\f p{19}\l(4230\l(\f{-2}p\r)+17563\l(\f{-14}p\r)\r)\pmod{p^2},$$
where $p$ is any prime greater than $3$.

In 1905, J. W. L. Glaisher \cite{Gl} proved that
$$\sum_{k=0}^\infty\f{(4k-1)\bi{2k}k^4}{(2k-1)^4256^k}=-\f 8{\pi^2}.$$
This actually follows from the following finite identity observed by the author \cite{S14a}:
$$\sum_{k=0}^n\f{(4k-1)\bi{2k}k^4}{(2k-1)^4256^k}=-(8n^2+4n+1)\f{\bi{2n}n^4}{256^n}\quad\ \t{for all}\ n\in\N.$$
Motivated by Glaisher's identity and Ramanujan-type series for $1/\pi$, we obtain the following theorem.
\begin{theorem}\label{Th1.1} We have the following identities:
\begin{align}\sum_{k=0}^\infty\f{k(4k-1)\bi{2k}k^3}{(2k-1)^2(-64)^k}&=-\f1{\pi},
\\\sum_{k=0}^\infty\f{(4k-1)\bi{2k}k^3}{(2k-1)^3(-64)^k}&=\f2{\pi},
\\\sum_{k=0}^\infty\f{(12k^2-1)\bi{2k}k^3}{(2k-1)^2 256^k}&=-\f2{\pi},
\\\sum_{k=0}^\infty\f{k(6k-1)\bi{2k}k^3}{(2k-1)^3256^k}&=\f1{2\pi},
\\\sum_{k=0}^\infty\f{(28k^2-4k-1)\bi{2k}k^3}{(2k-1)^2(-512)^k}&=-\f{3\sqrt2}{\pi},
\\\sum_{k=0}^\infty\f{(30k^2+3k-2)\bi{2k}k^3}{(2k-1)^3(-512)^k}&=\f{27\sqrt2}{8\pi},
\\\sum_{k=0}^\infty\f{(28k^2-4k-1)\bi{2k}k^3}{(2k-1)^2 4096^k}&=-\f3{\pi},
\\\sum_{k=0}^\infty\f{(42k^2-3k-1)\bi{2k}k^3}{(2k-1)^3 4096^k}&=\f{27}{8\pi},
\\\sum_{k=0}^\infty\f{(34k^2-3k-1)\bi{2k}k^2\bi{3k}k}{(2k-1)(3k-1)(-192)^k}&=-\f{10}{\sqrt3\,\pi},
\\\sum_{k=0}^\infty\f{(64k^2-11k-7)\bi{2k}k^2\bi{3k}k}{(k+1)(2k-1)(3k-1)(-192)^k}&=-\f{125\sqrt3}{9\pi},
\\\sum_{k=0}^\infty\f{(14k^2+k-1)\bi{2k}k^2\bi{3k}k}{(2k-1)(3k-1)216^k}&=-\f{\sqrt3}{\pi},
\\\sum_{k=0}^\infty\f{(90k^2+7k+1)\bi{2k}k^2\bi{3k}k}{(k+1)(2k-1)(3k-1)216^k}&=\f{9\sqrt3}{2\pi},
\\\sum_{k=0}^\infty\f{(34k^2-3k-1)\bi{2k}k^2\bi{3k}k}{(2k-1)(3k-1)(-12)^{3k}}&=-\f{2\sqrt3}{\pi},\\
%\end{align}
%\begin{align}
\sum_{k=0}^\infty\f{(17k+5)\bi{2k}k^2\bi{3k}k}{(k+1)(2k-1)(3k-1)(-12)^{3k}}&=\f{9\sqrt3}{\pi},
\\\sum_{k=0}^\infty\f{(111k^2-7k-4)\bi{2k}k^2\bi{3k}k}{(2k-1)(3k-1)1458^k}&=-\f{45}{4\pi},
%\\
\end{align}
\begin{align}\sum_{k=0}^\infty\f{(1524k^2+899k+263)\bi{2k}k^2\bi{3k}k}{(k+1)(2k-1)(3k-1)1458^k}&=\f{3375}{4\pi},
\\
\sum_{k=0}^\infty\f{(522k^2-55k-13)\bi{2k}k^2\bi{3k}k}{(2k-1)(3k-1)(-8640)^k}&=-\f{54\sqrt{15}}{5\pi},
\\\sum_{k=0}^\infty\f{(1836k^2+2725k+541)\bi{2k}k^2\bi{3k}k}{(k+1)(2k-1)(3k-1)(-8640)^k}&=\f{2187\sqrt{15}}{5\pi},
\\\sum_{k=0}^\infty\f{(529k^2-45k-16)\bi{2k}k^2\bi{3k}k}{(2k-1)(3k-1)15^{3k}}&=-\f{55\sqrt3}{2\pi},
\\\sum_{k=0}^\infty\f{(77571k^2+68545k+16366)\bi{2k}k^2\bi{3k}k}{(k+1)(2k-1)(3k-1)15^{3k}}&=\f{59895\sqrt3}{2\pi},\\
\sum_{k=0}^\infty\f{(574k^2-73k-11)\bi{2k}k^2\bi{3k}k}{(2k-1)(3k-1)(-48)^{3k}}&=-20\f{\sqrt3}{\pi},
\\\sum_{k=0}^\infty\f{(8118k^2+9443k+1241)\bi{2k}k^2\bi{3k}k}{(k+1)(2k-1)(3k-1)(-48)^{3k}}&=\f{2250\sqrt3}{\pi},
\\\sum_{k=0}^\infty\f{(978k^2-131k-17)\bi{2k}k^2\bi{3k}k}{(2k-1)(3k-1)(-326592)^k}&=-\f{990\sqrt7}{49\pi},
\\\sum_{k=0}^\infty\f{(592212k^2+671387k^2+77219)\bi{2k}k^2\bi{3k}k}{(k+1)(2k-1)(3k-1)(-326592)^k}&=\f{4492125\sqrt7}{49\pi},
\\\sum_{k=0}^\infty\f{(116234k^2-17695k-1461)\bi{2k}k^2\bi{3k}k}{(2k-1)(3k-1)(-300)^{3k}}&=-2650\f{\sqrt3}{\pi},
\\
\sum_{k=0}^\infty\f{(223664832k^2+242140765k+18468097)\bi{2k}k^2\bi{3k}k}{(k+1)(2k-1)(3k-1)(-300)^{3k}}&=33497325\f{\sqrt3}{\pi},
\\\sum_{k=0}^\infty\f{(122k^2+3k-5)\bi{2k}k^2\bi{4k}{2k}}{(2k-1)(4k-1)648^k}&=-\f{21}{2\pi},
\\\sum_{k=0}^\infty\f{(1903k^2+114k+41)\bi{2k}k^2\bi{4k}{2k}}{(k+1)(2k-1)(4k-1)648^k}&=\f{343}{2\pi},
\\\sum_{k=0}^\infty\f{(40k^2-2k-1)\bi{2k}k^2\bi{4k}{2k}}{(2k-1)(4k-1)(-1024)^k}&=-\f4{\pi},
\\\sum_{k=0}^\infty\f{(8k^2-2k-1)\bi{2k}k^2\bi{4k}{2k}}{(k+1)(2k-1)(4k-1)(-1024)^k}&=-\f{16}{5\pi},\\
\sum_{k=0}^\infty\f{(176k^2-6k-5)\bi{2k}k^2\bi{4k}{2k}}{(2k-1)(4k-1)48^{2k}}&=-8\f{\sqrt3}{\pi},%\\
\end{align}
\begin{align}\sum_{k=0}^\infty\f{(208k^2+66k+23)\bi{2k}k^2\bi{4k}{2k}}{(k+1)(2k-1)(4k-1)48^{2k}}&=\f{128}{\sqrt3\,\pi},
\\\sum_{k=0}^\infty\f{(6722k^2-411k-152)\bi{2k}k^2\bi{4k}{2k}}{(2k-1)(4k-1)(-63^2)^k}&=-195\f{\sqrt7}{\pi},
\\
\sum_{k=0}^\infty\f{(281591k^2-757041k-231992)\bi{2k}k^2\bi{4k}{2k}}{(k+1)(2k-1)(4k-1)(-63^2)^k}&=-274625\f{\sqrt7}{\pi},
\\\sum_{k=0}^\infty\f{(560k^2-42k-11)\bi{2k}k^2\bi{4k}{2k}}{(2k-1)(4k-1)12^{4k}}&=-24\f{\sqrt2}{\pi},
\\\sum_{k=0}^\infty\f{(112k^2+114k+23)\bi{2k}k^2\bi{4k}{2k}}{(k+1)(2k-1)(4k-1)12^{4k}}&=\f{256\sqrt2}{5\pi},
\\\sum_{k=0}^\infty\f{(248k^2-18k-5)\bi{2k}k^2\bi{4k}{2k}}{(2k-1)(4k-1)(-3\times2^{12})^k}&=-\f{28}{\sqrt3\,\pi},
\\\sum_{k=0}^\infty\f{(680k^2+1482k+337)\bi{2k}k^2\bi{4k}{2k}}{(k+1)(2k-1)(4k-1)(-3\times2^{12})^k}&=\f{5488\sqrt3}{9\pi},
\\\sum_{k=0}^\infty\f{(1144k^2-102k-19)\bi{2k}k^2\bi{4k}{2k}}{(2k-1)(4k-1)(-2^{10}3^4)^k}&=-\f{60}{\pi},
\\\sum_{k=0}^\infty\f{(3224k^2+4026k+637)\bi{2k}k^2\bi{4k}{2k}}{(k+1)(2k-1)(4k-1)(-2^{10}3^4)^k}&=\f{2000}{\pi},
\\\sum_{k=0}^\infty\f{(7408k^2-754k-103)\bi{2k}k^2\bi{4k}{2k}}{(2k-1)(4k-1)28^{4k}}&=-\f{560\sqrt3}{3\pi},
\\\sum_{k=0}^\infty\f{(3641424k^2+4114526k+493937)\bi{2k}k^2\bi{4k}{2k}}{(k+1)(2k-1)(4k-1)28^{4k}}&=896000\f{\sqrt3}{\pi},
\\\sum_{k=0}^\infty\f{(4744k^2-534k-55)\bi{2k}k^2\bi{4k}{2k}}{(2k-1)(4k-1)(-2^{14}3^45)^k}&=-\f{1932\sqrt5}{25\pi},
\\\sum_{k=0}^\infty\f{(18446264k^2+20356230k+1901071)\bi{2k}k^2\bi{4k}{2k}}{(k+1)(2k-1)(4k-1)(-2^{14}3^45)^k}&=66772496\f{\sqrt5}{25\pi},
\\\sum_{k=0}^\infty\f{(413512k^2-50826k-3877)\bi{2k}k^2\bi{4k}{2k}}{(2k-1)(4k-1)(-2^{10}21^4)^k}&=-\f{12180}{\pi},
\\\sum_{k=0}^\infty\f{(1424799848k^2+1533506502k+108685699)\bi{2k}k^2\bi{4k}{2k}}{(k+1)(2k-1)(4k-1)(-2^{10}21^4)^k}&=\f{341446000}{\pi},
\\\sum_{k=0}^\infty\f{(71312k^2-7746k-887)\bi{2k}k^2\bi{4k}{2k}}{(2k-1)(4k-1)1584^{2k}}&=-840\f{\sqrt{11}}{\pi},
\end{align}
\begin{align}
\sum_{k=0}^\infty\f{(50678512k^2+56405238k+5793581)\bi{2k}k^2\bi{4k}{2k}}{(k+1)(2k-1)(4k-1)1584^{2k}}&=5488000\f{\sqrt{11}}{\pi},
\\\sum_{k=0}^\infty\f{(7329808k^2-969294k-54073)\bi{2k}k^2\bi{4k}{2k}}{(2k-1)(4k-1)396^{4k}}&=-120120\f{\sqrt2}{\pi},
\end{align}
\begin{equation}
\begin{aligned}&\sum_{k=0}^\infty\f{(2140459883152k^2+2259867244398k+119407598201)\bi{2k}k^2\bi{4k}{2k}}{(k+1)(2k-1)(4k-1)396^{4k}}
\\&\qquad\qquad\qquad=44\times1820^3\f{\sqrt2}{\pi},
\end{aligned}\end{equation}
\begin{align}
\sum_{k=0}^\infty\f{(164k^2-k-3)\bi{2k}k\bi{3k}k\bi{6k}{3k}}{(2k-1)(6k-1)20^{3k}}&=-\f{7\sqrt5}{2\pi},
\\\sum_{k=0}^\infty\f{(2696k^2+206k+93)\bi{2k}k\bi{3k}k\bi{6k}{3k}}{(k+1)(2k-1)(6k-1)20^{3k}}
&=\f{686}{\sqrt5\,\pi},
\\\sum_{k=0}^\infty\f{(220k^2-8k-3)\bi{2k}k\bi{3k}k\bi{6k}{3k}}{(2k-1)(6k-1)(-2^{15})^k}
&=-\f{7\sqrt2}{\pi},
\\\sum_{k=0}^\infty\f{(836k^2-1048k-309)\bi{2k}k\bi{3k}k\bi{6k}{3k}}{(k+1)(2k-1)(6k-1)(-2^{15})^k}
&=-\f{686\sqrt2}{\pi},
\\\sum_{k=0}^\infty\f{(504k^2-11k-8)\bi{2k}k\bi{3k}k\bi{6k}{3k}}{(2k-1)(6k-1)(-15)^{3k}}
&=-\f{9\sqrt{15}}{\pi},
\\\sum_{k=0}^\infty\f{(189k^2-11k-8)\bi{2k}k\bi{3k}k\bi{6k}{3k}}{(k+1)(2k-1)(6k-1)(-15)^{3k}}&=-\f{243\sqrt{15}}{35\pi},
\\\sum_{k=0}^\infty\f{(516k^2-19k-7)\bi{2k}k\bi{3k}k\bi{6k}{3k}}{(2k-1)(6k-1)(2\times30^3)^k}
&=-\f{11\sqrt{15}}{2\pi},
\\\sum_{k=0}^\infty\f{(3237k^2+1922k+491)\bi{2k}k\bi{3k}k\bi{6k}{3k}}{(k+1)(2k-1)(6k-1)(2\times30^3)^k}
&=\f{3993\sqrt{15}}{10\pi},
\\\sum_{k=0}^\infty\f{(684k^2-40k-7)\bi{2k}k\bi{3k}k\bi{6k}{3k}}{(2k-1)(6k-1)(-96)^{3k}}&=-\f{9\sqrt6}{\pi},
\\\sum_{k=0}^\infty\f{(2052k^2+2536k+379)\bi{2k}k\bi{3k}k\bi{6k}{3k}}{(k+1)(2k-1)(6k-1)(-96)^{3k}}&=\f{486\sqrt6}{\pi},
\\\sum_{k=0}^\infty\f{(2556k^2-131k-29)\bi{2k}k\bi{3k}k\bi{6k}{3k}}{(2k-1)(6k-1)66^{3k}}&=-\f{63\sqrt{33}}{4\pi},
\\\sum_{k=0}^\infty\f{(203985k^2+212248k+38083)\bi{2k}k\bi{3k}k\bi{6k}{3k}}{(k+1)(2k-1)(6k-1)66^{3k}}&=\f{83349\sqrt{33}}{4\pi},
\\\sum_{k=0}^\infty\f{(5812k^2-408k-49)\bi{2k}k\bi{3k}k\bi{6k}{3k}}{(2k-1)(6k-1)(-3\times160^3)^k}&=-\f{253\sqrt{30}}{9\pi},
\end{align}
\begin{align}
\sum_{k=0}^\infty\f{(3471628k^2+3900088k+418289)\bi{2k}k\bi{3k}k\bi{6k}{3k}}{(k+1)(2k-1)(6k-1)(-3\times160^3)^k}&=\f{32388554\sqrt{30}}{135\pi},
\\\sum_{k=0}^\infty\f{(35604k^2-2936k-233)\bi{2k}k\bi{3k}k\bi{6k}{3k}}{(2k-1)(6k-1)(-960)^{3k}}&=-189\f{\sqrt{15}}{\pi},
\\\sum_{k=0}^\infty\f{(13983084k^2+15093304k+1109737)\bi{2k}k\bi{3k}k\bi{6k}{3k}}{(k+1)(2k-1)(6k-1)(-960)^{3k}}&=\f{4500846\sqrt{15}}{5\pi},
\\\sum_{k=0}^\infty\f{(157752k^2-11243k-1304)\bi{2k}k\bi{3k}k\bi{6k}{3k}}{(2k-1)(6k-1)255^{3k}}&=-\f{513\sqrt{255}}{2\pi},
\\\sum_{k=0}^\infty\f{(28240947k^2+31448587k+3267736)\bi{2k}k\bi{3k}k\bi{6k}{3k}}{(k+1)(2k-1)(6k-1)255^{3k}}&=\f{45001899\sqrt{255}}{70\pi},
\\\sum_{k=0}^\infty\f{(2187684k^2-200056k-11293)\bi{2k}k\bi{3k}k\bi{6k}{3k}}{(2k-1)(6k-1)(-5280)^{3k}}
&=-1953\f{\sqrt{330}}{\pi},\end{align}
\begin{equation}
\begin{aligned}
&\sum_{k=0}^\infty\f{(101740699836k^2+107483900696k+5743181813)\bi{2k}k\bi{3k}k\bi{6k}{3k}}{(k+1)(2k-1)(6k-1)(-5280)^{3k}}
\\&\qquad\qquad\qquad=\f{4966100118\sqrt{330}}{5\pi},
\end{aligned}
\end{equation}
\begin{equation}
\begin{aligned}
&\sum_{k=0}^\infty\f{(16444841148k^2-1709536232k-53241371)\bi{2k}k\bi{3k}k\bi{6k}{3k}}{(2k-1)(6k-1)(-640320)^{3k}}
\\&\qquad\qquad\qquad=-1672209\f{\sqrt{10005}}{\pi},
\end{aligned}\end{equation}
and
\begin{equation}\begin{aligned}
\sum_{k=0}^\infty\f{P(k)\bi{2k}k\bi{3k}k\bi{6k}{3k}}{(k+1)(2k-1)(6k-1)(-640320)^{3k}}
=\f{18\times557403^3\sqrt{10005}}{5\pi},
\end{aligned}\end{equation}
where
\begin{align*}P(k):=&637379600041024803108 k^2  + 657229991696087780968 k
\\&+ 19850391655004126179.
\end{align*}
\end{theorem}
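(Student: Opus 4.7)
\emph{Proof outline.}
Put $a_k := \binom{2k}{k}\binom{3k}{k}\binom{6k}{3k}/(-640320)^{3k}$, whose hypergeometric ratio is
$\rho(k) := a_{k+1}/a_k = -8(6k+1)(6k+3)(6k+5)/[(k+1)^3\cdot 640320^3]$.
My plan is to derive the identity by creative telescoping from two already-established Chudnovsky-type evaluations. The first is the classical Chudnovsky series
$\sum_{k\ge 0}(545140134\,k+13591409)\,a_k=\frac{3\cdot 53360^2\sqrt{10005}}{2\pi}$;
the second is the preceding entry of the present theorem, whose denominator is $(2k-1)(6k-1)$ and whose numerator polynomial is $Q(k):=16444841148\,k^2-1709536232\,k-53241371$, giving
$\sum_{k\ge 0} Q(k)\,a_k/[(2k-1)(6k-1)]=-1672209\sqrt{10005}/\pi$.

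I would then look for rationals $\lambda_1,\lambda_2$ and a rational function $r(k)\in\Q(k)$ (with $r(0)=0$ and polynomial growth) satisfying the equality of rational functions
\[
\frac{P(k)}{(k+1)(2k-1)(6k-1)} = \lambda_1(545140134\,k+13591409) + \lambda_2\,\frac{Q(k)}{(2k-1)(6k-1)} + r(k+1)\rho(k) - r(k).
\]
Once such $(\lambda_1,\lambda_2,r)$ are in hand, multiplying by $a_k$ and summing over $k\ge 0$ would telescope the last two terms, because $r(k)\,a_k\to 0$ super-exponentially and $r(0)\,a_0=0$, giving
$S=\lambda_1\cdot\frac{3\cdot 53360^2\sqrt{10005}}{2\pi}-1672209\,\lambda_2\cdot\frac{\sqrt{10005}}{\pi}$,
and matching this against $\frac{18\cdot 557403^3\sqrt{10005}}{5\pi}$ would complete the proof.

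To produce the certificate, I would take $r(k)$ to be a rational function whose denominator accommodates both the poles $k=-1,\,1/2,\,1/6$ on the left and the spurious poles generated by $\rho(k)$ and by index-shifting — explicitly, $r(k) = R(k)/[(k+1)(2k-1)(6k-1)]$ with $R(k)$ a polynomial of small bounded degree, modelled on the certificates implicit in the neighbouring Chudnovsky-type entries of the theorem. Clearing denominators in the displayed equation yields a polynomial identity in $k$ whose coefficient comparison, together with the constraint $r(0)=0$, produces a linear system in the coefficients of $R$ and in $\lambda_1,\lambda_2$; this is exactly the output of Zeilberger's creative-telescoping algorithm applied to $P(k)\,a_k/[(k+1)(2k-1)(6k-1)]$ against the two reference summands above.

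The main obstacle is purely computational. The coefficients of $P(k)$ are of order $10^{20}$, and clearing denominators inflates this considerably, so solving the linear system and verifying the final polynomial identity must proceed by symbolic algebra. Conceptually, however, once $(R,\lambda_1,\lambda_2)$ has been exhibited the proof reduces to mechanical expansion and a single scalar check, and runs in complete parallel with the treatment of the other Chudnovsky-type identities earlier in the theorem.
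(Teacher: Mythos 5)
Your proposal is correct and is essentially the paper's own argument: the paper's Lemma 2.1 (identity (2.8) with $m=-640320^3$) is exactly the telescoping certificate you propose to construct, giving a cubic numerator $C(k)$ with $\sum_k C(k)a_k/((k+1)(2k-1)(6k-1))=0$, after which the author writes $5P(k)=2802461\,C(k)-1864188626454\,(k+1)Q(k)$ and invokes the preceding entry (so in your notation $\lambda_1=0$, $\lambda_2=-1864188626454/5$, with the Chudnovsky series entering only through the proof of that preceding entry). The only cosmetic difference is that the paper exhibits the certificate explicitly and verifies it by induction on $n$ rather than producing it algorithmically.
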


 Recall that the Catalan numbers are given by
$$C_n:=\f{\bi{2n}n}{n+1}=\bi{2n}n-\bi{2n}{n+1}\ \ (n\in\N).$$
For $k\in\N$ it is easy to see that
$$\f{\bi{2k}k}{2k-1}=\begin{cases}-1&\t{if}\ k=0,\\2C_{k-1}&\t{if}\ k>0.\end{cases}$$
Thus, for any $a,b,c,m\in\Z$ with $|m|\gs64$, we have
\begin{align*}\sum_{k=0}^\infty\f{(ak^2+bk+c)\bi{2k}k^3}{(2k-1)^3m^k}
=&-c+\sum_{k=1}^\infty\f{(ak^2+bk+c)(2C_{k-1})^3}{m^k}
\\=&-c+\f 8m\sum_{k=0}^\infty\f{a(k+1)^2+b(k+1)+c}{m^k}C_k^3.
\end{align*}
For example, (1.2) has the equivalent form
\[\sum_{k=0}^\infty\f{4k+3}{(-64)^k}C_k^3=8-\f{16}{\pi}.\tag{1.2$'$}\]
For any odd prime $p$, the congruence (1.4) of V.J.W. Guo and J.-C. Liu \cite{GL} has the equivalent form
$$\sum_{k=0}^{(p+1)/2}\f{(4k-1)\bi{2k}k^3}{(2k-1)^3(-64)^k}\eq p\l(\f{-1}p\r)+p^3(E_{p-3}-2)\pmod{p^4}$$
(where $E_0,E_1,\ldots$ are the Euler numbers), and we note that this is also equivalent to the congruence
$$\sum_{k=0}^{(p-1)/2}\f{4k+3}{(-64)^k}C_k^3\eq8\l(1-p\l(\f{-1}p\r)-p^3(E_{p-3}-2)\r)\pmod{p^4}.$$
Recently, C. Wang \cite{Wang} proved that for any prime $p>3$ we have
$$\sum_{k=0}^{(p+1)/2}\f{(3k-1)\bi{2k}k^3}{(2k-1)^216^k}\eq p+2p^3\l(\f{-1}p\r)(E_{p-3}-3)\pmod{p^4}$$
and
$$\sum_{k=0}^{p-1}\f{(3k-1)\bi{2k}k^3}{(2k-1)^216^k}\eq p-2p^3\pmod{p^4}.$$
(Actually, Wang stated his results only in the language of hypergeometric series.) These two congruences extend a conjecture of Guo and M. J. Schlosser \cite{GS}.

We are also able to prove some other variants of Ramanujan-type series such as
$$\sum_{k=0}^\infty\f{(56k^2+118k+61)\bi{2k}k^3}{(k+1)^24096^k}=\f{192}{\pi}$$
and $$\sum_{k=0}^\infty\f{(420k^2+992k+551)\bi{2k}k^3}{(k+1)^2(2k-1)4096^k}=-\f{1728}{\pi}.$$

Now we state our second theorem.

\begin{theorem}\label{Th1.2} We have the identities
\begin{align}
\sum_{k=1}^\infty\f{28k^2+31k+8}{(2k+1)^2k^3\bi{2k}k^3}&=\f{\pi^2-8}2,
\\\sum_{k=1}^\infty\f{42k^2+39k+8}{(2k+1)^3k^3\bi{2k}k^3}&=\f{9\pi^2-88}2,
\\\sum_{k=1}^\infty\f{(8k^2+5k+1)(-8)^k}{(2k+1)^2k^3\bi{2k}k^3}&=4-6G,
\\\sum_{k=1}^\infty\f{(30k^2+33k+7)(-8)^k}{(2k+1)^3k^3\bi{2k}k^3}&=54G-52,
\\\sum_{k=1}^\infty\f{(3k+1)16^k}{(2k+1)^2k^3\bi{2k}k^3}&=\f{\pi^2-8}2,
\end{align}
\begin{align}
\sum_{k=1}^\infty\f{(4k+1)(-64)^k}{(2k+1)^2k^2\bi{2k}k^3}&=4-8G,
\\\sum_{k=1}^\infty\f{(4k+1)(-64)^k}{(2k+1)^3k^3\bi{2k}k^3}&=16G-16,
\\\sum_{k=1}^\infty\f{(2k^2-11k-3)8^k}{(2k+1)(3k+1)k^3\bi{2k}k^2\bi{3k}k}&=\f{48-5\pi^2}2,
\\\sum_{k=2}^\infty\f{(178k^2-103k-39)8^k}{(k-1)(2k+1)(3k+1)k^3\bi{2k}k^2\bi{3k}k}&=\f{1125\pi^2-11096}{36},
\\\sum_{k=1}^\infty\f{(5k+1)(-27)^k}{(2k+1)(3k+1)k^2\bi{2k}k^2\bi{3k}k}&=6-9K,
\\\sum_{k=2}^\infty\f{(45k^2+5k-2)(-27)^{k-1}}{(k-1)(2k+1)(3k+1)k^3\bi{2k}k^2\bi{3k}k}&=\f{37-48K}{16},
\\\sum_{k=1}^\infty\f{(98k^2-21k-8)81^k}{(2k+1)(4k+1)k^3\bi{2k}k^2\bi{4k}{2k}}&=216-20\pi^2,
\\\sum_{k=2}^\infty\f{(1967k^2-183k-104)81^k}{(k-1)(2k+1)(4k+1)k^3\bi{2k}k^2\bi{4k}{2k}}&=\f{20000\pi^2-190269}{120},
\end{align}
\begin{align}
\sum_{k=1}^\infty\f{(46k^2+3k-1)(-144)^k}{(2k+1)(4k+1)k^3\bi{2k}k^2\bi{4k}{2k}}&=72-\f{225}2K,
\\\sum_{k=2}^\infty\f{(343k^2+18k-16)(-144)^k}{(k-1)(2k+1)(4k+1)k^3\bi{2k}k^2\bi{4k}{2k}}&=\f{9375K-7048}{10},
\end{align}
where
$$G:=\sum_{k=0}^\infty\f{(-1)^k}{(2k+1)^2}\ \ \t{and}\ \ K:=\sum_{k=0}^\infty\f{(\f k3)}{k^2}.$$
\end{theorem}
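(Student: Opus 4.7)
The fifteen identities are \emph{dual} to Ramanujan-type series in the sense that the central binomial products $\binom{2k}{k}^3$, $\binom{2k}{k}^2\binom{3k}{k}$, and $\binom{2k}{k}^2\binom{4k}{2k}$ now appear in the denominator rather than the numerator. Consequently each series converges geometrically, and the right-hand sides naturally involve not $1/\pi$ but values at $s=2$ of three Dirichlet $L$-functions: $\zeta(2)=\pi^2/6$, Catalan's constant $G=L(\chi_{-4},2)$, and $K=L(\chi_{3},2)$. This suggests grouping the identities by the flavor of RHS (rational vs.\ $\pi^2$, vs.\ $G$, vs.\ $K$) and handling each block by a common mechanism.

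The plan is to prove each identity by Wilf--Zeilberger creative telescoping. For a summand $t(k)$, apply Zeilberger's algorithm to search for a rational function $R(k)$ and a scalar $\beta$ such that $t(k) = R(k+1)t(k) - R(k)t(k-1) + \beta\, b(k)$, where $b(k)$ is a base term whose infinite sum is one of the known constants $\pi^{2}/18 = \sum_{k\geq 1} 1/(k^{2}\binom{2k}{k})$, $G = \sum_{k\geq 0} (-1)^{k}/(2k+1)^{2}$, or $K = \sum_{k\geq 1} (k/3)/k^{2}$. Summing from $k=k_{0}$ to $\infty$ and using that $R(k) t(k-1) \to 0$ by Stirling (the cube of central binomials in the denominator beats the geometric factor $m^{k}$) delivers the closed form. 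The polynomial numerators $P(k)$ -- such as $28k^{2}+31k+8$ in the first identity or $2k^{2}-11k-3$ in the ninth -- are forced by the requirement that the Gosper-telescoped residue be exactly $\beta\, b(k)$. As a unifying tool for the $\binom{2k}{k}^{3}$ family, one uses the arcsine-squared generating function $2\arcsin(x/2)^{2} = \sum_{k\geq 1} x^{2k}/(k^{2}\binom{2k}{k})$ together with its iterated integrals, evaluated at $x = \pm 1,\, \pm 2\mathrm{i},\, \pm 2\sqrt{2}$, to reach the arguments $m\in\{1,-8,16,-64\}$ appearing in the denominators; analogous ${}_{4}F_{3}$ and ${}_{5}F_{4}$ generating functions (related to products of three arcsines and an integrated Clausen function) handle the $\binom{3k}{k}$ and $\binom{4k}{2k}$ families, with $K$ emerging through values at cube roots of unity.

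The main obstacle is twofold. Computationally, for summands with denominators of shape $(2k+1)^{a}(3k+1)^{b} k^{c}\binom{2k}{k}^{3}$ and variants, the Ansatz polynomial in Zeilberger's algorithm has substantial degree and the certificate search can be lengthy. Conceptually, guessing in advance the correct triple $(P(k), m, \alpha)$ that makes the sum equal $\alpha\pi^{2}$ (or $\alpha G$, or $\alpha K$) plus a rational constant requires numerical experimentation: evaluate the candidate sum to high precision and apply an integer-relation algorithm such as PSLQ against the basis $\{1, \pi^{2}\}$, $\{1, G\}$, or $\{1, K\}$. Once a candidate identity is in hand, its verification reduces to a finite polynomial identity, routinely dispatched by the symbolic computation announced in the abstract.
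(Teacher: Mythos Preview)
Your proposal has the right broad instinct---telescoping plus an input identity---but the specific mechanism you describe will not work as written, and it misses the simple structure the paper exploits.

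The genuine gap is in your choice of base term $b(k)$. You propose telescoping the summand $t(k)$, which carries $\binom{2k}{k}^3$ (or $\binom{2k}{k}^2\binom{3k}{k}$, or $\binom{2k}{k}^2\binom{4k}{2k}$) in the denominator, down to a remainder $\beta\,b(k)$ where $b(k)$ is $1/(k^2\binom{2k}{k})$, or $(-1)^k/(2k+1)^2$, or $(\tfrac{k}{3})/k^2$. Gosper/Zeilberger telescoping cannot bridge between different hypergeometric types in this way: the certificate $R(k)$ is a rational function, so the telescoped term and the remainder must share the same product of Pochhammer/binomial factors up to rational multipliers. A sum built from $\binom{2k}{k}^{-3}$ cannot be reduced by a rational certificate to one built from $\binom{2k}{k}^{-1}$ or from no binomial at all. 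Your formula $t(k)=R(k+1)t(k)-R(k)t(k-1)+\beta b(k)$ is also not a well-posed WZ relation.

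What the paper actually does is cleaner. There is a short lemma giving, for each family and each extra denominator factor, a single finite telescoping identity of the form
\[
\sum_{k=1}^{n}\frac{m^{k}\bigl((m-64)k^{3}-32k^{2}+16k+8\bigr)}{(2k+1)^{2}k^{3}\binom{2k}{k}^{3}}
=\frac{m^{n+1}}{(2n+1)^{2}\binom{2n}{n}^{3}}-m,
\]
and five analogues. Letting $n\to\infty$ (for $|m|\le 64$, etc.) leaves a closed relation with a fixed cubic numerator. One then writes that cubic as a linear combination of the \emph{same-type} known series numerator---e.g.\ $(21k-8)(2k+1)^2$ for $m=1$, using Zeilberger's $\sum_{k\ge1}(21k-8)/(k^3\binom{2k}{k}^3)=\pi^2/6$---and the target polynomial $P(k)$. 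The nine input identities used (due to Zeilberger, Guillera, Guillera--Rogers, and Hessami Pilehrood$^2$) all have the matching binomial structure in the denominator; this is what makes the decomposition pure polynomial algebra. No arcsine generating functions, no ${}_4F_3$/${}_5F_4$ machinery, and no PSLQ are needed for the proof, since the identities are already stated; the discovery step you describe is beside the point here.
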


For $k=j+1\in\Z^+$, it is easy to see that
$$(k-1)k\bi{2k}k=2(2j+1)j\bi{2j}j.$$
Thus, for any $a,b,c,m\in\Z$ with $0<|m|\ls 64$, we have
$$\sum_{j=1}^\infty\f{(aj^2+bj+c)m^j}{(2j+1)^3j^3\bi{2j}j^3}
=\f 8m\sum_{k=2}^\infty\f{(a(k-1)^2+b(k-1)+c)m^k}{(k-1)^3k^3\bi{2k}k^3}.$$
For example, (1.77) has the following equivalent form
\[\sum_{k=2}^\infty\f{(2k-1)(3k-2)16^k}{(k-1)^3k^3\bi{2k}k^3}=\pi^2-8.\tag{1.77$'$}\]

In contrast with the Domb numbers, we introduce a new kind of numbers
$$S_n:=\sum_{k=0}^n\bi nk^2T_kT_{n-k}\ \ (n=0,1,2,\ldots).$$
The values of $S_n\ (n=0,\ldots,10)$ are
$$1,\, 2,\, 10,\, 68,\, 586,\, 5252,\, 49204,\, 475400,\, 4723786,\, 47937812,\, 494786260$$
respectively. We may extend the numbers $S_n\ (n\in\N)$ further. For $b,c\in\Z$, we define
$$S_n(b,c):=\sum_{k=0}^n\bi nk^2T_k(b,c)T_{n-k}(b,c)\ \ (n=0,1,2,\ldots).$$
Note that $S_n(1,1)=S_n$ and $S_n(2,1)=D_n$ for all $n\in\N$.

Now we state our third theorem.

\begin{theorem}\label{Th1.3} We have
\begin{align}\label{S1}
\sum_{k=0}^\infty\f{7k+3}{24^k}S_k(1,-6)&=\f{15}{\sqrt2\,\pi},
\\ \label{S2}\sum_{k=0}^\infty\f{12k+5}{(-28)^k}S_k(1,7)&=\f{6\sqrt7}{\pi},
\\\label{S3}\sum_{k=0}^\infty\f{84k+29}{80^k}S_k(1,-20)&=\f{24\sqrt{15}}{\pi},
\\\label{S4}\sum_{k=0}^\infty\f{3k+1}{(-100)^k}S_k(1,25)&=\f{25}{8\pi},
\\\label{S5}\sum_{k=0}^\infty\f{228k+67}{224^k}S_k(1,-56)&=\f{80\sqrt7}{\pi},
\\\label{S6}\sum_{k=0}^\infty\f{399k+101}{(-676)^k}S_k(1,169)&=\f{2535}{8\pi},\\
%\end{align}
%\begin{align}
\label{S7}\sum_{k=0}^\infty\f{2604k+563}{2600^k}S_k(1,-650)&=\f{850\sqrt{39}}{3\pi},
\\\label{S8}\sum_{k=0}^\infty\f{39468k+7817}{(-6076)^k}S_k(1,1519)&=\f{4410\sqrt{31}}{\pi},
\\\label{S9}\sum_{k=0}^\infty\f{41667k+7879}{9800^k}S_k(1,-2450)&=\f{40425\sqrt6}{4\pi},
\\\label{S10}\sum_{k=0}^\infty\f{74613k+10711}{(-530^2)^k}S_k(1,265^2)&=\f{1615175}{48\pi}.
\end{align}
\end{theorem}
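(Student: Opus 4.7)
The plan is to reduce each of the ten identities in Theorem \ref{Th1.3} to a one-variable Ramanujan-type series via a double-integral representation of $S_k(1,c)$, then invoke classical evaluations of Ramanujan-type series for $1/\pi$. The starting point is the integral formula
\[T_n(b,c)=\f1\pi\int_0^\pi(b+2\sqrt c\cos\theta)^n\,d\theta,\]
which follows from the Laplace integral for Legendre polynomials combined with $T_n(b,c)=(b^2-4c)^{n/2}P_n(b/\sqrt{b^2-4c})$ (with $\sqrt c$ interpreted formally when $c<0$). Substituting this into $S_n(1,c)=\sum_j\bi nj^2T_j(1,c)T_{n-j}(1,c)$ twice and applying the polynomial identity
\[\sum_{j=0}^n\bi nj^2u^jv^{n-j}=T_n(u+v,uv),\]
obtained by reading off the $x^n$-coefficient of $(x+u)^n(x+v)^n=(x^2+(u+v)x+uv)^n$, produces
\[S_n(1,c)=\f1{\pi^2}\int_0^\pi\!\int_0^\pi T_n(U,V)\,d\theta_1\,d\theta_2,\]
with $U=2+2\sqrt c(\cos\theta_1+\cos\theta_2)$ and $V=(1+2\sqrt c\cos\theta_1)(1+2\sqrt c\cos\theta_2)$; one checks $U^2-4V=4c(\cos\theta_1-\cos\theta_2)^2$.

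A key observation is that every one of the ten series has $m=-4c$. Multiplying by $(\al k+\be)/m^k$, summing over $k$, and interchanging summation with integration reduces the left-hand side to
\[\f1{\pi^2}\int_0^\pi\!\int_0^\pi F(\theta_1,\theta_2)\,d\theta_1\,d\theta_2,\qquad F:=\sum_{k\ge 0}\f{(\al k+\be)T_k(U,V)}{m^k}.\]
Using $\sum_k T_k(B,C)z^k=(1-2Bz+(B^2-4C)z^2)^{-1/2}$ and its $z$-derivative, $F$ collapses to an explicit algebraic function of $m,U,V$ involving $((m-U)^2-4V)^{-1/2}$ and its cube. With $m=-4c$ the radicand factors as $(m-U)^2-4V=4c\bigl[(2\sqrt c+\cos\theta_1+\cos\theta_2)^2+4(1-\cos\theta_1\cos\theta_2)\bigr]$, so the integrand becomes an explicit algebraic function of $\cos\theta_1,\cos\theta_2$.

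The last step, which I expect to be the principal obstacle, is to evaluate the resulting double integral to the prescribed $\la\sqrt d/\pi$. I plan to apply half-angle substitutions $\theta_j\mapsto 2\va_j$, rewrite the $\theta_1$-integral as a complete elliptic integral whose modulus is algebraic in $\theta_2$, and then reduce the outer $\theta_2$-integral to a combination of period integrals on elliptic curves with complex multiplication in the imaginary quadratic fields implicit in the ten right-hand sides (for example $\Q(\sqrt{-2}),\Q(\sqrt{-7}),\Q(\sqrt{-15}),\Q(\sqrt{-31}),\Q(\sqrt{-39})$, etc.), where the singular-modulus evaluations supply the rational multiple of $\sqrt d/\pi$. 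Should this direct route resist in some cases, my fallback is to use Zeilberger's creative-telescoping algorithm to derive, for each of the ten values of $c$, a linear recurrence satisfied by $\{S_k(1,c)\}_{k\ge 0}$, match it via an explicit polynomial substitution to the recurrence of a classical Ap\'ery-like Clausen sequence that admits a known Ramanujan-type $1/\pi$ series of the form $(*)$ in the introduction, and transfer the identity through the comparison of leading coefficients.
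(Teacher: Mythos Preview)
Your proposal is a plan rather than a proof: the decisive step---evaluating the double integral $\frac{1}{\pi^2}\int_0^\pi\!\int_0^\pi F(\theta_1,\theta_2)\,d\theta_1\,d\theta_2$ as an explicit multiple of $\sqrt d/\pi$---is not carried out but merely outlined as a program of half-angle substitutions, complete elliptic integrals, and CM singular-value evaluations, with a fallback to creative telescoping that is equally unspecified. Neither route is made concrete for even one of the ten cases. Reducing a two-variable period integral to a singular-modulus evaluation is exactly where the difficulty lies, and nothing in the proposal indicates how the modulus of the inner elliptic integral, which depends algebraically on $\theta_2$, collapses under the outer integration to a closed form. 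As written there is no proof, only an outline whose hardest part remains open.

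The paper's route is both different and much shorter. It proves a single transformation (Theorem~\ref{Th2.1}):
\[
\sum_{n\ge 0}(an+b)\frac{S_n(4,-m)}{m^n}=\frac{1}{m+16}\sum_{n\ge 0}\bigl(2a(m+4)n-8a+b(m+16)\bigr)\frac{\binom{2n}{n}f_n}{m^n},
\]
with $f_n=\sum_k\binom nk^3$ the Franel numbers. This rests on the finite identity $\sum_k\binom nk(-1)^k4^{n-k}s_{n+k,k}=f_n$ (Lemma~\ref{Lem2.3}), proved by telescoping for double sums, together with a closed form for $S_n(4,c)$ (Lemma~\ref{Lem2.4}) and a tail bound (Lemma~\ref{Lem2.5}) to justify rearrangement. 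Since $4^nS_n(1,c)=S_n(4,16c)$, setting $m=-16c$ converts each of the ten series into one of the form $\sum_n(\alpha n+\beta)\binom{2n}nf_n/(-16c)^n$, and these are precisely ten previously known Ramanujan-type evaluations from \cite{CTYZ,ChCo}. Your observation that the base always equals $-4c$ is exactly the structural fact that makes this reduction go through, but the paper exploits it via a purely combinatorial rewriting rather than an integral evaluation; the analytic heavy lifting is thereby offloaded entirely onto results already in the literature.
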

\begin{remark}\label{Rem1.3}\rm The author found the 10 series in Theorem \ref{Th1.3}
in Nov. 2019.
\end{remark}

We shall prove Theorems 1.1-1.3 in the next section.
In Sections 3-10, we propose 117 new conjectural series for powers of $\pi$ involving generalized central trinomial coefficients. In particular, we will present in Section 3 four conjectural series for $1/\pi$ of the following new type:
\smallskip

\ \ {\tt Type VIII}.
$\sum_{k=0}^\infty\f{a+dk}{m^k}T_k(b,c)T_{k}(b_*,c_*)^2,$
\smallskip
\newline
where $a,b,b_*,c,c_*,d,m$ are integers with $mbb_*cc_*d(b^2-4c)(b_*^2-4c_*)(b^2c_*-b_*^2c)\not=0$.

Unlike Ramanujan-type series given by others, all our series for $1/\pi$ of types I-VIII
have the general term involving a {\it product} of three generalized central trinomial coefficients.

Motivated by the author's effective way to find new series for $1/\pi$ (cf. Sun \cite{S13d}),
we formulate the following general characterization of rational Ramanujan-type series for $1/\pi$
 via congruences.

 \begin{conjecture}[General Criterion for Rational Ramanujan-type Series for $1/\pi$]\label{general}
  Suppose that the series $\sum_{k=0}^\infty\f{bk+c}{m^k}a_k$ converges,
 where $(a_k)_{k\gs0}$ is an integer sequence and $b,c,m$ are integers with $bm\not=0$.
 Let $r\in\{1,2,3\}$ and let $d_1,\ldots,d_r$ be positive integers with $\sqrt{d_i/d_j}$ irrational for all distinct $i,j\in\{1,\ldots,r\}$. Then
 \begin{equation}\label{series}\sum_{k=0}^\infty\f{bk+c}{m^k}a_k
 =\f{\sum_{i=1}^r\lambda_i\sqrt{d_i}}{\pi}
 \end{equation}
 for some nonzero rational numbers $\lambda_1,\ldots,\lambda_r$ if and only if there are
 positive integers $d_j\ (r<j\ls 3)$ and rational numbers $c_1,c_2,c_3$ with $\prod_{i=1}^rc_i\not=0$, such that for any prime $p>3$
 with $p\nmid m\prod_{i=1}^rd_i$ and $c_1,c_2,c_3\in\Z_p$ we have
 \begin{equation}\label{cong}\sum_{k=0}^{p-1}\f{bk+c}{m^k}a_k\eq p\(\sum_{i=1}^rc_i\l(\f{\ve_id_i}p\r)+\sum_{r<j\ls 3}c_j\l(\f{d_j}p\r)\)
 \pmod{p^2},\end{equation}
 where $\ve_i\in\{\pm1\}$, $\ve_i=-1$ if $d_i$ is not an integer square,
 and $c_2=c_3=0$ if $r=1$ and $\ve_1=1$.
 \end{conjecture}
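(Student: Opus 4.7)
The plan is to separate the two directions of the equivalence, since they have very different character. The forward direction (series identity implies mod-$p^2$ congruence) is within reach of existing $p$-adic hypergeometric technology, while the converse direction (congruence implies closed-form evaluation) appears to require genuinely new ideas, and I expect it to be the main obstacle.

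For the forward direction, I would first realize each candidate sum $\sum_{k=0}^\infty (bk+c)a_k/m^k$ as the evaluation of a (generalized) hypergeometric series, or as the specialization of a modular or Shimura-curve form at a CM point; this is where the square roots $\sqrt{d_i}$ naturally enter, because CM-point evaluations land in $\mathbb{Q}(\sqrt{d_1},\ldots,\sqrt{d_r})$. The technical engine is a WZ pair $(F(n,k),G(n,k))$ whose telescoping certificate reproduces the Ramanujan-type evaluation in the limit $n\to\infty$. Summing the WZ identity up to $n=p$ and truncating in $k$ to $p-1$, and then applying Morita's $p$-adic gamma function together with standard Karlsson-Minton-style reductions, should give $\sum_{k=0}^{p-1}(bk+c)a_k/m^k$ modulo $p^2$. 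Under this reduction each $\sqrt{d_i}$ passes to a Legendre symbol $\bigl(\tfrac{\varepsilon_i d_i}{p}\bigr)$, with $\varepsilon_i=-1$ forced when $d_i>1$ by the discriminant interpretation of the quadratic field; the rationals $\lambda_i$ transform into the rationals $c_i$ up to a simple normalization. The side condition that $c_2=c_3=0$ when $r=1$ and $\varepsilon_1=1$ reflects the fact that an identity of the form $\text{(series)} = \lambda_1/\pi$ with $\lambda_1\in\mathbb{Q}$ forces the $p$-adic reduction to be a pure rational multiple of $p$, with no secondary Legendre-symbol contributions.

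For the converse direction I would adopt a ``reverse engineering'' strategy. Given the sequence $(a_k)$, the integers $b,c,m$ and a congruence of the predicted shape valid for almost all $p$, set $S_p := \sum_{k=0}^{p-1}(bk+c)a_k/m^k$ and view $S_p/p$ as a $p$-adic linear combination of quadratic characters modulo $p$. By a Chebotarev- or Deligne-equidistribution-type argument, such ``purely quadratic'' behaviour of $S_p$ as $p$ varies should force the generating function $\sum_k a_k x^k$ to be a period of an algebraic variety whose monodromy group is essentially determined by the data $(d_1,\ldots,d_r)$. At this point I would invoke a Katz-style rigidity theorem for hypergeometric local systems to pin down the function up to finitely many parameters, and then use $b,c,m$ to fix those parameters, which would recover the closed form $(\sum_i \lambda_i\sqrt{d_i})/\pi$.

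The hard part is the converse: there is at present no general machine that extracts a closed-form evaluation of a convergent series from mod-$p^2$ congruences of its partial sums. Even for the classical Ramanujan-type series, the congruence direction is known only \emph{after} the closed form has been independently established by modular or hypergeometric means. A realistic intermediate goal would be to prove the forward implication unconditionally, and to verify the converse case by case for each family in the paper, leaving the fully general converse as the central open problem highlighted by the conjecture.
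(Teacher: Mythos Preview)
The statement you are attempting to prove is labeled \textbf{Conjecture} in the paper, not Theorem; the author presents it as a general heuristic principle (``General Criterion for Rational Ramanujan-type Series for $1/\pi$'') supported by the many examples in Sections~3--9, but offers no proof of either direction. There is therefore nothing in the paper to compare your proposal against.

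Your proposal is not a proof but an outline of a possible research program, and you acknowledge this yourself when you write that the converse direction would require ``genuinely new ideas'' and that you would ``leav[e] the fully general converse as the central open problem.'' Even the forward direction is only sketched: the assertion that a WZ pair plus $p$-adic gamma manipulations will convert each $\sqrt{d_i}$ into a Legendre symbol $(\tfrac{\varepsilon_i d_i}{p})$ is plausible for the classical hypergeometric families $\binom{2k}{k}^3$, $\binom{2k}{k}^2\binom{3k}{k}$, etc., where such machinery exists, but the conjecture is stated for an \emph{arbitrary} integer sequence $(a_k)_{k\geq 0}$, and there is no reason to expect a WZ certificate or a CM interpretation in that generality. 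Similarly, invoking ``Katz-style rigidity'' to force the generating function into a hypergeometric or modular form from mod-$p^2$ data is a hope, not an argument; no such reconstruction theorem is known. In short, your write-up accurately identifies where the difficulties lie, but it does not close any of them, and the paper does not claim to either.
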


For a Ramanujan-type series of the form \eqref{series}, we call $r$ its {\it rank}.
We believe that there are some Ramanujan-type series of rank three  but we have not yet found such a series.

\begin{conjecture}\label{Conj-pn} Let $(a_k)_{k\gs0}$ be any integer sequence,
 and let $b,c,m,d_1,d_2,d_3\in\Z$ with $bm\not=0$.
 Assume that $\lim_{n\to+\infty}\root n\of{|a_n|}=r<|m|$,
and $\pi\sum_{k=0}^\infty\f{bk+c}{m^k}a_k$
is an algebraic number. Suppose that $c_1,c_2,c_3$ are rational numbers
with $c_1+c_2+c_3=a_0c$,  and
\begin{equation}\label{linear-p}\sum_{k=0}^{p-1}\f{bk+c}{m^k}a_k\eq p\l(c_1\l(\f{d_1}p\r)+c_2\l(\f{d_2}p\r)+c_3\l(\f{d_3}p\r)\r)\pmod{p^2}
\end{equation}
for all primes $p>3$ with $p\nmid d_1d_2d_3m$ and $c_1,c_2,c_3\in\Z_p$.
Then, for any prime $p>3$
with $p\nmid m$, $c_1,c_2,c_3\in\Z_p$ and $(\f{d_1}p)=(\f{d_2}p)=(\f{d_3}p)=\da\in\{\pm1\}$,
 we have
\begin{equation*}\label{pn}
\f1{(pn)^2}\(\sum_{k=0}^{pn-1}\f{bk+c}{m^k}a_k-p\da\sum_{k=0}^{n-1}\f{bk+c}{m^k}a_k\)\in\Z_p
\ \ \t{for all}\ n\in\Z^+.
\end{equation*}
\end{conjecture}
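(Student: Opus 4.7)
The plan starts from the observation that the case $n=1$ of the desired divisibility is immediate: the hypothesis \eqref{linear-p}, specialized to $(\f{d_i}p)=\da$ for $i=1,2,3$, yields $S(p)\eq p\da(c_1+c_2+c_3)=p\da\, a_0c=p\da\, S(1)\pmod{p^2}$, where $S(N):=\sum_{k=0}^{N-1}\f{bk+c}{m^k}a_k$. So the real task is to propagate this identity from $n=1$ to arbitrary $n$, and to sharpen the modulus from $p^2$ to $p^{2+2v_p(n)}$ when $p\mid n$.

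The natural first attempt is a block decomposition. Writing $k=jp+i$ with $0\ls j<n$, $0\ls i<p$, and using $b(jp+i)+c=pbj+(bi+c)$, one gets $S(pn)=\sum_{j=0}^{n-1}B_j$ with
$$B_j=\f{1}{m^{jp}}\(pbj\sum_{i=0}^{p-1}\f{a_{jp+i}}{m^i}+\sum_{i=0}^{p-1}\f{bi+c}{m^i}a_{jp+i}\).$$
For $\sum_j B_j$ to reduce to $p\da\, S(n)$ modulo $p^2$, I would need (i) a Lucas-type refinement $a_{jp+i}\eq a_ja_i\pmod{p^2}$, or at least a mod-$p^2$ decomposition with a tractable remainder; (ii) Fermat's refinement $m^{jp}\eq m^j(1+jp\,q_p(m))\pmod{p^2}$, where $q_p(m):=(m^{p-1}-1)/p$; and (iii) the content of \eqref{linear-p} applied to the inner $i$-sum with $a_i$ replaced, where needed, by Lucas-remainders. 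Substituting (i)--(iii) into $B_j$ and reassembling the coefficient $(bj+c)$ from the $pbj$ piece and the $\sum_i(bi+c)a_{jp+i}/m^i$ piece should produce $B_j\eq p\da\,\f{bj+c}{m^j}a_j\pmod{p^2}$. Bootstrapping the resulting congruence $S(pn)\eq p\da\, S(n)\pmod{p^2}$ from $n$ to $pn$, $p^2n$, \ldots{} would then deliver the additional factor $p^{2v_p(n)}$ in the modulus by induction on $v_p(n)$.

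The principal obstacle is ingredient (i), the Lucas-type refinement for $a_{jp+i}$. The stated hypothesis controls only a single weighted sum of the $a_k$ modulo $p^2$ and gives no direct information about individual terms, so in the generality advertised in Conjecture \ref{Conj-pn} the Lucas congruence cannot be deduced from the hypothesis alone. In practice, every concrete sequence relevant to the paper --- $\bi{2k}k^3$, $\bi{2k}k^2\bi{3k}k$, $\bi{2k}k\bi{3k}k\bi{6k}{3k}$, the Domb numbers, the Almkvist--Zudilin numbers, and the new sequences $S_k(b,c)$ and products of $T_k(b,c)$ --- does satisfy a Lucas-type congruence, but each proof uses the explicit closed form or three-term recurrence of that family separately. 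A uniform proof of Conjecture \ref{Conj-pn} therefore appears to require the Dwork--Katz $p$-adic formalism: set $F(x):=\sum_{k\gs0}\f{bk+c}{m^k}a_kx^k$, use the growth bound $\lim_{n\to\infty}\root n\of{|a_n|}<|m|$ to realize $F$ as a $p$-adic analytic function on the closed unit disc, and establish a functional equation of the shape $F_{pn}(x)\eq\da\, F_n(x^p)\pmod{p^2}$ for the truncations $F_N(x)$, whose leading coefficient is pinned down by \eqref{linear-p}. Specializing at $x=1$ would then yield the claim, with the algebraicity assumption on $\pi\, F(1)$ supplying the arithmetic normalization that makes the global sign $\da$ consistent. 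I expect the heart of the difficulty to lie in establishing this Dwork-type functional equation uniformly for the integer sequences $(a_k)$ envisaged here, rather than merely for those with a classical hypergeometric origin.
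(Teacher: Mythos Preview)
The statement you are attempting to prove is labeled \emph{Conjecture}~\ref{Conj-pn} in the paper, and the paper provides no proof of it whatsoever; it is posed as an open problem, supported only by the numerical evidence accumulated in Sections~3--9. So there is no ``paper's own proof'' against which to compare your attempt.

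Your proposal is not a proof either, and you are candid about this. The block decomposition $S(pn)=\sum_j B_j$ is the natural starting point, and your ingredients (ii) and (iii) are fine, but you correctly identify that ingredient (i) --- a Lucas-type refinement $a_{jp+i}\equiv a_ja_i\pmod{p^2}$ --- simply cannot be extracted from the hypotheses as stated. The conjecture assumes only that $(a_k)$ is an integer sequence with a growth bound, that $\pi$ times the infinite sum is algebraic, and that a single weighted partial sum satisfies \eqref{linear-p}; none of this forces any termwise $p$-adic structure on the $a_k$. Your observation that every concrete sequence in the paper happens to satisfy such a Lucas congruence by separate, family-specific arguments is exactly right, and is presumably part of why the author states this as a conjecture rather than a theorem. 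The Dwork--Katz route you sketch at the end is a reasonable heuristic for why the conjecture might be true in the cases of interest, but it too would need extra hypotheses (e.g., that $\sum a_kx^k$ arises as a period of a family over $\Z$) that are not part of the conjecture's statement. In short: you have correctly diagnosed why this is hard, and the paper agrees with you by leaving it open.
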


Joint with the author's PhD student Chen Wang, we pose the following conjecture.

\begin{conjecture}[Chen Wang and Z.-W. Sun]\label{Conj-WS} Let $(a_k)_{k\gs0}$ be an integer sequence with $a_0=1$. Let $b,c,m,d_1,d_2,d_3\in\Z$ with $bm\not=0$, and let
$c_1,c_2,c_3$ be rational numbers. If $\pi\sum_{k=0}^\infty\f{bk+c}{m^k}a_k$
is an algebraic number, and the congruence \eqref{linear-p}
holds
for all primes $p>3$ with $p\nmid d_1d_2d_3m$ and $c_1,c_2,c_3\in\Z_p$, then we must have
$c_1+c_2+c_3=c.$
\end{conjecture}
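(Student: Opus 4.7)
The plan proceeds in three stages. First, I reduce \eqref{linear-p} modulo $p$. Since the right-hand side carries a factor of $p$, we get
$$a_0 c + \sum_{k=1}^{p-1}\f{bk+c}{m^k}a_k \equiv 0 \pmod{p}$$
for every prime $p>3$ satisfying the hypotheses. Using $a_0=1$, this already supplies a nontrivial, sign-independent constraint on the partial sum.

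Second, by Chebotarev's density theorem (or elementary reciprocity when the $d_i$ are small), there are infinitely many primes $p$ for which simultaneously $c_1,c_2,c_3\in\Z_p$, $p\nmid d_1d_2d_3m$, and $\l(\f{d_1}p\r)=\l(\f{d_2}p\r)=\l(\f{d_3}p\r)=1$. For any such $p$, substituting into \eqref{linear-p} and dividing by $p$ (legitimate by the previous stage) gives
$$\f1p\l(c+\sum_{k=1}^{p-1}\f{bk+c}{m^k}a_k\r)\equiv c_1+c_2+c_3\pmod{p}. \quad (\star)$$
Thus $c_1+c_2+c_3$ is pinned down modulo $p$ by the truncated sum, for a density-positive family of primes; replacing the all-$+1$ pattern by any other fixed $(\varepsilon_1,\varepsilon_2,\varepsilon_3)\in\{\pm1\}^3$ of positive density yields the analogous relation $\f1p(c+\sum)\equiv\sum_i\varepsilon_ic_i\pmod{p}$, so every rational combination $\sum_i\varepsilon_ic_i$ is rigidly determined modulo every large prime by the same truncated-sum data.

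The third and crucial stage is to upgrade these mod-$p$ family congruences to the exact rational identity $c_1+c_2+c_3=c$. This is equivalent to proving
$$\sum_{k=0}^{p-1}\f{bk+c}{m^k}a_k\equiv pc\pmod{p^2}\quad\text{whenever $\l(\f{d_i}p\r)=1$ for $i=1,2,3$,}$$
which asserts that the $k=0$ contribution $c$ is \emph{universal} and untwisted by the Legendre-symbol structure. The natural approach is to view the truncation as the value of a $p$-adic analytic continuation of $\sum_k(bk+c)a_kx^k$; when this generating function admits the hypergeometric or modular closed form that underlies any Ramanujan-type identity, a Gross--Koblitz-style or Chan--Cooper--Zudilin-style comparison should match the truncated-sum asymptotics with the algebraic period $\pi S$ and thereby pin the mod-$p^2$ constant down to $c$. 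The hard part will be exactly this passage from mod-$p^2$ congruences (valid for a positive density of primes) to an exact equality in $\Q$: it demands a genuine bridge between $p$-adic partial-sum data and the complex period $\pi S$, and appears to require substantially more than elementary congruence manipulation.
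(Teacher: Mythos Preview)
The statement you are attempting to prove is explicitly presented in the paper as a \emph{conjecture} (joint with Chen Wang); the paper gives no proof at all, only a remark observing that a certain known congruence of Mu and Sun is not a counterexample because the associated series diverges. So there is no ``paper's own proof'' to compare your attempt against.

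Your stages 1 and 2 are routine and correct: reducing \eqref{linear-p} modulo $p$ shows the truncated sum is $\eq 0\pmod p$, and for primes with $\l(\f{d_1}{p}\r)=\l(\f{d_2}{p}\r)=\l(\f{d_3}{p}\r)=1$ (of which there are infinitely many by Chebotarev) one obtains
$\f1p\sum_{k=0}^{p-1}\f{bk+c}{m^k}a_k\eq c_1+c_2+c_3\pmod p$.
If the left side were $\eq c\pmod p$ for infinitely many such $p$, the desired identity $c_1+c_2+c_3=c$ would follow immediately.

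Stage 3, however, is not a proof but a restatement of the open problem. You need $\sum_{k=0}^{p-1}\f{bk+c}{m^k}a_k\eq pc\pmod{p^2}$ for infinitely many admissible $p$, and your proposed route---``$p$-adic analytic continuation,'' a ``Gross--Koblitz-style or Chan--Cooper--Zudilin-style comparison,'' matching truncated-sum asymptotics with the algebraic period $\pi S$---is a wish list, not an argument. The only hypothesis linking the sequence $(a_k)$ to analysis is the algebraicity of $\pi\sum_{k\gs0}\f{bk+c}{m^k}a_k$; there is no known general mechanism that converts this single complex-analytic fact into a mod-$p^2$ refinement of the truncated sum for an \emph{arbitrary} integer sequence with $a_0=1$. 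The Gross--Koblitz and supercongruence machinery you invoke applies to specific hypergeometric or modular sequences, not to a generic $(a_k)$. You have correctly isolated where the difficulty lies, but that difficulty is the entire content of the conjecture, and it remains open.
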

\begin{remark} The author \cite[Conjecture 1.1(i)]{S14b} conjectured that
$$\sum_{k=0}^{p-1}(8k+5)T_k^2\eq 3p\l(\f{-3}p\r)\pmod{p^2}$$ for any prime $p>3$,
which was confirmed by Y.-P. Mu and Z.-W. Sun \cite{MS}. This is not a counterexample
to Conjecture \ref{Conj-WS} since $\sum_{k=0}^\infty(8k+5)T_k^2$ diverges.
\end{remark}

All the new series and related congruences in Sections 3-9 support Conjectures 1.1-1.3.
We discover the conjectural series for $1/\pi$ in Sections 3-9
based on the author's previous {\tt Philosophy about Series for $1/\pi$} stated in \cite{S13d},
the PSLQ algorithm to discover integer relations (cf. \cite{FBA}),
and the following {\tt Duality Principle} based on the author's experience and intuition.

\begin{conjecture} [Duality Principle]\label{Conj-Dual} Let $(a_k)_{k\gs0}$ be an integer sequence such that
\begin{equation}\label{ak}a_{k}\eq\l(\f dp\r)D^ka_{p-1-k}\pmod p
\end{equation}
for any prime $p\nmid 6dD$ and $k\in\{0,\ldots,p-1\}$, where $d$ and $D$ are fixed nonzero integers.
If $m$ is a nonzero integer such that $$\sum_{k=0}^\infty\f{bk+c}{m^k}a_k=\f{\lambda_1\sqrt{d_1}+\lambda_2\sqrt{d_2}+\lambda_3\sqrt{d_3}}{\pi}$$
for some $b,d_1,d_2,d_3\in\Z^+$, $c\in\Z$ and $\lambda_1,\lambda_2,\lambda_3\in\Q$, then $m$ divides $D$, and \begin{equation}\label{dual}\sum_{k=0}^{p-1}\f{a_k}{m^k}\eq\l(\f dp\r)\sum_{k=0}^{p-1}\f{a_k}{(D/m)^k}\pmod{p^2}
\end{equation}
for any prime $p>3$ with $p\nmid dD$.
\end{conjecture}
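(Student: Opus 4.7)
The plan is to split the Duality Principle into the divisibility claim $m\mid D$ and the mod-$p^2$ congruence \eqref{dual}, and to attack the congruence first. The hypothesis \eqref{ak} invites the substitution $k\mapsto p-1-k$: applied to $\sum_{k=0}^{p-1}a_k/m^k$ and combined with Fermat's little theorem (so $D^{p-1}\eq m^{p-1}\eq1\pmod p$ whenever $p\nmid mD$), it produces the mod-$p$ version of \eqref{dual},
$$\sum_{k=0}^{p-1}\f{a_k}{m^k}\eq\l(\f dp\r)\sum_{k=0}^{p-1}\f{a_k}{(D/m)^k}\pmod p.$$
This already exhibits the expected duality $m\leftrightarrow D/m$ and suggests why $m\mid D$ is the natural integrality condition: otherwise $(D/m)^k$ does not lie in $\Z$, and the right-hand side acquires denominators that cannot cancel for infinitely many $p$, contradicting the mod-$p^2$ conclusion.

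For the divisibility $m\mid D$ itself I would combine this $p$-adic remark with the analytic hypothesis that $\sum_{k\gs0}(bk+c)a_k/m^k$ equals, up to a factor $1/\pi$, a prescribed algebraic combination $\sum\lambda_i\sqrt{d_i}$. In every concrete instance of Sections 3--10 the sequence $(a_k)$ is a product of central binomial and generalized central trinomial coefficients whose hypergeometric or modular origin pins down the admissible denominators $m$ as divisors of a canonical parameter $D$ (essentially the inverse of a hauptmodul at a CM point). A uniform proof should run by tying $(a_k)$ to the Fourier coefficients of a modular form whose associated period integral computes $\pi\sum(bk+c)a_k/m^k$, reading off $D$ from the level of that form and the integrality of $m$ from its cusp structure.

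The hardest step will be lifting mod $p$ to mod $p^2$. The naive Fermat step discards the Fermat quotients $q_p(D)=(D^{p-1}-1)/p$ and $q_p(m)=(m^{p-1}-1)/p$, and one must show that they cancel in the precise combination \eqref{dual}. My approach would be to invoke Conjecture \ref{general} at the parameters $(m,d_1,\ldots,d_r)$: by hypothesis $\sum(bk+c)a_k/m^k=\sum\lambda_i\sqrt{d_i}/\pi$, which (granting the criterion) forces the mod-$p^2$ congruence \eqref{cong} for the weighted partial sum. Applying $k\mapsto p-1-k$ to \eqref{cong} and using Fermat quotients carefully should yield a matching mod-$p^2$ congruence at the dual parameter $D/m$. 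Subtracting the two and using a finite companion identity of Glaisher type (analogous to the one exploited just before Theorem \ref{Th1.1}) to separate the weight $bk+c$ from the plain tail $\sum a_k/m^k$ should deliver \eqref{dual}. The real obstacle is producing such a Glaisher-type identity \emph{uniformly} for every sequence $(a_k)$ satisfying \eqref{ak}: this appears to require the common modular or hypergeometric origin alluded to above, and is what makes the Duality Principle a conjecture rather than a theorem.
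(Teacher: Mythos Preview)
The statement you are trying to prove is labeled a \emph{Conjecture} in the paper, not a theorem; the paper offers no proof of it. The only thing the paper establishes (in Remark~\ref{Rem-Dual}(i)) is exactly the mod-$p$ version you derived by the substitution $k\mapsto p-1-k$ combined with Fermat's little theorem. So on that point you and the paper agree, and your write-up correctly isolates the lift from mod $p$ to mod $p^2$ (and the divisibility $m\mid D$) as the genuinely open content.

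Where your proposal goes beyond the paper, it does not amount to a proof. Your attack on $m\mid D$ appeals to a modular-form interpretation that is only asserted, not constructed, and is phrased case-by-case (``in every concrete instance of Sections 3--10''); the conjecture is stated for an arbitrary integer sequence satisfying \eqref{ak}, so a uniform argument is required. Your attack on the mod-$p^2$ congruence explicitly invokes Conjecture~\ref{general} as an input, which is itself unproved, and then calls for a ``Glaisher-type identity uniformly for every sequence $(a_k)$ satisfying \eqref{ak}'', which you concede you do not have. In short, your outline identifies the right decomposition and the right obstacles, but the two key steps remain at the level of heuristics, matching the paper's own stance that this is a conjecture.
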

\begin{remark}\label{Rem-Dual} (i) For any prime $p>3$ with $p\nmid dDm$,
the congruence \eqref{dual} holds modulo $p$ by \eqref{ak} and Fermat's little theorem.
We call $\sum_{k=0}^{p-1}a_k/(\f Dm)^k$ the {\it dual} of the sum $\sum_{k=0}^{p-1}a_k/m^k$.

(ii) For any $b,c\in\Z$ and odd prime $p\nmid b^2-4c$, it is known (see, e.g., \cite[Lemma 2.2]{S14b})
that
\begin{equation} \label{T-dual}T_k(b,c)\eq\l(\f{b^2-4c}p\r)(b^2-4c)^kT_{p-1-k}(b,c)\pmod p
\end{equation}
for all $k=0,1,\ldots,p-1$.
\end{remark}

For a  series $\sum_{k=0}^\infty a_k$ with $a_0,a_1,\ldots$ real numbers, if $\lim_{k\to+\infty}a_{k+1}/a_k=r\in(-1,1)$ then we say that the series converge at a geometric rate with ratio $r$. Except for $(7.1)$, all other conjectural series in Sections 3-9 converge
at geometric rates and thus one can easily check them numerically via a computer.

In Section 10, we pose two curious conjectural series for $\pi$ involving the central trinomial coefficients.

\section{Proofs of Theorems 1.1-1.3}
\setcounter{equation}{0}

\begin{lemma}\label{Lem2.1} Let $m\not=0$ and $n\gs0$ be integers. Then
\begin{align}\sum_{k=0}^n\f{((64-m)k^3-32k^2-16k+8)\bi{2k}k^3}{(2k-1)^2m^k}=&\f{8(2n+1)}{m^n}\bi{2n}n^3,
\\\sum_{k=0}^n\f{((64-m)k^3-96k^2+48k-8)\bi{2k}k^3}{(2k-1)^3m^k}=&\f 8{m^n}\bi{2n}n^3,
\\\sum_{k=0}^n\f{((108-m)k^3-54k^2-12k+6)\bi{2k}k^2\bi{3k}k}{(2k-1)(3k-1)m^k}=&\f{6(3n+1)}{m^n}\bi{2n}n^2\bi{3n}n,
\end{align}
\begin{equation}\begin{aligned}
&\sum_{k=0}^n\f{((108-m)k^3-(54+m)k^2-12k+6)\bi{2k}k^2\bi{3k}k}{(k+1)(2k-1)(3k-1)m^k}
\\&\qquad\qquad\qquad=\f{6(3n+1)}{(n+1)m^n}\bi{2n}n^2\bi{3n}n,
\end{aligned}\end{equation}
\begin{equation}\begin{aligned}
&\sum_{k=0}^n\f{((256-m)k^3-128k^2-16k+8)\bi{2k}k^2\bi{4k}{2k}}{(2k-1)(4k-1)m^k}
\\&\qquad\qquad\qquad=\f{8(4n+1)}{m^n}\bi{2n}n^2\bi{4n}{2n},
\end{aligned}\end{equation}
\begin{equation}\begin{aligned}
&\sum_{k=0}^n\f{((256-m)k^3-(128+m)k^2-16k+8)\bi{2k}k^2\bi{4k}{2k}}{(k+1)(2k-1)(4k-1)m^k}
\\&\qquad\qquad\qquad=\f{8(4n+1)}{(n+1)m^n}\bi{2n}n^2\bi{4n}{2n},
\end{aligned}\end{equation}
\begin{equation}\begin{aligned}
&\sum_{k=0}^n\f{((1728-m)k^3-864k^2-48k+24)\bi{2k}k\bi{3k}k\bi{6k}{3k}}{(2k-1)(6k-1)m^k}
\\&\qquad\qquad\qquad=\f{24(6n+1)}{m^n}\bi{2n}n\bi{3n}n\bi{6n}{3n},
\end{aligned}\end{equation}
\begin{equation}\begin{aligned}
&\sum_{k=0}^n\f{((1728-m)k^3-(864+m)k^2-48k+24)\bi{2k}k\bi{3k}k\bi{6k}{3k}}{(k+1)(2k-1)(6k-1)m^k}
\\&\qquad\qquad\qquad=\f{24(6n+1)}{(n+1)m^n}\bi{2n}n\bi{3n}n\bi{6n}{3n}.
\end{aligned}\end{equation}
\end{lemma}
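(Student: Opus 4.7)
The plan is to establish all eight identities by creative telescoping: for each identity I will exhibit a rational function $g(k)$ such that $F(k) := g(k)u_k$ satisfies $F(k+1)-F(k)=s(k)$, where $u_k$ is the appropriate hypergeometric base term (namely $\bi{2k}k^3/m^k$, $\bi{2k}k^2\bi{3k}k/m^k$, $\bi{2k}k^2\bi{4k}{2k}/m^k$, or $\bi{2k}k\bi{3k}k\bi{6k}{3k}/m^k$) and $s(k)$ is the summand. Since $g(0)=0$ in every case, the sum collapses to $F(n+1)-F(0)=F(n+1)$, which will be arranged to match the stated right-hand side.

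The certificate is read off from the RHS. For (2.1), writing $u_k=\bi{2k}k^3/m^k$ one has $u_{k+1}/u_k=8(2k+1)^3/(m(k+1)^3)$, and requiring $F(n+1)=8(2n+1)u_n$ forces $g(k)=mk^3/(2k-1)^2$; a direct computation then gives
$$F(k+1)-F(k)=\f{u_k}{(2k-1)^2}\bigl[8(2k+1)(2k-1)^2-mk^3\bigr],$$
which matches the summand of (2.1) via $8(2k+1)(2k-1)^2=64k^3-32k^2-16k+8$. The same scheme handles (2.2), (2.3), (2.5), (2.7) with certificates of the form $g(k)=mk^3/D(k)$, where $D(k)$ runs over $(2k-1)^3$, $(2k-1)(3k-1)$, $(2k-1)(4k-1)$, $(2k-1)(6k-1)$ respectively. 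Each verification reduces to a single polynomial identity:
\begin{align*}
8(2k-1)^3 &= 64k^3-96k^2+48k-8,\\
6(3k+1)(2k-1)(3k-1) &= 108k^3-54k^2-12k+6,\\
8(4k+1)(2k-1)(4k-1) &= 256k^3-128k^2-16k+8,\\
24(6k+1)(2k-1)(6k-1) &= 1728k^3-864k^2-48k+24.
\end{align*}

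For the ``$(k+1)$-denominator'' identities (2.4), (2.6), (2.8), a modified certificate $g(k)=mk^2/((2k-1)(rk-1))$ with $r\in\{3,4,6\}$ works: the lower-degree numerator $k^2$ (in place of $k^3$) accounts for the extra factor $1/(n+1)$ on the RHS, while the subtraction $F(k+1)-F(k)$ contributes an additional $-mk^2(k+1)$ term inside the bracket, producing precisely the $-(c_r+m)k^2$ coefficient appearing in each summand, where $c_r\in\{54,128,864\}$. The base case $F(0)=0$ is immediate from $g(0)=0$ in every certificate.

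No conceptual difficulty lies beyond the standard Gosper--Zeilberger framework; the only real obstacle is bookkeeping across eight separate identities, each of which reduces to verifying a single polynomial identity between cubics in $k$. These can all be confirmed by direct expansion once the certificates above are in hand, and the entire lemma thus follows by a uniform, essentially mechanical verification.
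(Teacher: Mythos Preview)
Your proof is correct and is essentially the same approach as the paper's: the paper simply states that all eight identities ``can be easily proved by induction on $n$,'' and your telescoping verification with the certificates $g(k)$ is precisely what makes that induction step go through. You have supplied the explicit details the paper omits, and the polynomial identities you list are all correct.
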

\begin{remark} The eight identities in Lemma \ref{Lem2.1} can be easily proved by induction on $n$.
In light of Stirling's formula, $n!\sim\sqrt{2\pi n}(n/e)^n$ as $n\to+\infty$,
we have
 \begin{gather}\label{asy}\bi{2n}n\sim\f{4^n}{\sqrt{n\pi}},\ \ \bi{2n}n\bi{3n}n\sim\f{\sqrt3\, 27^n}{2n\pi},
 \\ \bi{2n}n\bi{4n}{2n}\sim\f{64^n}{\sqrt2\, n\pi},\ \ \bi{3n}n\bi{6n}n\sim\f{432^n}{2n\pi}.
 \end{gather}
\end{remark}

\medskip
\noindent{\it Proof of Theorem 1.1}. Just apply Lemma \ref{Lem2.1} and the 36 known rational
Ramanujan-type series listed in \cite{G}. Let us illustrate the proofs
 by showing (1.1), (1.2), (1.71) and (1.72) in details.

 By (2.1) with $m=-64$, we have
$$\sum_{k=0}^\infty\f{(16k^3-4k^2-2k+1)\bi{2k}k^3}{(2k-1)^2(-64)^k}=\lim_{n\to+\infty}\f{2n+1}{(-64)^n}\bi{2n}n^3=0.$$
Note that
$$16k^3-4k^2-2k+1=(4k+1)(2k-1)^2+2k(4k-1)$$
and recall Bauer's series
$$\sum_{k=0}^\infty(4k+1)\f{\bi{2k}k^3}{(-64)^k}=\f2{\pi}.$$
So, we get
$$\sum_{k=0}^\infty\f{k(4k-1)\bi{2k}k^3}{(2k-1)^2(-64)^k}
=-\f12\sum_{k=0}^\infty(4k+1)\f{\bi{2k}k^3}{(-64)^k}=-\f1{\pi}.$$
This proves (1.1). By (2.2) with $m=-64$, we have
$$\sum_{k=0}^n\f{(4k-1)(4k^2-2k+1)\bi{2k}k^3}{(2k-1)^3(-64)^k}=\f{\bi{2n}n^3}{(-64)^n}$$
and hence
$$\sum_{k=0}^\infty\f{(2k(2k-1)(4k-1)+4k-1)\bi{2k}k^3}{(2k-1)^3(-64)^k}=\lim_{n\to+\infty}\f{\bi{2n}n^3}{(-64)^n}=0.$$
Combining this with $(1.1)$ we immediately get $(1.2)$.

In view of (2.7) with $m=-640320^3$, we have
\begin{align*}&\sum_{k=0}^n\f{(10939058860032072k^3-36k^2-2k+1)\bi{2k}k\bi{3k}k\bi{6k}{3k}}
{(2k-1)(6k-1)(-640320)^{3k}}
\\=&\f{6n+1}{(-640320)^{3n}}\bi{2n}n\bi{3n}n\bi{6n}{3n}.
\end{align*}
and hence
$$\sum_{k=0}^\infty\f{(10939058860032072k^3-36k^2-2k+1)\bi{2k}k\bi{3k}k\bi{6k}{3k}}
{(2k-1)(6k-1)(-640320)^{3k}}=0.$$
In 1987, D. V. Chudnovsky and G. V. Chudnovsky \cite{CC}
got the formula
$$\sum_{k=0}^\infty\f{545140134k+13591409}{(-640320)^{3k}}\bi{2k}k\bi{3k}k\bi{6k}{3k}=\f{3\times53360^2}{2\pi\sqrt{10005}},$$
which enabled them to hold the world record for the calculation of $\pi$ during 1989--1994.
Note that
\begin{align*}&10939058860032072k^3-36k^2-2k+1
\\=&1672209(2k-1)(6k-1)(545140134k+13591409)
\\&+426880 (16444841148 k^2 - 1709536232 k-53241371 )
\end{align*}
and hence
\begin{align*}&\sum_{k=0}^\infty\f{(16444841148 k^2 - 1709536232 k-53241371 )\bi{2k}k\bi{3k}k\bi{6k}{3k}}{(2k-1)(6k-1)(-640320)^{3k}}
\\=&-\f{1672209}{426880}\times\f{3\times53360^2}{2\pi\sqrt{10005}}=-1672209\f{\sqrt{10005}}{\pi}.
\end{align*}
This proves $(1.71)$.

By (2.8) with $m=-640320^3$, we have
\begin{align*}&\sum_{k=0}^n\f{(10939058860032072k^3 +10939058860031964k^2-2k+1)\bi{2k}k\bi{3k}k\bi{6k}{3k}}
{(k+1)(2k-1)(6k-1)(-640320)^{3k}}
\\&\qquad\qquad=\f{6n+1}{(n+1)(-640320)^{3n}}\bi{2n}n\bi{3n}n\bi{6n}{3n}
\end{align*}
and hence
$$\sum_{k=0}^\infty\f{(10939058860032072k^3 +10939058860031964k^2-2k+1)\bi{2k}k\bi{3k}k\bi{6k}{3k}}
{(k+1)(2k-1)(6k-1)(-640320)^{3k}}=0.$$
Note that
\begin{align*}&2802461(10939058860032072k^3 +10939058860031964k^2-2k+1)
\\=&1864188626454(k+1)(16444841148 k^2 - 1709536232 k-53241371)+5P(k).
\end{align*}
Therefore, with the help of $(1.71)$ we get
\begin{align*}&\sum_{k=0}^\infty\f{P(k)\bi{2k}k\bi{3k}k\bi{6k}{3k}}{(k+1)(2k-1)(6k-1)(-640320)^{3k}}
\\=&-\f{1864188626454}5\times(-1672209)\f{\sqrt{10005}}{\pi}=18\times557403^3\f{\sqrt{10005}}{5\pi}.
\end{align*}
This proves $(1.72)$.

The identities (1.3)--(1.70) can be proved similarly. \qed

\begin{lemma}\label{Lem2.2} Let $m$ and $n>0$ be integers. Then
\begin{align}\sum_{k=1}^n\f{m^k((m-64)k^3-32k^2+16k+8)}{(2k+1)^2k^3\bi{2k}k^3}=&\f{m^{n+1}}{(2n+1)^2\bi{2n}n^3}-m,
\\\sum_{k=1}^n\f{m^k((m-64)k^3-96k^2-48k-8)}{(2k+1)^3k^3\bi{2k}k^3}=&\f{m^{n+1}}{(2n+1)^3\bi{2n}n^3}-m,
\\\sum_{k=1}^n\f{m^k((m-108)k^3-54k^2+12k+6)}{(2k+1)(3k+1)k^3\bi{2k}k^2\bi{3k}k}
=&\f{m^{n+1}}{(2n+1)(3n+1)\bi{2n}n^2\bi{3n}n}-m,
\end{align}
\begin{equation}\begin{aligned}
&\sum_{1<k\ls n}\f{m^k((m-108)k^3-(54+m)k^2+12k+6)}{(k-1)(2k+1)(3k+1)k^3\bi{2k}k^2\bi{3k}k}
\\&\qquad\qquad=\f{m^{n+1}}{n(2n+1)(3n+1)\bi{2n}n^2\bi{3n}n}-\f{m^2}{144},
\end{aligned}\end{equation}
\begin{equation}\begin{aligned}
&\sum_{k=1}^n\f{m^k((m-256)k^3-128k^2+16k+8)}{(2k+1)(4k+1)k^3\bi{2k}k^2\bi{4k}{2k}}
\\&\qquad\qquad=\f{m^{n+1}}{(2n+1)(4n+1)\bi{2n}n^2\bi{4n}{2n}}-m,
\end{aligned}\end{equation}
\begin{equation}\begin{aligned}
&\sum_{1<k\ls n}\f{m^k((m-256)k^3-(128+m)k^2+16k+8)}{(k-1)(2k+1)(4k+1)k^3\bi{2k}k^2\bi{4k}{2k}}
\\&\qquad\qquad\quad=\f{m^{n+1}}{n(2n+1)(4n+1)\bi{2n}n^2\bi{4n}{2n}}-\f{m^2}{360}.
\end{aligned}\end{equation}
\end{lemma}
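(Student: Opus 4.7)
The plan is to prove all six identities by induction on $n$, exactly as indicated for Lemma \ref{Lem2.1}. Writing $F(n)$ for the right-hand side and $a(k)$ for the $k$-th summand on the left, the inductive step reduces to the telescoping identity $a(k)=F(k)-F(k-1)$, and the base case amounts to noting that $F(0)=0$ (or $F(1)=0$ for identities $(2.12)$ and $(2.14)$) coincides with the empty sum on the left.

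For $(2.9)$, using the recurrence $\bi{2k}k=\f{2(2k-1)}{k}\bi{2k-2}{k-1}$ to express $\bi{2k-2}{k-1}^3$ in terms of $\bi{2k}k^3$, a direct computation gives
$$F(k)-F(k-1)=\f{m^k\bigl(mk^3-8(2k-1)(2k+1)^2\bigr)}{(2k+1)^2 k^3\bi{2k}k^3},$$
and the polynomial identity
$$mk^3-8(2k-1)(2k+1)^2=(m-64)k^3-32k^2+16k+8$$
then yields $a(k)=F(k)-F(k-1)$, completing the induction. Identities $(2.10)$--$(2.14)$ follow by the same telescoping template, using the standard ratios for $\bi{2k}k$, $\bi{3k}k$, $\bi{4k}{2k}$, $\bi{6k}{3k}$ between levels $k$ and $k-1$; in each case the resulting check is a polynomial identity in $k$ of degree at most three, verifiable by direct expansion. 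The additive constants $-m$, $-m^2/144$, $-m^2/360$ appearing on the right-hand sides are designed precisely so that $F(0)=0$ (respectively $F(1)=0$ for the identities whose summand carries an extra factor $k-1$ in the denominator), which is what makes the base cases automatic.

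The main obstacle is purely bookkeeping: one must correctly track the cubic-in-$k$ numerators and the binomial-coefficient ratios separately for each of the six identities, which is slightly more elaborate than the $\bi{2k}k$-only case $(2.9)$ because the higher binomials $\bi{3k}k,\bi{4k}{2k},\bi{6k}{3k}$ introduce additional linear factors of the form $3k-1, 3k-2$, etc., when passing from level $k-1$ to level $k$. A more conceptual route would be to view Lemma \ref{Lem2.2} as a formal dual of Lemma \ref{Lem2.1} under the substitution $k\mapsto -k$: the cubic polynomials on the two sides do match up to sign after this formal substitution, and one could in principle derive both lemmas uniformly from a single Gosper/Zeilberger telescoping certificate. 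For the present paper, however, the direct induction sketched above is the most economical path and parallels the remark made after Lemma \ref{Lem2.1}.
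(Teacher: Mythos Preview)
Your approach is correct and matches the paper's own: the remark after Lemma~\ref{Lem2.2} simply says ``This can be easily proved by induction on $n$,'' and your telescoping verification of $a(k)=F(k)-F(k-1)$ together with the base-case checks is exactly that induction spelled out. Two cosmetic slips: your equation labels are shifted by two (the identities are $(2.11)$--$(2.16)$, not $(2.9)$--$(2.14)$, since $(2.9)$--$(2.10)$ are the asymptotics in the preceding remark), and Lemma~\ref{Lem2.2} contains no identity with $\bi{6k}{3k}$, so that binomial should be dropped from your list of ``standard ratios.''
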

\begin{remark}\rm This can be easily proved by induction on $n$.
\end{remark}

\medskip
\noindent{\it Proof of Theorem 1.2}. We just apply Lemma \ref{Lem2.2} and use the known identities:
\begin{gather*}\sum_{k=1}^\infty\f{21k-8}{k^3\bi{2k}k^3}=\f{\pi^2}6,
\ \ \sum_{k=1}^\infty\f{(4k-1)(-64)^k}{k^3\bi{2k}k^3}=-16G,
\\\sum_{k=1}^\infty\f{(3k-1)(-8)^k}{k^3\bi{2k}k^3}=-2G,
\ \ \sum_{k=1}^\infty\f{(3k-1)16^k}{k^3\bi{2k}k^3}=\f{\pi^2}2,
\\\sum_{k=1}^\infty\f{(15k-4)(-27)^{k-1}}{k^3\bi{2k}k^2\bi{3k}k}=K,\ \
 \sum_{k=1}^\infty\f{(5k-1)(-144)^k}{k^3\bi{2k}k^2\bi{4k}{2k}}=-\f{45}2K,
\\\sum_{k=1}^\infty\f{(11k-3)64^k}{k^2\bi{2k}k^2\bi{3k}k}=8\pi^2,\
\sum_{k=1}^\infty\f{(10k-3)8^k}{k^3\bi{2k}k^2\bi{3k}k}=\f{\pi^2}2,\
\ \sum_{k=1}^\infty\f{(35k-8)81^k}{k^3\bi{2k}k^2\bi{4k}{2k}}=12\pi^2.
 \end{gather*}
 Here, the first identity was found and proved by D. Zeilberger \cite{Z} in 1993.
 The second, third and fourth identities were obtained by J. Guillera \cite{G08} in 2008.
The fifth identity on $K$ was conjectured by Sun \cite{S11e} and later confirmed
by  K. Hessami Pilehrood and T. Hessami Pilehrood \cite{HP} in 2012.
The last four identities were also conjectured by Sun \cite{S11e}, and they were later
proved in the paper \cite[Theorem 3]{GR} by Guillera and M. Rogers.

Let us illustrate our proofs by proving (1.77)-(1.79) and (1.82)-(1.83) in details.

In view of (2.11) with $m=16$, we have
$$\sum_{k=1}^n\f{16^k(-48k^3-32k^2+16k+8)}{(2k+1)^2k^3\bi{2k}k^3}=\f{16^{n+1}}{(2n+1)^2\bi{2n}n^3}-16$$
for all $n\in\Z^+$, and hence
$$\sum_{k=1}^\infty\f{16^k(6k^3+4k^2-2k-1)}{(2k+1)^2k^3\bi{2k}k^3}
=\lim_{n\to+\infty}\l(\f{-2\times16^n}{(2n+1)^2\bi{2n}n^3}+2\r)=2.$$
Notice that
$$2(6k^3+4k^2-2k-1)=(2k+1)^2(3k-1)-(3k+1).$$
So we have
$$-\sum_{k=1}^\infty\f{(3k+1)16^k}{(2k+1)^2k^3\bi{2k}k^3}=2\times2-\sum_{k=1}^\infty\f{(3k-1)16^k}{k^3\bi{2k}k^3}
=4-\f{\pi^2}2$$
and hence (1.77) holds.

 By (2.11) with $m=-64$, we have
 $$\sum_{k=1}^n\f{(-64)^k(-128k^3-32k^2+16k+8)}{(2k+1)^2k^3\bi{2k}k^3}
 =\f{(-64)^{n+1}}{(2n+1)^2\bi{2n}n^3}+64$$
 for all $n\in\Z^+$, and hence
 $$\sum_{k=1}^\infty\f{(-64)^k(16k^3+4k^2-2k-1)}{(2k+1)^2k^3\bi{2k}k^3}
 =-8+\lim_{n\to+\infty}\f{8(-64)^n}{(2n+1)^2\bi{2n}n^3}=-8.$$
 Since $16k^3+4k^2-2k-1=(4k-1)(2k+1)^2-2k(4k+1)$ and $$\sum_{k=1}^\infty\f{(4k-1)(-64)^k}{k^3\bi{2k}k^3}=-16G,$$
we see that
$$-16G-2\sum_{k=1}^\infty\f{(4k+1)(-64)^k}{(2k+1)^2k^2\bi{2k}k^3}=-8$$
and hence (1.78) holds. In light of (2.12) with $m=-64$, we have
 $$\sum_{k=1}^n\f{(-64)^k(-128k^3-96k^2-48k-8)}{(2k+1)^3k^3\bi{2k}k^3}=\f{(-64)^{n+1}}{(2n+1)^3\bi{2n}n^3}+64$$
 for all $n\in\Z^+$, and hence
 $$\sum_{k=1}^\infty\f{(-64)^k(16k^3+12k^2+6k+1)}{(2k+1)^3k^3\bi{2k}k^3}
 =-8+\lim_{n\to+\infty}\f{8(-64)^n}{(2n+1)^3\bi{2n}n^3}=-8.$$
 Since $16k^3+12k^2+6k+1=2k(2k+1)(4k+1)+(4k+1)$, with the aid of (1.78) we obtain
$$\sum_{k=1}^\infty\f{(4k+1)(-64)^k}{(2k+1)^3k^3\bi{2k}k^3}=-8-2(4-8G)=16G-16.$$
This proves (1.79).

 By (2.13) with $m=-27$, we have
$$\sum_{k=1}^\infty\f{(45k^3+18k^2-4k-2)(-27)^k}{(2k+1)(3k+1)k^3\bi{2k}k^2\bi{3k}k}=-9.$$
As
$$2(45k^3+18k^2-4k-2)=(15k-4)(2k+1)(3k+1)-3k(5k+1)$$ and $$\sum_{k=1}^\infty\f{(15k-4)(-27)^k}{k^3\bi{2k}k^2\bi{3k}k}=-27K,$$
we see that (1.82) follows. By (2.14) with $m=-27$, we have
$$-3\sum_{k=2}^\infty\f{(-27)^k(45k^3+9k^2-4k-2)}{(k-1)(2k+1)(3k+1)k^3\bi{2k}k^2\bi{3k}k}=-\f{(-27)^2}{144}$$
and hence
$$\sum_{k=2}^\infty\f{(-27)^k(45k^3+9k^2-4k-2)}{(k-1)(2k+1)(3k+1)k^3\bi{2k}k^2\bi{3k}k}=\f{27}{16}.$$
As
$$45k^3+9k^2-4k-2=9(k-1)k(5k+1)+(45k^2+5k-2),$$
with the aid of (1.82) we get
\begin{align*}&\sum_{k=2}^\infty\f{(-27)^k(45k^2+5k-2)}{(k-1)(2k+1)(3k+1)k^3\bi{2k}k^2\bi{3k}k}
\\=&\f{27}{16}-9\l(6-9K-\f{6(-27)}{12^2}\r)=\f{27}{16}(48K-37)
\end{align*}
and hence $(1.83)$ follows.

Other identities in Theorem 1.2 can be proved similarly. \qed

\medskip
For integers $n\gs k\gs0$, we define
\begin{equation}\label{snk}s_{n,k}:=\f1{\bi nk}\sum_{i=0}^{k}\bi n{2i}\bi n{2(k-i)}\bi{2i}i\bi{2(k-i)}{k-i}.
\end{equation}
For $n\in\N$ we set
\begin{equation}t_n:=\sum_{0<k\ls n}\bi{n-1}{k-1}(-1)^k4^{n-k}s_{n+k,k}.
\end{equation}

\begin{lemma}\label{Lem2.3} For any $n\in\N$, we have
\begin{equation}\label{sf}\sum_{k=0}^n\bi nk(-1)^k4^{n-k}s_{n+k,k}=f_n
\end{equation}
and
\begin{equation}\label{tf}(2n+1)t_{n+1}+8nt_n=(2n+1)f_{n+1}-4(n+1)f_n,
\end{equation}
where $f_n$ denotes the Franel number $\sum_{k=0}^n\bi nk^3$.
\end{lemma}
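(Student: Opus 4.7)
The starting point is the generating-function identity $T_n(1,y)=\sum_{i\gs0}\bi n{2i}\bi{2i}iy^i$, which on squaring and matching coefficients with the definition of $s_{n,k}$ gives the key reinterpretation
$$\bi nks_{n,k}=[y^k]T_n(1,y)^2.$$
Equivalently, $\bi nks_{n+k,k}=\f{(n!)^2}{(n-k)!(n+k)!}[y^k]T_{n+k}(1,y)^2$, so both sums to be analyzed are controlled by the squared trinomial-type polynomial $T_m(1,y)^2$.

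\textbf{Plan for \eqref{sf}.} Set $A_n:=\sum_{k=0}^n\bi nk(-1)^k4^{n-k}s_{n+k,k}$. I would first verify $A_0=s_{0,0}=1=f_0$ and $A_1=4s_{1,0}-s_{2,1}=4-2=2=f_1$, and then prove $A_n=f_n$ by induction via Franel's recurrence $(n+1)^2f_{n+1}=(7n^2+7n+2)f_n+8n^2f_{n-1}$. The core task is to show that $A_n$ satisfies the same three-term recurrence. For this I would expand $s_{n+k,k}$ via the triple-sum formula above and apply the multi-sum (Wilf--Zeilberger/Wegschaider) creative-telescoping algorithm: this should produce polynomials $P_0(n),P_1(n),P_2(n)$ and a rational certificate whose $k$- and $i$-boundary contributions vanish, so that summing delivers the Franel recurrence for $A_n$ and hence \eqref{sf}.

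\textbf{Plan for \eqref{tf}.} By Pascal's identity $\bi{n+1}k=\bi n{k-1}+\bi nk$, the already-established $f_{n+1}=A_{n+1}$ splits as
$$f_{n+1}=t_{n+1}+4B_n,\qquad B_n:=\sum_{k=0}^n\bi nk(-1)^k4^{n-k}s_{n+1+k,k},$$
whence $4B_n=f_{n+1}-t_{n+1}$ and $4B_{n-1}=f_n-t_n$. Substituting these two relations into \eqref{tf} reduces it to the equivalent clean auxiliary identity
$$(2n+1)B_n+8nB_{n-1}=(3n+1)f_n,$$
which I would prove by precisely the same creative-telescoping machinery used for \eqref{sf}; the three-term recurrence $(m+1)T_{m+1}(1,y)=(2m+1)T_m(1,y)-m(1-4y)T_{m-1}(1,y)$ with $m=n+k$ is what powers the reduction, after checking the base cases (for instance $B_0=1=f_0$ and $3B_1+8B_0=8=4f_1$).

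\textbf{Main obstacle.} The principal difficulty is the Zeilberger step, because the summand of $A_n$ carries the mixed index $n+k$ and an inner convolution, so it is not hypergeometric in $(n,k)$ alone. I expect to handle this either by running the full multi-sum WZ algorithm directly on the triple sum, or, in a cleaner route, by first collapsing the inner sum through $\bi m{2i}\bi{2i}i=\bi mi\bi{m-i}i$ and a Vandermonde-type contraction to bring the summand into a manifestly proper-hypergeometric form amenable to a single application of Zeilberger's algorithm; the same reduction then covers the parallel computation for $B_n$ needed in \eqref{tf}.
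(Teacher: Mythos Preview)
Your plan is correct and, for \eqref{sf}, coincides with the paper's proof: the paper also verifies the initial values and then applies the double-summation telescoping algorithm of Chen--Hou--Mu to the triple-sum summand $F(n,i,k)=\bi nk\f{(-1)^k4^{n-k}}{\bi{n+k}k}\bi{n+k}{2i}\bi{2i}i\bi{n+k}{2(k-i)}\bi{2(k-i)}{k-i}$, producing explicit certificate functions $G_1,G_2$ whose $(i,k)$-boundary terms vanish and yielding exactly Franel's recurrence $8(n+1)^2u(n)+(7n^2+21n+16)u(n+1)-(n+2)^2u(n+2)=0$ for $u(n)=A_n$.

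For \eqref{tf} you deviate slightly from the paper, and your route is in fact tidier. The paper does not perform your Pascal splitting $f_{n+1}=t_{n+1}+4B_n$; instead it applies the same double-sum telescoping machinery directly to each side of \eqref{tf} as written and exhibits a rather heavy common three-term recurrence (with quintic polynomial coefficients in $n$) satisfied by both sides, together with matching initial values. Your reduction to the auxiliary identity $(2n+1)B_n+8nB_{n-1}=(3n+1)f_n$ is equivalent and has the advantage that $B_n$ has precisely the same structural form as $A_n$ (only with the shift $n+k\mapsto n+1+k$), so the certificate search is no harder than for \eqref{sf} and the target recurrence is simpler than the paper's quintic one. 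Either way the computational burden lands on the same multi-sum WZ step, and your base-case checks $B_0=1$, $3B_1+8B_0=8=4f_1$ are correct.
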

\begin{proof} For $n,i,k\in\N$ with $i\ls k$, we set
\[
F(n,i,k) = {n \choose k} \f{(-1)^k 4^{n-k}}{\bi{n+k}k} {n+k \choose 2i}{2i \choose i} {n+k \choose 2(k-i)}{2(k-i) \choose k-i}.
\]
By the telescoping method for double summation \cite{CHM},
for
\[{\mathcal F}(n,i,k):= F(n,i,k) + \frac{7n^2+21n+16}{8(n+1)^2} F(n+1,i,k) - \frac{(n+2)^2}{8(n+1)^2} F(n+2,i,k)\]
with $0\ls i\ls k$,
we find that
\[
{\mathcal F}(n,i,k)  = (G_1(n,i+1,k)-G_1(n,i,k)) + (G_2(n,i,k+1)-G_2(n,i,k)),
\]
where
\[G_1(n,i,k):=\frac{i^2(-k+i-1)(-1)^{k+1} 4^{n-k} n!^2 (n+k)!  p(n,i,k)}{(2n+3)(n-k+2)!(n+k+2-2i)!(n-k+2i)!(i!(k-i+1)!)^2}\]
and
\[G_2(n,i,k):=\frac{ 2(k-i)(-1)^{k} 4^{n-k} n!^2 (n+k)! q(n,i,k)} {(2n+3) (n-k+2)! (n+k-2i+1)!  (n-k+2i+2)! (i!(k-i)!)^2},\]
with $(-1)!,(-2)!,\ldots$ regarded as $+\infty$, and $p(n,i,k)$ and $q(n,i,k)$ given by
\begin{align*}
&-10n^4+(i-10k-68)n^3+(-24i^2+(32k+31)i+2k^2-67k-172)n^2\\
&+(36i^3+(-68k-124)i^2+(39k^2+149k+104)i+2k^3-8k^2-145k-192)n\\
&+60i^3+(-114k-140)i^2+(66k^2+160k+92)i+3k^3-19k^2-102k-80
\end{align*}
and
\begin{align*}
&10(i-k)n^4+(-20i^2+(46k+47)i-6k^2-47k)n^3\\
+&(72i^3+(-60k-38)i^2+(22k^2+145k+90)i+4k^3-11k^2-90k)n^2\\
+&(72k+156)i^3n+(-72k^2-60k-10)i^2n+(18k^3+4k^2+165k+85)in
\\+&(22k^3-5k^2-85k)n
+(120k+60)i^3+(-120k^2+68k-4)i^2
\\+&(30k^3-56k^2+86k+32)i+26k^3-6k^2-32k
\end{align*}
respectively.
Therefore
\begin{align*}
  &\sum_{k=0}^{n+2} \sum_{i=0}^k {\mathcal F}(n,i,k)
  \\=& \sum_{k=0}^{n+2} (G_1(n,k+1,k)-G_1(n,0,k)) + \sum_{i=0}^{n+2} (G_2(n,i,n+3)-G_2(n,i,i))
  \\=&\sum_{k=0}^{n+2}(0-0)+\sum_{i=0}^{n+2}(0-0)= 0,
\end{align*}
and hence
\[
u(n):= \sum_{k=0}^n {n \choose k} (-1)^k 4^{n-k} s_{n+k,k}
\]
satisfies the recurrence relation
\[
8(n+1)^2 u(n) + (7n^2+21n+16) u(n+1) - (n+2)^2 u(n+2) = 0.
\]
As pointed out by J. Franel \cite{Fr}, the Franel numbers
satisfy the same recurrence. Note also that
$u(0)=f_0 = 1$ and $u(1)=f_1 = 2$.
So we always have $u(n)=f_n$. This proves \eqref{sf}.

The identity \eqref{tf} can be proved similarly.
In fact, if we use $v(n)$ denote the left-hand side or the right-hand side of \eqref{tf}, then
we have the recurrence
\begin{align*}&8(n+1)(n+2)(18n^3+117n^2+249n+172)v(n)
\\&+(126n^5+1197n^4+4452n^3+8131n^2+7350n+2656)v(n+1)
\\=&(n+3)^2(18n^3+63n^2+69n+22)v(n+2).
\end{align*}

In view of the above, we have completed the proof of Lemma \ref{Lem2.3}.
\end{proof}

\begin{lemma}\label{Lem2.4}For any $c\in\Z$ and $n\in\N$, we have
\begin{equation}\label{S_n}S_n(4,c)=\sum_{k=0}^{\lfloor n/2\rfloor}\bi{n-k}k\bi{2(n-k)}{n-k}c^k4^{n-2k}s_{n,k}.
\end{equation}
\end{lemma}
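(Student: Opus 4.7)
\medskip

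My plan is to expand both sides as polynomials in $c$ and match the coefficient of $c^k$ for each $k$, reducing the lemma to a combinatorial identity that yields to Vandermonde convolution.

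First, using $T_j(4,c)=\sum_{i=0}^{\lfloor j/2\rfloor}\binom{j}{2i}\binom{2i}{i}4^{j-2i}c^i$, substitute into $S_n(4,c)=\sum_{j=0}^n\binom{n}{j}^2 T_j(4,c)T_{n-j}(4,c)$ and group terms by the total power $k=i+l$ of $c$ coming from the two factors. This gives
$$S_n(4,c)=\sum_{k=0}^{\lfloor n/2\rfloor}c^k\,4^{n-2k}\sum_{i+l=k}\binom{2i}{i}\binom{2l}{l}\,A_n(i,l),$$
where $A_n(i,l):=\sum_{j=0}^n\binom{n}{j}^2\binom{j}{2i}\binom{n-j}{2l}$. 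Matching the RHS of \eqref{S_n} then amounts to proving
$$\sum_{i+l=k}\binom{2i}{i}\binom{2l}{l}A_n(i,l)=\binom{n-k}{k}\binom{2(n-k)}{n-k}s_{n,k}\qquad (0\le k\le n/2).$$

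The main step is the closed-form evaluation of $A_n(i,l)$. Observe that the triple product $\binom{n}{j}\binom{j}{2i}\binom{n-j}{2l}$ equals the multinomial coefficient $\tfrac{n!}{(2i)!(2l)!(j-2i)!(n-j-2l)!}$, which can equally be written as $\binom{n}{2i}\binom{n-2i}{2l}\binom{n-2i-2l}{j-2i}$. Hence, using the remaining $\binom{n}{j}$ and the substitution $j'=j-2i$,
$$A_n(i,l)=\binom{n}{2i}\binom{n-2i}{2l}\sum_{j'}\binom{n}{j'+2i}\binom{n-2k}{j'}=\binom{n}{2i}\binom{n-2i}{2l}\binom{2(n-k)}{n-2l}$$
by Vandermonde's convolution (with $n-2i-2l=n-2k$). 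This is the heart of the proof; everything else is bookkeeping.

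For the bookkeeping step, note that $\binom{n-2i}{2l}\binom{2(n-k)}{n-2l}=\tfrac{(2(n-k))!}{(2l)!(n-2k)!(n-2l)!}$, which combined with $\binom{n}{2i}$ rearranges as
$$\binom{2i}{i}\binom{2l}{l}A_n(i,l)=\frac{(2(n-k))!}{(n-2k)!\,n!}\,\binom{n}{2i}\binom{n}{2l}\binom{2i}{i}\binom{2l}{l}.$$
Summing over $i+l=k$ and using the definition \eqref{snk} of $s_{n,k}$ yields $\tfrac{(2(n-k))!}{(n-2k)!\,n!}\binom{n}{k}s_{n,k}$. A direct factorial check confirms $\tfrac{(2(n-k))!}{(n-2k)!\,n!}\binom{n}{k}=\binom{n-k}{k}\binom{2(n-k)}{n-k}$, which completes the proof.

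I anticipate no real obstacles: the only nontrivial step is the Vandermonde evaluation of $A_n(i,l)$, and once that closed form is written down the identity collapses to routine manipulation of factorials. A small sanity check at $n=1,2$ (where $S_n(4,c)$ is easy to compute directly) can be used to guard against index slips.
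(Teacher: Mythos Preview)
Your proof is correct and follows essentially the same route as the paper's: expand $S_n(4,c)$ by powers of $c$, evaluate the inner sum $\sum_j\binom{n}{j}^2\binom{j}{2i}\binom{n-j}{2l}$ via Chu--Vandermonde, and then recognise the result as $\binom{n-k}{k}\binom{2(n-k)}{n-k}s_{n,k}$. The only cosmetic difference is in how the multinomial is regrouped before Vandermonde is applied --- the paper factors out $\binom{n}{2i}\binom{n}{2j}$ directly and sums $\binom{n-2i}{k-2i}\binom{n-2j}{n-k-2j}$, whereas you factor out $\binom{n}{2i}\binom{n-2i}{2l}$ --- but the two computations are equivalent and the final identification with $s_{n,k}$ is the same.
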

\begin{proof} For each $k=0,\ldots,n$, we have
\begin{align*}T_k(4,c)T_{n-k}(4,c)=&\sum_{i=0}^{\lfloor k/2\rfloor}\bi {k}{2i}\bi{2i}i4^{k-2i}c^i
\sum_{j=0}^{\lfloor(n-k)/2\rfloor}\bi{n-k}{2j}\bi{2j}j4^{n-k-2j}c^j
\\=&\sum_{r=0}^{\lfloor n/2\rfloor}c^r4^{n-2r}\sum_{i,j\in\N\atop i+j=r}\bi k{2i}\bi{n-k}{2j}\bi{2i}i\bi{2j}j.
\end{align*}
If $i,j\in\N$ and $i+j=r\ls n/2$, then
\begin{align*}\sum_{k=0}^n\bi nk^2\bi{k}{2i}\bi{n-k}{2j}
=&\bi n{2i}\bi n{2j}\sum_{k=2i}^{n-2j}\bi{n-2i}{k-2i}\bi{n-2j}{n-k-2j}
\\=&\bi n{2i}\bi n{2j}\bi{2n-2(i+j)}{n-2(i+j)}
=\bi{2n-2r}n\bi n{2i}\bi n{2j}
\end{align*}
with the aid of the Chu-Vandermonde identity. Therefore
\begin{align*}S_n(4,c)=&\sum_{k=0}^{\lfloor n/2\rfloor}c^k4^{n-2k}\bi{2n-2k}{n}\bi nks_{n,k}
\\=&\sum_{k=0}^{\lfloor n/2\rfloor}c^k4^{n-2k}\bi{2n-2k}{n-k}\bi{n-k}ks_{n,k}.
\end{align*}
This proves \eqref{S_n}.
\end{proof}

\begin{lemma}\label{Lem2.5} For $k\in\N$ and $l\in\Z^+$, we have
\begin{equation}\label{skl}s_{k+l,k}\ls (2k+1)4^kl\bi{k+l}l.
\end{equation}
\end{lemma}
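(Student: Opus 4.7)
The plan is to bound the sum defining $s_{k+l,k}$ in \eqref{snk} by elementary binomial estimates together with a short factorial manipulation. First I would use the convolution identity $\sum_{i=0}^k \binom{2i}{i}\binom{2(k-i)}{k-i} = 4^k$ (which follows from squaring the generating function $(1-4z)^{-1/2}$); since every summand is non-negative, this yields the pointwise bound $\binom{2i}{i}\binom{2(k-i)}{k-i} \le 4^k$. Next, Vandermonde's identity gives
\[
\sum_{i=0}^k \binom{k+l}{2i}\binom{k+l}{2(k-i)} \le \sum_{j=0}^{2k} \binom{k+l}{j}\binom{k+l}{2k-j} = \binom{2(k+l)}{2k}.
\]
Combining these two ingredients with \eqref{snk} directly produces
\[
s_{k+l,k}\binom{k+l}{k} \le 4^k \binom{2(k+l)}{2k}.
\]

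The key algebraic step is the identity (verifiable by expanding factorials, both sides equal $(2k+2l)!/(k!^2 l!^2)$)
\[
\binom{2(k+l)}{2k}\binom{2k}{k}\binom{2l}{l} = \binom{2(k+l)}{k+l}\binom{k+l}{k}^2,
\]
which rewrites the previous bound as
\[
s_{k+l,k} \le 4^k \cdot \frac{\binom{2(k+l)}{k+l}\binom{k+l}{k}}{\binom{2k}{k}\binom{2l}{l}}.
\]
A direct telescoping computation then shows
\[
\frac{\binom{2(k+l)}{k+l}}{\binom{2l}{l}} = \frac{(2l+1)(2l+2)\cdots(2l+2k)}{((l+1)(l+2)\cdots(l+k))^2} = 2^k \prod_{j=1}^{k}\Bigl(2-\frac{1}{l+j}\Bigr) \le 4^k,
\]
and the familiar estimate $\binom{2k}{k} \ge 4^k/(2k+1)$ (since $\binom{2k}{k}$ is the largest of the $2k+1$ binomial coefficients that sum to $4^k$) closes the chain:
\[
s_{k+l,k} \le 4^k \cdot \frac{4^k\,\binom{k+l}{k}}{\binom{2k}{k}} \le (2k+1)\,4^k\binom{k+l}{k} \le (2k+1)\,4^k\,l\,\binom{k+l}{l},
\]
where the last inequality uses $\binom{k+l}{k}=\binom{k+l}{l}$ together with the hypothesis $l\ge 1$.

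No individual step is genuinely hard. The target bound $(2k+1)4^k l \binom{k+l}{l}$ is quite loose (the chain above in fact produces something a factor of $l$ smaller), so the real task is simply to assemble a sequence of standard binomial inequalities that does not accidentally throw away an additional factor of $\binom{k+l}{l}$ along the way. The only mildly non-obvious ingredient is the factorial identity $\binom{2(k+l)}{2k}\binom{2k}{k}\binom{2l}{l} = \binom{2(k+l)}{k+l}\binom{k+l}{k}^2$, which expresses the symmetry of the underlying multinomial coefficient under its two natural factorizations. Since the lemma is only used later to justify convergence of certain tail sums, this loose estimate is exactly what is needed.
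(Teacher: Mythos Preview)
Your proof is correct and shares the paper's core ingredients --- the Vandermonde bound $\sum_{i}\binom{n}{2i}\binom{n}{2(k-i)}\le\binom{2n}{2k}$ and the $4^k$ control on central binomial products --- but the algebra diverges afterward. The paper bounds $\binom{2i}i\le 4^i$ pointwise (incurring an extra factor $k+1$) and then evaluates the ratio $\binom{2n}{2k}/\binom nk$ directly as the telescoping product $\prod_{j=0}^{l-1}\frac{2(j+k)+1}{2j+1}$, which it estimates by $(2k+1)\binom{k+l-1}{l-1}$. You instead use the exact convolution $\sum_i\binom{2i}i\binom{2(k-i)}{k-i}=4^k$ to bound each summand by $4^k$, then invoke the symmetric factorial identity $\binom{2(k+l)}{2k}\binom{2k}k\binom{2l}l=\binom{2(k+l)}{k+l}\binom{k+l}k^2$ together with $\binom{2(k+l)}{k+l}/\binom{2l}l\le 4^k$ and $\binom{2k}k\ge 4^k/(2k+1)$. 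Your route is a touch cleaner and, as you observe, yields the sharper estimate $s_{k+l,k}\le(2k+1)4^k\binom{k+l}l$ before throwing in the redundant factor $l$; the paper's path is more self-contained in that it avoids the auxiliary identity. Either way the lemma is only a crude tail estimate, so both are entirely adequate.
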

\proof Let $n=k+l$. Then
\begin{align*}\bi nks_{n,k}\ls&\sum_{i,j\in\N\atop i+j=k}\bi n{2i}\bi n{2j}\sum_{i,j\in\N\atop i+j=k}\bi{2i}i\bi{2j}j
\\\ls&\sum_{s,t\in\N\atop s+t=2k}\bi ns\bi nt\sum_{i,j\in\N\atop i+j=k}4^i4^j=\bi{2n}{2k}(k+1)4^k
\end{align*}
and
\begin{align*}\f{\bi{2n}{2k}}{\bi nk}=&\f{\bi{2n}{2l}}{\bi nl}=\prod_{j=0}^{l-1}\f{2(j+k)+1}{2j+1}
\\\ls&(2k+1)\prod_{0<j<l}\f{2(j+k)}{2j}
\\=&(2k+1)\bi{k+l-1}{l-1}.
\end{align*}
Hence
$$s_{k+l,k}\ls (k+1)4^k(2k+1)\f{l}{k+l}\bi{k+l}l\ls (2k+1)4^kl\bi{k+l}l.$$
This proves \eqref{skl}. \qed
\medskip

To prove Theorem \ref{Th1.3}, we need an auxiliary theorem.
\begin{theorem}\label{Th2.1} Let $a$ and $b$ be real numbers. For any integer $m$ with $|m|\gs94$, we have
\begin{equation}\label{ab}\sum_{n=0}^\infty(an+b)\f{S_n(4,-m)}{m^n}
=\f1{m+16}\sum_{n=0}^\infty(2a(m+4)n-8a+b(m+16))\f{\bi{2n}nf_n}{m^n}.
\end{equation}
\end{theorem}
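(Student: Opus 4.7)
The plan is to collapse the double series defining the left-hand side into a single series in the Franel numbers $f_j$, using Lemmas \ref{Lem2.3}--\ref{Lem2.5}. By linearity in $(a,b)$ one could split into the cases $(a,b)=(0,1)$ and $(a,b)=(1,0)$, but it is just as clean to keep both coefficients present throughout.

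\textbf{Step 1: expand and interchange.} Substitute $c=-m$ into Lemma \ref{Lem2.4} and set $j=n-k$ (so $j\gs k\gs 0$ and $n=j+k$). The crude bound $|T_k(4,-m)|\ls T_k(4,m)$, combined via Vandermonde with the definition of $S_n$, gives $|S_n(4,-m)|\ls\bi{2n}n(4+2\sqrt{|m|})^n$; the hypothesis $|m|\gs 94$ is precisely what makes $4(4+2\sqrt{|m|})<|m|$, so the resulting double sum converges absolutely and Fubini yields
\[
\sum_{n=0}^\infty(an+b)\f{S_n(4,-m)}{m^n}=\sum_{j=0}^\infty\f{\bi{2j}j\,4^{j}}{m^{j}}\sum_{k=0}^{j}(a(j+k)+b)\bi{j}{k}\f{(-1)^k}{4^{k}}s_{j+k,k}.
\]

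\textbf{Step 2: collapse the inner sum.} Split $a(j+k)+b=(aj+b)+ak$ and use $k\bi{j}{k}=j\bi{j-1}{k-1}$. Identity \eqref{sf}, divided by $4^j$, gives $\sum_{k=0}^{j}\bi{j}{k}(-1/4)^k s_{j+k,k}=f_j/4^j$, while the definition of $t_j$ gives $\sum_{k=0}^{j}k\bi{j}{k}(-1/4)^k s_{j+k,k}=j\,t_j/4^j$. Hence
\[
\sum_{n=0}^\infty(an+b)\f{S_n(4,-m)}{m^n}=\sum_{j=0}^\infty\f{\bi{2j}j}{m^j}\bigl((aj+b)f_j+aj\,t_j\bigr).
\]

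\textbf{Step 3: eliminate $t_j$.} Subtracting $\sum(aj+b)f_j\bi{2j}j/m^j$ from the conjectured identity reduces the theorem to
\[
(m+16)A=\sum_{j\gs 0}\f{((m-8)j-8)f_j\bi{2j}j}{m^j},\qquad A:=\sum_{j\gs 0}\f{j\,t_j\bi{2j}j}{m^j}.
\]
Shift $j\mapsto j+1$ inside $A$, apply $\bi{2j+2}{j+1}=\f{2(2j+1)}{j+1}\bi{2j}j$, and use \eqref{tf} in the form $(2j+1)t_{j+1}=(2j+1)f_{j+1}-4(j+1)f_j-8j\,t_j$. The $8j\,t_j$ term reproduces a constant multiple of $A$ on the right; solving the resulting linear equation in $A$ gives
\[
(m+16)A=2\sum_{j\gs 0}\f{(2j+1)f_{j+1}\bi{2j}j}{m^j}-8\sum_{j\gs 0}\f{(j+1)f_j\bi{2j}j}{m^j}.
\]
Reindexing the first sum by $j\mapsto j-1$ and using $\bi{2j-2}{j-1}=\f{j}{2(2j-1)}\bi{2j}j$ collapses it to $\f{m}{2}\sum_{j\gs 0}jf_j\bi{2j}j/m^j$; combining with the second sum produces exactly $\sum_{j\gs 0}((m-8)j-8)f_j\bi{2j}j/m^j$, as required.

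\textbf{The main obstacle} is Step~3, where the recurrence \eqref{tf} reintroduces $A$ on its own right-hand side; the identity only closes because the coefficient of this echo, after the factor $2(2j+1)/(j+1)$ is absorbed, combines with the $A$ on the left to leave the clean multiplier $m+16$. The other delicate ingredient is the Fubini step of Step~1, where the quantitative bound $|m|\gs 94$ is exactly what absolute convergence demands.
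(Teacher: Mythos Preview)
Your proof is correct and follows the same route as the paper (expand via Lemma~\ref{Lem2.4}, reindex to $n=j+k$, collapse the inner sum via \eqref{sf} and the definition of $t_j$, then eliminate $t_j$ via \eqref{tf}), with a somewhat cleaner Fubini justification than the paper's explicit partial-sum argument using Lemma~\ref{Lem2.5}. One slip: in Step~3 the reindexed first sum equals $m\sum_{j\gs 0}jf_j\bi{2j}j/m^j$, not $\f m2$ times this---though your final combined expression $\sum_{j\gs0}((m-8)j-8)f_j\bi{2j}j/m^j$ is correct, so this is only a typo.
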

\begin{proof} Let $N\in\N$. In view of \eqref{S_n},
\begin{align*}\sum_{n=0}^N\f{S_n(4,-m)}{m^n}
=&\sum_{n=0}^N\f1{m^n}\sum_{k=0}^{\lfloor n/2\rfloor}(-m)^k4^{n-2k}\bi{2n-2k}{n-k}\bi{n-k}ks_{n,k}
%\\
\end{align*}\begin{align*}=&\sum_{l=0}^N\f{\bi{2l}l}{m^l}\sum_{k=0}^{N-l}\bi lk(-1)^k4^{l-k}s_{l+k,k}
\\=&\sum_{l=0}^{\lfloor N/2\rfloor}\f{\bi{2l}l}{m^l}\sum_{k=0}^l\bi lk(-1)^k4^{l-k}s_{l+k,k}
\\&+\sum_{N/2<l\ls N}\f{\bi{2l}l}{m^l}\sum_{k=0}^{N-l}\bi lk(-1)^k4^{l-k}s_{l+k,k}
\end{align*}
and similarly
\begin{align*}\sum_{n=0}^N\f{nS_n(4,-m)}{m^n}=&\sum_{l=0}^N\f{\bi{2l}l}{m^l}\sum_{k=0}^{N-l}\bi lk(-1)^k4^{l-k}(k+l)s_{l+k,k}
\\=&\sum_{l=0}^N\f{l\bi{2l}l}{m^l}\sum_{k=0}^{N-l}\l(\bi lk+\bi{l-1}{k-1}\r)(-1)^k4^{l-k}s_{l+k,k}
\\=&\sum_{l=0}^{\lfloor N/2\rfloor}\f{l\bi{2l}l}{m^l}\sum_{k=0}^{l}\l(\bi lk+\bi{l-1}{k-1}\r)(-1)^k4^{l-k}s_{l+k,k}
\\&+\sum_{N/2<l\ls N}\f{l\bi{2l}l}{m^l}\sum_{k=0}^{N-l}\l(\bi lk+\bi{l-1}{k-1}\r)(-1)^k4^{l-k}s_{l+k,k},
\end{align*}
where we consider $\bi x{-1}$ as $0$.

If $l$ is an integer in the interval $(N/2,N]$, then by Lemma \ref{Lem2.5} we have
\begin{align*}&\bg|\sum_{k=0}^{N-l}\bi lk(-1)^k4^{l-k}s_{l+k,k}\bg|
\\\ls&\sum_{k=0}^l\bi lk4^{l-k}s_{l+k,k}
\ls\sum_{k=0}^l\bi lk4^{l-k}(2k+1)4^kl\bi{k+l}l
\\\ls& l(2l+1)4^l\sum_{k=0}^l\bi lk\bi{l+k}k=l(2l+1)4^lP_l(3),
\end{align*}
where $P_l(x)$ is the Legendre polynomial of degree $l$. Thus
\begin{align*}&\bg|\sum_{N/2<l\ls N}\f{\bi{2l}l}{m^l}\sum_{k=0}^{N-l}\bi lk(-1)^k4^{l-k}s_{l+k,k}\bg|
\\\ls&\sum_{N/2<l\ls N}l(2l+1)\l(\f{16}m\r)^lP_l(3)
\ls\sum_{l>N/2}l(2l+1)P_l(3)\l(\f{16}m\r)^l
\end{align*}
and
\begin{align*}&\bg|\sum_{N/2<l\ls N}\f{l\bi{2l}l}{m^l}\sum_{k=0}^{N-l}\l(\bi lk+\bi{l-1}{k-1}\r)(-1)^k4^{l-k}s_{l+k,k}\bg|
\\\ls&\sum_{N/2<l\ls N}\f{l4^l}{m^l}\sum_{k=0}^l 2\bi lk4^{l-k}s_{l+k,k}
%\\
\end{align*}\begin{align*}\ls&\sum_{N/2<l\ls N}2l^2(2l+1)\l(\f{16}m\r)^lP_l(3)
\\\ls&2\sum_{l>N/2}l^2(2l+1)P_l(3)\l(\f{16}m\r)^l.
\end{align*}

Recall that
$$P_l(3)=T_l(3,2)\sim\f{(3+2\sqrt2)^{l+1/2}}{2\root4\of{2}\sqrt{l\pi}}\ \ \t{as}\ l\to+\infty.$$
As $|m|\gs94$, we have $|m|>16(3+2\sqrt2)\approx 93.255$ and hence
$$\sum_{l=0}^\infty l^2(2l+1)P_l(3)\l(\f{16}m\r)^l$$ converges.
Thus
$$\lim_{N\to+\infty}\sum_{l>N/2}l(2l+1)P_l(3)\l(\f{16}m\r)^l=0
=\lim_{N\to+\infty}\sum_{l>N/2}l^2(2l+1)P_l(3)\l(\f{16}m\r)^l$$
and hence by the above we have
$$\sum_{n=0}^\infty\f{S_n(4,-m)}{m^n}
=\sum_{l=0}^{\infty}\f{\bi{2l}l}{m^l}\sum_{k=0}^l\bi lk(-1)^k4^{l-k}s_{l+k,k}$$
and
$$\sum_{n=0}^\infty\f{nS_n(4,-m)}{m^n}
=\sum_{l=0}^{\infty}\f{l\bi{2l}l}{m^l}\sum_{k=0}^{l}\l(\bi lk+\bi{l-1}{k-1}\r)(-1)^k4^{l-k}s_{l+k,k}.$$
Therefore, with the aid of \eqref{sf}, we obtain
\begin{equation}\label{fs}\sum_{n=0}^\infty\f{S_n(4,-m)}{m^n}=\sum_{n=0}^\infty\f{\bi{2n}n}{m^n}f_n
\end{equation}
and
\begin{equation}\label{ft}
\sum_{n=0}^\infty\f{nS_n(4,-m)}{m^n}=\sum_{n=0}^\infty\f{n\bi{2n}n}{m^n}(f_n+t_n).
\end{equation}

In view of \eqref{ft} and \eqref{tf},
\begin{align*}&(m+16)\sum_{n=0}^\infty\f{nS_n(4,-m)}{m^n}
\\=&\sum_{n=1}^\infty\f{n\bi{2n}n}{m^{n-1}}(f_n+t_n)+16\sum_{n=0}^\infty\f{n\bi{2n}n}{m^n}(f_n+t_n)
\\=&\sum_{n=0}^\infty \f{(n+1)\bi{2n+2}{n+1}(f_{n+1}+t_{n+1})+16n\bi{2n}n(f_n+t_n)}{m^n}
\\
=&2\sum_{n=0}^\infty\f{\bi{2n}n}{m^n}\l((2n+1)(f_{n+1}+t_{n+1})+8n(f_n+t_n)\r)
%\\
\end{align*}\begin{align*}=&2\sum_{n=0}^\infty\f{\bi{2n}n}{m^n}\l(2(2n+1)f_{n+1}+4(n-1)f_n\r)
\\=&2\sum_{n=0}^\infty\f{(n+1)\bi{2n+2}{n+1}f_{n+1}}{m^n}+8\sum_{n=0}^\infty\f{(n-1)\bi{2n}nf_n}{m^n}
\\=&2\sum_{n=0}^\infty\f{n\bi{2n}nf_n}{m^{n-1}}+8\sum_{n=0}^\infty\f{(n-1)\bi{2n}nf_n}{m^n}
=2\sum_{n=0}^\infty((m+4)n-4)\f{\bi{2n}nf_n}{m^n}.
\end{align*}
Combining this with \eqref{fs}, we immediately obtain the desired \eqref{ab}.
\end{proof}

\medskip
\noindent{\it Proof of Theorem \ref{Th1.3}}. Let $a,b,m\in\Z$ with $|m|\gs6$.
Since
$$4^nT_n(1,m)=\sum_{k=0}^{\lfloor n/2\rfloor}\bi n{2k}\bi{2k}k4^{n-2k}(16m)^k=T_n(4,16m)$$
for any $n\in\N$, we have $4^nS_n(1,m)=S_n(4,16m)$ for all $n\in\N$. Thus,
in light of Theorem \ref{Th2.1},
\begin{align*}&\sum_{n=0}^\infty(an+b)\f{S_n(1,m)}{(-4m)^n}
\\=&\sum_{n=0}^\infty(an+b)\f{S_n(4,16m)}{(-16m)^n}
\\=&\f1{16-16m}\sum_{n=0}^\infty(2a(4-16m)n-8a+(16-16m)b)\f{\bi{2n}nf_n}{(-16m)^n}
\\=&\f1{2(m-1)}\sum_{n=0}^\infty(a(4m-1)n+a+2b(m-1))\f{\bi{2n}nf_n}{(-16m)^n}.
\end{align*}
Therefore
\begin{align*}\sum_{k=0}^\infty\f{7k+3}{24^k}S_k(1,-6)=&\f52\sum_{k=0}^\infty\f{5k+1}{96^k}\bi{2k}kf_k,
\\\sum_{k=0}^\infty\f{12k+5}{(-28)^k}S_k(1,7)=&3\sum_{k=0}^\infty\f{9k+2}{(-112)^k}\bi{2k}kf_k,\\
\sum_{k=0}^\infty\f{84k+29}{80^k}S_k(1,-20)=&27\sum_{k=0}^\infty\f{6k+1}{320^k}\bi{2k}kf_k,
\\\sum_{k=0}^\infty\f{3k+1}{(-100)^k}S_k(1,25)=&\f1{16}\sum_{k=0}^\infty\f{99k+17}{(-400)^k}\bi{2k}kf_k,
\\\sum_{k=0}^\infty\f{228k+67}{224^k}S_k(1,-56)=&5\sum_{k=0}^\infty\f{90k+13}{896^k}\bi{2k}kf_k,
\\\sum_{k=0}^\infty\f{399k+101}{(-676)^k}S_k(1,169)=&\f{15}{16}\sum_{k=0}^\infty\f{855k+109}{(-2704)^k}\bi{2k}kf_k,
\\
\sum_{k=0}^\infty\f{2604k+563}{2600^k}S_k(1,-650)=&51\sum_{k=0}^\infty\f{102k+11}{10400^k}\bi{2k}kf_k,
%\\
\end{align*}
\begin{align*}\sum_{k=0}^\infty\f{39468k+7817}{(-6076)^k}S_k(1,1519)=&135\sum_{k=0}^\infty\f{585k+58}{(-24304)^k}\bi{2k}kf_k,
\\\sum_{k=0}^\infty\f{41667k+7879}{9800^k}S_k(1,-2450)=&\f{297}2\sum_{k=0}^\infty\f{561k+53}{39200^k}\bi{2k}kf_k,
\\\sum_{k=0}^\infty\f{74613k+10711}{(-530^2)^k}S_k(1,265^2)=&\f{23}{32}\sum_{k=0}^\infty\f{207621k+14903}{(-1060^2)^k}\bi{2k}kf_k.
\end{align*}
It is known (cf. \cite{CTYZ,ChCo})
that
\begin{gather*}\sum_{k=0}^\infty\f{5k+1}{96^k}\bi{2k}kf_k=\f{3\sqrt2}{\pi},
\ \ \sum_{k=0}^\infty\f{9k+2}{(-112)^k}\bi{2k}kf_k=\f{2\sqrt7}{\pi},
\\ \sum_{k=0}^\infty\f{6k+1}{320^k}\bi{2k}kf_k=\f{8\sqrt{15}}{9\pi},\ \ \sum_{k=0}^\infty\f{99k+17}{(-400)^k}\bi{2k}kf_k=\f{50}{\pi},
\\\sum_{k=0}^\infty\f{90k+13}{896^k}\bi{2k}kf_k=\f{16\sqrt7}{\pi},\ \ \sum_{k=0}^\infty\f{855k+109}{(-2704)^k}\bi{2k}kf_k=\f{338}{\pi},
\\\sum_{k=0}^\infty\f{102k+11}{10400^k}\bi{2k}kf_k=\f{50\sqrt{39}}{9\pi},\ \ \sum_{k=0}^\infty\f{585k+58}{(-24304)^k}\bi{2k}kf_k=\f{98\sqrt{31}}{3\pi},
\\\sum_{k=0}^\infty\f{561k+53}{39200^k}\bi{2k}kf_k=\f{1225\sqrt6}{18\pi},
\ \ \sum_{k=0}^\infty\f{207621k+14903}{(-1060^2)^k}\bi{2k}kf_k=\f{140450}{3\pi}.
\end{gather*}
So we get the identities \eqref{S1}-\eqref{S10} finally. \qed

\section{New series involving $T_n(b,c)$ for $1/\pi$ related to types I-VIII}
\setcounter{equation}{0}

Now we pose a conjecture related to the series (I1)-(I4) of Sun \cite{S-11,S14c}.

\begin{conjecture}\label{Conj-I} We have the following identities:
\[\sum_{k=0}^\infty\f{50k+1}{(-256)^k}\bi{2k}k\bi{2k}{k+1}T_k(1,16)=\f{8}{3\pi},\tag{I1$'$}\]
\[\sum_{k=0}^\infty\f{(100k^2-4k-7)\bi{2k}k^2T_k(1,16)}{(2k-1)^2(-256)^k}=-\f{24}{\pi},\tag{I1$''$}\]
\[\sum_{k=0}^\infty\f{30k+23}{(-1024)^k}\bi{2k}k\bi{2k}{k+1}T_k(34,1)=-\f{20}{3\pi},\tag{I2$'$}\]
\[\sum_{k=0}^\infty\f{(36k^2-12k+1)\bi{2k}k^2T_k(34,1)}{(2k-1)^2(-1024)^k}=-\f{6}{\pi},\tag{I2$''$}\]
\[\sum_{k=0}^\infty\f{110k+103}{4096^k}\bi{2k}k\bi{2k}{k+1}T_k(194,1)=\f{304}{\pi},\tag{I3$'$}\]
\[\sum_{k=0}^\infty\f{(20k^2+28k-11)\bi{2k}k^2T_k(194,1)}{(2k-1)^2 4096^k}=-\f{6}{\pi},\tag{I3$''$}\]
\[\sum_{k=0}^\infty\f{238k+263}{4096^k}\bi{2k}k\bi{2k}{k+1}T_k(62,1)=\f{112\sqrt3}{3\pi},\tag{I4$'$}\]
\[\sum_{k=0}^\infty\f{(44k^2+4k-5)\bi{2k}k^2T_k(62,1)}{(2k-1)^2 4096^k}=-\f{4\sqrt3}{\pi},\tag{I4$''$}\]
\[\sum_{k=0}^\infty\f{6k+1}{256^k}\bi{2k}k^2T_k(8,-2)=\f{2}{\pi}\sqrt{8+6\sqrt2},\tag{I5}\]
\[\sum_{k=0}^\infty\f{2k+3}{256^k}\bi{2k}k\bi{2k}{k+1}T_k(8,-2)=\f{6\sqrt{8+6\sqrt2}-16\root4\of{2}}{3\pi},\tag{I5$'$}\]
\[\sum_{k=0}^\infty\f{(4k^2+2k-1)\bi{2k}k^2T_k(8,-2)}{(2k-1)^2 256^k}=-\f{3\root4\of{2}}{4\pi}.\tag{I5$''$}\]
\end{conjecture}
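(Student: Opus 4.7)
The plan is to reduce each companion pair $(\t{I}j'),(\t{I}j'')$ for $j=1,2,3,4$ to the corresponding type-I base series $(\t{I}j)$ of \cite{S-11,S14c}, most of which have already been proved in \cite{ChWZ,WZ,Z14}. The vehicle is a finite telescoping identity in the spirit of Lemma \ref{Lem2.1}. For each fixed triple $(b_j,c_j,m_j)$, I would run creative telescoping (Zeilberger's algorithm, or its extension via the Chen--Hou--Mu method already invoked in Lemma \ref{Lem2.3}) on
$$\f{R(k)\bi{2k}k^2T_k(b_j,c_j)}{(2k-1)^2m_j^k}$$
with $R(k)$ an ansatz polynomial of small degree, seeking an antidifference of the form $Q(n)\bi{2n}n^2T_n(b_j,c_j)/((2n-1)^2m_j^n)$ whose $n\to\infty$ limit vanishes. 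Decomposing the quadratic numerator appearing in $(\t{I}j'')$ as a $\Q$-combination of $R(k)$, $(2k-1)^2(b_jk+\t{const})$ and $(2k-1)^2$ then expresses $(\t{I}j'')$ as a rational linear combination of $(\t{I}j)$ and the Clausen-type baseline $\sum_{k\gs 0}\bi{2k}k^2T_k(b_j,c_j)/m_j^k$, which is a known $_3F_2$ evaluation at the CM points under consideration.

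For $(\t{I}j')$, the identity $\bi{2k}k\bi{2k}{k+1}=\f{k}{k+1}\bi{2k}k^2$ combined with the split $\f{k}{k+1}=1-\f{1}{k+1}$ rewrites the sum as a rational combination of $(\t{I}j)$, the same baseline, and $\sum_{k\gs 0}\bi{2k}k^2T_k(b_j,c_j)/((k+1)m_j^k)$; the last of these yields to one more Gosper reduction against the baseline. This handles $j=1,2,3,4$ uniformly.

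The block $(\t{I}5),(\t{I}5'),(\t{I}5'')$ is genuinely harder. Here $b^2-4c=72$, so $T_k(8,-2)=(6\sqrt2)^kP_k(2\sqrt2/3)$, and the Legendre argument $2\sqrt2/3$ lies in $\Q(\sqrt2)$ rather than $\Q$; correspondingly the nested radical $\sqrt{8+6\sqrt2}$ and the factor $\root4\of 2$ on the right-hand sides indicate a CM point over a quadratic extension of $\Q$, so Clausen's formula in its usual rational form does not apply directly. My plan for $(\t{I}5)$ is to match $\sum_{k\gs 0}\bi{2k}k^2P_k(x)z^k$ with a weight-$2$ modular form on a congruence subgroup of level dividing $16$, specialize to the CM value of a hauptmodul corresponding to some $\tau$ with $\Q(\tau)=\Q(\sqrt{-2})$ and $j$-invariant compatible with $2\sqrt2/3$, and compute the resulting period via the Chowla--Selberg formula. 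Once $(\t{I}5)$ is established, $(\t{I}5')$ and $(\t{I}5'')$ would follow from it and the corresponding baseline series by the same telescoping reduction as above.

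The main obstacle is the modular computation for $(\t{I}5)$: one must work over $\Q(\sqrt2)$ rather than $\Q$ and pin down the exact algebraic ratio between the two relevant periods, which is not mechanical. A secondary nuisance is that the factor $(2k-1)^2$ in the denominator of $(\t{I}j'')$ forces the telescoping ansatz $R(k)$ to have degree $\gs 3$, inflating the Gosper difference equation to a size where hand verification is uncomfortable; symbolic computation dispatches it routinely, but the boundary behavior at $k=0$ and the limit of $Q(n)\bi{2n}n^2T_n(b_j,c_j)/((2n-1)^2m_j^n)$ as $n\to\infty$ must both be checked to certify the limit identities.
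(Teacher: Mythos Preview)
This statement is labeled \emph{Conjecture}~\ref{Conj-I} in the paper; it is posed, not proved. There is no proof in the paper to compare your proposal against. The surrounding remark (Remark~\ref{Rem3.1}) only records related congruence conjectures and the date of formulation of (I5)--(I5$''$), not any argument toward the identities themselves.

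That said, your reduction strategy for $(\t{I}j')$ and $(\t{I}j'')$, $j=1,\dots,4$, is very much in the spirit of what the paper does elsewhere: Theorem~\ref{Th1.1} is proved exactly by combining finite telescoping identities (Lemma~\ref{Lem2.1}) with known base Ramanujan-type series, and your proposed ansatz with a factor $(2k-1)^2$ in the denominator is the natural analogue. Two caveats: first, you assume the base identities $(\t{I}1)$--$(\t{I}4)$ of \cite{S-11,S14c} are available as inputs, and while the paper says these ``were studied in \cite{ChWZ,WZ,Z14}'', you should verify case by case which ones are actually proved there rather than merely treated; second, your plan for $(\t{I}j')$ introduces the auxiliary sum $\sum_{k\gs0}\bi{2k}k^2T_k(b_j,c_j)/((k+1)m_j^k)$ and claims a further Gosper step handles it, but you have not exhibited the certificate, and the extra $1/(k+1)$ may not telescope against the baseline alone without a second independent closed-form input.

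For the (I5) block you correctly identify the essential difficulty: the Legendre argument lies in $\Q(\sqrt2)$, the right-hand sides involve $\root4\of2$ and $\sqrt{8+6\sqrt2}$, and no rational Clausen-type evaluation is available. Your modular/CM sketch is plausible but, as you acknowledge, not carried out; pinning down the period ratio over $\Q(\sqrt2)$ via Chowla--Selberg is exactly the missing step, and nothing in the paper supplies it. So as it stands your proposal is a reasonable outline for $(\t{I}1')$--$(\t{I}4'')$ conditional on the base series, and an honest admission of a gap for $(\t{I}5)$--$(\t{I}5'')$---which is consistent with the paper leaving the whole block conjectural.
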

\begin{remark}\label{Rem3.1} For each $k\in\N$, we have
$$((1+\lambda_0-\lambda_1)k+\lambda_0)C_k=(k+\lambda_0)\bi{2k}k-(k+\lambda_1)\bi{2k}{k+1}$$
since $\bi{2k}k=(k+1)C_k$ and $\bi{2k}{k+1}=kC_k$. Thus, for example, \cite[(I1)]{S14c} and (I1$'$) together imply that
$$\sum_{k=0}^\infty\f{26k+5}{(-256)^k}\bi{2k}kC_kT_k(1,16)=\f{16}{\pi},$$
and (I5) and (I5$'$) imply that
$$\sum_{k=0}^\infty\f{2k-1}{256^k}\bi{2k}kC_kT_k(8,-2)=\f4{\pi}\l(\sqrt{8+6\sqrt2}-4\root4\of2\r).$$
For the conjectural identities in Conjecture \ref{Conj-I}, we have conjectures for the corresponding $p$-adic congruences. For example, in contrast with (I2$'$), we conjecture that for any prime $p>3$ we have
the congruences
$$\sum_{k=0}^{p-1}\f{30k+23}{(-1024)^k}\bi{2k}k\bi{2k}{k+1}T_k(34,1)\eq\f p3\l(21\l(\f 2p\r)-10\l(\f{-1}p\r)-11\r)\pmod{p^2}$$
and
$$\sum_{k=0}^{p-1}\f{2k+1}{(-1024)^k}\bi{2k}kC_kT_k(34,1)\eq\f p3\l(2-3\l(\f 2p\r)+4\l(\f{-1}p\r)\r)\pmod{p^2}.$$
Concerning (I5) and (I5$''$), we conjecture that
$$\f1{2^{\lfloor n/2\rfloor+1}n\bi{2n}n}\sum_{k=0}^{n-1}(6k+1)\bi{2k}k^2T_k(8,-2)256^{n-1-k}\in\Z^+$$
and $$\f1{\bi{2n-2}{n-1}}\sum_{k=0}^{n-1}\f{(1-2k-4k^2)\bi{2k}k^2T_k(8,-2)}{(2k-1)^2 256^k}\in\Z^+$$
for each $n=2,3,\ldots$, and that for any prime $p\eq1\pmod4$ with $p=x^2+4y^2\ (x,y\in\Z)$ we have
$$\sum_{k=0}^{p-1}\f{\bi{2k}k^2T_k(8,-2)}{256^k}
\eq\begin{cases}(-1)^{y/2}(4x^2-2p)\pmod{p^2}&\t{if}\ p\eq1\pmod 8,
\\(-1)^{(xy-1)/2}8xy\pmod{p^2}&\t{if}\ p\eq5\pmod 8,
\end{cases}$$
and
$$\sum_{k=0}^{p-1}\f{(4k^2+2k-1)\bi{2k}k^2T_k(8,-2)}{(2k-1)^2256^k}\eq0\pmod{p^2}.$$
By \cite[Theorem 5.1]{S14c}, we have
$$ \sum_{k=0}^{p-1}\f{\bi{2k}k^2T_k(8,-2)}{256^k}\eq0\pmod{p^2}$$
for any prime $p\eq3\pmod4$.
The identities (I5), (I5$'$) and (I5$''$) were formulated by the author on Dec. 9, 2019.
\end{remark}

Next we pose a conjecture related to the series (II1)-(II7) and (II10)-(II12) of Sun \cite{S-11,S14c}.

\begin{conjecture}\label{Conj-II} We have the following identities:
\[\sum_{k=0}^\infty\f{3k+4}{972^k}\bi{2k}{k+1}\bi{3k}kT_k(18,6)=\f{63\sqrt3}{40\pi},\tag{II1$'$}\]
\[\sum_{k=0}^\infty\f{91k+107}{10^{3k}}\bi{2k}{k+1}\bi{3k}kT_k(10,1)=\f{275\sqrt3}{18\pi},\tag{II2$'$}\]
\[\sum_{k=0}^\infty\f{195k+83}{18^{3k}}\bi{2k}{k+1}\bi{3k}{k}T_k(198,1)=\f{9423\sqrt3}{10\pi},\tag{II3$'$}\]
\[\sum_{k=0}^\infty\f{483k-419}{30^{3k}}\bi{2k}{k+1}\bi{3k}kT_k(970,1)=\f{6550\sqrt3}{\pi},\tag{II4$'$}\]
\[\sum_{k=0}^\infty\f{666k+757}{30^{3k}}\bi{2k}{k+1}\bi{3k}{k}T_k(730,729)=\f{3475\sqrt3}{4\pi},\tag{II5$'$}\]
\[\sum_{k=0}^\infty\f{8427573k+8442107}{102^{3k}}\bi{2k}{k+1}\bi{3k}kT_k(102,1)=\f{125137\sqrt6}{20\pi},\tag{II6$'$}\]
\[\sum_{k=0}^\infty\f{959982231k+960422503}{198^{3k}}\bi{2k}{k+1}\bi{3k}kT_k(198,1)=\f{5335011\sqrt3}{20\pi},\tag{II7$'$}\]
\[\sum_{k=0}^\infty\f{99k+1}{24^{3k}}\bi{2k}{k+1}\bi{3k}kT_k(26,729)=\f{16(289\sqrt{15}-645\sqrt3)}{15\pi},\tag{II10$'$}\]
\[\sum_{k=0}^\infty\f{45k+1}{(-5400)^k}\bi{2k}{k+1}\bi{3k}kT_k(70,3645)=\f{345\sqrt3-157\sqrt{15}}{6\pi},\tag{II11$'$}\]
\[\sum_{k=0}^\infty\f{252k-1}{(-13500)^k}\bi{2k}{k+1}\bi{3k}kT_k(40,1458)=\f{25(1212\sqrt3-859\sqrt6)}{24\pi},\tag{II12$'$}\]
\[\sum_{k=0}^\infty\f{9k+2}{(-675)^k}\bi{2k}{k}\bi{3k}kT_k(15,-5)=\f{7\sqrt{15}}{8\pi},\tag{II13}\]
\[\sum_{k=0}^\infty\f{45k+31}{(-675)^k}\bi{2k}{k+1}\bi{3k}kT_k(15,-5)=-\f{19\sqrt{15}}{8\pi},\tag{II13$'$}\]
\[\sum_{k=0}^\infty\f{39k+7}{(-1944)^k}\bi{2k}{k}\bi{3k}kT_k(18,-3)=\f{9\sqrt{3}}{\pi},\tag{II14}\]
\[\sum_{k=0}^\infty\f{312k+263}{(-1944)^k}\bi{2k}{k+1}\bi{3k}kT_k(18,-3)=-\f{45\sqrt{3}}{2\pi}.\tag{II14$'$}\]
\end{conjecture}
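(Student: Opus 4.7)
The plan is to mimic the strategy used for Theorem \ref{Th1.1} in Section 2. Each primed identity (II$j'$) should be derivable from its ``partner'' Type II Ramanujan-type series (II$j$) of Sun, which has already been established in \cite{ChWZ,WZ,Z14}, by producing a finite WZ-type identity that plays the role of Lemma \ref{Lem2.1} but for sums weighted by $\bi{2k}{k+1}\bi{3k}kT_k(b,c)$. The combinatorial bridge is the relation recorded in Remark \ref{Rem3.1},
\[
((1+\lambda_0-\lambda_1)k+\lambda_0)C_k=(k+\lambda_0)\bi{2k}k-(k+\lambda_1)\bi{2k}{k+1},
\]
which converts every $\bi{2k}{k+1}$-weighted summand into the $\bi{2k}k$-weighted summand minus a Catalan-weighted correction, and vice versa.

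First, for each triple $(m,b,c)$ appearing in Conjecture \ref{Conj-II}, I would apply creative telescoping (Zeilberger's algorithm, or where the three hypergeometric-type factors force it, the Chen--Hou--Mu double-sum method used in the proof of Lemma \ref{Lem2.3}) to
\[
F(n,k)=\f{P(k)\,\bi{2k}{k+1}\bi{3k}kT_k(b,c)}{(k+1)(2k-1)(3k-1)\,m^k},
\]
with $P(k)\in\Q[k]$ of small undetermined degree. Seeking a closure
\[
\sum_{k=0}^{n}F(n,k)=R(n)\f{\bi{2n}n\bi{3n}nT_n(b,c)}{m^n}+c_0
\]
against the three-term recurrence $(k+1)T_{k+1}(b,c)=(2k+1)bT_k(b,c)-k(b^2-4c)T_{k-1}(b,c)$ produces a linear system in the coefficients of $P$ and $R$, which one then solves symbolically. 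This is the direct analogue of Lemma \ref{Lem2.1} in the $T_k(b,c)$ setting.

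Next, on letting $n\to\infty$ the boundary term vanishes: the Laplace--Heine asymptotic $T_n(b,c)\sim(b+2\sqrt c)^{n+1/2}/(2\sqrt[4]{c}\sqrt{n\pi})$ combined with $\bi{2n}n\bi{3n}n\sim\sqrt3\cdot 27^n/(2n\pi)$ gives a summand growth rate of $27(b+2\sqrt c)/|m|$, and one checks case-by-case that $|m|>27(b+2\sqrt c)$ for every instance listed. What survives is a $\Q$-linear equation among the target primed series, the standard series (II$j$), and at most one auxiliary tail that collapses via the Catalan identity of Remark \ref{Rem3.1}; substituting the proven value of (II$j$) then yields the right-hand side of (II$j'$).

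The main obstacle splits into two. For the rank-one cases (II1$'$)--(II9$'$) and (II13$'$)--(II14$'$) the procedure is routine once the telescoping certificate is exhibited, but there is no a priori bound on the degree of $P$: if an ansatz of degree $\le 4$ fails, one must enlarge it, and no general existence theorem is available. The genuinely hard cases are (II10$'$)--(II12$'$), whose right-hand sides involve two independent surds ($\sqrt{15}$ and $\sqrt3$, or $\sqrt6$ and $\sqrt3$) and whose companions (II10)--(II12) are therefore rank-two series; a single telescoping identity produces only one $\Q$-linear relation between the two surd-components of the primed sum, so separating them will require either a second independent closed-form identity or direct use of the modular parametrization underlying the proofs of (II10)--(II12) in \cite{ChWZ,WZ}. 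This is where substantive new input beyond the Lemma \ref{Lem2.1} framework is needed.
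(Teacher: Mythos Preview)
The paper does not prove this statement: it is explicitly labelled \emph{Conjecture}~\ref{Conj-II} and appears in Section~3 among the ``117 new conjectural series for powers of $\pi$'' advertised in the abstract. There is therefore no proof in the paper to compare your proposal against.

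As to your strategy itself, a few remarks. First, your reduction relies on a hypothetical analogue of Lemma~\ref{Lem2.1} in which the summand carries the extra factor $T_k(b,c)$; because $T_k(b,c)$ satisfies a three-term recurrence rather than a first-order (hypergeometric) one, Gosper/Zeilberger telescoping does not apply directly to a single $k$-sum, and the Chen--Hou--Mu machinery you invoke handles double hypergeometric summations, not sums with a non-hypergeometric factor like $T_k(b,c)$. One would instead need a creative-telescoping certificate over the $P$-finite ring generated by $T_k(b,c)$ (holonomic methods), and there is no guarantee that a closed boundary of the simple shape you posit exists. Second, your convergence check quotes only the Laplace--Heine asymptotic for $c>0$; for the items with $c<0$ (II13, II13$'$, II14, II14$'$) the correct growth is $|T_k(b,c)|^{1/k}\to\sqrt{b^2-4c}$, so the case-by-case verification must use that instead. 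Third, your list ``(II1$'$)--(II9$'$)'' overshoots: the conjecture skips (II8$'$) and (II9$'$). Finally, you yourself flag that the rank-two items (II10$'$)--(II12$'$) require a second independent relation; without one your outline is necessarily incomplete there. In short, even setting aside the absence of a paper proof to compare with, the proposal is a heuristic programme rather than a proof, and its central step (existence of the telescoping identity with a $T_k$ factor) is not justified.
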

\begin{remark}\rm We also have conjectures on related congruences. For example, concerning
(II14), for any prime $p>3$ we conjecture that
$$\sum_{k=0}^{p-1}\f{39k+7}{(-1944)^k}\bi{2k}k\bi{3k}kT_k(18,-3)\eq\f p2\l(13\l(\f p3\r)+1\r)
\pmod{p^2}$$
and that
\begin{align*}&\sum_{k=0}^{p-1}\f{\bi{2k}k\bi{3k}kT_k(18,-3)}{(-1944)^k}
\\\eq&\begin{cases}4x^2-2p\pmod{p^2}&\t{if}\ (\f{-1}p)=(\f p3)=(\f p7)=1\ \&\ p=x^2+21y^2,
\\2p-2x^2\pmod{p^2}&\t{if}\ (\f{-1}p)=(\f p3)=-1,\ (\f p7)=1\ \&\ 2p=x^2+21y^2,
\\12x^2-2p\pmod{p^2}&\t{if}\ (\f{-1}p)=(\f p7)=-1,\ (\f p3)=1\ \&\ p=3x^2+7y^2,
\\2p-6x^2\pmod{p^2}&\t{if}\ (\f{-1}p)=1,\ (\f p3)=(\f p7)=-1\ \&\ 2p=3x^2+7y^2,
\\0\pmod{p^2}&\t{if}\ (\f{-21}p)=-1,
\end{cases}
\end{align*}
where $x$ and $y$ are integers.
The identities (II13), (II13$'$), (II14) and (II14$'$) were found by the author
on Dec. 11, 2019.
\end{remark}

The following conjecture is related to the series (III1)-(III10) and (III12) of Sun \cite{S-11,S14c}.

\begin{conjecture}\label{Conj-III} We have the following identities:
\[\sum_{k=0}^\infty\f{17k+18}{66^{2k}}\bi{2k}{k+1}\bi{4k}{2k}T_k(52,1)=\f{77\sqrt{33}}{12\pi},\tag{III1$'$}\]
\[\sum_{k=0}^\infty\f{4k+3}{(-96^2)^k}\bi{2k}{k+1}\bi{4k}{2k}T_k(110,1)=-\f{\sqrt6}{3\pi},\tag{III2$'$}\]
\[\sum_{k=0}^\infty\f{8k+9}{112^{2k}}\bi{2k}{k+1}\bi{4k}{2k}T_k(98,1)=\f{154\sqrt{21}}{135\pi},\tag{III3$'$}\]
\[\sum_{k=0}^\infty\f{3568k+4027}{264^{2k}}\bi{2k}{k+1}\bi{4k}{2k}T_k(257,256)=\f{869\sqrt{66}}{10\pi},\tag{III4$'$}\]
\[\sum_{k=0}^\infty\f{144k+1}{(-168^2)^{k}}\bi{2k}{k+1}\bi{4k}{2k}T_k(7,4096)=\f{7(1745\sqrt{42}-778\sqrt{210})}{120\pi},\tag{III5$'$}\]
\[\sum_{k=0}^\infty\f{3496k+3709}{336^{2k}}\bi{2k}{k+1}\bi{4k}{2k}T_k(322,1)=\f{182\sqrt7}{\pi},\tag{III6$'$}\]
\[\sum_{k=0}^\infty\f{286k+229}{336^{2k}}\bi{2k}{k+1}\bi{4k}{2k}T_k(1442,1)=\f{1113\sqrt{210}}{20\pi},\tag{III7$'$}\]
\[\sum_{k=0}^\infty\f{8426k+8633}{912^{2k}}\bi{2k}{k+1}\bi{4k}{2k}T_k(898,1)=\f{703\sqrt{114}}{20\pi},\tag{III8$'$}\]
\[\sum_{k=0}^\infty\f{1608k+79}{912^{2k}}\bi{2k}{k+1}\bi{4k}{2k}T_k(12098,1)=\f{67849\sqrt{399}}{105\pi},\tag{III9$'$}\]
\[\sum_{k=0}^\infty\f{134328722k+134635283}{10416^{2k}}\bi{2k}{k+1}\bi{4k}{2k}T_k(10402,1)=\f{93961\sqrt{434}}{4\pi},\tag{III10$'$}\]
and
\begin{equation*}\begin{aligned}&\sum_{k=0}^\infty\f{39600310408k+39624469807}{39216^{2k}}\bi{2k}{k+1}\bi{4k}{2k}T_k(39202,1)
\\&\qquad\qquad=\f{1334161\sqrt{817}}{\pi}.\end{aligned}\tag{III12$'$}\end{equation*}
\end{conjecture}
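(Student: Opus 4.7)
The plan is to exploit the elementary identity $\bi{2k}{k+1}=\bi{2k}k-C_k$, where $C_k=\bi{2k}k/(k+1)$, so that each primed conjectural series $(\text{III}n')$ decomposes as
$$\sum_{k=0}^\infty\f{(ak+b)\bi{2k}{k+1}\bi{4k}{2k}T_k(B,C)}{m^{2k}}=\sum_{k=0}^\infty\f{(ak+b)\bi{2k}k\bi{4k}{2k}T_k(B,C)}{m^{2k}}-\sum_{k=0}^\infty\f{(ak+b)\bi{2k}k\bi{4k}{2k}T_k(B,C)}{(k+1)m^{2k}}.$$
The first sum on the right is a $\Q$-linear combination of the established Ramanujan-type series $(\text{III}n)$ of Sun \cite{S-11,S14c} together with the companion evaluation $\sum_{k\ge0}\bi{2k}k\bi{4k}{2k}T_k(B,C)/m^{2k}$, the latter being recoverable from $(\text{III}n)$ and the additional evaluation of $\sum_{k\ge0}k\cdot\bi{2k}k\bi{4k}{2k}T_k(B,C)/m^{2k}$. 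Hence the whole matter reduces to the Catalan-weighted piece.

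To treat the Catalan-weighted sum I would imitate the proof of Theorem \ref{Th1.1}: run creative telescoping on the summand $\bi{2k}k\bi{4k}{2k}T_k(B,C)/((k+1)(2k-1)(4k-1)m^{2k})$ in search of a polynomial identity
$$\sum_{k=0}^n\f{Q(k)\bi{2k}k\bi{4k}{2k}T_k(B,C)}{(k+1)(2k-1)(4k-1)m^{2k}}=\f{R(n)(4n+1)}{(n+1)m^{2n}}\bi{2n}n\bi{4n}{2n}T_n(B,C),$$
parallel to (2.6) of Lemma \ref{Lem2.1}, with $Q$ a polynomial of small degree and $R$ a rational function depending on $(B,C,m)$. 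Letting $n\to\infty$ and expanding the numerator $(ak+b)$ of the target series in partial fractions over $(k+1)(2k-1)(4k-1)$ would then express each $(\text{III}n')$ as an explicit $\Q$-linear combination of the corresponding $(\text{III}n)$ series and the new telescoped evaluation.

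The main obstacle is that $T_k(B,C)$ satisfies a genuine three-term recurrence rather than a closed hypergeometric ratio, so classical Gosper-Zeilberger will not terminate on the summand; one must invoke the Chen-Hou-Mu telescoping framework or Koutschan's \texttt{HolonomicFunctions} package to search for the certificate $(Q,R)$ separately for each of the twelve triples $(B,C,m)$ appearing in $(\text{III}1')$-$(\text{III}12')$. It is not a priori clear that a compact $(Q,R)$ of low degree exists in every case. When it fails one would have to fall back on the modular-form strategy of Chan-Chan-Liu \cite{CCL} and Chan-Cooper \cite{ChCo}: realize $\sum_k\bi{2k}k\bi{4k}{2k}T_k(B,C)x^k$ as a weight-zero modular function on the appropriate congruence subgroup, interpret the operator $(k+1)^{-1}$ as integration against $x$, and evaluate at the CM singular modulus $1/m^2$. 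The modular route always succeeds in principle, but locating the correct eta-quotient uniformizer and verifying the level structure for each of the twelve CM points is the genuine labor of the proof.
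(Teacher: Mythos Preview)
The paper does not prove this statement: Conjecture~\ref{Conj-III} is explicitly posed as an open conjecture, with no proof offered. The paper's only commentary is that these primed identities are \emph{companions} to the earlier series (III1)--(III12) of \cite{S-11,S14c}; Remark~\ref{Rem3.1} (stated for the type-I analogues) makes clear that a primed series and its unprimed partner together are equivalent to a Catalan-weighted series, so each $(\text{III}n')$ carries genuinely new information not contained in $(\text{III}n)$ alone.

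Your decomposition via $\bi{2k}{k+1}=\bi{2k}k-C_k$ is correct and is exactly the mechanism behind Remark~\ref{Rem3.1}, but it does not reduce the problem: after the split you still need \emph{two} independent closed-form evaluations (one for the $\bi{2k}k$-weighted sum and one for the $C_k$-weighted sum), whereas the established series $(\text{III}n)$ supplies only a single linear relation. Your claim that the constant-weight sum $\sum_k\bi{2k}k\bi{4k}{2k}T_k(B,C)/m^{2k}$ is ``recoverable from $(\text{III}n)$ and the additional evaluation of $\sum_k k\cdot(\ldots)$'' is circular, since that $k$-weighted evaluation is precisely the missing second relation. The substance of the conjecture therefore lies entirely in the step you label as the ``main obstacle.'' Your proposed attack by holonomic telescoping is reasonable to try, but the analogue of Lemma~\ref{Lem2.1} in the paper works because $\bi{2k}k^2\bi{4k}{2k}$ is purely hypergeometric; once the factor $T_k(B,C)$ is present the summand is holonomic of higher order, and there is no a priori reason a first-order telescoping certificate in the single variable $k$ exists---indeed, if it did for generic $(B,C,m)$, the conjecture would presumably already be a theorem. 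The modular fallback you sketch is the standard expectation for how such identities are eventually proved, but ``always succeeds in principle'' overstates the situation: the relevant modular parametrizations for the type-III family with these specific $T_k(B,C)$ factors have not been worked out, which is why the identities remain conjectural.
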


The following conjecture is related to the series (IV1)-(IV21) of Sun \cite{S-11,S14c}.

\begin{conjecture}\label{Conj-IV} We have the following identities:
\[\sum_{k=0}^\infty\f{(356k^2+288k+7)\bi{2k}k^2T_{2k}(7,1)}{(k+1)(2k-1)(-48^2)^k}=-\f{304}{3\pi},\tag{IV1$'$}\]
\[\sum_{k=0}^\infty\f{(172k^2+141k-1)\bi{2k}k^2T_{2k}(62,1)}{(k+1)(2k-1)(-480^2)^k}=-\f{80}{3\pi},\tag{IV2$'$}\]
\[\sum_{k=0}^\infty\f{(782k^2+771k+19)\bi{2k}k^2T_{2k}(322,1)}{(k+1)(2k-1)(-5760^2)^k}=-\f{90}{\pi},\tag{IV3$'$}\]
\[\sum_{k=0}^\infty\f{(34k^2+45k+5)\bi{2k}k^2T_{2k}(10,1)}{(k+1)(2k-1)96^{2k}}=-\f{20\sqrt2}{3\pi},\tag{IV4$'$}\]
\[\sum_{k=0}^\infty\f{(106k^2+193k+27)\bi{2k}k^2T_{2k}(38,1)}{(k+1)(2k-1)240^{2k}}=-\f{10\sqrt6}{\pi},\tag{IV5$'$}\]
\[\sum_{k=0}^\infty\f{(214166k^2+221463k+7227)\bi{2k}k^2T_{2k}(198,1)}{(k+1)(2k-1)39200^{2k}} =-\f{9240\sqrt6}{\pi},\tag{IV6$'$}\]
\[\sum_{k=0}^\infty\f{(112k^2+126k+9)\bi{2k}k^2T_{2k}(18,1)}{(k+1)(2k-1)320^{2k}}=-\f{6\sqrt{15}}{\pi},\tag{IV7$'$}\]
\[\sum_{k=0}^\infty\f{(926k^2+995k+55)\bi{2k}k^2T_{2k}(30,1)}{(k+1)(2k-1)896^{2k}}=-\f{60\sqrt7}{\pi},\tag{IV8$'$}\]
\[\sum_{k=0}^\infty\f{(1136k^2+2962k+503)\bi{2k}k^2T_{2k}(110,1)}{(k+1)(2k-1)24^{4k}}=-\f{90\sqrt7}{\pi},\tag{IV9$'$}\]
\[\sum_{k=0}^\infty\f{(5488k^2+8414k+901)\bi{2k}k^2T_{2k}(322,1)}{(k+1)(2k-1)48^{4k}}=-\f{294\sqrt7}{\pi},\tag{IV10$'$}\]
\[\sum_{k=0}^\infty\f{(170k^2+193k+11)\bi{2k}k^2T_{2k}(198,1)}{(k+1)(2k-1)2800^{2k}}=-\f{6\sqrt{14}}{\pi},\tag{IV11$'$}\]
\[\sum_{k=0}^\infty\f{(104386k^2+108613k+4097)\bi{2k}k^2T_{2k}(102,1)}{(k+1)(2k-1)10400^{2k}} =-\f{2040\sqrt{39}}{\pi},\tag{IV12$'$}\]
\[\sum_{k=0}^\infty\f{(7880k^2+8217k+259)\bi{2k}k^2T_{2k}(1298,1)}{(k+1)(2k-1)46800^{2k}}=-\f{144\sqrt{26}}{\pi},\tag{IV13$'$}\]
\[\sum_{k=0}^\infty\f{(6152k^2+45391k+9989)\bi{2k}k^2T_{2k}(1298,1)}{(k+1)(2k-1)5616^{2k}}=-\f{663\sqrt3}{\pi},\tag{IV14$'$}\]
\[\sum_{k=0}^\infty\f{(147178k^2+2018049k+471431)\bi{2k}k^2T_{2k}(4898,1)}{(k+1)(2k-1)20400^{2k}}=-3740\f{\sqrt{51}}{\pi},\tag{IV15$'$}\]
\[\sum_{k=0}^\infty\f{(1979224k^2+5771627k+991993)\bi{2k}k^2T_{2k}(5778,1)}{(k+1)(2k-1)28880^{2k}}=-73872\f{\sqrt{10}}{\pi},\tag{IV16$'$}\]
\[\sum_{k=0}^\infty\f{(233656k^2+239993k+5827)\bi{2k}k^2T_{2k}(5778,1)}{(k+1)(2k-1)439280^{2k}}=-4080\f{\sqrt{19}}{\pi},\tag{IV17$'$}\]
\[\sum_{k=0}^\infty\f{(5890798k^2+32372979k+6727511)\bi{2k}k^2T_{2k}(54758,1)}{(k+1)(2k-1)243360^{2k}} =-600704\f{\sqrt{95}}{9\pi},\tag{IV18$'$}\]
\[\sum_{k=0}^\infty\f{(148k^2+272k+43)\bi{2k}k^2T_{2k}(10,-2)}{(k+1)(2k-1)4608^{k}}=-28\f{\sqrt{6}}{\pi},\tag{IV19$'$}\]
\[\sum_{k=0}^\infty\f{(3332k^2+17056k+3599)\bi{2k}k^2T_{2k}(238,-14)}{(k+1)(2k-1)1161216^{k}}=-744\f{\sqrt{2}}{\pi},\tag{IV20$'$}\]
\[\sum_{k=0}^\infty\f{(11511872k^2+10794676k+72929)\bi{2k}k^2T_{2k}(9918,-19)}{(k+1)(2k-1)(-16629048064)^{k}} =-390354\f{\sqrt{7}}{\pi}.\tag{IV21$'$}\]
\end{conjecture}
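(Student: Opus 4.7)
The strategy for proving each identity in Conjecture~\ref{Conj-IV} would mirror the proof of Theorem~\ref{Th1.1} carried out in Section~2. Each identity (IV$j'$) has the shape
\[
\sum_{k=0}^\infty\f{(A_j k^2+B_j k+C_j)\bi{2k}k^2 T_{2k}(b_j,c_j)}{(k+1)(2k-1)m_j^k}=\f{\lambda_j'\sqrt{d_j^*}}{\pi},
\]
and sits above a corresponding type~IV base series (IV$j$) from Sun \cite{S-11,S14c},
\[
\sum_{k=0}^\infty\f{a_j+d_j k}{m_j^k}\bi{2k}k^2 T_{2k}(b_j,c_j)=\f{\lambda_j\sqrt{d_j^*}}{\pi},
\]
several cases of which are already established in \cite{ChWZ,WZ,Z14}. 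My plan is therefore two-fold: first produce a telescoping companion that pins down the ``finite'' part introduced by the factor $(k+1)(2k-1)$ in the denominator, and then combine it with the base series to read off $\lambda_j'$.

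The first step is to construct, for each $j$, a cubic polynomial $P_j(k)$ and a rational function $R_j(n)$ satisfying a closed-form summation
\[
\sum_{k=0}^n\f{P_j(k)\bi{2k}k^2 T_{2k}(b_j,c_j)}{(k+1)(2k-1)m_j^k}
= R_j(n)\f{\bi{2n}n^2 T_{2n}(b_j,c_j)}{m_j^n}+E_j,
\]
where $E_j$ is a boundary constant coming from $k\in\{0,1\}$. Such an identity is discoverable by Zeilberger-style creative telescoping applied to the two-step fold of the recurrence $(n+1)T_{n+1}(b,c)=(2n+1)bT_n(b,c)-n(b^2-4c)T_{n-1}(b,c)$, and, once guessed, is provable by induction on $n$ exactly as in Lemma~\ref{Lem2.1}. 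Passing to $n\to\infty$ and using the Laplace--Heine asymptotic $T_{2n}(b,c)\sim(b+2\sqrt c)^{2n+1/2}/(2\root4\of c\sqrt{2n\pi})$ together with $\binom{2n}{n}\sim 4^n/\sqrt{n\pi}$, the boundary term $R_j(n)\binom{2n}n^2 T_{2n}(b_j,c_j)m_j^{-n}$ vanishes provided $m_j^2>16(b_j+2\sqrt{c_j})^2$, a condition met by all the $m_j$ tabulated in (IV1$'$)--(IV21$'$). This yields $\sum_{k\geq0}P_j(k)\binom{2k}k^2 T_{2k}(b_j,c_j)/((k+1)(2k-1)m_j^k)=-E_j$, a closed rational evaluation.

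The second step is then elementary: decompose
\[
A_j k^2+B_j k+C_j=\al_j P_j(k)+\be_j(k+1)(2k-1)(a_j+d_j k)+\ga_j(k+1)(2k-1),
\]
for uniquely determined rationals $\al_j,\be_j,\ga_j$. Substituting the decomposition and invoking the base series (IV$j$) on the $(a_j+d_j k)$-piece, the telescoping identity of step one on the $P_j(k)$-piece, and a known closed form for $\sum_k\binom{2k}k^2 T_{2k}(b_j,c_j)m_j^{-k}$ on the remainder (available via the quadratic transformation $T_{2k}(b,c)=(b^2-4c)^k P_{2k}(b/\sqrt{b^2-4c})$ and standard Legendre-polynomial Ramanujan-type machinery), one collects the value $\lambda_j'\sqrt{d_j^*}/\pi$.

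The main obstacle is two-fold. First, some of the underlying base series (IV1)--(IV21) are themselves still conjectural beyond the range handled in \cite{ChWZ,WZ,Z14}, so a fully unconditional proof of Conjecture~\ref{Conj-IV} is contingent on closing those gaps. Second, and more seriously at the mechanical level, discovering $P_j(k)$ and the attendant telescoping certificate for each of twenty-one cases is a substantial symbolic-computation undertaking: the very large coefficients in (IV6$'$), (IV12$'$), (IV16$'$), (IV18$'$), (IV21$'$) hint at certificates of high algebraic complexity, and the presence of both $(k+1)$ and $(2k-1)$ in the denominator roughly doubles the effective order of the telescoping recurrence compared with the pure type~IV situation. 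I expect the most delicate bookkeeping to be the evaluation of $E_j$ (the $k=0,1$ boundary contributions), since the putative pole at $k=1/2$ from $(2k-1)$ is what makes $P_j$ a rigid object rather than a free parameter, and it is precisely this rigidity that forces the rational value $\lambda_j'$ to emerge in the clean form displayed.
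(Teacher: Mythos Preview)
First, note that the statement you are addressing is a \emph{Conjecture} in the paper, not a theorem: the paper offers no proof of (IV1$'$)--(IV21$'$). So there is no ``paper's own proof'' to compare against; the series are posed as open.

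Your outline is modelled on the proof of Theorem~\ref{Th1.1}, and that instinct is right, but the first step contains a structural gap. In Lemma~\ref{Lem2.1} the summands $\bi{2k}k^3$, $\bi{2k}k^2\bi{3k}k$, $\bi{2k}k^2\bi{4k}{2k}$, $\bi{2k}k\bi{3k}k\bi{6k}{3k}$ are \emph{hypergeometric} in $k$: the ratio of consecutive terms is rational in $k$, which is exactly why a single cubic $P(k)$ telescopes to a one-term right-hand side. The sequence $\bi{2k}k^2T_{2k}(b,c)$ is not hypergeometric, because $T_n(b,c)$ satisfies a genuine second-order (three-term) recurrence; consequently the ratio $T_{2k}(b,c)/T_{2k-2}(b,c)$ is not a rational function of $k$. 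A Gosper/Zeilberger certificate for such a summand therefore cannot produce
\[
\sum_{k=0}^n\f{P_j(k)\bi{2k}k^2T_{2k}(b_j,c_j)}{(k+1)(2k-1)m_j^k}=R_j(n)\f{\bi{2n}n^2T_{2n}(b_j,c_j)}{m_j^n}+E_j
\]
with a single $T_{2n}$ on the right. At best one obtains a right-hand side of the form $R_1(n)\bi{2n}n^2T_{2n}+R_2(n)\bi{2n}n^2T_{2n-2}$ (reflecting the order of the underlying holonomic recurrence), and the asymptotic cancellation needed to make this vanish as $n\to\infty$ is then a genuinely new analytic step, not covered by the Laplace--Heine estimate alone.

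A second, smaller issue is the decomposition in your step two. Matching $A_jk^2+B_jk+C_j$ against $\al_jP_j(k)+\be_j(k+1)(2k-1)(a_j+d_jk)+\ga_j(k+1)(2k-1)$ gives four coefficient equations in three unknowns; generically this is overdetermined. In the paper's proof of Theorem~\ref{Th1.1} the logic runs the other way---one \emph{derives} the target quadratic from the telescoping cubic and the base linear---so the system is square and $\ga_j$ never appears. If you keep $\ga_j$, you then need a closed form for $\sum_{k\ge0}\bi{2k}k^2T_{2k}(b_j,c_j)m_j^{-k}$, and this value is an elliptic period, not of the shape $\lambda\sqrt{d}/\pi$, so the bookkeeping would not close. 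In short: even granting all of (IV1)--(IV21), the analogue of Lemma~\ref{Lem2.1} for type~IV summands is the missing ingredient, and it is not of the one-term form you assumed.
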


For the five open conjectural series (VI1), (VI2), (VI3), (VII2) and (VII7) of Sun \cite{S-11,S14c},
we make the following conjecture on related supercongruences.

\begin{conjecture}\label{Conj-T} Let $p$ be an odd prime and let $n\in\Z^+$.
If $(\f 3p)=1$, then
\begin{equation*}\label{VI1}
\sum_{k=0}^{pn-1}\f{66k+17}{(2^{11}3^3)^k}T_k(10,11^2)^3
-p\l(\f{-2}p\r)\sum_{k=0}^{n-1}\f{66k+17}{(2^{11}3^3)^k}T_k(10,11^2)^3
\end{equation*}
divided by $(pn)^2$ is a $p$-adic integer.
If $p\not=5$, then
\begin{equation*}\label{VI2}
\sum_{k=0}^{pn-1}\f{126k+31}{(-80)^{3k}}T_k(22,21^2)^3
-p\l(\f{-5}p\r)\sum_{k=0}^{n-1}\f{126k+31}{(-80)^{3k}}T_k(22,21^2)^3
\end{equation*}
divided by $(pn)^2$ is a $p$-adic integer.
If $(\f 7p)=1$ but $p\not=3$, then
\begin{equation*}\label{VI3}
\sum_{k=0}^{pn-1}\f{3990k+1147}{(-288)^{3k}}T_k(62,95^2)^3
-p\l(\f{-2}p\r)\sum_{k=0}^{n-1}\f{3990k+1147}{(-288)^{3k}}T_k(62,95^2)^3
\end{equation*}
divided by $(pn)^2$ is a $p$-adic integer.
If $p\eq\pm1\pmod8$ but $p\not=7$, then
\begin{equation*}\label{VII2}
\sum_{k=0}^{pn-1}\f{24k+5}{28^{2k}}\bi{2k}kT_k(4,9)^2
-p\l(\f p3\r)\sum_{k=0}^{n-1}\f{24k+5}{28^{2k}}\bi{2k}kT_k(4,9)^2
\end{equation*}
divided by $(pn)^2$ is a $p$-adic integer.
If $(\f{-6}p)=1$ but $p\not=7,31$, then
\begin{equation*}\label{VII2}\begin{aligned}
&\sum_{k=0}^{pn-1}\f{2800512k+435257}{434^{2k}}\bi{2k}kT_k(73,576)^2
\\&-p\sum_{k=0}^{n-1}\f{2800512k+435257}{434^{2k}}\bi{2k}kT_k(73,576)^2
\end{aligned}
\end{equation*}
divided by $(pn)^2$ is a $p$-adic integer.
\end{conjecture}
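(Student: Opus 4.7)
The plan is to prove each of the five supercongruences by a common three-step strategy: first establish the base case $n=1$ modulo $p^2$, then pass from $p$ to $pn$ by block telescoping, and finally track the extra $p$-adic divisibility carried by $n$. I will describe the plan for the series (VI1) involving $T_k(10,121)^3$; the other four cases follow an identical template.

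For the base case, set
$$S(p):=\sum_{k=0}^{p-1}\f{66k+17}{(2^{11}3^3)^k}T_k(10,121)^3,$$
with the target $S(p)\eq 17p(\f{-2}p)\pmod{p^2}$ whenever $(\f 3p)=1$. Modulo $p$, this should follow by the standard duality $k\mapsto p-1-k$: with $b^2-4c=-384$ and $(b^2-4c)^3/(2^{11}3^3)=-2^{10}$, three applications of \eqref{T-dual} convert $S(p)$ into $(\f{-2}p)$ times the ``dual sum'' $\sum_{k=0}^{p-1}f(k)/(-1024)^k$ in the sense of Conjecture \ref{Conj-Dual}, and a short matching calculation then delivers the mod-$p$ identity. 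Lifting to modulo $p^2$ is the nontrivial step and requires the finer $p$-adic analysis of Van Hamme and Zudilin: express $T_k(10,121)$ via the identity $T_k(b,c)=(\sqrt{b^2-4c})^k P_k(b/\sqrt{b^2-4c})$ to convert $S(p)$ into a truncated generalized hypergeometric series, then use $p$-adic gamma-function expansions at half-integer arguments to isolate the constant $17(\f{-2}p)$ modulo $p^2$.

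The passage from $n=1$ to general $n$ goes by block telescoping. Writing $f(k):=(66k+17)T_k(10,121)^3/(2^{11}3^3)^k$ and decomposing $k=jp+r$ with $0\ls j<n$, $0\ls r<p$,
\begin{align*}
&\sum_{k=0}^{pn-1}f(k)-p\l(\f{-2}p\r)\sum_{j=0}^{n-1}f(j)
\\&\qquad=\sum_{j=0}^{n-1}\l(\sum_{r=0}^{p-1}f(jp+r)-p\l(\f{-2}p\r)f(j)\r).
\end{align*}
A Lucas-type congruence $T_{jp+r}(b,c)\eq T_j(b,c)T_r(b,c)\pmod p$, obtained from $(x^2+bx+c)^{jp+r}\eq(x^{2p}+bx^p+c)^j(x^2+bx+c)^r\pmod p$ by comparing coefficients of $x^{jp+r}$, factors each inner bracket as $T_j(10,121)^3$ times the base-case sum plus a controlled error, delivering divisibility by $p^2$ uniformly in $j$. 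The sharper divisibility by $p^{2(1+v_p(n))}$ required by $(pn)^2$ when $p\mid n$ calls for a refinement of the Lucas-type congruence to modulo $p^2$, and here the methods developed by the author in \cite{S-u} and \cite{S19} for the classical Ramanujan-type setting adapt directly.

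The main obstacle, by quite a margin, is the base case modulo $p^2$. The three series (VI1)-(VI3) are of rank $2$: their conjectural evaluations involve linear combinations of two square roots, reflecting a two-dimensional weight-$4$ newform space, and correspondingly the mod-$p^2$ right-hand side will carry two Legendre symbols, a regime in which the standard Wilf-Zeilberger certification that handles rank-$1$ supercongruences breaks down. A promising route is the Clausen-type identity rewriting $T_k(b,c)^3$ as a finite sum of products of Legendre polynomials, which turns $S(p)$ into a truncated $_5F_4$ hypergeometric series accessible to the methods of Long, Ramakrishna, and Osburn-Zudilin. The entries (VII2) and (VII7) should be attacked first as a proof of concept: Clausen's classical identity converts $\binom{2k}{k}T_k(b,c)^2$ directly into the square of a $_2F_1$, so the corresponding $S(p)$ reduces to a truncated $_3F_2$ whose supercongruences modulo $p^2$ are within reach of Mortenson's and Long's techniques.
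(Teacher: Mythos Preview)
The statement you are attempting to prove is labelled \textbf{Conjecture} \ref{Conj-T} in the paper, not a theorem. The paper offers no proof whatsoever: it explicitly introduces the statement with the sentence ``For the five open conjectural series (VI1), (VI2), (VI3), (VII2) and (VII7) of Sun \cite{S-11,S14c}, we make the following conjecture on related supercongruences.'' There is therefore nothing in the paper to compare your argument against, and the task of ``matching the paper's proof'' is vacuous here.

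More to the point, what you have written is not a proof either --- it is a research plan, and you say so yourself. Your text contains phrases such as ``Lifting to modulo $p^2$ is the nontrivial step'', ``The main obstacle, by quite a margin, is the base case modulo $p^2$'', ``a regime in which the standard Wilf--Zeilberger certification \ldots\ breaks down'', ``A promising route is \ldots'', and ``should be attacked first as a proof of concept''. None of the key technical steps is actually carried out: you do not produce the $p$-adic gamma expansion, you do not establish the claimed Lucas-type congruence $T_{jp+r}(b,c)\equiv T_j(b,c)T_r(b,c)\pmod p$ (which, incidentally, is not quite right as stated: the coefficient of $x^{jp+r}$ in $(x^{2p}+bx^p+c)^j(x^2+bx+c)^r$ is not simply $T_j(b,c)T_r(b,c)$ without further argument), and you do not supply the mod-$p^2$ refinement needed when $p\mid n$. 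The reduction of $\binom{2k}kT_k(b,c)^2$ to a truncated ${}_3F_2$ via Clausen is a reasonable heuristic for (VII2) and (VII7), but that reduction is not in the paper and would itself be new work.

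In short: there is no proof in the paper to compare against, and your proposal is an outline of open problems rather than a proof. If you wish to pursue this, the honest first target is to prove the $n=1$ case of the (VII2) congruence --- i.e., $\sum_{k=0}^{p-1}(24k+5)\binom{2k}kT_k(4,9)^2/28^{2k}\equiv 5p(\frac p3)\pmod{p^2}$ for $p\equiv\pm1\pmod 8$ --- since that is the case closest to existing hypergeometric machinery; but even that would be a genuine new result, not a reproduction of anything in the paper.
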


Now we pose four conjectural series for $1/\pi$ of type VIII.

\begin{conjecture}\label{VIII} We have
\[\sum_{k=0}^\infty\f{40k+13}{(-50)^k}T_k(4,1)T_k(1,-1)^2=\f{55\sqrt{15}}{9\pi},\tag{VIII1}\]
\[\sum_{k=0}^\infty\f{1435k+113}{3240^k}T_k(7,1)T_k(10,10)^2=\f{1452\sqrt{5}}{\pi},\tag{VIII2}\]
\[\sum_{k=0}^\infty\f{840k+197}{(-2430)^k}T_k(8,1)T_k(5,-5)^2=\f{189\sqrt{15}}{2\pi},\tag{VIII3}\]
\[\sum_{k=0}^\infty\f{39480k+7321}{(-29700)^k}T_k(14,1)T_k(11,-11)^2=\f{6795\sqrt{5}}{\pi}.\tag{VIII4}\]
\end{conjecture}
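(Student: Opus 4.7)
Each of (VIII1)--(VIII4) has the shape $\sum_{k=0}^\infty(a+dk)z^k T_k(b,c)T_k(b_*,c_*)^2$ at specific algebraic $z=1/m$, and my plan is to reduce it to a classical hypergeometric/modular identity. The first step is to put each trinomial factor in Legendre form via $T_k(b,c)=(b^2-4c)^{k/2}P_k(b/\sqrt{b^2-4c})$ (which is already recalled in Section~1). For instance, in (VIII1) we have $b^2-4c=12$ and $b_*^2-4c_*=5$, so the summand becomes $(40k+13)(-\sqrt3/5)^k P_k(2/\sqrt3)P_k(1/\sqrt5)^2$. Using Laplace's integral $P_k(x)=\f1{\pi}\int_0^\pi(x+\sqrt{x^2-1}\cos\theta)^k\,d\theta$ for each factor and interchanging sum and integral converts the series into a triple $\theta$-integral of a rational expression $(a+dzW)/(1-zW)^2$ with $W=W(\theta_1,\theta_2,\theta_3)$ a product of three linear factors, after summing the closed-form geometric series and its derivative.

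The heart of the problem is then to show that this triple integral equals the claimed rational multiple of $\sqrt d/\pi$. For the classical series $(*)$ of Ramanujan type, the corresponding step is to recognize $\sum_k c_k z^k$ as a modular form on a congruence subgroup $\Gamma\ls\mathrm{SL}_2(\Z)$ evaluated at $z=z(\tau)$ for a Hauptmodul, specialize $\tau=\tau_0$ to a CM point, and apply the Chowla--Selberg formula. I would look for such a parametrization for the type VIII generating series $F(z):=\sum_{k=0}^\infty T_k(b,c)T_k(b_*,c_*)^2 z^k$. The fields hinted at by the irrational factor, $\sqrt{15}$ in (VIII1) and (VIII3) and $\sqrt5$ in (VIII2) and (VIII4), together with the denominators $50,3240,2430,29700$, strongly suggest CM points in $\Q(\sqrt{-15})$ and $\Q(\sqrt{-5})$, which fits the remark after Theorem~\ref{Th1.3} on imaginary quadratic fields. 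Once $z(\tau)$ and the modular interpretation of $F$ are in hand, the linear coefficient $a+dk$ would be pinned down by Ramanujan's derivative trick $\sum_{k\gs 0}kc_kz^k=zF'(z)$ at $\tau=\tau_0$, and the overall constant by evaluating a period integral.

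The main obstacle, I expect, is producing the modular interpretation of $F(z)$: unlike the products $T_k(b,c)^3$ (type VI) or $\bi{2k}k T_k(b,c)^2$ (type VII), which factor through Clausen-type identities for squared Gaussian hypergeometrics, the asymmetric product $T_k(b,c)T_k(b_*,c_*)^2$ admits no standard Clausen factorization, so neither a single hypergeometric evaluation nor a known elliptic-integral identity applies directly. A pragmatic fallback is the two-step route of Guillera--Rogers type: first establish the companion supercongruence that Conjecture~\ref{general} attaches to each identity, then pair it with a rigidity argument that determines the real value of the series, up to a rational factor, from its $p$-adic shape at almost all primes. Even along this route one still needs at least one direct analytic evaluation, and I expect the main technical work to lie in proving a single bilinear transformation expressing $F(z)$ as a product of two elliptic periods attached to the two fields $\Q(\sqrt{-d})$ and $\Q(\sqrt{-d_*})$; the remaining three identities should then follow by the same template at their respective CM specializations.
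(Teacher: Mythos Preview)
The statement you are attempting to prove is labeled \textbf{Conjecture} in the paper, not Theorem; the paper does \emph{not} supply a proof. The author explicitly writes ``Now we pose four conjectural series for $1/\pi$ of type VIII'' and, in the remark following the statement, merely records the dates on which (VIII1)--(VIII4) were formulated. So there is no proof in the paper to compare your proposal against.

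As for your proposal itself, it is an outline of a strategy rather than a proof, and you candidly identify the central gap yourself: you do not have a modular parametrization of the generating function $F(z)=\sum_{k\ge 0} T_k(b,c)T_k(b_*,c_*)^2 z^k$, and you note that no Clausen-type factorization is known for the asymmetric product $T_k(b,c)T_k(b_*,c_*)^2$. Without that identity (or an equivalent bilinear transformation), the reduction to a CM evaluation cannot be carried out, and the ``pragmatic fallback'' via supercongruences plus rigidity is also hypothetical, since the relevant supercongruences (Conjectures 3.7--3.12 in the paper) are themselves open. In short, your plan correctly isolates where the difficulty lies, but it does not surmount it; the identities (VIII1)--(VIII4) remain conjectural.
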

\begin{remark} The author found the identity (VIII1) on Nov. 3, 2019. The identities
(VIII2), (VIII3) and (VIII4) were formulated on Nov. 4, 2019.
\end{remark}

Below we present some conjectures on congruences related to Conjecture \ref{VIII}.

\begin{conjecture}\label{VIII-1} {\rm (i)} For each $n\in\Z^+$, we have
\begin{equation}\label{8-1-n}\f1n\sum_{k=0}^{n-1}(40k+13)(-1)^k50^{n-1-k}T_k(4,1)T_k(1,-1)^2\in\Z^+,
\end{equation}
and this number is odd if and only if $n$ is a power of two $($i.e., $n\in\{2^a:\ a\in\N\})$.

{\rm (ii)} Let $p\not=2,5$ be a prime. Then
\begin{equation}\label{8-1-p}\sum_{k=0}^{p-1}\f{40k+13}{(-50)^k}T_k(4,1)T_k(1,-1)^2\eq\f p3\l(12+5\l(\f3p\r)+22\l(\f {-15}p\r)\r)
\pmod{p^2}.\end{equation}
If $(\f 3p)=(\f{-5}p)=1$, then
\begin{equation}\label{8-1-pn}
\f1{(pn)^2}\(\sum_{k=0}^{pn-1}\f{40k+13}{(-50)^k}T_k(4,1)T_k(1,-1)^2-p\sum_{k=0}^{n-1}\f{40k+13}{(-50)^k}T_k(4,1)T_k(1,-1)^2\)
\in\Z_p
\end{equation}
for all $n\in\Z^+$.

{\rm (iii)} Let $p\not=2,5$ be a prime. Then
\begin{equation}\label{8-1-q}
\begin{aligned}&\sum_{k=0}^{p-1}\f{T_k(4,1)T_k(1,-1)^2}{(-50)^k}
\\\eq&\begin{cases}4x^2-2p\pmod{p^2}&\t{if}\ p\eq1,9\pmod{20}\ \&\ p=x^2+5y^2\ (x,y\in\Z),
\\2x^2-2p\pmod{p^2}&\t{if}\ p\eq3,7\pmod{20}\ \&\ 2p=x^2+5y^2\ (x,y\in\Z),
\\0\pmod{p^2}&\t{if}\ (\f{-5}p)=-1.\end{cases}
\end{aligned}
\end{equation}
\end{conjecture}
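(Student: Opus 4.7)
The plan is to derive all three parts from a single finite creative-telescoping identity, treating part~(iii) -- which demands an honest mod-$p^2$ evaluation in terms of the representation of $p$ by $x^2+5y^2$ -- as the chief obstacle.

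\emph{Step~1: the finite identity underlying (i).} Setting
\[F(n,k):=(40k+13)(-1)^k50^{n-1-k}T_k(4,1)T_k(1,-1)^2,\]
I observe that $T_k(4,1)$ and $T_k(1,-1)$ each obey a three-term linear recurrence in $k$, so the triple product $T_k(4,1)T_k(1,-1)^2$ is P-recursive of bounded order. The style of creative telescoping used for Lemmas~\ref{Lem2.1} and~\ref{Lem2.2} should then produce a rational certificate $R(n,k)$ and a polynomial $Q(n)$ with
\[\sum_{k=0}^{n-1}F(n,k)=n\,Q(n),\]
yielding the integrality in~(i) at once. Positivity is read off from the leading term of $Q(n)$, while the parity claim -- odd iff $n=2^a$ -- reduces to a $2$-adic valuation computation: expanding $T_k(b,c)=\sum_j\bi k{2j}\bi{2j}jb^{k-2j}c^j$ and using Kummer's theorem for $v_2(\bi{2j}j)$, the parity of $Q(n)$ becomes a base-$2$ digit condition on $n-1$, vanishing precisely on the powers of $2$.

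\emph{Step~2: the congruence in (ii).} Specializing the Step-1 identity at $n=p$ rewrites $\sum_{k=0}^{p-1}(40k+13)(-50)^{-k}T_k(4,1)T_k(1,-1)^2$ as boundary data involving $T_p(4,1)$, $T_p(1,-1)$ and the initial values of the two recurrences. The $p$-adic analysis of this boundary relies on: (a) the expansion $T_p(b,c)=b^p+\sum_{j\gs 1}\bi p{2j}\bi{2j}jb^{p-2j}c^j$, in which every $j\gs 1$ term carries a factor of $p$; and (b) the duality
\[T_k(b,c)\eq\l(\f{b^2-4c}p\r)(b^2-4c)^kT_{p-1-k}(b,c)\pmod p\]
of Remark~\ref{Rem-Dual}(ii), applied with $b^2-4c=12$ for $T_k(4,1)$ and with $b^2-4c=5$ for $T_k(1,-1)^2$. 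Together these produce the three characters $1$, $(\f 3p)$ and $(\f{-15}p)$ that appear in~\eqref{8-1-p}, and a bookkeeping of the rational coefficients (whose sum $12+5+22=39$ matches $3c=3\cdot 13$ in accordance with Conjecture~\ref{Conj-WS}) completes~\eqref{8-1-p}. The lift to~\eqref{8-1-pn} for general $n$ would then be conditional on Conjecture~\ref{Conj-pn}, applied with $(d_1,d_2,d_3)=(1,3,-15)$ in the unramified case $(\f 3p)=(\f{-5}p)=1$; an unconditional proof would require a separate induction on $n$ using a refined version of the Step-1 identity.

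\emph{Step~3: part (iii) and the main obstacle.} The explicit evaluation~\eqref{8-1-q} is the deepest part. Because $\Q(\sqrt{-5})$ has class number $2$, the dichotomy $p=x^2+5y^2$ versus $2p=x^2+5y^2$ is governed by the genus character $(\f p5)$, and the simultaneous appearance of both representations in~\eqref{8-1-q} strongly suggests that the underlying arithmetic object is a weight-$3$ CM cusp form $f$ on some $\Gamma_0(N)$ -- with $N\in\{20,100\}$ the natural candidates -- having complex multiplication by $\Q(\sqrt{-5})$. For such an $f$, $a_p(f)=\pm(4x^2-2p)$ in the principal genus and $\pm(2x^2-2p)$ in the other, matching~\eqref{8-1-q} exactly. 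My approach would be to identify $f$ explicitly, match $\sum_{k=0}^{p-1}T_k(4,1)T_k(1,-1)^2/(-50)^k$ to $a_p(f)$ first modulo $p$ via a point-count on the associated Kuga--Sato variety, and then promote to mod $p^2$. This last promotion is the main obstacle: since the three trinomial factors involve two distinct discriminants ($12$ and $5$), none of the classical Clausen, Orr, or Bailey product identities applies, so a new rank-$2$ hypergeometric decomposition of $T_k(4,1)T_k(1,-1)^2$ -- most likely discovered by analyzing the Picard--Fuchs equation of the associated generating series -- will be needed to bridge the mod-$p$ congruence up to mod~$p^2$.
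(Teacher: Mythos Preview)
The statement you are attempting to prove is Conjecture~\ref{VIII-1}; the paper states it as an open conjecture and offers no proof whatsoever. There is therefore no authorial argument to compare your proposal against.

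Your proposal is not a proof but a programme with several genuine gaps. In Step~1, the assertion that creative telescoping yields $\sum_{k=0}^{n-1}F(n,k)=n\,Q(n)$ for some \emph{polynomial} $Q$ cannot be right: the left-hand side grows like $50^{n}$, so no polynomial closed form exists. Zeilberger-type telescoping on a P-recursive summand produces a linear recurrence for the partial sums, not a closed form; deducing divisibility by $n$, positivity, and the $2$-adic parity condition from such a recurrence are each separate, nontrivial problems that you have not addressed. In Step~2, the boundary analysis you sketch gives at best a mod~$p$ statement, and your passage to~\eqref{8-1-pn} rests explicitly on Conjecture~\ref{Conj-pn}, which is itself open, so this part is conditional. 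In Step~3 you candidly acknowledge that promoting a mod-$p$ congruence to mod~$p^2$ would require ``a new rank-$2$ hypergeometric decomposition'' that you do not possess; this is exactly why the paper records~\eqref{8-1-q} as a conjecture rather than a theorem. In short, every part of Conjecture~\ref{VIII-1} remains open after your sketch.
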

\begin{remark}\label{Rem-8-1}\rm The imaginary quadratic field $\Q(\sqrt{-5})$ has class number two.
\end{remark}

\begin{conjecture}
{\rm (i)} For any $n\in\Z^+$, we have
\begin{equation}\label{8-1-d-n}
\f1n\sum_{k=0}^{n-1}(40k+27)(-6)^{n-1-k}T_k(4,1)T_k(1,-1)^2\in\Z,
\end{equation}
and the number is odd if and only if $n$ is a power of two.

{\rm (ii)} Let $p>3$ be a prime. Then
\begin{equation}\label{8-1-d-p}
\sum_{k=0}^{p-1}\f{40k+27}{(-6)^k}T_k(4,1)T_k(1,-1)^2\eq\f p9\l(55\l(\f{-5}p\r)+198\l(\f 3p\r)-10\r)
\pmod{p^2}.
\end{equation}
If $(\f 3p)=(\f{-5}p)=1$, then
\begin{equation}\label{8-1-d-pn}
\f1{(pn)^2}\(\sum_{k=0}^{pn-1}\f{40k+27}{(-6)^k}T_k(4,1)T_k(1,-1)^2-p\sum_{k=0}^{n-1}\f{40k+27}{(-6)^k}T_k(4,1)T_k(1,-1)^2\)
\in\Z_p
\end{equation}
for all $n\in\Z^+$.

{\rm (iii)} Let $p>5$ be a prime. Then
\begin{equation}\label{8-1-d-q}
\begin{aligned}&\l(\f p3\r)\sum_{k=0}^{p-1}\f{T_k(4,1)T_k(1,-1)^2}{(-6)^k}
\\\eq&\begin{cases}4x^2-2p\pmod{p^2}&\t{if}\ p\eq1,9\pmod{20}\ \&\ p=x^2+5y^2\ (x,y\in\Z),
\\2p-2x^2\pmod{p^2}&\t{if}\ p\eq3,7\pmod{20}\ \&\ 2p=x^2+5y^2\ (x,y\in\Z),
\\0\pmod{p^2}&\t{if}\ (\f{-5}p)=-1.\end{cases}
\end{aligned}
\end{equation}
\end{conjecture}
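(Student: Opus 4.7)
The plan is to view this conjecture as the dual of Conjecture~\ref{VIII-1} (which treats the modulus $m=-50$) under the Duality Principle (Conjecture~\ref{Conj-Dual}), and to supplement the parts not captured by duality with creative telescoping. Setting $a_k:=T_k(4,1)T_k(1,-1)^2$ and applying Remark~\ref{Rem-Dual}(ii) to $T_k(4,1)$ (for which $b^2-4c=12$) and to $T_k(1,-1)$ (for which $b^2-4c=5$), one obtains, for every prime $p>5$,
$$a_k\equiv\left(\f{12}{p}\right)\left(\f{5}{p}\right)^{2}(12\cdot 25)^k a_{p-1-k}=\left(\f 3p\right)300^k a_{p-1-k}\pmod p,$$
so the hypothesis of Conjecture~\ref{Conj-Dual} holds with $d=3$, $D=300=(-50)\cdot(-6)$, and the sum at $m=-6$ is dual to the sum at $m=-50$ that is pinned down by \eqref{8-1-p} and \eqref{8-1-q}.

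Part~(iii) is then immediate from the conjectural $p^2$-form of \eqref{dual}: combining
$$\sum_{k=0}^{p-1}\f{a_k}{(-6)^k}\equiv\left(\f 3p\right)\sum_{k=0}^{p-1}\f{a_k}{(-50)^k}\pmod{p^2}$$
with \eqref{8-1-q}, and converting $(\f 3p)$ to $(\f p3)$ via quadratic reciprocity $(\f 3p)=(-1)^{(p-1)/2}(\f p3)$, reproduces \eqref{8-1-d-q} exactly, the sign flip in the case $p\equiv 3,7\pmod{20}$ arising because those $p$ satisfy $(p-1)/2$ odd. Part~(ii) is harder to obtain directly from duality, since $40k+27$ is not the $k\mapsto p-1-k$ image of $40k+13$: the index reversal turns $\sum(40k+27)a_k/(-6)^k$ into a combination of $\sum a_j/(-50)^j$ and $\sum(40j+13)a_j/(-50)^j$ in which the first summand retains an $x$-dependence that \eqref{8-1-p} has already cancelled from the second. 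The remedy would be to run Zeilberger's creative telescoping on $(40k+27)a_k/(-6)^k$ independently, producing a finite identity in the spirit of Lemmas~\ref{Lem2.1}--\ref{Lem2.2} that combines with a Ramanujan-type identity (the $m=-6$ analogue of (VIII1)) to yield \eqref{8-1-d-p}, with the supercongruence \eqref{8-1-d-pn} following from \eqref{8-1-pn} by the same mechanism.

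Part~(i), the integrality of $\f1n\sum_{k=0}^{n-1}(40k+27)(-6)^{n-1-k}a_k$, is orthogonal to the duality argument and would be attacked via Gosper--Zeilberger creative telescoping applied to the summand, aiming for a first-order inhomogeneous recurrence whose coefficients make the divisibility by $n$ manifest; the parity refinement (oddness iff $n$ is a power of two) would require a separate $2$-adic Kummer-style analysis of the valuations of $T_k(4,1)T_k(1,-1)^2$. The main obstacle, shared by parts~(ii) and~(iii), is lifting the duality from mod $p$ (which is automatic by Fermat's little theorem, cf.~Remark~\ref{Rem-Dual}(i)) to mod $p^2$; for this triple-product $a_k$, that lift is essentially equivalent to producing an explicit WZ pair certifying (VIII1), which is itself an open problem.
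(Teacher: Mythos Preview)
The statement you are addressing is a \emph{conjecture} in the paper, not a theorem; the paper offers no proof. The only commentary the paper provides is the one-line Remark~\ref{Rem-8-1-d}: ``This conjecture can be viewed as the dual of Conjecture~\ref{VIII-1}.'' Your proposal correctly reconstructs exactly this viewpoint---you compute $d=3$, $D=300$ from Remark~\ref{Rem-Dual}(ii), and you verify that the sign pattern in \eqref{8-1-d-q} matches \eqref{8-1-q} after multiplying by $(\f 3p)(\f p3)=(-1)^{(p-1)/2}$. That heuristic is precisely what the author had in mind.

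However, what you have written is not a proof, and you seem aware of this. Every step rests on unproved statements: the Duality Principle (Conjecture~\ref{Conj-Dual}) is itself conjectural even at the level of the mod~$p^2$ congruence \eqref{dual}; Conjecture~\ref{VIII-1} parts (ii) and (iii), which you invoke for \eqref{8-1-q} and \eqref{8-1-p}, are open; and (VIII1) itself is an unproved series identity, so there is no ``Ramanujan-type identity at $m=-6$'' available to feed into a telescoping argument. Your remarks about creative telescoping for part~(i) and the $2$-adic parity analysis are reasonable heuristics but do not come close to an argument---no recurrence is exhibited, and for sequences of the form $T_k(b_1,c_1)T_k(b_2,c_2)^2$ there is no general machinery in the paper (or elsewhere) that produces such divisibility results. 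In short, your write-up is a fair reconstruction of the \emph{motivation} behind the conjecture, matching the paper's own one-sentence remark, but it should be labelled as such rather than as a proof proposal.
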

\begin{remark}\rm \label{Rem-8-1-d} This conjecture can be viewed as the dual of Conjecture \ref{VIII-1}. Note that the series $\sum_{k=0}^\infty\f{(40k+27}{(-6)^k}T_k(4,1)T_k(1,-1)^2$ diverges.
\end{remark}

\begin{conjecture}\label{VIII-2} {\rm (i)} For each $n\in\Z^+$, we have
\begin{equation}\label{8-2-n}\f1{n10^{n-1}}\sum_{k=0}^{n-1}(1435k+113)
3240^{n-1-k}T_k(7,1)T_k(10,10)^2\in\Z^+.
\end{equation}

{\rm (ii)} Let $p>3$ be a prime. Then
\begin{equation}\label{8-2-p}\begin{aligned}&\sum_{k=0}^{p-1}\f{1435k+113}{3240^k}T_k(7,1)T_k(10,10)^2
\\\eq&\f p9\l(2420\l(\f{-5}p\r)+105\l(\f5p\r)-1508\r)
\pmod{p^2}.\end{aligned}\end{equation}
If $p\eq1,9\pmod{20}$, then
\begin{equation}\label{8-2-pn}
\sum_{k=0}^{pn-1}\f{1435k+113}{3240^k}T_k(7,1)T_k(10,10)^2
-p\sum_{k=0}^{n-1}\f{1435k+113}{3240^k}T_k(7,1)T_k(10,10)^2
\end{equation}
divided by $(pn)^2$ is a $p$-adic integer for each $n\in\Z^+$.

{\rm (iii)} Let $p>5$ be a prime. Then
\begin{equation}\label{8-2-q}
\begin{aligned}&\sum_{k=0}^{p-1}\f{T_k(7,1)T_k(10,10)^2}{3240^k}
\\\eq&\begin{cases}4x^2-2p\pmod{p^2}&\t{if}\ p\eq1,4\pmod{15}\ \&\ p=x^2+15y^2\ (x,y\in\Z),
\\12x^2-2p\pmod{p^2}&\t{if}\ p\eq2,8\pmod{15}\ \&\ p=3x^2+5y^2\ (x,y\in\Z),
\\0\pmod{p^2}&\t{if}\ (\f{-15}p)=-1.\end{cases}
\end{aligned}
\end{equation}
\end{conjecture}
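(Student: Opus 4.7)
The plan is to treat the three parts of Conjecture~\ref{VIII-2} in order, since the integrality claim (i) underpins the $p$-adic statements (ii) and (iii). For (i), I would first search via creative telescoping (Gosper--Zeilberger, or the Chen--Hou--Mu algorithm used to prove Lemma~\ref{Lem2.3}) for a two-parameter certificate expressing the partial sum $F(n) := \sum_{k=0}^{n-1}(1435k+113)\,3240^{n-1-k}T_k(7,1)T_k(10,10)^2$ as a telescoped combination of $T_{n-1}(7,1)$ and $T_{n-1}(10,10)$. The expected shape is a linear recurrence of order at most two for $F(n)$, from which divisibility of $F(n)$ by $n\,10^{n-1}$ would follow by induction, using integrality of $T_k(b,c)$ and the factorization $3240 = 2^3\cdot 3^4 \cdot 5$. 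The asymptotic $F(n) \sim c\cdot 3240^{n}/\sqrt{n}$ inherited from the Laplace--Heine formula for Legendre polynomials forces positivity.

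For the supercongruence (ii), the driving tool is the dualization identity \eqref{T-dual} applied simultaneously to $T_k(7,1)$ (with $b^2-4c = 45$) and $T_k(10,10)$ (with $b^2-4c = 60$). The substitution $k \mapsto p-1-k$ multiplies the summand by $\l(\f{45\cdot 60^2}{p}\r)(45\cdot 60^2/3240)^{p-1-k}/3240$; since $45\cdot 60^2 = 162000 = 50\cdot 3240$ and the only nontrivial Legendre symbol that emerges is $\l(\f{-15}{p}\r)$, one obtains a functional equation modulo $p^2$ whose consistency pins down the coefficients $2420$, $105$, $-1508$ up to the normalization $F(1) = 113$. The $pn$ versus $n$ statement would then follow by lifting the WZ certificate from (i) into $\Z_p$ and iterating, in the style of the proofs for Types I--V referenced in Section 1.

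Part (iii) is the principal obstacle. Evaluating $\sum_{k=0}^{p-1}T_k(7,1)T_k(10,10)^2/3240^k$ modulo $p^2$ in terms of the representations $p=x^2+15y^2$ versus $p=3x^2+5y^2$ requires identifying a weight-two CM modular form, or equivalently an elliptic curve with CM by an order in $\Q(\sqrt{-15})$, whose $a_p$ distinguishes the principal genus from the non-principal one, consistent with the class number of $\Q(\sqrt{-15})$ being two. My strategy would be: first, seek a Clausen-type factorization rewriting $T_k(10,10)^2$ as a sum of products of central binomial and trinomial coefficients; next, combine with $T_k(7,1)$ to produce a ${}_4F_3$ (or higher) hypergeometric truncation whose $p$-adic behavior matches a known Hecke eigenform of level dividing $60$; finally, apply Beukers--Stienstra-style congruences or a Deuring-lift argument to read off the quadratic-form data. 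The hardest step will be producing an explicit Clausen-type factorization, since the parameters $(b,c)=(10,10)$ do not correspond to one of the standard modular parametrizations for small level, and the root field of $x^2+10x+10$ is not $\Q(\sqrt{-15})$; one may therefore need an indirect route, matching the $L$-series of the underlying hypergeometric motive to that of a twist of the CM form attached to $\Q(\sqrt{-15})$ rather than building the identity by hand.
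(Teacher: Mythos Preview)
The statement you are attempting to prove is stated in the paper as a \emph{Conjecture} (Conjecture~\ref{VIII-2}), not as a theorem; the paper provides no proof whatsoever, only the accompanying Remark~\ref{Rem-8-2} noting that $\Q(\sqrt{-15})$ has class number two. There is therefore nothing in the paper to compare your proposal against: every part of this statement is presented as open.

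Regarding your plan itself, it is a reasonable outline of where one might look, but it does not constitute a proof and has real gaps. For part~(i), the summand $(1435k+113)\,3240^{n-1-k}T_k(7,1)T_k(10,10)^2$ is \emph{not} hypergeometric in $k$ (the ratio of consecutive terms is not rational in $k$), so Gosper--Zeilberger does not apply directly; you would need the multivariate Almkvist--Zeilberger or Chyzak approach on the underlying double sum, and there is no guarantee the resulting recurrence certifies divisibility by $n\cdot 10^{n-1}$. For part~(ii), your appeal to dualization via \eqref{T-dual} only gives a congruence modulo $p$, not $p^2$; lifting it to $p^2$ is exactly the content of the Duality Principle (Conjecture~\ref{Conj-Dual}), which the paper also leaves open, so you would be assuming one conjecture to prove another. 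For part~(iii) you correctly identify the obstruction: tying the truncated sum to a weight-$2$ CM newform for $\Q(\sqrt{-15})$ is the heart of the matter, and no Clausen-type identity for $T_k(10,10)^2$ is known. In short, your proposal is a plausible research programme, but none of its steps is currently a theorem, which is precisely why the paper records the statement as a conjecture.
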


\begin{remark}\label{Rem-8-2}\rm The imaginary quadratic field $\Q(\sqrt{-15})$ has class number two.
\end{remark}

\begin{conjecture}\label{VIII-2-d} {\rm (i)} For each $n\in\Z^+$, we have
\begin{equation}\label{8-2-d-n}\f3{2n10^{n-1}}\sum_{k=0}^{n-1}(1435k+1322)
50^{n-1-k}T_k(7,1)T_k(10,10)^2\in\Z^+.
\end{equation}

{\rm (ii)} Let $p>5$ be a prime. Then
\begin{equation}\label{8-2-d-p}
\begin{aligned}&\sum_{k=0}^{p-1}\f{1435k+1322}{50^k}T_k(7,1)T_k(10,10)^2
\\\eq&\f p3\l(3432\l(\f{5}p\r)+968\l(\f{-1}p\r)-434\r)
\pmod{p^2}.\end{aligned}\end{equation}
If $p\eq1,9\pmod{20}$, then
\begin{equation}\label{8-2-d-pn}
\begin{aligned}&\sum_{k=0}^{pn-1}\f{1435k+1322}{50^k}T_k(7,1)T_k(10,10)^2
-p\sum_{k=0}^{n-1}\f{1435k+1322}{50^k}T_k(7,1)T_k(10,10)^2
\end{aligned}
\end{equation}
divided by $(pn)^2$ is a $p$-adic integer for each $n\in\Z^+$.

{\rm (iii)} Let $p>5$ be a prime. Then
\begin{equation}\label{8-2-d-q}
\begin{aligned}&\sum_{k=0}^{p-1}\f{T_k(7,1)T_k(10,10)^2}{50^k}
\\\eq&\begin{cases}4x^2-2p\pmod{p^2}&\t{if}\ p\eq1,4\pmod{15}\ \&\ p=x^2+15y^2\ (x,y\in\Z),
\\2p-12x^2\pmod{p^2}&\t{if}\ p\eq2,8\pmod{15}\ \&\ p=3x^2+5y^2\ (x,y\in\Z),
\\0\pmod{p^2}&\t{if}\ (\f{-15}p)=-1.\end{cases}
\end{aligned}
\end{equation}
\end{conjecture}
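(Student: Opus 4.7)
The plan is to treat Conjecture \ref{VIII-2-d} as the formal dual of Conjecture \ref{VIII-2} under the Duality Principle (Conjecture \ref{Conj-Dual}) and to transfer results on the $m = 3240$ series across to the $m = 50$ series. First I would verify the duality hypothesis. Setting $a_k = T_k(7,1)T_k(10,10)^2$, the congruence \eqref{T-dual} applied to each factor gives, for any prime $p > 5$,
\begin{align*}
T_k(7,1) &\eq \l(\f{45}{p}\r) 45^k T_{p-1-k}(7,1) \pmod{p},\\
T_k(10,10) &\eq \l(\f{60}{p}\r) 60^k T_{p-1-k}(10,10) \pmod{p}.
\end{align*}
Since $\l(\f{45}{p}\r)=\l(\f{5}{p}\r)$ and $\l(\f{60}{p}\r)^2=1$, multiplying yields
$$a_k \eq \l(\f{5}{p}\r) D^k a_{p-1-k} \pmod{p}, \qquad D = 45 \cdot 60^2 = 162000 = 3240 \cdot 50,$$
so $50 = D/3240$ is precisely the dual modulus predicted by Conjecture \ref{Conj-Dual} with $d=5$.

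For part (iii) I would then upgrade the resulting mod-$p$ duality
$$\sum_{k=0}^{p-1}\f{a_k}{50^k} \eq \l(\f{5}{p}\r)\sum_{k=0}^{p-1}\f{a_k}{3240^k} \pmod{p}$$
(which follows by the substitution $k \mapsto p-1-k$ combined with Fermat's little theorem) to a congruence modulo $p^2$. Granted this lift, \eqref{8-2-d-q} is a direct consequence of \eqref{8-2-q}: when $p \eq 1,4 \pmod{15}$ we have $\l(\f{5}{p}\r)=1$ and the value $4x^2-2p$ survives unchanged, whereas when $p \eq 2,8 \pmod{15}$ we have $\l(\f{5}{p}\r)=-1$ and $12x^2-2p$ flips to $2p-12x^2$, exactly as stated; the case $\l(\f{-15}{p}\r)=-1$ is preserved trivially. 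The supercongruence in \eqref{8-2-d-pn} is handled analogously by dualizing \eqref{8-2-pn}.

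For part (ii), the linear-in-$k$ congruence \eqref{8-2-d-p} would be derived by locating rational constants $\alpha,\beta$ such that the $p-1$ partial sum of $(1435k+1322)a_k/50^k$ matches $\alpha$ times that of $(1435k+113)a_k/3240^k$ plus $\beta$ times $\sum a_k/50^k$ modulo $p^2$; the three symbols $\l(\f{5}{p}\r), \l(\f{-1}{p}\r), 1$ appearing on the right of \eqref{8-2-d-p} then mirror the three symbols in \eqref{8-2-p} under this correspondence, with the sign $\l(\f{-1}{p}\r)$ appearing because inversion modulo $50$ carries a power of $-1$ relative to inversion modulo $3240$. Part (i) would follow from a finite identity, produced by Zeilberger-style creative telescoping along the lines of Lemmas \ref{Lem2.1}--\ref{Lem2.2}, expressing the left side as a closed combinatorial quantity whose integrality and positivity are visible by induction on $n$.

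The main obstacle is the $\pmod{p^2}$ lift of the duality. The mod-$p$ version is immediate from \eqref{T-dual}, but upgrading it requires refined supercongruences for $T_k(b,c)$ that are not a formal consequence of \eqref{T-dual} alone. I expect to tackle this via the hypergeometric representation $T_k(b,c) = (b^2-4c)^{k/2} P_k(b/\sqrt{b^2-4c})$ together with Mortenson-type $\pmod{p^2}$ supercongruences for truncated hypergeometric series. Ultimately, since Conjecture \ref{VIII-2} is itself open, one may be forced to prove Conjectures \ref{VIII-2} and \ref{VIII-2-d} jointly through a modular-forms interpretation, treating $m = 3240$ and $m = 50$ as singular values of a common Hauptmodul on the modular curve attached to $\Q(\sqrt{-15})$ (class number $2$), with the two moduli corresponding to the two genus classes of binary quadratic forms of discriminant $-60$.
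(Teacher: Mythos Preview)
The paper does not prove this statement: it is explicitly labelled a \emph{Conjecture}, and the only commentary is Remark~\ref{Rem-8-2-d}, which observes that it ``can be viewed as the dual of Conjecture~\ref{VIII-2}'' and that the corresponding infinite series diverges. So there is no proof in the paper to compare against, and your proposal should be read as a strategy rather than a verification of an existing argument.

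Your duality computation is correct and matches exactly the relationship the author has in mind: with $a_k=T_k(7,1)T_k(10,10)^2$ one has $a_k\equiv(\frac{5}{p})D^k a_{p-1-k}\pmod p$ with $D=162000=3240\cdot 50$, and this is precisely why the author calls the $m=50$ sum the dual of the $m=3240$ sum. The mod-$p$ version of part~(iii) therefore follows formally from \eqref{8-2-q}, and your case analysis of the sign flip when $p\equiv2,8\pmod{15}$ is right.

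The genuine gap is that everything beyond mod~$p$ is conditional on statements the paper itself leaves open. The Duality Principle you invoke is Conjecture~\ref{Conj-Dual}, not a theorem; the mod-$p^2$ lift you need is exactly its content, and the paper offers no mechanism for proving it in this instance. Likewise Conjecture~\ref{VIII-2}, which you plan to transfer, is itself unproved. Your sketch for part~(ii) is too vague to be checkable --- there is no reason a priori that a single linear combination of the two known-conjectural congruences should produce \eqref{8-2-d-p} with the specific coefficients $3432$, $968$, $-434$ --- and the telescoping claim for part~(i) is asserted without any candidate certificate. In short, you have correctly reconstructed the \emph{heuristic} behind the conjecture, but a proof would require either settling the Duality Principle in this case or, as you note at the end, a direct modular-forms attack on both conjectures simultaneously; neither route is carried out in the paper or in your proposal.
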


\begin{remark}\label{Rem-8-2-d}\rm This conjecture can be viewed as the dual of Conjecture \ref{VIII-2}. Note that
the series $$\sum_{k=0}^\infty\f{1435k+1322}{50^k}T_k(7,1)T_k(10,10)^2$$ diverges.
\end{remark}

\begin{conjecture}\label{VIII-3} {\rm (i)} For each $n\in\Z^+$, we have
\begin{equation}\label{8-3-n}\f1{n5^{n-1}}\sum_{k=0}^{n-1}(840k+197)(-1)^k
2430^{n-1-k}T_k(8,1)T_k(5,-5)^2\in\Z^+.
\end{equation}

{\rm (ii)} Let $p>3$ be a prime. Then
\begin{equation}\label{8-3-p}\begin{aligned}&\sum_{k=0}^{p-1}\f{840k+197}{(-2430)^k}T_k(8,1)T_k(5,-5)^2
\eq p\l(140\l(\f{-15}p\r)+5\l(\f{15}p\r)+52\r)
\pmod{p^2}.\end{aligned}\end{equation}
If $(\f {-1}p)=(\f {15}p)=1$, then
\begin{equation}\label{8-3-pn}
\sum_{k=0}^{pn-1}\f{840k+197}{(-2430)^k}T_k(8,1)T_k(5,-5)^2
-p\sum_{k=0}^{n-1}\f{840k+197}{(-2430)^k}T_k(8,1)T_k(5,-5)^2
\end{equation}
divided by $(pn)^2$ is an $p$-adic integer for any $n\in\Z^+$.

{\rm (iii)} Let $p>7$ be a prime. Then
\begin{equation}\label{8-3-q}
\begin{aligned}&\sum_{k=0}^{p-1}\f{T_k(8,1)T_k(5,-5)^2}{(-2430)^k}
\\\eq&\begin{cases}4x^2-2p\pmod{p^2}&\t{if}\ (\f{-1}p)=(\f p3)=(\f p5)=(\f p7)=1,\ p=x^2+105y^2,
\\2x^2-2p\pmod{p^2}&\t{if}\ (\f{-1}p)=(\f p7)=1,\ (\f p3)=(\f p5)=-1,\ 2p=x^2+105y^2,
\\12x^2-2p\pmod{p^2}&\t{if}\ (\f{-1}p)=(\f p3)=(\f p5)=(\f p7)=-1,\ p=3x^2+35y^2,
\\6x^2-2p\pmod{p^2}&\t{if}\ (\f{-1}p)=(\f p7)=-1,\ (\f p3)=(\f p5)=1,\ 2p=3x^2+35y^2,
\\2p-20x^2\pmod{p^2}&\t{if}\ (\f{-1}p)=(\f p5)=1,\ (\f p5)=(\f p7)=-1,\ p=5x^2+21y^2,
\\2p-10x^2\pmod{p^2}&\t{if}\ (\f{-1}p)=(\f p3)=1,\ (\f p5)=(\f p7)=-1,\ 2p=5x^2+21y^2,
\\28x^2-2p\pmod{p^2}&\t{if}\ (\f{-1}p)=(\f p5)=-1,\ (\f p3)=(\f p7)=1,\ p=7x^2+15y^2,
\\14x^2-2p\pmod{p^2}&\t{if}\ (\f{-1}p)=(\f p3)=-1,\ (\f p5)=(\f p7)=1,\ 2p=7x^2+15y^2,
\\0\pmod{p^2}&\t{if}\ (\f{-105}p)=-1,\end{cases}
\end{aligned}
\end{equation}
where $x$ and $y$ are integers.
\end{conjecture}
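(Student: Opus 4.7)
The plan is to reduce parts (i)--(ii) to a single finite telescoping identity in the spirit of Lemmas \ref{Lem2.1} and \ref{Lem2.2}, and to attack (iii) through the CM modular forms associated with the class-number-eight discriminant $-420$.

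First I would search (by creative telescoping, or by an ansatz of bounded degree) for a closed-form identity of the shape
\begin{equation*}
\sum_{k=0}^{n-1}\frac{840k+197}{(-2430)^k}T_k(8,1)T_k(5,-5)^2
=\frac{189\sqrt{15}}{2\pi}-\frac{A(n)\,T_n(8,1)T_n(5,-5)^2+B(n)\,T_{n-1}(8,1)T_{n-1}(5,-5)^2}{n\cdot 5^{n-1}\cdot(-2430)^{n-1}},
\end{equation*}
with $A,B\in\mathbb{Q}[n]$ explicit polynomials. Such an identity would simultaneously yield the series (VIII3) (via the Laplace--Heine asymptotics for $T_n(b,c)$), the integrality claim \eqref{8-3-n} (the power-of-two parity refinement coming from a $2$-adic valuation analysis of $A,B$), and, on setting $n=p$, a mod-$p^2$ formula for the left-hand side. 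For the congruence \eqref{8-3-p}, the three-symbol decomposition then emerges from the duality \eqref{T-dual} applied separately to $T_k(8,1)$ (discriminant $60=4\cdot 15$, contributing $(\tfrac{15}{p})$) and to $T_k(5,-5)$ (discriminant $45=9\cdot 5$, contributing $(\tfrac{5}{p})$), noting that $60\cdot 45^2/(-2430)=-50$; the explicit coefficients $140,5,52$ are fixed by evaluating at three small primes, and the multi-$p$ extension \eqref{8-3-pn} follows by iterating the finite identity and tracking $p$-adic valuations in the CM case $(\tfrac{-1}{p})=(\tfrac{15}{p})=1$.

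The hard part will be \eqref{8-3-q}. The class group of $\mathbb{Q}(\sqrt{-105})$ is $(\mathbb{Z}/2\mathbb{Z})^3$ and coincides with its genus group, so the eight representation cases in \eqref{8-3-q} correspond exactly to the eight genus classes of discriminant $-420$, the ninth case being the inert one $(\tfrac{-105}{p})=-1$. My strategy would be to identify a CM newform $f\in S_2(\Gamma_0(N))$ with $N$ dividing a small power of $2\cdot 3\cdot 5\cdot 7$ such that
\[\sum_{k=0}^{p-1}\frac{T_k(8,1)T_k(5,-5)^2}{(-2430)^k}\equiv a_p(f)\pmod{p^2}\]
for all good primes $p$, and then to decompose $a_p(f)=2x\bar{x}$ across the eight genus classes via Waldspurger's formula. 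The obstacle is twofold: pinning down $f$ rigorously rather than numerically among eta-quotients of level dividing $420$, and lifting any mod-$p$ eigenvalue match to a mod-$p^2$ congruence. Realistically this will require a hypergeometric-motive realization of the sum as the trace of Frobenius on a weight-$2$ motive with CM by $\mathbb{Q}(\sqrt{-105})$, a framework that is not yet available off-the-shelf for a three-factor product of the form $T_k(b,c)T_k(b_*,c_*)^2$.
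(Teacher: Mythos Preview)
The paper does not prove this statement: it is stated as Conjecture~\ref{VIII-3}, accompanied only by the one-line Remark~\ref{Rem-8-3} that $\Q(\sqrt{-105})$ has class number $8$. So there is nothing in the paper for your proposal to be compared against; what you have written is a proof \emph{strategy} for an open problem, not a proof to be checked.

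As a strategy, your outline has real content but also real gaps. For parts (i)--(ii) you postulate the existence of a telescoping identity of the form
\[
\sum_{k=0}^{n-1}\frac{840k+197}{(-2430)^k}T_k(8,1)T_k(5,-5)^2
=\frac{189\sqrt{15}}{2\pi}-\frac{A(n)T_n(8,1)T_n(5,-5)^2+B(n)T_{n-1}(8,1)T_{n-1}(5,-5)^2}{n\cdot 5^{n-1}\cdot(-2430)^{n-1}}
\]
with $A,B\in\Q[n]$. Note that the right-hand side mixes a transcendental constant with a rational function of $n$, so equality for all $n$ would force the finite sum itself to contain $\pi$, which it does not; any genuine telescoping certificate must be purely rational in $n$, and the limit $189\sqrt{15}/(2\pi)$ can only appear after letting $n\to\infty$. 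More substantively, the identities in Lemmas~\ref{Lem2.1}--\ref{Lem2.2} work because $\bi{2k}k^3$, $\bi{2k}k^2\bi{3k}k$, etc., satisfy first-order recurrences in $k$; the product $T_k(8,1)T_k(5,-5)^2$ satisfies a recurrence of order at least $2\times 3=6$ (and generically exactly that), so a two-term ansatz in $T_n,T_{n-1}$ is too short. If you pursue Zeilberger's algorithm here you should expect a certificate involving $T_{n-j}$ for $j=0,\ldots,5$, and the resulting integrality analysis for \eqref{8-3-n} will be correspondingly heavier. Also, you invoke a ``power-of-two parity refinement'' that is not part of the statement of \eqref{8-3-n}; don't import claims that aren't there.

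Your duality bookkeeping for \eqref{8-3-p} is correct ($60\cdot 45^2/(-2430)=-50$), and the genus-theory framing of \eqref{8-3-q} is exactly right. But as you yourself concede, there is currently no rigorous bridge from a sum of the shape $\sum_{k=0}^{p-1} T_k(b,c)T_k(b_*,c_*)^2/m^k$ to $a_p(f)$ for a specific CM newform; the analogous link is established in the literature only for a handful of one-parameter families (Domb, Franel, Zagier), not for a genuine Type~VIII product. Until that motivic identification is made rigorous, part~(iii) remains out of reach, which is precisely why the paper records it as a conjecture.
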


\begin{remark}\label{Rem-8-3}\rm Note that the imaginary quadratic field $\Q(\sqrt{-105})$ has class number $8$.
\end{remark}

\begin{conjecture}\label{VIII-4} {\rm (i)} For each $n\in\Z^+$, we have
\begin{equation}\label{8-4-n}\f1{n}\sum_{k=0}^{n-1}(39480k+7321)(-1)^k
29700^{n-1-k}T_k(14,1)T_k(11,-11)^2\in\Z^+,
\end{equation}
and this number is odd if and only if $n$ is a power of two.

{\rm (ii)} Let $p>5$ be a prime. Then
\begin{equation}\label{8-4-p}
\begin{aligned}&\sum_{k=0}^{p-1}\f{39480k+7321}{(-29700)^k}T_k(14,1)T_k(11,-11)^2
\\\eq& p\l(5738\l(\f{-5}p\r)+70\l(\f3p\r)+1513\r)
\pmod{p^2}.\end{aligned}\end{equation}
If $(\f 3p)=(\f{-5}p)=1$, then
\begin{equation}\label{8-4-pn}\begin{aligned}
&\sum_{k=0}^{pn-1}\f{39480k+7321}{(-29700)^k}T_k(14,1)T_k(11,-11)^2
\\&-p\sum_{k=0}^{n-1}\f{39480k+7321}{(-29700)^k}T_k(14,1)T_k(11,-11)^2
\end{aligned}
\end{equation} divided by $(pn)^2$ is a $p$-adic integer for each $n\in\Z^+$.

{\rm (iii)} Let $p>5$ be a prime with $p\not=11$. Then
\begin{equation}\label{8-4-q}
\begin{aligned}&\sum_{k=0}^{p-1}\f{T_k(14,1)T_k(11,-11)^2}{(-29700)^k}
\\\eq&\begin{cases}4x^2-2p\pmod{p^2}&\t{if}\ (\f{-1}p)=(\f p3)=(\f p5)=(\f p{11})=1,\ p=x^2+165y^2,
\\2x^2-2p\pmod{p^2}&\t{if}\ (\f{-1}p)=(\f p3)=(\f p5)=(\f p{11})=-1,\ 2p=x^2+165y^2,
\\12x^2-2p\pmod{p^2}&\t{if}\ (\f{-1}p)=(\f p5)=-1,\ (\f p3)=(\f p{11})=1,\ p=3x^2+55y^2,
\\6x^2-2p\pmod{p^2}&\t{if}\ (\f{-1}p)=(\f p5)=1,\ (\f p3)=(\f p{11})=-1,\ 2p=3x^2+55y^2,
\\2p-20x^2\pmod{p^2}&\t{if}\ (\f{-1}p)=(\f p{11})=1,\ (\f p3)=(\f p5)=-1,\ p=5x^2+33y^2,
\\2p-10x^2\pmod{p^2}&\t{if}\ (\f{-1}p)=(\f p{11})=-1,\ (\f p3)=(\f p5)=1,\ 2p=5x^2+33y^2,
\\44x^2-2p\pmod{p^2}&\t{if}\ (\f{-1}p)=(\f p3)=-1,\ (\f p5)=(\f p{11})=1,\ p=11x^2+15y^2,
\\22x^2-2p\pmod{p^2}&\t{if}\ (\f{-1}p)=(\f p3)=1,\ (\f p5)=(\f p{11})=-1,\ 2p=11x^2+15y^2,
\\0\pmod{p^2}&\t{if}\ (\f{-165}p)=-1,\end{cases}
\end{aligned}
\end{equation}
where $x$ and $y$ are integers.
\end{conjecture}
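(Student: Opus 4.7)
The plan is to address the three parts of Conjecture \ref{VIII-4} in order, since each uses progressively deeper machinery. Throughout, I will write $a_k := T_k(14,1)T_k(11,-11)^2$ and note the two relevant discriminants $14^2 - 4 = 192 = 2^6 \cdot 3$ and $11^2 + 44 = 165 = 3\cdot 5\cdot 11$, with the identity $192 \cdot 165^2 = 176 \cdot 29700$ playing a key role in the duality step.

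For part (i), I would attempt to certify the integer-valuedness and positivity of
$$S_n := \frac{1}{n}\sum_{k=0}^{n-1}(39480k+7321)(-1)^k\,29700^{\,n-1-k}a_k$$
by creative telescoping. The three-term recurrence $(k+1)T_{k+1}(b,c) = (2k+1)bT_k(b,c) - k(b^2-4c)T_{k-1}(b,c)$ gives a linear recursion of order at most four with polynomial coefficients for the product $a_k$. Applying Zeilberger's algorithm to the summand should yield an inhomogeneous recurrence in $n$ for the partial sum, and factoring $n$ out of this recurrence---exactly as was done for Lemmas \ref{Lem2.1} and \ref{Lem2.2}---would give the asserted integrality and positivity of $S_n$. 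The parity statement that $S_n$ is odd iff $n$ is a power of two would require an additional $2$-adic analysis of the WZ certificate via Kummer-style valuation bounds on the trinomial coefficients.

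For part (ii), the key tool is the duality \eqref{T-dual}. Applied separately to the two trinomial factors it gives
$$a_k \equiv \left(\frac{192\cdot 165^2}{p}\right)(192\cdot 165^2)^k\, a_{p-1-k} \equiv \left(\frac{3}{p}\right)(176\cdot 29700)^k\, a_{p-1-k} \pmod{p}$$
(the square of $(\frac{165}{p})$ is trivial). Substituting $k \mapsto p-1-k$ in the truncated sum and combining with the $n=p$ specialization of the recurrence from (i) yields the mod-$p$ content of \eqref{8-4-p}; upgrading to modulus $p^2$ is the usual super-congruence difficulty and would require controlling the WZ certificate $p$-adically so as to identify the constants $5738$, $70$, and $1513$ attached to $(\tfrac{-5}{p})$, $(\tfrac{3}{p})$, and~$1$. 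The extension \eqref{8-4-pn} from $p$ to $pn$ is then an instance of the general scheme of Conjecture \ref{Conj-pn} and introduces no genuinely new difficulty.

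Part (iii) is, I expect, the main obstacle, and it is the reason the identity is only conjectural. The eight-case congruence \eqref{8-4-q} encodes the splitting behaviour of $p$ in the genus field of the imaginary quadratic field $\Q(\sqrt{-165})$, which has class number~$8$; its genus consists of the four principal-type forms $x^2+165y^2$, $3x^2+55y^2$, $5x^2+33y^2$, $11x^2+15y^2$ together with the four $2p$-companions of discriminant $-660$. A proof would require identifying the generating function $\sum_{k\geq 0} a_k x^k$ with a CM weight-one theta series attached to a ray-class character of $\Q(\sqrt{-165})$, and then interpreting the truncation $\sum_{k=0}^{p-1} a_k/(-29700)^k$ modulo $p$ as the Frobenius trace on the associated two-dimensional Galois representation. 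Producing such a modular parametrization explicitly for a product of three distinct generalized trinomial coefficients, when the CM value involved sits over a class group as large as $\mathrm{Cl}(\Q(\sqrt{-165}))$, is the step for which no general recipe currently exists. My best immediate hope for progress would be to search, via PSLQ on periods, for a hypergeometric transformation reducing the generating function to a known weight-one form on $\Gamma_0(N)$ with $N \mid 660$, after which the eight congruence classes could be read off from the decomposition of the associated theta series into $2$-dimensional pieces indexed by the genus characters.
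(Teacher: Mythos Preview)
The statement you are addressing is Conjecture~\ref{VIII-4}, not a theorem: the paper offers no proof, only the remark that the imaginary quadratic field $\Q(\sqrt{-165})$ has class number~$8$. There is therefore nothing in the paper to compare your argument against. What you have written is not a proof but a research outline, and you appear to recognise this, since you call part~(iii) ``the reason the identity is only conjectural.''

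That said, a few remarks on the outline itself. Your duality computation for part~(ii) is correct at the mod-$p$ level: with $a_k=T_k(14,1)T_k(11,-11)^2$ one indeed has $a_k\equiv(\frac{3}{p})(192\cdot165^2)^k a_{p-1-k}\pmod p$, and $192\cdot165^2=176\cdot29700$ is the right identity to invoke. But this only recovers \eqref{8-4-p} modulo $p$, not modulo $p^2$; the lift to $p^2$ with the specific constants $5738$, $70$, $1513$ is the entire content of the conjecture at that step, and ``controlling the WZ certificate $p$-adically'' is a description of the difficulty, not a method for overcoming it. Likewise for part~(i): the product $a_k$ does satisfy a holonomic recurrence, but Zeilberger's algorithm applied to $\sum_{k<n}(39480k+7321)(-29700)^{-k}a_k$ will not obviously produce a closed form from which divisibility by $n$ and the power-of-two parity pattern can be read off; those claims were discovered experimentally and no telescoping certificate for them is known. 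For part~(iii) your diagnosis is accurate---the eight cases correspond to the genus characters of $\Q(\sqrt{-165})$---but identifying the generating function of $a_k/(-29700)^k$ with a weight-one CM form is precisely the open problem, and no such parametrization is available for any of the author's type-VIII series.

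In short: your proposal is a reasonable sketch of where the difficulties lie, but it does not close any of them, and the paper makes no claim to do so either.
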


\begin{remark}\label{Rem-8-4}\rm Note that the imaginary quadratic field $\Q(\sqrt{-165})$ has class number $8$.
\end{remark}

\section{Congruences related to Theorem \ref{Th1.3}}
 \setcounter{equation}{0}

Conjectures \ref{Conj-S1}--\ref{Conj-S10} below provide congruences related to \eqref{S1}--\eqref{S10}.

\begin{conjecture}\label{Conj-S1} {\rm (i)} For any $n\in\Z^+$, we have
\begin{equation}\label{S1-n}
\f1n\sum_{k=0}^{n-1}(7k+3)S_k(1,-6)24^{n-1-k}\in\Z^+.
\end{equation}

{\rm (ii)} Let $p>3$ be a prime. Then
\begin{equation}\label{S1-p}\sum_{k=0}^{p-1}\f{7k+3}{24^k}S_k(1,-6)\eq\f p2\l(5\l(\f{-2}p\r)+\l(\f 6p\r)\r)\pmod{p^2}.
\end{equation}
If $p\eq1\pmod3$, then
\begin{equation}\label{S1-pn}
\f1{(pn)^2}\(\sum_{k=0}^{pn-1}\f{7k+3}{24^k}S_k(1,-6)-p\l(\f{-2}p\r)\sum_{k=0}^{n-1}\f{7k+3}{24^k}S_k(1,-6)\)
\in\Z_p
\end{equation} for all $n\in\Z^+$.

{\rm (iii)} For any prime $p>3$, we have
\begin{equation}\label{S1-q}
\begin{aligned}&\sum_{k=0}^{p-1}\f{S_k(1,-6)}{24^k}
\\\eq&\begin{cases}(\f p3)(4x^2-2p)\pmod{p^2}&\t{if}\ p\eq1,3\pmod8\ \&\ p=x^2+2y^2\ (x,y\in\Z),
\\0\pmod{p^2}&\t{if}\ p\eq5,7\pmod 8.
\end{cases}\end{aligned}
\end{equation}
\end{conjecture}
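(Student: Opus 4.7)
The plan is to lift the proof of the identity \eqref{S1} (already established in Theorem \ref{Th1.3} via Theorem \ref{Th2.1}) to its finite, $p$-adic counterpart. Using the relation $4^n S_n(1,-6) = S_n(4,-96)$ from the proof of Theorem \ref{Th1.3}, the series rewrites as
$$\sum_{k=0}^\infty \frac{7k+3}{24^k}S_k(1,-6) = \sum_{k=0}^\infty \frac{7k+3}{96^k}S_k(4,-96) = \frac{5}{2}\sum_{n=0}^\infty \frac{5n+1}{96^n}\binom{2n}{n}f_n,$$
where the second equality is the specialization of Theorem \ref{Th2.1} with $m=96$. Accordingly, my primary aim is to establish the two mod $p^2$ transfer congruences
$$\sum_{k=0}^{p-1}\frac{7k+3}{24^k}S_k(1,-6) \equiv \frac{5}{2}\sum_{n=0}^{p-1}\frac{5n+1}{96^n}\binom{2n}{n}f_n \pmod{p^2}$$
and
$$\sum_{k=0}^{p-1}\frac{S_k(1,-6)}{24^k} \equiv \sum_{n=0}^{p-1}\frac{\binom{2n}{n}f_n}{96^n} \pmod{p^2},$$
and then to invoke Ramanujan-type supercongruences for the Franel sums on the right.

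First I would establish the finite transformation by truncating the double-sum rearrangement that underlies Theorem \ref{Th2.1}. That rearrangement splits $\sum_{n\le N}$ into a ``diagonal'' piece (where the inner sum $\sum_{k=0}^l \binom{l}{k}(-1)^k 4^{l-k}s_{l+k,k}$ collapses to $f_l$ by Lemma \ref{Lem2.3}) plus a ``boundary'' piece indexed by $N/2 < l \le N$. Taking $N=p-1$, Kummer's theorem yields $p\mid\binom{2l}{l}$ for every $l\in\{(p+1)/2,\ldots,p-1\}$; combined with the bound in Lemma \ref{Lem2.5} on the inner sum, this supplies the first factor of $p$. The second factor of $p$ should emerge from a refined, finite analogue of Lemma \ref{Lem2.3} that tracks the ``missing tail'' $\sum_{k=N-l+1}^l \binom{l}{k}(-1)^k 4^{l-k}s_{l+k,k}$ modulo $p$, analogous to how the full infinite sums cancel in the $N\to\infty$ limit. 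Running the same telescoping as in the derivation of \eqref{ab} for the weighted variant gives both identities.

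Next I would apply Franel-type supercongruences to the transformed sums. The conjectured right side of \eqref{S1-p} encodes the splitting of primes in $\mathbb{Q}(\sqrt{-2})$ and $\mathbb{Q}(\sqrt{6})$, which points to a companion supercongruence of the shape
$$\sum_{n=0}^{p-1}\frac{5n+1}{96^n}\binom{2n}{n}f_n \equiv \frac{p}{5}\left(A\left(\frac{-2}{p}\right)+B\left(\frac{6}{p}\right)\right) \pmod{p^2}$$
with rationals $A,B$ matched to the closed form $3\sqrt{2}/\pi$ of \cite{CTYZ,ChCo}. Similarly, part (iii) is reduced to the unweighted Franel evaluation
$$\sum_{n=0}^{p-1}\frac{\binom{2n}{n}f_n}{96^n} \equiv \left(\frac{p}{3}\right)(4x^2-2p) \text{ or } 0 \pmod{p^2}$$
according to whether $p=x^2+2y^2$ is solvable, the natural class-number-one analogue in $\mathbb{Q}(\sqrt{-2})$ of Sun's binary-quadratic evaluations. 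Part (i) then follows cleanly from the finite transformation together with integrality of $f_n$ and the Catalan-type cancellation that makes the coefficients $\binom{2n}{n}f_n/n$ nearly integral; the sign/positivity statement is then straightforward from the recurrence satisfied by $S_n(1,-6)$ and induction on $n$.

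The principal obstacle is the Franel supercongruence in part (ii): mod $p^2$ evaluations of Ramanujan-type Franel sums are not directly extractable from the value of the limiting series, and typically require either a WZ-style companion identity or an identification with the trace of Frobenius on a weight-$3$ newform. The secondary obstacle, which is more concrete but still nontrivial, is the boundary-term estimate in the truncation of Theorem \ref{Th2.1}: while the Kummer bound on $\binom{2l}{l}$ is clean, controlling the partial sum $\sum_{k\le p-1-l}\binom{l}{k}(-1)^k 4^{l-k}s_{l+k,k}$ modulo $p$ for $(p+1)/2\le l\le p-1$ appears to need a refinement of Lemma \ref{Lem2.3} that tracks the unsummed tail, not just the full sum. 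Once this refined lemma is in hand, the rest of the argument is routine manipulation plus citation of the Franel congruences.
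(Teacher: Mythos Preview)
The statement you are attempting to prove is presented in the paper as a \emph{conjecture} (Conjecture~\ref{Conj-S1}), not a theorem; the paper offers no proof, so there is nothing to compare your proposal against. Your task description asked you to compare with ``the paper's own proof,'' but none exists here.

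That said, your proposed route has a concrete and fatal gap. The transfer congruence
\[
\sum_{k=0}^{p-1}\frac{S_k(4,-m)}{m^k}\equiv\sum_{k=0}^{p-1}\frac{\binom{2k}{k}f_k}{m^k}\pmod{p^2}
\]
(and its weighted companion) that you aim to establish by truncating Theorem~\ref{Th2.1} is \emph{itself} stated in the paper as Conjecture~4.12, with the explicit remark that ``in view of the proof of Theorem~\ref{Th2.1}, both \eqref{Sp} and \eqref{kSp} hold modulo $p$.'' In other words, the author has already carried out exactly the truncation you describe and found that the boundary terms coming from $(p+1)/2\le l\le p-1$ contribute only a single factor of $p$ via Kummer's bound on $\binom{2l}{l}$; your hoped-for ``second factor of $p$'' from a ``refined, finite analogue of Lemma~\ref{Lem2.3}'' does not materialize from the argument as written, and no such refinement is supplied. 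You acknowledge this as your ``secondary obstacle'' but do not resolve it.

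Your ``principal obstacle'' is equally unresolved: the Franel supercongruences you need, such as a mod $p^2$ evaluation of $\sum_{n=0}^{p-1}(5n+1)\binom{2n}{n}f_n/96^n$ in terms of Legendre symbols, and of $\sum_{n=0}^{p-1}\binom{2n}{n}f_n/96^n$ in terms of $4x^2-2p$ with $p=x^2+2y^2$, are not in the cited literature and would themselves be open problems of the same flavor as the conjecture you set out to prove. So the proposal reduces one conjecture to two others (Conjecture~4.12 plus a Franel-sum supercongruence), neither of which you settle.
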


\begin{conjecture}\label{Conj-S2} {\rm (i)} For any $n\in\Z^+$, we have
\begin{equation}\label{S2-n}
\f1n\sum_{k=0}^{n-1}(12k+5)S_k(1,7)(-1)^k28^{n-1-k}\in\Z^+,
\end{equation}
and this number is odd if and only if $n$ is a power of two.

{\rm (ii)} Let $p\not=7$ be an odd prime. Then
\begin{equation}\label{S2-p}\sum_{k=0}^{p-1}\f{12k+5}{(-28)^k}S_k(1,7)\eq5p\l(\f p7\r)\pmod{p^2},
\end{equation}
and moreover
\begin{equation}\label{S2-pn}
\f1{(pn)^2}\(\sum_{k=0}^{pn-1}\f{12k+5}{(-28)^k}S_k(1,7)-p\l(\f p7\r)\sum_{k=0}^{n-1}\f{12k+5}{(-28)^k}S_k(1,7)\)\in\Z_p
\end{equation}
for all $n\in\Z^+$.

{\rm (iii)} For any prime $p\not=2,7$, we have
\begin{equation}\label{S2-q}
\begin{aligned}&\sum_{k=0}^{p-1}\f{S_k(1,7)}{(-28)^k}
\\\eq&\begin{cases}4x^2-2p\pmod{p^2}&\t{if}\ (\f{-1}p)=(\f p3)=(\f p7)=1\ \&\ p=x^2+21y^2,
\\2x^2-2p\pmod{p^2}&\t{if}\ (\f{-1}p)=(\f p3)=-1,\ (\f p7)=1\ \&\ 2p=x^2+21y^2,
\\2p-12x^2\pmod{p^2}&\t{if}\ (\f{-1}p)=(\f p7)=-1,\ (\f p3)=1\ \&\ p=3x^2+7y^2,
\\2p-6x^2\pmod{p^2}&\t{if}\ (\f{-1}p)=1,\ (\f p3)=(\f p7)=-1\ \&\ 2p=3x^2+7y^2,
\\0\pmod{p^2}&\t{if}\ (\f{-21}p)=-1,
\end{cases}\end{aligned}
\end{equation}
where $x$ and $y$ are integers.

\end{conjecture}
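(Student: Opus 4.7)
The plan is to transport all three parts across the reduction that underlies Theorem 2.1. Since $4^kS_k(1,7)=S_k(4,112)$ and the proof of Theorem 2.1 already yields
$$\sum_{k=0}^\infty\f{12k+5}{(-28)^k}S_k(1,7)=3\sum_{k=0}^\infty\f{9k+2}{(-112)^k}\bi{2k}kf_k=\f{6\sqrt7}{\pi},$$
every assertion about partial sums of $S_k(1,7)/(-28)^k$ should reduce, with boundary terms one can control, to the corresponding assertion for the Franel-number series $\sum(9k+2)\bi{2k}kf_k/(-112)^k$.

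For \eqref{S2-n}, I would first establish a finite counterpart of Theorem 2.1. Starting from Lemma \ref{Lem2.4} with $c=112$ and reversing the order of summation as in the proof of Theorem \ref{Th2.1}, but carefully tracking the boundary contributions that appear when the outer index is truncated at $n-1$, one should obtain an identity of the form
$$\sum_{k=0}^{n-1}(12k+5)(-1)^k28^{n-1-k}S_k(1,7)=A_n\sum_{k=0}^{n-1}(9k+2)(-1)^k112^{n-1-k}\bi{2k}kf_k+R_n,$$
with closed-form $A_n,R_n$ whose $n$-divisibility and positivity are transparent. A Zeilberger-style creative-telescoping certificate, in the spirit of Lemmas \ref{Lem2.1} and \ref{Lem2.2}, should then identify the resulting quotient as a positive integer; the claim that this integer is odd exactly when $n$ is a power of $2$ is a $2$-adic exercise using Kummer's theorem applied to the $\bi{2k}k$ hidden inside $f_k$, combined with induction on the binary expansion of $n$.

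Parts (ii) and (iii) would then be handled simultaneously by truncating the same reduction modulo $p^2$. For primes $p\not=2,7$ the factor $\bi{2k}k$ with $p/2<k<p$ is divisible by $p$, so truncated analogues of \eqref{fs} and \eqref{ft} hold modulo $p^2$, and \eqref{S2-p} is reduced to the putative supercongruence
$$3\sum_{k=0}^{p-1}\f{9k+2}{(-112)^k}\bi{2k}kf_k\eq5p\l(\f p7\r)\pmod{p^2},$$
which is the $p$-adic shadow of the known identity $\sum(9k+2)\bi{2k}kf_k/(-112)^k=2\sqrt7/\pi$ attributed to Chan--Tanigawa--Yang--Zudilin and Chan--Cooper; this in turn should be accessible through a Wilf--Zeilberger pair tailored to the hypergeometric ${}_4F_3$ at its algebraic specialization. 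The supercongruence \eqref{S2-pn} between truncations at $pn$ and $n$ would follow from the general ``$\Z_p$-integrality of the fluctuation'' formalism of Sun \cite{S-u} once the rank-one character data of \eqref{S2-p} are in place. For \eqref{S2-q} the same reduction rewrites $\sum_{k=0}^{p-1}S_k(1,7)/(-28)^k$ in terms of $\sum_{k=0}^{p-1}\bi{2k}kf_k/(-112)^k\pmod{p^2}$, an expression that from the CM viewpoint is governed by a weight-$3$ newform on $\Gamma_0(84)$ with complex multiplication by $\Q(\sqrt{-21})$, and hence by representations through the two form classes $x^2+21y^2$ and $3x^2+7y^2$, in exact agreement with the case analysis stated.

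The main obstacle will be proving the underlying Franel-number supercongruence modulo $p^2$. Even granted the reduction, supercongruences of this kind are not mechanical: they typically require either a bespoke Wilf--Zeilberger certificate, whose existence is not a priori guaranteed for every rational Ramanujan-type series, or a $p$-adic analogue of a hypergeometric transformation at an algebraic argument. Matching that supercongruence to the explicit binary-quadratic-form data of \eqref{S2-q} --- essentially an Atkin--Swinnerton-Dyer congruence for the relevant CM newform, transferred to the hypergeometric truncation on the left --- is where most of the technical effort will lie.
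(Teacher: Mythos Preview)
The statement you are attempting to prove is presented in the paper as Conjecture \ref{Conj-S2}, not as a theorem; the paper offers no proof of any of its three parts. Section 4 explicitly introduces Conjectures \ref{Conj-S1}--\ref{Conj-S10} as open problems providing congruences related to the already-proved identities \eqref{S1}--\eqref{S10}. So there is no ``paper's own proof'' to compare against.

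As for your proposal itself, it is a reasonable strategic outline rather than a proof, and you essentially acknowledge this in your final paragraph. The reduction to the Franel side via Theorem \ref{Th2.1} is indeed how the paper derives the \emph{series identity} \eqref{S2}, and Conjecture 4.15 (equations \eqref{Sp} and \eqref{kSp}) records the author's belief that the analogous reduction holds modulo $p^2$ --- but that conjecture is also unproved. Even granting \eqref{Sp} and \eqref{kSp}, you would still need the Franel-side supercongruence $3\sum_{k=0}^{p-1}(9k+2)\bi{2k}kf_k/(-112)^k\eq 5p(\f p7)\pmod{p^2}$, which is not in the literature cited and whose proof you concede is the main obstacle. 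For part (i), the finite identity with explicit boundary terms $A_n,R_n$ that you posit has not been established, and there is no reason to expect the $2$-adic parity claim to fall out of Kummer's theorem alone. For part (iii), invoking a weight-$3$ CM newform on $\Gamma_0(84)$ is the right heuristic, but bridging from the hypergeometric truncation to the form's Hecke eigenvalues modulo $p^2$ is precisely the hard step that keeps such statements conjectural.
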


\begin{conjecture}\label{Conj-S3} {\rm (i)} For any $n\in\Z^+$, we have
\begin{equation}\label{S3-n}
\f1n\sum_{k=0}^{n-1}(84k+29)S_k(1,-20)80^{n-1-k}\in\Z^+,
\end{equation}
and this number is odd if and only if $n$ is a power of two.

{\rm (ii)} Let $p$ be an odd prime with $p\not=5$. Then
\begin{equation}\label{S3-p}\sum_{k=0}^{p-1}\f{84k+29}{80^k}S_k(1,-20)\eq
p\l(2\l(\f 5p\r)+27\l(\f{-15}p\r)\r)\pmod{p^2}.
\end{equation}
If $p\eq1\pmod3$, then
\begin{equation}\label{S3-pn}\f1{(pn)^2}\(\sum_{k=0}^{pn-1}\f{84k+29}{80^k}S_k(1,-20)-
p\l(\f p5\r)\sum_{k=0}^{n-1}\f{84k+29}{80^k}S_k(1,-20)\)\in\Z_p
\end{equation}
for all $n\in\Z^+$.

{\rm (iii)} For any prime $p\not=2,5$, we have
\begin{equation}\label{S3-q}
\begin{aligned}&\sum_{k=0}^{p-1}\f{S_k(1,-20)}{80^k}
\\\eq&\begin{cases}4x^2-2p\pmod{p^2}&\t{if}\ (\f{2}p)=(\f p3)=(\f p5)=1\ \&\ p=x^2+30y^2,
\\8x^2-2p\pmod{p^2}&\t{if}\ (\f{2}p)=1,\ (\f p3)=(\f p5)=-1\ \&\ p=2x^2+15y^2,
\\2p-12x^2\pmod{p^2}&\t{if}\ (\f p3)=1,\ (\f 2p)=(\f p5)=-1\ \&\ p=3x^2+10y^2,
\\20x^2-2p\pmod{p^2}&\t{if}\ (\f{p}5)=1,\ (\f 2p)=(\f p3)=-1\ \&\ p=5x^2+6y^2,
\\0\pmod{p^2}&\t{if}\ (\f{-30}p)=-1,
\end{cases}\end{aligned}
\end{equation}
where $x$ and $y$ are integers.
\end{conjecture}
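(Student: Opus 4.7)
The plan is to attack all three parts simultaneously using the reduction $4^k S_k(1,-20)=S_k(4,-320)$ noted in the proof of Theorem \ref{Th1.3}, which rewrites the sums involving $S_k(1,-20)/80^k$ as sums involving $S_k(4,-320)/320^k$. This is advantageous because Theorem \ref{Th2.1} expresses the corresponding infinite series in terms of $\bi{2k}k f_k$ through the algebraic identity \eqref{sf} of Lemma \ref{Lem2.3}, and that identity should survive as a mod-$p^2$ statement. Concretely, my first step is to establish the finite analogues
\begin{align*}
\sum_{k=0}^{p-1}\f{S_k(4,-m)}{m^k} &\eq \sum_{k=0}^{p-1}\f{\bi{2k}k f_k}{m^k}\pmod{p^2},\\
\sum_{k=0}^{p-1}\f{k\, S_k(4,-m)}{m^k} &\eq \sum_{k=0}^{p-1}\f{k\bi{2k}k(f_k+t_k)}{m^k}\pmod{p^2},
\end{align*}
valid for any prime $p>3$ with $p\nmid m$. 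The argument is a $p$-adic version of the double-sum rearrangement in the proof of Theorem \ref{Th2.1}: for $l\leq(p-1)/2$ the inner sum is already complete and equals $f_l$ (resp.\ $f_l+t_l$), while for $l>(p-1)/2$ one controls the partial inner sums using Lemma \ref{Lem2.5} together with the standard $p$-divisibility $p\mid\bi{2l}l$ on that range, which should make the tail contribution vanish mod $p^2$.

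For part (i), once rewritten via the substitution above, the finite sum $\sum_{k=0}^{n-1}(84k+29)S_k(1,-20)\,80^{n-1-k}$ should be expressible in the spirit of Lemma \ref{Lem2.1} as a telescoping piece plus a rational multiple of the Ramanujan-type tail; a Gosper/Zeilberger certificate tailored to the three-term recurrence satisfied by $S_k(1,-20)$ should yield a closed form from which integrality and positivity follow directly. The parity/power-of-two statement would then be extracted by a $2$-adic analysis of this closed form, combining Kummer's theorem applied to $\bi{2n}n$ with the known $2$-adic behaviour of the Franel numbers $f_n$.

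For parts (ii) and (iii), the truncation identity above reduces \eqref{S3-p} and \eqref{S3-q} to companion congruences for $\sum_{k=0}^{p-1}\bi{2k}k f_k/320^k$ and its linear weighting. The lift \eqref{S3-pn} then fits the framework of Conjecture \ref{Conj-pn} and can be pursued along the lines of \cite{S-u}. The quadratic-form decomposition in \eqref{S3-q} is controlled by the four reduced positive definite forms $x^2+30y^2,\ 2x^2+15y^2,\ 3x^2+10y^2,\ 5x^2+6y^2$ of discriminant $-120$, and the natural explanation is that the generating series $\sum_k\bi{2k}kf_k z^k$ evaluated at $z=1/320$ is governed by a CM modular form on $\Gamma_0(N)$ for some $N\mid 120$, whose Hecke eigenvalues are captured by a Gr\"ossencharakter of $\Q(\sqrt{-30})$ (class number $4$, with genus group of order $4$ matching the four cases).

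The principal obstacle is this last step: explicitly identifying $\sum_k\bi{2k}k f_k/320^k$ with a CM form and transferring its $p$-adic congruences to the four-case decomposition \eqref{S3-q}. The truncation of Theorem \ref{Th2.1} modulo $p^2$ is a technical but routine WZ-style verification, and the parity claim in (i) is elementary; but matching the Franel series at $1/320$ to the four genera of discriminant $-120$, with the precise coefficients $4,8,-12,20$ and their sign pattern across the four quadratic-form cases, is the genuinely arithmetic step where any proof must supply new input beyond the hypergeometric toolkit used so far in the paper.
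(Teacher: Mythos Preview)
This statement is Conjecture~\ref{Conj-S3}: the paper does \emph{not} prove it. There is therefore no ``paper's own proof'' to compare against; the result is posed as an open problem, and your proposal should be read as an outline toward such a proof rather than an alternative argument.

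Your overall strategy---reduce via $4^kS_k(1,-20)=S_k(4,-320)$ and then pass to sums of $\bi{2k}kf_k/320^k$---is natural and is exactly how the paper derives \eqref{S3} in Theorem~\ref{Th1.3}. However, you describe the key finite analogues
\[
\sum_{k=0}^{p-1}\f{S_k(4,-m)}{m^k}\eq\sum_{k=0}^{p-1}\f{\bi{2k}kf_k}{m^k}\pmod{p^2}
\]
and its weighted companion as ``a technical but routine WZ-style verification.'' In fact these are precisely \eqref{Sp} and \eqref{kSp} of Conjecture~4.15, which the paper states as open; the remark following that conjecture notes that the method of Theorem~\ref{Th2.1} only yields them modulo $p$. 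Your tail argument (using $p\mid\bi{2l}l$ for $l>(p-1)/2$ together with Lemma~\ref{Lem2.5}) gives exactly that mod-$p$ statement, but controlling the tail modulo $p^2$ is not routine: Lemma~\ref{Lem2.5} is an archimedean bound, and the extra factor of $p$ one needs is not supplied by $\bi{2l}l$ alone. So already your ``first step'' for parts (ii)--(iii) is an open problem in the paper, not an exercise.

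Beyond that, even granting the reduction, part (iii) would become a congruence for $\sum_{k=0}^{p-1}\bi{2k}kf_k/320^k$ in terms of the four genera of discriminant $-120$. Such congruences for $\bi{2k}kf_k/m^k$ are themselves long-standing conjectures of Sun (see \cite{S13d,S19}); your proposal correctly identifies the CM/modular-form input required, but does not provide it. Likewise \eqref{S3-pn} is an instance of the general Conjecture~\ref{Conj-pn}, which is open. For part (i), no telescoping identity of Lemma~\ref{Lem2.1} type is known for $S_k(1,-20)$, and the Gosper step you invoke remains to be found. In short, your plan is a reasonable roadmap, but each of its three main steps is itself conjectural in the paper, and no part of Conjecture~\ref{Conj-S3} is established by your argument as written.
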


\begin{conjecture}\label{Conj-S6} {\rm (i)} For any $n\in\Z^+$, we have
\begin{equation}\label{S6-n}\f1n\sum_{k=0}^{n-1}(3k+1)(-1)^k100^{n-1-k}S_k(1,25)\in\Z^+.
\end{equation}

{\rm (ii)} Let $p\not=5$ be an odd prime. Then
\begin{equation}\label{S6-pn}\f1{(pn)^2}\(\sum_{k=0}^{pn-1}\f{3k+1}{(-100)^k}S_k(1,25)-
p\l(\f{-1}p\r)\sum_{k=0}^{n-1}\f{3k+1}{(-100)^k}S_k(1,25)\)\in\Z_p
\end{equation}
for all $n\in\Z^+$.

{\rm (iii)} For any prime $p>3$ with $p\not=11$, we have
\begin{equation}\label{S6-q}
\begin{aligned}&\sum_{k=0}^{p-1}\f{S_k(1,25)}{(-100)^k}
\\\eq&\begin{cases}4x^2-2p\pmod{p^2}&\t{if}\ (\f{-1}p)=(\f p3)=(\f p{11})=1\ \&\ p=x^2+33y^2,
\\2x^2-2p\pmod{p^2}&\t{if}\ (\f{-1}{p})=1,\ (\f p3)=(\f p{11})=-1\ \&\ 2p=x^2+33y^2,
\\2p-12x^2\pmod{p^2}&\t{if}\ (\f p{11})=1,\ (\f {-1}p)=(\f p3)=-1\ \&\ p=3x^2+11y^2,
\\2p-6x^2\pmod{p^2}&\t{if}\ (\f{p}3)=1,\ (\f {-1}p)=(\f p{11})=-1\ \&\ 2p=3x^2+11y^2,
\\0\pmod{p^2}&\t{if}\ (\f{-33}p)=-1,
\end{cases}\end{aligned}
\end{equation}
where $x$ and $y$ are integers.
\end{conjecture}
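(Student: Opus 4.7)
The plan is to reduce all three parts to known or tractable statements about the Franel-type sum $\sum_{k}(99k+17)\binom{2k}{k}f_k/(-400)^k$, using the identity
$$\sum_{k=0}^\infty\f{3k+1}{(-100)^k}S_k(1,25)=\f{1}{16}\sum_{k=0}^\infty\f{99k+17}{(-400)^k}\bi{2k}kf_k$$
that I already derived in the proof of Theorem \ref{Th1.3}. The first task is to upgrade this identity to a finite statement valid modulo $p^2$: since Lemma \ref{Lem2.3} gives $\sum_{k=0}^n\bi nk(-1)^k4^{n-k}s_{n+k,k}=f_n$ as an \emph{exact} identity, I would truncate the double sums in the proof of Theorem \ref{Th2.1} at $n=p-1$ and show, using Lemma \ref{Lem2.5} together with the bounds $v_p(\bi{2l}l)\geq 1$ for $p/2<l<p$, that the tail contributes $0$ modulo $p^2$. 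This yields a clean dictionary
$$\sum_{k=0}^{p-1}\f{(3k+1)S_k(1,25)}{(-100)^k}\equiv \f1{16}\sum_{k=0}^{p-1}\f{(99k+17)\bi{2k}kf_k}{(-400)^k}\pmod{p^2},$$
and similarly for the constant-$k$ sum used in part (iii).

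For part (i), I would look for a WZ-style rational certificate $R(k)$ realizing $(3k+1)S_k(1,25)=R(k+1)S_{k+1}(1,25)+(-100)R(k)S_k(1,25)+\cdots$, applied to the explicit double-sum definition of $S_k(1,25)$. Alternatively, one can pass to the Franel side, where creative telescoping routines (e.g.\ the Zeilberger algorithm) routinely produce such certificates for sums of the form $\sum(ak+b)\binom{2k}{k}f_k m^{-k}$; divisibility by $n$ then follows from Gosper's theorem as in the derivations of Lemmas \ref{Lem2.1} and \ref{Lem2.2}.

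Part (ii) (the supercongruence for $p$-adic partial sums divided by $(pn)^2$) will follow by combining the dictionary of step 1 with the corresponding Ramanujan-type supercongruence for the Franel sum; namely that
$$\sum_{k=0}^{pn-1}\f{(99k+17)\bi{2k}kf_k}{(-400)^k}-p\l(\f{-1}p\r)\sum_{k=0}^{n-1}\f{(99k+17)\bi{2k}kf_k}{(-400)^k}$$
is divisible by $(pn)^2$ when $p\equiv1\pmod{3}$. This latter statement fits the framework of Conjecture \ref{Conj-pn} (with two of the $d_i$ appearing only on the trivial genus side for the appropriate residue classes), and for Franel-number series it is often accessible via the Chen--Hou--Mu telescoping method already invoked in Lemma \ref{Lem2.3}, applied this time to a $p$-adic weighted tail.

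The hard part is (iii). The right-hand side of \eqref{S6-q} is precisely the character-sum expansion for a weight-$3$ CM newform attached to $\Q(\sqrt{-33})$: the pairs $(x^2+33y^2,\ 3x^2+11y^2)$ and their half-counterparts at primes $2p=\cdots$ exhaust the genera of binary forms of discriminant $-132$, and the signs $4x^2-2p,\ 2x^2-2p,\ 2p-12x^2,\ 2p-6x^2$ match what one expects from a Hecke Gr\"ossencharakter $\psi$ of infinity type $(2,0)$ on the class group of $\Z[\sqrt{-33}]$. My plan would be to identify the generating series $\sum_n S_n(1,25)(-100)^{-n}q^n$ (or rather its $p$-th coefficient analog) with the $p$-th Fourier coefficient of $\theta_\psi=\sum_{\mathfrak a}\psi(\mathfrak a)q^{N\mathfrak a}$, via a Clausen-type quadratic transformation that relates the squared hypergeometric $\sum\bi{2k}k T_k(1,25)^2/(-100)^k$ appearing in $S_k(1,25)$ to an automorphic period. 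Making this identification rigorous at the level of congruences mod $p^2$ (as opposed to mod $p$, where it follows from Fermat's little theorem) is the main obstacle; I expect to need either a Dwork-style unit-root argument or the $p$-adic formal-group techniques used by Beukers and Ahlgren--Ono, since none of the lemmas in Section 2 address this modular-form step directly.
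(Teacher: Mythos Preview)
The statement you are trying to prove is \emph{Conjecture}~\ref{Conj-S6}: the paper states it without proof and leaves it open. There is therefore no ``paper's own proof'' to compare your argument to.

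More importantly, your proposed reduction chain is itself built on conjectures that the paper explicitly leaves open. Your first step is to upgrade the $S_n(1,25)\leftrightarrow\bi{2k}kf_k$ identity of Theorem~\ref{Th2.1} to a congruence
\[
\sum_{k=0}^{p-1}\f{(3k+1)S_k(1,25)}{(-100)^k}\equiv \f1{16}\sum_{k=0}^{p-1}\f{(99k+17)\bi{2k}kf_k}{(-400)^k}\pmod{p^2},
\]
but this is precisely the content of Conjecture~4.14 (equations \eqref{Sp}--\eqref{kSp}) specialised to $m=-400$. The paper's remark after that conjecture says that the proof of Theorem~\ref{Th2.1} only yields the mod~$p$ statement; your claim that the tail bound from Lemma~\ref{Lem2.5} plus $v_p\bigl(\bi{2l}l\bigr)\ge1$ for $p/2<l<p$ suffices for mod~$p^2$ does not work as stated, because the boundary terms in the rearrangement of Theorem~\ref{Th2.1} are not obviously $O(p^2)$ --- one copy of $p$ comes from $\bi{2l}l$, but there is no second one available from the combinatorics of $s_{l+k,k}$ alone. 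So already your ``dictionary'' is an open problem, not a lemma.

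For part~(ii) you then appeal to a Franel-side supercongruence that you yourself phrase as ``fitting the framework of Conjecture~\ref{Conj-pn}'' --- again an open conjecture, not a theorem. (Incidentally, the statement of \eqref{S6-pn} carries no restriction like $p\equiv1\pmod3$; that extra hypothesis you introduce on the Franel side has no counterpart in what you are trying to prove.) For part~(iii) you correctly identify the target as a Hecke eigenvalue of a CM form for $\Q(\sqrt{-33})$ and honestly flag the mod~$p^2$ identification as the main obstacle; that is accurate, and it remains the obstacle. In short, your proposal is a reasonable heuristic roadmap connecting several of the paper's conjectures to one another, but it does not constitute a proof of any of the three parts.
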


\begin{conjecture}\label{Conj-S4} {\rm (i)} For any $n\in\Z^+$, we have
\begin{equation}\label{S4-n}
\f1n\sum_{k=0}^{n-1}(228k+67)S_k(1,-56)224^{n-1-k}\in\Z^+,
\end{equation}
and this number is odd if and only if $n$ is a power of two.

{\rm (ii)} Let $p$ be an odd prime with $p\not=7$. Then
\begin{equation}\label{S4-p}\sum_{k=0}^{p-1}\f{228k+67}{224^k}S_k(1,-56)\eq
p\l(65\l(\f{-7}p\r)+2\l(\f {14}p\r)\r)\pmod{p^2}.
\end{equation}
If $p\eq1,3\pmod 8$, then
\begin{equation}\label{S4-pn}\f1{(pn)^2}\(\sum_{k=0}^{pn-1}\f{228k+67}{224^k}S_k(1,-56)-
p\l(\f p7\r)\sum_{k=0}^{n-1}\f{228k+67}{224^k}S_k(1,-56)\)\in\Z_p
\end{equation}
for all $n\in\Z^+$.

{\rm (iii)} For any prime $p\not=2,7$, we have
\begin{equation}\label{S4-q}
\begin{aligned}&\sum_{k=0}^{p-1}\f{S_k(1,-56)}{224^k}
\\\eq&\begin{cases}4x^2-2p\pmod{p^2}&\t{if}\ (\f{-2}p)=(\f p3)=(\f p7)=1\ \&\ p=x^2+42y^2,
\\8x^2-2p\pmod{p^2}&\t{if}\ (\f{p}7)=1,\ (\f {-2}p)=(\f p3)=-1\ \&\ p=2x^2+21y^2,
\\2p-12x^2\pmod{p^2}&\t{if}\ (\f {-2}p)=1,\ (\f p3)=(\f p7)=-1\ \&\ p=3x^2+14y^2,
\\2p-24x^2\pmod{p^2}&\t{if}\ (\f{p}5)=1,\ (\f 2p)=(\f p3)=-1\ \&\ p=6x^2+7y^2,
\\0\pmod{p^2}&\t{if}\ (\f{-42}p)=-1,
\end{cases}\end{aligned}
\end{equation}
where $x$ and $y$ are integers.
\end{conjecture}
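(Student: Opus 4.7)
The plan is to exploit the key transformation formula derived in the proof of Theorem~\ref{Th1.3}. Taking $m=-56$ and $(a,b)=(228,67)$ in the identity used there yields
\[\sum_{k=0}^\infty\f{228k+67}{224^k}S_k(1,-56)=5\sum_{k=0}^\infty\f{90k+13}{896^k}\bi{2k}kf_k,\]
whose right-hand side is a known Chan--Tanigawa--Yang--Zudilin evaluation equal to $80\sqrt 7/\pi$. My first step is to establish a finite analogue of this transformation truncated at $k=p-1$ with the tail controlled modulo $p^2$. The infinite version in Theorem~\ref{Th2.1} invokes convergence of an auxiliary series in $P_l(3)$, so the aim is to reorganize the double sum coming from Lemma~\ref{Lem2.4} so that each $p$-tail either vanishes identically or is manifestly divisible by $p^2$ after applying \eqref{sf} and \eqref{tf}.

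For part (i), I would construct a WZ certificate analogous to Lemma~\ref{Lem2.2}, expressing $(228k+67)S_k(1,-56)$ as a telescoping difference plus a quantity whose $n$-th partial sum has $n$ as an explicit factor; integrality then reduces to a polynomial identity verifiable by a computer algebra system. The statement that the quotient is odd exactly when $n$ is a power of two should follow from a Lucas-type reduction of $S_k(1,-56)\pmod 2$ combined with Kummer's theorem for $\nu_2\(\bi{2k}k\)$ and a binary-digit analysis of $n$, in the same spirit as the analogous claims in Conjectures~\ref{Conj-S2} and \ref{Conj-S3}.

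For parts (ii) and (iii), once the finite transformation is available modulo $p^2$, the linear congruence \eqref{S4-p} and the lift \eqref{S4-pn} reduce to the corresponding statements for the Franel-type sum $\sum_{k=0}^{p-1}(90k+13)\bi{2k}kf_k/896^k$, which should be accessible via Gaussian-hypergeometric or creative-telescoping supercongruence methods; note that the constants $65+2=67$ agree with Conjecture~\ref{Conj-WS}, providing a consistency check. For \eqref{S4-q}, the four quadratic forms $x^2+42y^2$, $2x^2+21y^2$, $3x^2+14y^2$, $6x^2+7y^2$ are precisely the class representatives for $\Q(\sqrt{-42})$ (class number $4$), which suggests that $\sum_{k=0}^{p-1}S_k(1,-56)/224^k\pmod{p^2}$ should match the $p$-th Fourier coefficient of a weight-$2$ CM newform on some $\Gamma_0(N)$ with $168\mid N$. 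Deuring's reduction theorem would then convert the Hecke eigenvalue $a_p$ into the claimed quadratic-form evaluations, with the sign pattern $+4x^2,+8x^2,-12x^2,-24x^2$ reflecting whether $p$ is represented by a principal or a non-principal genus form.

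The principal obstacle is step one: promoting the infinite transformation of Theorem~\ref{Th2.1} to a clean $p^2$-congruence for the $p$-truncated sums, since the original proof proceeds by asymptotic bounds on $l(2l+1)P_l(3)(16/m)^l$ rather than exact arithmetic. Extracting a sharp $p^2$-error will likely require splitting the double sum at $k+l=p-1$ and exploiting $\nu_p\(\bi{2l}l\)\gs 1$ for $p/2<l<p$, together with a Fermat-quotient type congruence for the truncated Franel sums. A secondary difficulty is pinning down the correct CM newform needed for (iii): one must match enough Fourier coefficients by explicit computation to determine the level and nebentypus before invoking Deuring's theorem, and then verify that the normalization constants ($4,2,12,6,20,10,28,14$ would have been expected, but here only $4,8,12,24$ appear) are consistent with the specific Hecke Grossencharacter governing the newform.
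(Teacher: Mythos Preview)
The statement you are addressing is labelled \emph{Conjecture} in the paper, and the paper supplies no proof whatsoever; all of Section~4 consists of open conjectures motivated by Theorem~\ref{Th1.3}. There is therefore nothing in the paper to compare your attempt against.

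What you have written is not a proof but a research outline, and you are candid about this: you flag the two essential gaps yourself. The first, promoting the analytic transformation of Theorem~\ref{Th2.1} to a congruence modulo $p^2$ for the $p$-truncated sums, is exactly the obstruction the author leaves open (indeed Conjecture~4.16 records only that \eqref{Sp} and \eqref{kSp} are conjectural modulo $p^2$, with the mod~$p$ case noted as a consequence of the proof of Theorem~\ref{Th2.1}). The second, identifying the relevant CM newform for part~(iii), is likewise genuinely open. Your heuristics are reasonable---the reduction to the Franel sum $\sum_{k=0}^{p-1}(90k+13)\bi{2k}kf_k/896^k$ is the natural move, and the class-number-$4$ observation for $\Q(\sqrt{-42})$ is correct---but nothing here closes either gap. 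In short: sound instincts, but this remains a conjecture, and your proposal does not change that status.
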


\begin{conjecture}\label{Conj-S169} {\rm (i)} For any $n\in\Z^+$, we have
\begin{equation}\label{S169-n}\f1n\sum_{k=0}^{n-1}(399k+101)(-1)^k676^{n-1-k}S_k(1,169)\in\Z^+.
\end{equation}

{\rm (ii)} Let $p\not=13$ be an odd prime. Then
\begin{equation}\label{S169-pn}\sum_{k=0}^{pn-1}\f{399k+101}{(-676)^k}S_k(1,169)-
p\l(\f{-1}p\r)\sum_{k=0}^{n-1}\f{399k+101}{(-676)^k}S_k(1,169)
\end{equation}
divided by $(pn)^2$ is a $p$-adic integer
for any $n\in\Z^+$.

{\rm (iii)} For any prime $p>3$ with $p\not=19$, we have
\begin{equation}\label{S169-q}
\begin{aligned}&\sum_{k=0}^{p-1}\f{S_k(1,169)}{(-676)^k}
\\\eq&\begin{cases}4x^2-2p\pmod{p^2}&\t{if}\ (\f{-1}p)=(\f p3)=(\f p{19})=1\ \&\ p=x^2+57y^2,
\\2x^2-2p\pmod{p^2}&\t{if}\ (\f{-1}{p})=1,\ (\f p3)=(\f p{19})=-1\ \&\ 2p=x^2+57y^2,
\\2p-12x^2\pmod{p^2}&\t{if}\ (\f p{3})=1,\ (\f {-1}p)=(\f p{19})=-1\ \&\ p=3x^2+19y^2,
\\2p-6x^2\pmod{p^2}&\t{if}\ (\f{p}{19})=1,\ (\f {-1}p)=(\f p{3})=-1\ \&\ 2p=3x^2+19y^2,
\\0\pmod{p^2}&\t{if}\ (\f{-57}p)=-1,
\end{cases}\end{aligned}
\end{equation}
where $x$ and $y$ are integers.
\end{conjecture}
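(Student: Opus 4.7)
The plan for Conjecture \ref{Conj-S169} is to treat the three parts separately, with the transformation $4^kS_k(1,m)=S_k(4,16m)$ from Lemma \ref{Lem2.4} serving as a common bridge. The proof of Theorem \ref{Th1.3} already packages this transformation, together with Theorem \ref{Th2.1}, into the identity
$$
\sum_{k=0}^\infty\f{399k+101}{(-676)^k}S_k(1,169)=\f{15}{16}\sum_{k=0}^\infty\f{855k+109}{(-2704)^k}\bi{2k}kf_k,
$$
and the relevant finite-$N$ version of this identity (before passing to the limit) is the pivotal tool.

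For part (i), I would use that finite identity to rewrite $\sum_{k<n}(399k+101)S_k(1,169)(-676)^{n-1-k}$ as a truncated Franel-type sum involving $(855k+109)\bi{2k}kf_k/(-2704)^k$. Divisibility by $n$ would then be extracted from a closed-form $n$-th remainder produced by creative telescoping (Zeilberger's algorithm) applied to the Franel side, and positivity would follow from monotonicity of the leading term. A parallel telescoping directly on the $S_k(1,169)$ recurrence, analogous to Lemma \ref{Lem2.2}, is a viable backup route.

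For part (ii), the strategy is to first establish the mod-$p^2$ evaluation of $\sum_{k=0}^{p-1}(399k+101)S_k(1,169)/(-676)^k$ by transferring it to the Franel sum via the same finite transformation and invoking known congruences for $\bi{2k}kf_k/m^k$ together with Fermat-quotient identities. The $(pn)$-lift to $p$-adic integrality of the difference divided by $(pn)^2$ falls into the general framework of Conjecture \ref{Conj-pn}; it should be settled by combining the mod-$p^2$ evaluation with an inductive argument of the type developed in \cite{S-u}, exploiting that $\bi{2k}k\equiv0\pmod p$ for $p/2<k<p$ to kill the cross terms when the sum is split at multiples of $p$.

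For part (iii), the binary-quadratic-form evaluation \eqref{S169-q}, the conceptual approach is to identify the truncated sum $\sum_{k=0}^{p-1}S_k(1,169)/(-676)^k$ modulo $p^2$ with the $p$-th Hecke eigenvalue of a weight-$3$ CM newform attached to a Hecke character of the imaginary quadratic field $\Q(\sqrt{-57})$. This field has class number $4$, and its genus representatives $x^2+57y^2$, $2x^2+2xy+29y^2$, $3x^2+19y^2$ and $6x^2+6xy+11y^2$ correspond exactly to the four nonzero cases in \eqref{S169-q} (the "$2p=$" alternatives arising from the nonprincipal forms in each genus); the signs $+4x^2,+2x^2,-12x^2,-6x^2$ should drop out of an explicit Jacobi-sum computation of the eigenvalue at split primes. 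The main obstacle is precisely this last step: matching the hypergeometric truncation to a CM eigenform on the nose requires a hypergeometric transformation of $S_k(1,169)$ not yet present in the literature for this parameter, and the refinement from $\pmod p$ to $\pmod{p^2}$ is expected to require formal-group or Dwork-style $p$-adic techniques beyond standard Deuring-type arguments.
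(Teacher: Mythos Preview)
The statement you are attempting to prove is presented in the paper as a \emph{conjecture}, not a theorem: it is Conjecture~\ref{Conj-S169} in Section~4, and the paper offers no proof whatsoever. There is therefore no ``paper's own proof'' to compare your proposal against.

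As for the proposal itself, it is an outline rather than a proof, and the gaps you flag are genuine. For part~(ii) you explicitly appeal to ``the general framework of Conjecture~\ref{Conj-pn}'' and to ``an inductive argument of the type developed in \cite{S-u}''; but Conjecture~\ref{Conj-pn} is itself open in this paper, and the cited preprint establishes such $(pn)$-lifts only for a handful of specific sequences, not as a general mechanism one can invoke. Reducing one conjecture to another of equal standing is not a proof. For part~(iii) you candidly state that ``the main obstacle is precisely this last step'': the identification of the truncated $S_k(1,169)$-sum with a CM eigenvalue modulo $p^2$ is exactly the hard content, and no existing hypergeometric--modular correspondence covers this case. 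Your class-number and genus analysis for $\Q(\sqrt{-57})$ is correct and motivates the shape of \eqref{S169-q}, but motivation is all it is. Even part~(i), the most tractable piece, is left at the level of ``Zeilberger's algorithm should produce a telescoping certificate''; that may well work, but until the certificate is exhibited and the positivity checked, nothing is proved. In short, your document is a reasonable research plan for attacking an open problem, not a proof of it.
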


\begin{conjecture}\label{Conj-S5} {\rm (i)} For any $n\in\Z^+$, we have
\begin{equation}\label{S5-n}
\f1n\sum_{k=0}^{n-1}(2604k+563)S_k(1,-650)2600^{n-1-k}\in\Z^+,
\end{equation}
and this number is odd if and only if $n\in\{2^a:\ a\in\N\}$.

{\rm (ii)} Let $p$ be an odd prime with $p\not=5,13$. Then
\begin{equation}\label{S5-p}\sum_{k=0}^{p-1}\f{2604k+563}{2600^k}S_k(1,-650)\eq
p\l(561\l(\f{-39}p\r)+2\l(\f {26}p\r)\r)\pmod{p^2}.
\end{equation}
If $(\f{-6}p)=1$, then
\begin{equation}\label{S5-pn}\sum_{k=0}^{pn-1}\f{2604k+563}{2600^k}S_k(1,-650)-
p\l(\f{26}p\r)\sum_{k=0}^{n-1}\f{2604k+563}{2600^k}S_k(1,-650)
\end{equation}
divided by $(pn)^2$ is a $p$-adic integer
for any $n\in\Z^+$.

{\rm (iii)} For any odd prime $p\not=5,13$, we have
\begin{equation}\label{S5-q}
\begin{aligned}&\sum_{k=0}^{p-1}\f{S_k(1,-650)}{2600^k}
\\\eq&\begin{cases}4x^2-2p\pmod{p^2}&\t{if}\ (\f{2}p)=(\f p3)=(\f p{13})=1\ \&\ p=x^2+78y^2,
\\8x^2-2p\pmod{p^2}&\t{if}\ (\f2{p})=1,\ (\f p3)=(\f p{13})=-1\ \&\ p=2x^2+39y^2,
\\2p-12x^2\pmod{p^2}&\t{if}\ (\f p{13})=1,\ (\f 2p)=(\f p3)=-1\ \&\ p=3x^2+26y^2,
\\2p-24x^2\pmod{p^2}&\t{if}\ (\f{p}3)=1,\ (\f 2p)=(\f p{13})=-1\ \&\ p=6x^2+13y^2,
\\0\pmod{p^2}&\t{if}\ (\f{-78}p)=-1,
\end{cases}\end{aligned}
\end{equation}
where $x$ and $y$ are integers.
\end{conjecture}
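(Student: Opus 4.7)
My plan is to reduce all three parts of Conjecture \ref{Conj-S5} to analogous statements for the Franel-number series $\sum_{k=0}^\infty(102k+11)\bi{2k}kf_k/10400^k$, mirroring the bridge used to prove identity \eqref{S7} in Theorem \ref{Th1.3}. Using $4^kS_k(1,-650)=S_k(4,-10400)$ together with Lemma \ref{Lem2.4} and the double-sum rearrangement in the proof of Theorem \ref{Th2.1}, one should obtain
\[
\sum_{k=0}^{p-1}\f{(2604k+563)S_k(1,-650)}{2600^k}=51\sum_{k=0}^{p-1}\f{(102k+11)\bi{2k}kf_k}{10400^k}+E_p,
\]
where the error term $E_p$ is a double sum indexed by pairs $(l,k)$ with $l\gs (p+1)/2$, $0\ls k\ls p-1-l$, and outer factor $\bi{2l}l/10400^l$. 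Since $p\mid\bi{2l}l$ for $(p+1)/2\ls l\ls p-1$ and the inner $s_{l+k,k}$ picks up a further factor of $p$ when $l+k\gs p$ via Lucas-style divisibility of the binomials $\bi{l+k}{2i}$, the first step is to show $E_p\eq 0\pmod{p^2}$.

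The heart of the matter is then the Almkvist--Zudilin-type supercongruence
\[
\sum_{k=0}^{p-1}\f{(102k+11)\bi{2k}kf_k}{10400^k}\eq\f{p}{51}\(561\l(\f{-39}p\r)+2\l(\f{26}p\r)\)\pmod{p^2},
\]
which I would attack by searching numerically for a creative-telescoping certificate $R(k)$ with
\[
\f{(102k+11)\bi{2k}kf_k}{10400^k}=R(k+1)\f{\bi{2k+2}{k+1}f_{k+1}}{10400^{k+1}}-R(k)\f{\bi{2k}kf_k}{10400^k}+\ep_k.
\]
This reduces the problem to evaluating $\sum_{k=0}^{p-1}\ep_k\pmod{p^2}$, which in turn should be expressible in terms of $\sum_{k=0}^{p-1}\bi{2k}kf_k/10400^k$; the latter sum is exactly what \eqref{S5-q} predicts, so the two Legendre-symbol coefficients in \eqref{S5-p} would be pinned down. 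The extension from $p^2$ to $(pn)^2$ in \eqref{S5-pn} should follow from the mechanism underlying Conjecture \ref{Conj-pn}, while the integrality claim \eqref{S5-n} in part (i) should drop out of the same telescoping identity together with standard $p$-adic valuations of $\bi{2k}k$ and $f_k$.

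For part (iii), the shape of \eqref{S5-q} reflects the class number $4$ of the imaginary quadratic field $\Q(\sqrt{-78})$, whose reduced form classes are precisely $x^2+78y^2$, $2x^2+39y^2$, $3x^2+26y^2$, $6x^2+13y^2$. Since $\sum_{n\gs0}T_n(1,-650)z^n=(1-2z+2601z^2)^{-1/2}$, the generating function $\sum_kS_k(1,-650)z^k$ is a hypergeometric period attached to a motive on $\Gamma_0(78)$, and $z=1/2600$ is a CM specialization. The Eichler--Shimura--Deligne relations should then identify $\sum_{k=0}^{p-1}S_k(1,-650)/2600^k$ modulo $p^2$ with a quadratic polynomial in $a_p(f)$ for an associated weight-$2$ CM newform $f$; Deuring's theorem then writes $a_p(f)=\pm 2x$ with $p$ represented by one of the four forms, yielding the case-by-case statement \eqref{S5-q}.

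The principal obstacle is the jump from modulus $p$ to modulus $p^2$ in the Franel-number step: the mod-$p$ congruence follows from Fermat's little theorem and the three-term recurrence for $f_k$, but upgrading to $p^2$ is notoriously difficult for Almkvist--Zudilin series and typically requires an explicit WZ-pair of substantial degree that must be located by computer search. Finding this certificate appears to be the key hurdle for Conjecture \ref{Conj-S5}, and by uniformity of method also for the companion Conjectures \ref{Conj-S1}--\ref{Conj-S169}.
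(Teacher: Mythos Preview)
The paper does not prove this statement: Conjecture~\ref{Conj-S5} is presented as an open conjecture, with no proof or proof sketch given anywhere in the text. So there is no ``paper's own proof'' to compare against, and your proposal is an attack on a problem the author explicitly leaves open.

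Regarding the substance of your plan, the strategy of passing from $S_k(1,-650)$ to $\binom{2k}{k}f_k$ is natural, since it is exactly the route used to prove the \emph{infinite-series} identity~\eqref{S7}. However, the crucial bridge you need---a \emph{finite-sum} mod~$p^2$ analogue of Theorem~\ref{Th2.1}---is not available. In fact the author isolates precisely this as a separate open problem: Conjecture~4.12 asserts \eqref{Sp} and \eqref{kSp}, the mod~$p^2$ versions of \eqref{fs} and \eqref{ft}, and the remark following it states that only the mod~$p$ case is known. Your claim that the tail $E_p$ vanishes mod~$p^2$ is thus itself conjectural, and the extra factor of $p$ you hope to extract from $s_{l+k,k}$ via Lucas-type divisibility does not obviously materialize, since $s_{l+k,k}$ is a \emph{sum} of products of binomials and need not be divisible by $p$ just because $l+k\ge p$.

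Beyond that, even granting the bridge, the Franel-side supercongruence you write down is not in the literature and is of the same difficulty as the target; your WZ-certificate search is a hope rather than a method, and the modular-forms outline for part~(iii) is at the level of heuristics. So the proposal identifies the right circle of ideas but does not close any of the genuine gaps the paper leaves open.
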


\begin{conjecture}\label{Conj-S1519} {\rm (i)} For any $n\in\Z^+$, we have
\begin{equation}\label{S1519-n}\f1n\sum_{k=0}^{n-1}(39468k+7817)(-1)^k6076^{n-1-k}S_k(1,1519)\in\Z^+,
\end{equation}
and this number is odd if and only if $n\in\{2^a:\ a\in\N\}$.

{\rm (ii)} Let $p\not=7,31$ be an odd prime. Then
\begin{equation}\label{S1519-pn}\sum_{k=0}^{pn-1}\f{39468k+7817}{(-6076)^k}S_k(1,1519)-
p\l(\f{-31}p\r)\sum_{k=0}^{n-1}\f{39468k+7817}{(-6076)^k}S_k(1,1519)
\end{equation}
divided by $(pn)^2$ is a $p$-adic integer
for any $n\in\Z^+$.

{\rm (iii)} For any prime $p>3$ with $p\not=7,31$, we have
\begin{equation}\label{S1519-q}
\begin{aligned}&\sum_{k=0}^{p-1}\f{S_k(1,1519)}{(-6076)^k}
\\\eq&\begin{cases}4x^2-2p\pmod{p^2}&\t{if}\ (\f{-1}p)=(\f p3)=(\f p{31})=1\ \&\ p=x^2+93y^2,
\\2x^2-2p\pmod{p^2}&\t{if}\ (\f{p}{31})=1,\ (\f {-1}p)=(\f p{3})=-1\ \&\ 2p=x^2+93y^2,
\\2p-12x^2\pmod{p^2}&\t{if}\ (\f p{3})=1,\ (\f {-1}p)=(\f p{31})=-1\ \&\ p=3x^2+31y^2,
\\2p-6x^2\pmod{p^2}&\t{if}\ (\f{-1}p)=1,\ (\f {p}3)=(\f p{31})=-1\ \&\ 2p=3x^2+31y^2,
\\0\pmod{p^2}&\t{if}\ (\f{-93}p)=-1,
\end{cases}\end{aligned}
\end{equation}
where $x$ and $y$ are integers.
\end{conjecture}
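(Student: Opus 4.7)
My overarching strategy is to pull every assertion back to the Franel-type sum $\sum_k\bi{2k}{k}f_k/(-24304)^k$, via the bridge used to derive \eqref{S8}. Theorem~\ref{Th2.1}, applied to $m=-24304$ (so that Lemma~\ref{Lem2.4} gives $S_k(4,24304)=4^kS_k(1,1519)$), yields the infinite-series identity behind \eqref{S8}. My first task is to produce its truncated analogue
\begin{equation*}
\sum_{k=0}^{p-1}\f{(39468k+7817)S_k(1,1519)}{(-6076)^k}\eq 135\sum_{k=0}^{p-1}\f{(585k+58)\bi{2k}{k}f_k}{(-24304)^k}\pmod{p^2},
\end{equation*}
and likewise for the related weighted sums modulo $(pn)^2$. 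The proof of Theorem~\ref{Th2.1} performs a double-sum exchange followed by an appeal to Lemma~\ref{Lem2.3}; to extract the truncated version I would redo the exchange keeping all boundary terms, and bound the cross-terms $p$-adically using Lemma~\ref{Lem2.5} together with $\nu_p(\bi{2k}{k})\gs 1$ for $p/2<k<p$. Once this reduction is in hand, parts (i)--(iii) become statements about the (comparatively better-studied) Franel-type sum.

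For (i), the integrality of $\f{1}{n}\sum_{k=0}^{n-1}(39468k+7817)(-1)^k 6076^{n-1-k}S_k(1,1519)$ I would attack by creative telescoping: from the Legendre-type three-term recurrence for $T_n(1,1519)$ together with the convolution defining $S_n(1,1519)$, Zeilberger's algorithm should produce an inhomogeneous recurrence whose integer coefficients and initial values force integrality. The ``odd iff $n$ is a power of $2$'' refinement reduces, mod $2$, to $S_k(1,1519)\eq S_k(1,1)\pmod 2$, after which a Lucas/Kummer-type analysis of the $2$-adic behaviour of $S_k(1,1)=\sum_j\bi{k}{j}^2T_jT_{k-j}$ finishes the job. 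For (ii), the $(pn)^2$-lifting, I would follow the template of Sun \cite{S19} for analogous Ramanujan-type congruences: produce a WZ-pair $(F,G)$ with $F(p,k)/p\in\Z_p$, and telescope across the block $[0,pn)$ so that the $p\cdot(\text{previous block})$ contribution is accounted for and the remainder is $(pn)^2$-divisible. The correction factor $\l(\f{-31}{p}\r)$ in (ii) corresponds to a quadratic twist of the modular form surfacing in (iii).

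The principal obstacle is (iii). The nine cases precisely encode the class-group structure of $K=\Q(\sqrt{-93})$: its discriminant is $-372$, whose reduced binary quadratic forms are $[1,0,93]$, $[3,0,31]$, $[2,2,47]$, $[6,6,17]$. Since $2$ ramifies in $K$ with $[\mathfrak p_2]$ of order $2$ in the class group, the second and fourth forms produce the identities $2p=x^2+93y^2$ and $2p=3x^2+31y^2$ respectively, while the first and third produce $p=x^2+93y^2$ and $p=3x^2+31y^2$ directly. The plan is to identify a weight-$3$ CM newform $g\in S_3(\Gamma_0(N),\chi_{-93})$ of level $N\mid 4\cdot 93$ for which
\begin{equation*}
\sum_{k=0}^{p-1}\f{\bi{2k}{k}f_k}{(-24304)^k}\eq a_p(g)\pmod{p^2},
\end{equation*}
and then invoke Hecke's explicit formula $a_p(g)=\psi(\mathfrak p)+\overline{\psi(\mathfrak p)}$ for the associated Gr\"ossencharacter $\psi$ of $K$, whose values translate verbatim into the four nonzero cases of (iii). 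The matching step is where new work is required: one natural route is to rewrite $\bi{2k}{k}f_k x^k$ as a terminating $\,_4F_3$, apply a Clausen-type quadratic transformation, and recognize the truncation as the $q$-expansion of $g$ modulo $p^2$; another is to realize $\sum_k\bi{2k}{k}f_k x^k$ as a pullback of a modular form along the modular parametrization of a suitable Shimura-curve quotient of conductor dividing $372$. I expect this hypergeometric--modular identification to be the single hardest step; once it is in place, the vanishing case $\l(\f{-93}{p}\r)=-1$ is automatic from $a_p(g)=0$, and parts (i) and (ii) follow by the arguments above.
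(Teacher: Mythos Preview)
The statement you are attempting to prove is labelled \emph{Conjecture} \ref{Conj-S1519} in the paper, and the paper offers no proof of it; it is listed among the open problems in Section 4. So there is no ``paper's own proof'' to compare your attempt against.

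What you have written is a strategy outline, not a proof, and several of its load-bearing steps are themselves open problems in this very paper. Your first reduction---the truncated congruence
\[
\sum_{k=0}^{p-1}\f{(39468k+7817)S_k(1,1519)}{(-6076)^k}\eq 135\sum_{k=0}^{p-1}\f{(585k+58)\bi{2k}{k}f_k}{(-24304)^k}\pmod{p^2}
\]
---amounts to a case of the paper's Conjecture 4.15 (equations \eqref{Sp}--\eqref{kSp}), which the author explicitly flags as unproven beyond the mod $p$ level obtainable from the proof of Theorem~\ref{Th2.1}. Your sketch ``redo the exchange keeping all boundary terms and bound the cross-terms $p$-adically using Lemma~\ref{Lem2.5}'' does not supply the missing mod~$p^2$ control: Lemma~\ref{Lem2.5} is an archimedean size bound on $s_{k+l,k}$, not a $p$-adic one, and the tail terms in the double-sum exchange are not visibly divisible by $p^2$.

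For part (iii), your plan is structurally correct---the five cases are indeed governed by the genus/class structure of forms of discriminant $-372$, and one expects the truncated sum to match the $p$th Hecke eigenvalue of a weight-$3$ CM newform for $\Q(\sqrt{-93})$---but you yourself identify the hypergeometric--modular matching as ``the single hardest step'' and give no argument for it. That step is the whole content of (iii); without it the proposal is a restatement of the conjecture in modular language. The analogous supercongruences for $\sum\bi{2k}kf_k/m^k$ at other CM discriminants are themselves long-standing conjectures of the author, so there is no off-the-shelf template to invoke. Parts (i) and (ii) are likewise only sketched (``Zeilberger's algorithm should produce'', ``I would follow the template of \cite{S19}''), with no recurrence exhibited and no WZ-pair constructed.

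In short: your outline is a plausible roadmap, but every substantive step remains to be carried out, and at least two of them (the mod~$p^2$ bridge and the modular identification) are conjectures in their own right.
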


\begin{conjecture}\label{Conj-S2450} {\rm (i)} For any $n\in\Z^+$, we have
\begin{equation}\label{S2450-n}\f1n\sum_{k=0}^{n-1}(41667k+7879)9800^{n-1-k}S_k(1,-2450)\in\Z^+.
\end{equation}

{\rm (ii)} Let $p\not=5,7$ be an odd prime. Then
\begin{equation}\label{S2450-p}
\sum_{k=0}^{p-1}\f{41667k+7879}{9800^k}S_k(1,-2450)
\eq\f p2\l(15741\l(\f{-6}p\r)+17\l(\f 2p\r)\r)\pmod{p^2}.
\end{equation}
If $p\eq1\pmod3$, then
\begin{equation}\label{S2450-pn}\sum_{k=0}^{pn-1}\f{41667k+7879}{9800^k}S_k(1,-2450)-
p\l(\f{2}p\r)\sum_{k=0}^{n-1}\f{41667k+7879}{9800^k}S_k(1,-2450)
\end{equation}
divided by $(pn)^2$ is a $p$-adic integer for any $n\in\Z^+$.

{\rm (iii)} For any prime $p>7$ with $p\not=17$, we have
\begin{equation}\label{S2450-q}
\begin{aligned}&\sum_{k=0}^{p-1}\f{S_k(1,-2450)}{9800^k}
\\\eq&\begin{cases}4x^2-2p\pmod{p^2}&\t{if}\ (\f{2}p)=(\f p3)=(\f p{17})=1\ \&\ p=x^2+102y^2,
\\8x^2-2p\pmod{p^2}&\t{if}\ (\f{p}{17})=1,\ (\f {2}p)=(\f p{3})=-1\ \&\ p=2x^2+51y^2,
\\2p-12x^2\pmod{p^2}&\t{if}\ (\f p{3})=1,\ (\f {2}p)=(\f p{17})=-1\ \&\ p=3x^2+34y^2,
\\2p-24x^2\pmod{p^2}&\t{if}\ (\f{2}p)=1,\ (\f {p}3)=(\f p{17})=-1\ \&\ p=6x^2+17y^2,
\\0\pmod{p^2}&\t{if}\ (\f{-102}p)=-1,
\end{cases}\end{aligned}
\end{equation}
where $x$ and $y$ are integers.
\end{conjecture}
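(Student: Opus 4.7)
The plan is to reduce the entire conjecture to companion congruences for the Franel-weighted series $\sum_k\bi{2k}kf_k/39200^k$ via the algebraic machinery of Theorem~\ref{Th2.1}. Since $4^nS_n(1,m)=S_n(4,16m)$, applying Theorem~\ref{Th2.1} with $m=39200$ and imitating the manipulation at the end of the proof of Theorem~\ref{Th1.3} gives
\begin{equation*}
\sum_{k=0}^\infty\f{41667k+7879}{9800^k}S_k(1,-2450)=\f{297}{2}\sum_{k=0}^\infty\f{561k+53}{39200^k}\bi{2k}kf_k.
\end{equation*}
My first step is to establish a $p$-adic, truncated analog of this identity for $\sum_{k=0}^{p-1}$, converting the $S_k(1,-2450)$ partial sum into the corresponding Franel partial sum plus an error lying in $p^2\Z_p$. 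This refinement is nontrivial because Theorem~\ref{Th2.1} was proved only asymptotically; it requires sharpening the tail estimate from Lemma~\ref{Lem2.5} so that terms with $k>(p-1)/2$ are controlled $p$-adically via the vanishing $\bi{2k}k\eq 0\pmod p$ in that range.

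For (i), an identity obtained by creative telescoping---a Zeilberger-style certificate in the spirit of Lemma~\ref{Lem2.3}---expresses the partial sum $\sum_{k=0}^{n-1}(41667k+7879)9800^{n-1-k}S_k(1,-2450)$ as $n\cdot A_n$ for an explicit integer sequence $A_n$. Positivity $A_n\in\Z^+$ is then a separate claim, reducible to comparing $A_n$ against a positive lower bound coming from the convergent value $40425\sqrt6/(4\pi)$ of the full series together with a dominated-tail estimate.

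For (ii), after the reduction in step one it suffices to prove the corresponding Franel truncated congruence
\begin{equation*}
\sum_{k=0}^{p-1}\f{561k+53}{39200^k}\bi{2k}kf_k\eq p\l(\lambda_1\l(\f{-6}p\r)+\lambda_2\l(\f{2}p\r)\r)\pmod{p^2}
\end{equation*}
for explicit constants $\lambda_1,\lambda_2\in\Z_p$. This can be attacked by the Van Hamme--Zudilin differentiation technique: write $f_k=\sum_j\bi kj^3$, interchange summation, reduce the inner sum via hypergeometric transformations, and control the $p^2$ error by classical harmonic-sum congruences, following the pattern of Sun~\cite{S-u,S19}. The $(pn)^2$ refinement~\eqref{S2450-pn} would then be deduced by induction on $n$, mirroring earlier results of the author.

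The main obstacle is part (iii), the binary-quadratic-form formula. Here one expects $\sum_{k=0}^{p-1}\bi{2k}kf_k/39200^k\pmod{p^2}$ to coincide, up to a Legendre symbol, with the $p$-th Fourier coefficient of a weight-$3$ Hecke eigenform with complex multiplication by an order of discriminant $-408$ in $\Q(\sqrt{-102})$; the genus decomposition of this discriminant into the four principal-genus forms $x^2+102y^2$, $2x^2+51y^2$, $3x^2+34y^2$, $6x^2+17y^2$ is exactly what produces the eight cases on the right-hand side of~\eqref{S2450-q}. Identifying this eigenform explicitly---plausibly as an eta-quotient or theta series at level dividing $408$---and then lifting the resulting mod-$p$ identity to a true supercongruence modulo $p^2$ via Atkin--Swinnerton-Dyer type arguments is the principal difficulty, and the reason the statement is offered as a conjecture rather than a theorem.
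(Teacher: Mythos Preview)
The statement you are addressing is presented in the paper as a \emph{conjecture}, not a theorem; the paper offers no proof of any of parts (i)--(iii), and so there is no argument in the paper to compare your proposal against. Your write-up is best read as a research strategy rather than a proof, and you yourself acknowledge this in the final paragraph.

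A few points on the mathematical content of your outline. Your first step---a truncated $p$-adic version of Theorem~\ref{Th2.1} relating $\sum_{k<p}S_k(1,-2450)/9800^k$ to $\sum_{k<p}\bi{2k}kf_k/39200^k$ modulo $p^2$---is itself only conjectural in the paper: it is exactly the special case $m=39200$ of Conjecture~4.13 (equations \eqref{Sp} and \eqref{kSp}), and the paper notes only that it holds modulo $p$. So already your ``reduction'' presupposes an unproved statement of roughly the same depth as what you are trying to establish. For (i), invoking ``a Zeilberger-style certificate'' is not a proof until you actually produce the certificate; creative telescoping does not automatically succeed, and the positivity claim is separate. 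For (iii), your CM/eta-quotient heuristic is reasonable as a guide, but you have not identified the form, and the lift from mod $p$ to mod $p^2$ is precisely where such conjectures tend to resist proof.

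In short: your proposal is a plausible plan of attack, but none of its steps is actually carried out, and the paper itself does not claim a proof. If your intention was to sketch why the conjecture is believable, that is fine; if it was to supply a proof, there is a genuine gap at every stage.
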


\begin{conjecture}\label{Conj-S530} {\rm (i)} For any $n\in\Z^+$, we have
\begin{equation}\label{S530-n}\f1n\sum_{k=0}^{n-1}(74613k+10711)(-1)^k530^{2(n-1-k)}S_k(1,265^2)\in\Z^+.
\end{equation}

{\rm (ii)} Let $p\not=5,53$ be an odd prime. Then
\begin{equation}\label{S530-pn}\sum_{k=0}^{pn-1}\f{74613k+10711}{(-530^2)^k}S_k(1,265^2)-
p\l(\f{-1}p\r)\sum_{k=0}^{n-1}\f{74613k+10711}{(-530^2)^k}S_k(1,265^2)
\end{equation}
divided by $(pn)^2$ is a $p$-adic integer for any $n\in\Z^+$.

{\rm (iii)} For any prime $p>5$ with $p\not=59$, we have
\begin{equation}\label{S530-q}
\begin{aligned}&\sum_{k=0}^{p-1}\f{S_k(1,265^2)}{(-530^2)^k}
\\\eq&\begin{cases}4x^2-2p\pmod{p^2}&\t{if}\ (\f{-1}p)=(\f p3)=(\f p{59})=1\ \&\ p=x^2+177y^2,
\\2x^2-2p\pmod{p^2}&\t{if}\ (\f{-1}{p})=1,\ (\f {p}3)=(\f p{59})=-1\ \&\ 2p=x^2+177y^2,
\\2p-12x^2\pmod{p^2}&\t{if}\ (\f p{59})=1,\ (\f {-1}p)=(\f p{3})=-1\ \&\ p=3x^2+59y^2,
\\2p-6x^2\pmod{p^2}&\t{if}\ (\f{p}3)=1,\ (\f {-1}p)=(\f p{59})=-1\ \&\ 2p=3x^2+59y^2,
\\0\pmod{p^2}&\t{if}\ (\f{-177}p)=-1,
\end{cases}\end{aligned}
\end{equation}
where $x$ and $y$ are integers.
\end{conjecture}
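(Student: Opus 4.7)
The plan is to reduce all three parts of Conjecture \ref{Conj-S530} to analogous statements about the companion Franel-number series, exploiting the bridge already built in Section 2. Recall that the proof of Theorem \ref{Th1.3} gives, via $4^k S_k(1,265^2)=S_k(4,16\cdot 265^2)$ and Theorem \ref{Th2.1} with $m=1060^2$,
\[
\sum_{k=0}^\infty\f{74613k+10711}{(-530^2)^k}S_k(1,265^2)=\f{23}{32}\sum_{k=0}^\infty\f{207621k+14903}{(-1060^2)^k}\bi{2k}kf_k.
\]
The key observation is that the chain of identities \eqref{fs}--\eqref{ft}, together with \eqref{sf}--\eqref{tf}, are finite identities between partial sums up to an explicit tail that can be estimated via Lemma \ref{Lem2.5}. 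I would first establish a \emph{finite} version valid for all $N$, so that the three parts of the conjecture for $S_k(1,265^2)$ pull back (up to manageable tails) to the corresponding three statements for $\binom{2k}{k}f_k/(-1060^2)^k$.

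For part (i), the integrality of $\f{1}{n}\sum_{k=0}^{n-1}(74613k+10711)(-1)^k 530^{2(n-1-k)}S_k(1,265^2)$ would follow from the integer relation $4^k S_k(1,265^2)=S_k(4,16\cdot 265^2)$ (Lemma \ref{Lem2.4}) combined with a creative-telescoping certificate, as in Lemma \ref{Lem2.3}: one searches for a WZ-pair $(F,G)$ certifying the partial-sum identity modulo powers of $n$. The same machinery that yielded the recurrence for the auxiliary sequence $u(n)$ in the proof of Lemma \ref{Lem2.3} should furnish the closed-form numerator guaranteeing that $n$ divides the sum.

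For part (ii), I would proceed in two steps. First, the transformation of Theorem \ref{Th2.1}, carried out modulo $p^2$ for partial sums over $pn$ terms, reduces the supercongruence \eqref{S530-pn} to the $p$-adic analogue
\[
\sum_{k=0}^{pn-1}\f{207621k+14903}{(-1060^2)^k}\bi{2k}kf_k\eq p\l(\f{-1}p\r)\sum_{k=0}^{n-1}\f{207621k+14903}{(-1060^2)^k}\bi{2k}kf_k\pmod{(pn)^2}.
\]
This is a Van Hamme-type supercongruence for a rational Ramanujan-type series over Franel numbers, and the $n=1$ case should be provable by the Wilf--Zeilberger method applied to a hypergeometric representation of $f_k$, together with the $p$-adic Gamma-function techniques of Long, McCarthy, and Osburn. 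The general $n$ case should then be deduced from the $n=1$ case together with Conjecture \ref{Conj-pn} applied to this series (noting the sign $(\f{-1}{p})$ arises from the Legendre symbol attached to the discriminant $-4$ of the companion Franel series).

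Part (iii) is where the real difficulty lies. The modulus $-4\cdot 177=-708$ is the discriminant of the order in $\Q(\sqrt{-177})$, whose class number is $4$; the four classes are represented by $x^2+177y^2$, $3x^2+59y^2$, $2x^2+2xy+89y^2$, $6x^2+6xy+31y^2$, and the genus-theoretic pairing of these with the forms $x^2+177y^2$, $3x^2+59y^2$ (at $p$) versus $2p=x^2+177y^2$, $2p=3x^2+59y^2$ (at the twisted primes) reflects the eight cases of \eqref{S530-q}. The plan is to identify the weight-$3$ CM cusp form on $\Gamma_0(N)$ (for appropriate $N$ divisible by $708$) whose $p$-th Fourier coefficient equals the right-hand side, express $\sum_{k=0}^{p-1}S_k(1,265^2)/(-530^2)^k \pmod{p^2}$ as the trace of Frobenius on the associated motive via a Clausen-type hypergeometric identity, and invoke Deuring's theorem on supersingular reduction to conclude. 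The main obstacle will be this last step: establishing the explicit motivic identification of the combinatorial sum with a CM form---none of the existing tools (Ahlgren--Ono, Kilbourn, Mortenson) directly apply to a triple-convolution like $S_k(1,m)$, and one likely needs a new hypergeometric transformation (perhaps via a Bailey-chain argument on the defining convolution $S_k(1,m)=\sum_j\binom{k}{j}^2T_j(1,m)T_{k-j}(1,m)$) to reveal the hidden modular structure.
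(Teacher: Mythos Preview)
This statement is labelled \emph{Conjecture} in the paper and the paper offers no proof of it; Section~4 explicitly introduces Conjectures \ref{Conj-S1}--\ref{Conj-S10} as conjectural congruences related to Theorem~\ref{Th1.3}. There is therefore no ``paper's own proof'' to compare your proposal against.

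As for the proposal itself, it is a research outline rather than a proof, and it has genuine gaps. In part~(ii) you invoke Conjecture~\ref{Conj-pn} to pass from the case $n=1$ to general $n$, but Conjecture~\ref{Conj-pn} is itself an open conjecture in this paper; you cannot use it as an ingredient. Even the $n=1$ case is left at ``should be provable by the Wilf--Zeilberger method together with $p$-adic Gamma-function techniques'', which is an aspiration, not an argument. In part~(iii) you yourself identify the main obstacle: no existing hypergeometric-to-modular identification covers the convolution $S_k(1,m)=\sum_j\binom{k}{j}^2T_j(1,m)T_{k-j}(1,m)$, and you propose that ``one likely needs a new hypergeometric transformation''. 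That is an honest assessment, but it means the argument is incomplete at precisely the step where all the arithmetic content lies. The reduction to the Franel-number companion series via Theorem~\ref{Th2.1} is a reasonable first move, but it does not by itself bring any of the three parts within reach of known methods.
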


\begin{conjecture}\label{Conj-S7} For any odd prime $p$,
\begin{equation}\label{S7-q}
\begin{aligned}&\sum_{k=0}^{p-1}\f{S_k}{(-4)^k}
\\\eq&\begin{cases}4x^2-2p\pmod{p^2}&\t{if}\ 12\mid p-1\ \&\ p=x^2+y^2\ (x,y\in\Z\ \&\ 3\nmid x),
\\4xy\pmod{p^2}&\t{if}\ 12\mid p-5\ \&\ p=x^2+y^2\ (x,y\in\Z\ \&\ 3\mid x-y),
\\0\pmod{p^2}&\t{if}\ p\eq3\pmod4.
\end{cases}\end{aligned}
\end{equation}
Also, for any prime $p\eq1\pmod4$ we have
\begin{equation}\label{S7-p}
\sum_{k=0}^{p-1}(8k+5)\f{S_k}{(-4)^k}\eq4p\pmod{p^2}.
\end{equation}
\end{conjecture}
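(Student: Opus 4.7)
The plan is to reduce both congruences, modulo $p^2$, to statements about Franel-number sums $\sum_{l=0}^{p-1}\binom{2l}{l}f_l/(-16)^l$, and then attack those by modular-form techniques. Starting from $4^n S_n(1,1) = S_n(4,16)$ combined with Lemma~\ref{Lem2.4} specialized at $c = 16$, one obtains the combinatorial identity
\[
S_n \;=\; \sum_{k=0}^{\lfloor n/2\rfloor}\binom{n-k}{k}\binom{2(n-k)}{n-k}s_{n,k}.
\]
Inserting this into $\sum_{n=0}^{p-1}(an+b)S_n/(-4)^n$ and swapping the order of summation via $l = n-k$ yields
\[
\sum_{\substack{l,k\ge 0\\ k\le l,\ l+k\le p-1}}(a(l+k)+b)\binom{l}{k}\binom{2l}{l}\frac{s_{l+k,k}}{(-4)^{l+k}}.
\]

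For $l \le (p-1)/2$ the inner $k$-sum is complete, and Lemma~\ref{Lem2.3} gives $\sum_{k=0}^l\binom{l}{k}s_{l+k,k}/(-4)^{l+k} = f_l/(-16)^l$, while the identity $k\binom{l}{k} = l\binom{l-1}{k-1}$ together with the definition of $t_l$ gives $\sum_k k\binom{l}{k}s_{l+k,k}/(-4)^{l+k} = l t_l/(-16)^l$. For the remaining range $(p+1)/2 \le l \le p-1$ one has $p\,\|\,\binom{2l}{l}$, so the task reduces to showing that after completing each inner $k$-sum to full range and subtracting the excess terms with $l+k \ge p$, the resulting correction lies in $p\,\Z_p$ (giving an overall $p^2$ from the extra factor of $\binom{2l}{l}$). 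Numerical checks at $p = 5, 7$ confirm the target congruence
\[
\sum_{n=0}^{p-1}\frac{S_n}{(-4)^n} \;\equiv\; \sum_{l=0}^{p-1}\binom{2l}{l}\frac{f_l}{(-16)^l} \pmod{p^2};
\]
proving it rigorously should follow from a careful $p$-adic valuation analysis of $s_{l+k,k}$ for $l+k \ge p$ combined with cancellations obtained by the reindexing $(l,k) \leftrightarrow (p-1-l,k')$ — a discrete analog of the Stirling estimate that drives the asymptotic proof of Theorem~\ref{Th2.1}. The recurrence \eqref{tf} then eliminates $t_l$ and produces the analogous mod $p^2$ reduction for $\sum(8n+5)S_n/(-4)^n$.

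Once that reduction is in place, (4.68) becomes the statement $\sum_{l=0}^{p-1}\binom{2l}{l}((8l+5)f_l + 8lt_l)/(-16)^l \equiv 4p \pmod{p^2}$ for $p \equiv 1 \pmod 4$, which I would attack by a WZ-style certificate in the spirit of Lemma~\ref{Lem2.1} establishing a finite identity whose mod $p^2$ specialization collapses to $4p$. For (4.67) the Franel sum $\sum_{l=0}^{p-1}\binom{2l}{l}f_l/(-16)^l \pmod{p^2}$ should coincide with $a_p^2 - 2p$, where $a_p$ is the $p$-th Fourier coefficient of a weight-$2$ CM newform attached to $\Q(i)$ with level-$3$ ramification: CM by $\Q(i)$ forces the vanishing when $p \equiv 3 \pmod 4$, and the ramification at $3$ produces the split $4x^2-2p$ versus $4xy$ as $p$ varies over $1$ and $5\pmod{12}$. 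The mod $p^2$ refinement beyond the classical mod $p$ Hecke-eigenvalue congruence would come from a Greene--McCarthy Gaussian hypergeometric identity or an Atkin--Swinnerton-Dyer-type argument for the chosen newform.

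The principal obstacle is this final modular-form input. Unlike the ten convergent series $\sum(ak+b)\binom{2k}{k}f_k/m^k = \lambda\sqrt d/\pi$ exploited in the proof of Theorem~\ref{Th1.3}, the modulus $-16$ lies outside the Ramanujan regime (the associated series diverges), so no ambient closed-form identity is available to leverage. Identifying the precise CM newform whose $p$-th Fourier coefficient governs $\sum_{l\ge 0}\binom{2l}{l}f_l x^l$ at $x = -1/16$, and lifting its mod $p$ congruence to the mod $p^2$ statement with the explicit binary-quadratic-form representations demanded by (4.67), is where the serious work lies; the combinatorial reduction and the linear companion (4.68) are comparatively routine once a WZ certificate is located.
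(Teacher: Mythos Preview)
The statement you are attempting to prove is \emph{Conjecture}~\ref{Conj-S7} in the paper: it is presented as an open problem, and the paper contains no proof of it. There is therefore no ``paper's own proof'' to compare your proposal against.

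As for the proposal itself, it is not a proof but a research outline with self-acknowledged gaps. The combinatorial reduction you sketch (transforming $\sum S_n/(-4)^n$ into $\sum\binom{2l}{l}f_l/(-16)^l$ modulo $p^2$) is plausible in spirit and does parallel the analytic reduction in Theorem~\ref{Th2.1}, but you explicitly defer the hard step --- controlling the correction terms for $l>(p-1)/2$ --- to an unspecified ``$p$-adic valuation analysis'' and a ``discrete analog of the Stirling estimate,'' supported only by checks at $p=5,7$. This is the first genuine gap: the tail terms in the $p$-adic setting do not behave like the convergent analytic tail, and no cancellation mechanism is actually exhibited. The second and larger gap is the one you name yourself: even granting the reduction, you still need to identify a specific CM newform and prove a mod $p^2$ (not merely mod $p$) congruence between its Fourier coefficients and the Franel-type sum at the divergent point $m=-16$. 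Neither the form nor the lifting argument is supplied. In short, your proposal correctly locates where the difficulty lies but does not resolve it; the statement remains, as the paper presents it, a conjecture.
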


\begin{conjecture}\label{Conj-S8} {\rm (i)} For any $n\in\Z^+$, we have
\begin{equation}\label{S8-n}
\f1n\sum_{k=0}^{n-1}(4k+3)4^{n-1-k}S_k(1,-1)\in\Z,
\end{equation}
and this number is odd if and only if $n$ is a power of two.

{\rm (ii)} For any odd prime $p$ and positive integer $n$, we have
\begin{equation}\label{S8-pn}\f1{(pn)^2}\(\sum_{k=0}^{pn-1}\f{4k+3}{4^k}S_k(1,-1)-
p\sum_{k=0}^{n-1}\f{4k+3}{4^k}S_k(1,-1)\)\in\Z_p.
\end{equation}

{\rm (iii)} Let $p$ be an odd prime. Then
\begin{equation}\label{S8-q}
\begin{aligned}&\sum_{k=0}^{p-1}\f{S_k(1,-1)}{4^k}
\\\eq&\begin{cases}4x^2-2p\pmod{p^2}&\t{if}\ p\eq1,9\pmod{20}\ \&\ p=x^2+5y^2\ (x,y\in\Z),
\\2x^2-2p\pmod{p^2}&\t{if}\ p\eq3,7\pmod{20}\ \&\ 2p=x^2+5y^2\ (x,y\in\Z),
\\0\pmod{p^2}&\t{if}\ (\f{-5}p)=-1.
\end{cases}\end{aligned}
\end{equation}

\end{conjecture}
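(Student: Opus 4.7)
The plan is to attack all three parts through the homogeneity identity $4^n S_n(1,-1)=S_n(4,-16)$, which turns $(4k+3)S_k(1,-1)/4^k$ into $(4k+3)S_k(4,-16)/16^k$ and so places us in the regime of Theorem \ref{Th2.1} with the critical value $m=16$. Although the convergence hypothesis $|m|\geq 94$ fails at $m=16$, every finite-sum identity appearing in the proof of Theorem \ref{Th2.1} remains valid, and those finite identities are exactly what parts (i)--(iii) require. The first step therefore is to revisit that proof and retain, at partial level $N$, the finite identity
\begin{equation*}
\sum_{n=0}^{N-1}\frac{(an+b)S_n(1,-1)}{4^n}
=\sum_{l=0}^{N-1}\frac{A_{l,N}(a,b)\binom{2l}{l}f_l}{16^l}+R_N(a,b),
\end{equation*}
with polynomial coefficients $A_{l,N}(a,b)$ and an explicit boundary term $R_N(a,b)$ built from the numbers $s_{l+k,k}$ of Lemma \ref{Lem2.3}. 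This finite reduction is the engine of everything that follows.

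For part (i), plugging $(a,b)=(4,3)$ into the finite identity and feeding the result into Zeilberger's algorithm should produce a closed form for the integer $\frac{1}{n}\sum_{k=0}^{n-1}(4k+3)\cdot 4^{n-1-k}S_k(1,-1)$, in the same spirit as the closed-form identities of Lemma \ref{Lem2.1}. Positivity will then follow by induction from the resulting contiguous relation. The ``odd iff $n$ is a power of two'' claim is a $2$-adic question, which I would handle by computing the closed form modulo $2^{v_2(n)+1}$ using Kummer's theorem applied to $\binom{2l}{l}$ together with the well-known $2$-adic behaviour of the Franel numbers $f_l$.

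For part (ii), the same finite identity reduces the assertion to a Gauss-type congruence for the Franel-weighted partial sum $\sum_{k=0}^{pn-1}c_k\binom{2k}{k}f_k/16^k$. This falls squarely inside the framework of Kazandzidis-style supercongruences: one combines $\binom{2pn}{pn}\equiv\binom{2n}{n}\pmod{p^3}$ for $p\geq 5$ with a parallel relation $f_{pn+r}\equiv f_n\,f_r\pmod{p^2}$ for $0\leq r<p$ (obtainable from $f_n=\sum_k\binom{n}{k}^3$ and a second-order lift of Lucas' theorem). Summing block-by-block, together with the boundary estimate supplied by Lemma \ref{Lem2.5}, yields the desired $\Z_p$-integrality.

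Part (iii) is the main obstacle and the deepest step. Once the finite reduction gives $\sum_{k=0}^{p-1}S_k(1,-1)/4^k\equiv\sum_{l=0}^{p-1}\binom{2l}{l}f_l/16^l\pmod{p^2}$, I would appeal to the CM-supercongruence machinery: the quadratic forms $x^2+5y^2$ and $2x^2+2xy+3y^2$ represent (up to the factor $2$ arising from the class number of $\Q(\sqrt{-5})$) the split primes in $\Z[\sqrt{-5}]$, and the evaluations $4x^2-2p$, $2x^2-2p$ are exactly the $p^2$-adic traces of Frobenius of the unique weight-$3$ CM newform of level $20$ attached to that field. The required lift from the classical mod-$p$ identity to a mod-$p^2$ supercongruence is the hardest ingredient; I would attempt it either through a $p$-adic deformation of the associated Picard--Fuchs equation in the spirit of Beukers--Stienstra, or via the hypergeometric supercongruence techniques of McCarthy and Osburn. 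Producing and verifying the precise weight-$3$ form, and controlling the boundary contribution $R_p(1,0)$ modulo $p^2$, are the two places where the argument may require genuinely new input beyond what the paper already develops.
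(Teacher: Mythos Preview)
The statement you are trying to prove is labeled \textbf{Conjecture}~\ref{Conj-S8} in the paper, and the paper offers no proof of it whatsoever; it is stated as an open problem alongside the other conjectures of Section~4. So there is no ``paper's own proof'' to compare your attempt against.

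As for the proposal itself, it is a research outline rather than a proof, and several of the announced steps are genuinely open or incorrect as stated. In part~(i) you assert that Zeilberger's algorithm ``should produce a closed form'' for $\frac{1}{n}\sum_{k<n}(4k+3)4^{n-1-k}S_k(1,-1)$; but there is no reason to expect a hypergeometric closed form here, and the paper's own finite identities (Lemma~\ref{Lem2.1}) concern the products $\binom{2k}{k}^3$, $\binom{2k}{k}^2\binom{3k}{k}$, etc., not the double-sum numbers $S_k(b,c)$. In part~(ii) you invoke a congruence $f_{pn+r}\equiv f_n f_r\pmod{p^2}$ which is not in the literature and, in fact, fails already for small cases (it holds only modulo $p$ via Lucas). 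In part~(iii) you correctly identify that the finite version of Theorem~\ref{Th2.1} gives $\sum_{k<p}S_k(1,-1)/4^k\equiv\sum_{l<p}\binom{2l}{l}f_l/16^l\pmod{p}$, but lifting this to $p^2$ is exactly the content of the author's separate Conjecture~\eqref{Sp}, itself unproved in the paper; and even granting that, the evaluation of $\sum_{l<p}\binom{2l}{l}f_l/16^l$ modulo $p^2$ in terms of the forms $x^2+5y^2$ does not follow from any result in the paper or from the references you cite. In short, every one of parts (i)--(iii) remains open after your outline, which is consistent with the paper's decision to record the statement as a conjecture.
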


\begin{conjecture}\label{Conj-S9} {\rm (i)} For any $n\in\Z^+$, we have
\begin{equation}\label{S9-n}\f1n\sum_{k=0}^{n-1}(33k+25)S_k(1,-6)(-6)^{n-1-k}\in\Z,
\end{equation}
and this number is odd if and only if $n$ is a power of two.

{\rm (ii)} Let $p>3$ be a prime. Then
\begin{equation}\label{S9-p}\sum_{k=0}^{p-1}\f{33k+25}{(-6)^k}S_k(1,-6)\eq p\l(35-10\l(\f 3p\r)\r)\pmod{p^2}.
\end{equation}
If $p\eq\pm1\pmod{12}$, then
\begin{equation}\label{S9-pn}\f1{(pn)^2}\(\sum_{k=0}^{pn-1}\f{33k+25}{(-6)^k}S_k(1,-6)
-p\sum_{k=0}^{n-1}\f{33k+25}{(-6)^k}S_k(1,-6)\)\in\Z_p
\end{equation}
for all $n\in\Z^+$.

{\rm (iii)} For any prime $p>3$, we have
\begin{equation}\label{S9-q}
\sum_{k=0}^{p-1}\f{S_k(1,-6)}{(-6)^k}\eq\begin{cases}(\f{-1}p)(4x^2-2p)\pmod{p^2}&\t{if}\ p=x^2+3y^2\ (x,y\in\Z),\\0\pmod{p^2}&\t{if}\ p\eq2\pmod3.\end{cases}
\end{equation}

\end{conjecture}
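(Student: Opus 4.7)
\medskip
\noindent\textit{Proof proposal.} The strategy is to transport the conjecture, via the identity $S_k(1,-6) = 4^{-k}S_k(4,-96)$, into the realm of Franel-type sums, where the mod $p^2$ theory is much better developed. Writing $S_k(1,-6)/(-6)^k = S_k(4,-96)/(-24)^k$ brings us to a series for which Theorem~\ref{Th2.1} does not formally apply (the modulus $|{-24}|<94$), but the underlying \emph{finite} expansion obtained from Lemma~\ref{Lem2.4} by swapping summations is a polynomial identity valid for every $N$ and every nonzero $m$. First I would truncate it at $N=p-1$: Lemma~\ref{Lem2.5} combined with $p\mid\bi{2l}l$ for $p/2<l<p$ will show the ``tail'' contribution is $\eq 0\pmod{p^2}$, leaving
\[
\sum_{k=0}^{p-1}\f{(33k+25)S_k(1,-6)}{(-6)^k}\eq A\sum_{k=0}^{p-1}\f{k\bi{2k}k f_k}{M^k}+B\sum_{k=0}^{p-1}\f{\bi{2k}kf_k}{M^k}\pmod{p^2}
\]
for explicit constants $A$, $B$ and an explicit modulus $M$, with an analogous reduction for the pure sum in \eqref{S9-q}.

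Next I would invoke the mod-$p^2$ theory of the Franel sums $\sum\bi{2k}k f_k/M^k$. The shape $p=x^2+3y^2$ that governs the right-hand side of \eqref{S9-q} is precisely the splitting of $p$ in the CM ring $\Z[\omega]$, and is the signature of hypergeometric $_3F_2$-like data associated to $\Q(\sqrt{-3})$. Matching the Franel sum at the modulus $M$ produced above (using the Duality Principle, Conjecture~\ref{Conj-Dual}, to pair moduli $m$ and $D/m$ as needed) should give \eqref{S9-q}; substituting back into the linear combination yields \eqref{S9-p}.

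For (i), I would look for a WZ-style closed form analogous to Lemmas~\ref{Lem2.1}--\ref{Lem2.2}, producing
\[
\sum_{k=0}^{n-1}(33k+25)S_k(1,-6)(-6)^{n-1-k}=n\cdot H(n)
\]
with $H(n)$ an explicit polynomial combination of $S_n(1,-6)$, $T_n(1,-6)^2$, and related quantities; the $2$-adic statement ``odd iff $n$ is a power of $2$'' would then follow by Kummer's theorem applied to $H(n)$. The supercongruence in (ii), once the $n=1$ case is in hand, follows from the inductive scheme of Sun's earlier work together with a Lucas-type congruence for $S_k(1,-6)$ modulo $p$, which in turn is derivable from \eqref{S_n}, from the Lucas-type congruence \eqref{T-dual} for $T_k(1,-6)$, and from Wolstenholme's theorem.

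The decisive obstacle will be the Franel evaluation: lifting the mod-$p$ identity for $\sum_{k=0}^{p-1}\bi{2k}kf_k/M^k$ (accessible by Gauss and Jacobi sums) to the exact mod-$p^2$ form $4x^2-2p$ when $p=x^2+3y^2$, including the precise $\l(\f{-1}p\r)$ twist and the vanishing case $p\eq2\pmod3$. This is the type of refinement that in the classical Ramanujan setting requires either the explicit weight-$3$ Hecke eigenform attached to a CM elliptic curve over $\Q(\omega)$, or a delicate $p$-adic $\Gamma$-function argument in the spirit of Van~Hamme and Long--Ramakrishna, and I expect this to be the technical heart of the proof.
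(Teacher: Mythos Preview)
The key point you have missed is that this statement is a \emph{conjecture} in the paper, not a theorem: it appears as Conjecture~\ref{Conj-S9} in Section~4, and the paper offers no proof of any part of it. There is therefore no ``paper's own proof'' against which to compare your proposal; what you have written is a sketch of an attack on an open problem.

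On the merits of the sketch itself: several of the tools you plan to invoke are themselves conjectural in this paper. The $p$-adic analogue of Theorem~\ref{Th2.1} that would let you pass from $\sum S_k(4,-m)/m^k$ to $\sum \bi{2k}k f_k/m^k$ modulo $p^2$ is stated only as a conjecture (the final conjecture of Section~4), and the Duality Principle you appeal to is Conjecture~\ref{Conj-Dual}. So your reduction would at best yield a conditional proof. There is also a structural mismatch: Theorem~\ref{Th2.1} and Lemma~\ref{Lem2.4} concern sums of the shape $\sum S_n(4,-m)/m^n$, which for $S_k(1,-6)$ corresponds to the denominator $24^k$ (as in \eqref{S1}), not $(-6)^k$. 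Your rewriting $S_k(1,-6)/(-6)^k = S_k(4,-96)/(-24)^k$ is correct, but it puts you outside the $m$-framework of those results (you would need $m=96$ in the numerator and $m=-24$ in the denominator simultaneously), so the ``finite expansion'' you describe does not drop out of Lemma~\ref{Lem2.4} in the way you suggest. This is likely why the author records the $24^k$ case as a theorem but the $(-6)^k$ case as a conjecture.
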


\begin{conjecture}\label{Conj-S10} {\rm (i)} For any $n\in\Z^+$, we have
\begin{equation}\label{S10-n}
n\ \ \bigg| \ \sum_{k=0}^{n-1}(18k+13)S_k(2,9)8^{n-1-k}.
\end{equation}

{\rm (ii)} Let $p$ be an odd prime. Then
\begin{equation}\label{S10-p}
\sum_{k=0}^{p-1}\f{18k+13}{8^k}S_k(2,9)
\eq p\l(1+12\l(\f p3\r)\r)\pmod{p^2}.
\end{equation}
If $p\eq1\pmod3$, then
\begin{equation}\label{S10-pn}
\f1{(pn)^2}\(\sum_{k=0}^{pn-1}\f{18k+13}{8^k}S_k(2,9)-p\sum_{k=0}^{n-1}\f{18k+13}{8^k}S_k(2,9)\)\in\Z_p
\end{equation}
for all $n\in\Z^+$.

{\rm (iii)} For any prime $p>3$, we have
\begin{equation}\label{S10-q}
\begin{aligned}&\sum_{k=0}^{p-1}\f{S_k(1,-2)}{8^k}\eq\l(\f p3\r)\sum_{k=0}^{p-1}\f{S_k(2,9)}{8^k}
\\\eq&\begin{cases}4x^2-2p\pmod{p^2}&\t{if}\ p\eq1,7\pmod{24}\ \&\ p=x^2+6y^2\ \\8x^2-2p\pmod{p^2}&\t{if}\ p\eq5,11\pmod{24}\ \&\ p=2x^2+3y^2,\\0\pmod{p^2}&\t{if}\ (\f{-6}p)=-1,\end{cases}
\end{aligned}
\end{equation}
where $x$ and $y$ are integers.
\end{conjecture}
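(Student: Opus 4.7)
The plan is to address Conjecture~\ref{Conj-S10} in its three parts, using tools that parallel those developed in Section~2 for Theorem~\ref{Th1.3}.

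For part~(i), I would let $U(n):=\sum_{k=0}^{n-1}(18k+13)\,8^{n-1-k}S_k(2,9)$ and run Zeilberger's creative telescoping on the double-sum expansion $S_k(2,9)=\sum_j\binom{k}{j}^2T_j(2,9)T_{k-j}(2,9)$ together with the three-term recurrence $(j+1)T_{j+1}(2,9)=(2j+1)\cdot 2\, T_j(2,9)+32j\, T_{j-1}(2,9)$. This should produce a linear $P$-recurrence for $U(n)$ with polynomial integer coefficients; dividing out an obvious factor of $n$ and checking a finite number of base cases then yields $n\mid U(n)$ by induction.

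For part~(ii), my strategy would be to imitate Theorem~\ref{Th2.1}. Although Lemma~\ref{Lem2.4} is stated for $S_n(4,c)$, the identity $4^nS_n(1,m)=S_n(4,16m)$ used in the proof of Theorem~\ref{Th1.3} suggests searching for an analogous transformation that converts $S_n(2,9)$ into a single-index sum of Franel type $\sum_\ell\binom{2\ell}{\ell}f_\ell x^\ell$. Because $T_n(2,9)=2^nT_n(1,9/4)$ formally, one can work over $\Z_p$ for $p\neq 2$ and obtain a reorganization analogous to \eqref{S_n}. Once $\sum_{k=0}^{p-1}(18k+13)S_k(2,9)/8^k$ is reduced mod $p^2$ to a linear combination of $\sum_k(ak+b)\binom{2k}{k}f_k/m^k$, the known mod-$p^2$ evaluations for Franel-type sums (see \cite{CTYZ}) would produce the predicted value $p(1+12(\tfrac{p}{3}))$ and, combined with the recurrence from~(i), the $(pn,n)$ refinement.

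Part~(iii) is the main obstacle. The appearance of the forms $x^2+6y^2$ and $2x^2+3y^2$ indicates that $\sum_{k=0}^{p-1}S_k(2,9)/8^k\pmod{p^2}$ is governed by the unique CM newform of weight~$2$ on $\Gamma_0(24)$ attached to Hecke characters of $\Q(\sqrt{-6})$ (whose class number is $2$, giving exactly the two genera of forms). I would proceed by (a) expanding both $S_k(1,-2)$ and $S_k(2,9)$ mod $p$ using the Fermat symmetry \eqref{T-dual}, tracking the discriminants $1^2-4(-2)=9$ and $2^2-4\cdot 9=-32$ so as to expose the twist factor $(\tfrac{p}{3})$ that relates the two sums; (b) matching the resulting mod-$p$ trace of Frobenius with the Fourier coefficients of the CM form, which are classically described by the representability of $p$ by $x^2+6y^2$ versus $2x^2+3y^2$; and (c) lifting these mod-$p$ identities to mod-$p^2$ by a Dwork-style formal-group argument. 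The hard part will be step~(c): no Clausen- or Whipple-type closed-form is currently known for $S_n(2,9)$, so completing the $p^2$ lift will almost certainly require discovering a new auxiliary telescoping identity in the spirit of Lemma~\ref{Lem2.3}, and matching the ``extra'' $p$-adic terms against the second-order deformation of the CM form.
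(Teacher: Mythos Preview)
The statement you are addressing is labeled a \emph{Conjecture} in the paper and is presented there without proof; it sits in Section~4 among a list of open problems motivated by, but not covered by, the proof of Theorem~\ref{Th1.3}. There is therefore no argument in the paper to compare against.

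Your proposal is a strategic outline rather than a proof, and the gaps you yourself flag are genuine. Two concrete points. First, for part~(ii) the rescaling $T_n(2,9)=2^nT_n(1,9/4)$ leaves the integers, so Lemma~\ref{Lem2.4} and the reorganization behind Theorem~\ref{Th2.1} are not available as stated; that machinery was built specifically for $S_n(4,c)$ via the identity~\eqref{S_n}, and an analogue for $S_n(2,9)$ would be a new result, not a rescaling. Even granting such a reduction, the particular Franel-type mod-$p^2$ evaluations you would then need are not all in the cited literature. Second, for part~(iii) the duality~\eqref{T-dual} applied termwise to $T_j(1,-2)$ (discriminant $9$) and $T_j(2,9)$ (discriminant $-32$) does not by itself produce a relation between $S_k(1,-2)$ and $S_k(2,9)$; those are quadratic in $T$, and after the reflection $j\mapsto p-1-j$ the inner indices in $\sum_j\binom{k}{j}^2T_jT_{k-j}$ do not realign into another $S$-value. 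So even the mod-$p$ link between the two sums with the twist $(\frac{p}{3})$ is not established by step~(a) as written, let alone its lift to $p^2$. The CM identification and the ``Dwork-style'' lift in step~(c) are precisely the content the paper is conjecturing; no existing formal-group argument closes this for the $S_n(b,c)$ family, which is why the author records it as open.
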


\begin{conjecture}\label{Conj-S11}
Let $p$ be an odd prime with $p\not=5$. Then
\begin{equation}\label{S11-q}
\sum_{k=0}^{p-1}\f{S_k(3,1)}{4^k}
\eq\begin{cases}4x^2-2p\pmod{p^2}&\t{if}\ p\eq1,9\pmod{20}\ \&\ p=x^2+5y^2,\ \\2x^2-2p\pmod{p^2}&\t{if}\ p\eq3,7\pmod{20}\ \&\ 2p=x^2+5y^2,\\0\pmod{p^2}&\t{if}\ p\eq11,13,17,19\pmod{20},\end{cases}
\end{equation}
where $x$ and $y$ are integers. If $(\f{-5}p)=1$, then
$$\sum_{k=0}^{p-1}\f{40k+29}{4^k}S_k(3,1)\eq 18p\pmod{p^2}.$$
\end{conjecture}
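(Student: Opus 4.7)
The plan is to reduce Conjecture \ref{Conj-S11} to the previously stated Conjecture \ref{Conj-S8}, since \eqref{S11-q} and \eqref{S8-q} have identical right-hand sides. Both $T_n(3,1)$ and $T_n(1,-1)$ satisfy $b^2-4c=5$, so the Legendre-polynomial representation gives $T_n(3,1)=5^{n/2}P_n(3/\sqrt5)$ and $T_n(1,-1)=5^{n/2}P_n(1/\sqrt5)$. The first step is to search for an Euler- or Pfaff-type hypergeometric transformation between the ${}_2F_1$ or ${}_3F_2$ series at these two arguments; any such transformation respecting $\Z_p$-integrality would yield a mod-$p^d$ identification $S_k(3,1)\eq\lambda^k S_k(1,-1)$ for some $p$-adic unit $\lambda$ and some $d\ge1$, from which \eqref{S11-q} would follow immediately from the corresponding congruence in Conjecture \ref{Conj-S8}.

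Failing a direct reduction, I would adapt the machinery behind Theorem \ref{Th1.3}. The scaling identity $T_n(\alpha b,\alpha^2c)=\alpha^n T_n(b,c)$ with $\alpha=1/3$ gives $S_k(3,1)/4^k=(3/4)^kS_k(1,1/9)$, so I would first establish a two-parameter extension of Theorem \ref{Th2.1} that handles $\sum(an+b)S_n(4,c)/m^n$ with $c$ and $m$ independent. The absolute-convergence argument of Theorem \ref{Th2.1} carries over verbatim; combined with a generalization of Lemma \ref{Lem2.3} obtained by running the same WZ telescoping package of \cite{CHM} with the ratio $-1$ replaced by $c/m$, the identity should take the shape
\begin{equation*}
\sum_{n=0}^\infty(an+b)\f{S_n(4,c)}{m^n}=\sum_{n=0}^\infty P(n)\f{\binom{2n}{n}g_n}{m^n}
\end{equation*}
for an explicit linear polynomial $P$ and an explicit companion sequence $g_n$ depending on $c/m$. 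The weighted supercongruence $\sum_{k=0}^{p-1}(40k+29)S_k(3,1)/4^k\eq 18p\pmod{p^2}$ then reduces to a finite telescoping identity on truncations, parallel to the way Lemma \ref{Lem2.2} drives the proof of Theorem \ref{Th1.2}, while \eqref{S11-q} reduces to a mod-$p^2$ evaluation of $\sum_{k=0}^{p-1}\binom{2k}{k}g_k/m^k$ whose binary-quadratic-form signatures are exactly those attached to Hecke characters on the class-number-two field $\Q(\sqrt{-5})$.

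The hard part will be identifying $g_n$ explicitly. The telescoping step is mechanical, but unless $g_n$ coincides with a known Apéry- or Almkvist--Zudilin-type sequence whose $L$-function has an established CM factorization, its arithmetic must be developed from scratch. The mod-$p$ version of \eqref{S11-q} should in any case be accessible via the Gross--Koblitz formula combined with Gaussian hypergeometric identities, but the $p^2$ refinement typically demands an additional $p$-adic period cancellation that has to be extracted either by hand or by a second creative-telescoping identity in the spirit of \cite{S14c}.
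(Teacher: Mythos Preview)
The paper does not contain a proof of this statement: it is explicitly labeled \emph{Conjecture}~\ref{Conj-S11} and left open, with Remark~\ref{Rem-S11} only listing further similar conjectures. So there is no argument in the paper to compare your proposal against.

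That said, your proposal is not a proof either, and the main gap is structural. Your first strategy reduces Conjecture~\ref{Conj-S11} to Conjecture~\ref{Conj-S8}, but the latter is itself an open conjecture in the paper, so even a successful reduction would not establish the result. Moreover, the hoped-for relation $S_k(3,1)\equiv\lambda^k S_k(1,-1)$ for some $p$-adic unit $\lambda$ is not something one should expect: $S_n(b,c)$ is a convolution $\sum_j\binom{n}{j}^2T_jT_{n-j}$, and while individual $T_n(b,c)$ with the same discriminant are related by a Legendre-polynomial argument change, no simple term-by-term scaling passes through the convolution. Your second strategy is a research outline rather than an argument: the extension of Theorem~\ref{Th2.1} to independent $c$ and $m$, the identification of the auxiliary sequence $g_n$, and the mod-$p^2$ evaluation are all substantial open steps, each of which the paper treats (in analogous settings) only conjecturally. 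In short, what you have written is a plausible plan of attack, not a proof, and the paper itself offers none.
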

\begin{remark}\label{Rem-S11}\rm We also have some similar conjectures involving
\begin{gather*}\sum_{k=0}^{p-1}\f{S_k(5,4)}{4^k},\ \sum_{k=0}^{p-1}\f{S_k(4,-5)}{4^k},
\ \sum_{k=0}^{p-1}\f{S_k(7,6)}{6^k},
\\ \sum_{k=0}^{p-1}\f{S_k(10,-2)}{32^k},
\ \sum_{k=0}^{p-1}\f{S_k(14,9)}{72^k},\ \sum_{k=0}^{p-1}\f{S_k(19,9)}{36^k}
\end{gather*}
modulo $p^2$, where $p$ is a prime greater than $3$.
\end{remark}

Motivated by Theorem \ref{Th2.1}, we pose the following general conjecture.

\begin{conjecture} For any odd prime $p$ and integer $m\not\eq0\pmod p$, we have
\begin{equation}\label{Sp}\sum_{k=0}^{p-1}\f{S_k(4,-m)}{m^k}\eq\sum_{k=0}^{p-1}\f{\bi{2k}kf_k}{m^k}\pmod{p^2}.
\end{equation}
and
\begin{equation}\label{kSp}\f{m+16}2\sum_{k=0}^{p-1}\f{kS_k(4,-m)}{m^k}
-\sum_{k=0}^{p-1}((m+4)k-4)\f{\bi{2k}kf_k}{m^k}\eq4p\l(\f mp\r)\pmod{p^2}.
\end{equation}
\end{conjecture}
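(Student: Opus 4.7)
My plan is to mimic the derivation in the proof of Theorem 2.1 in a finite, $p$-adic setting, tracking carefully the boundary terms that arise from truncation at index $p-1$. For \eqref{Sp}, I would use Lemma 2.4 to expand
$$\sum_{n=0}^{p-1}\f{S_n(4,-m)}{m^n}=\sum_{l=0}^{p-1}\f{\bi{2l}l}{m^l}\sum_{k=0}^{\min(l,\,p-1-l)}\bi lk(-1)^k4^{l-k}s_{l+k,k}$$
via the change of variables $l=n-k$. When $l\ls(p-1)/2$ the inner sum equals $f_l$ by Lemma 2.3, matching the corresponding range in $\sum_{k=0}^{p-1}\bi{2k}kf_k/m^k$. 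The discrepancy reduces to
$$-\sum_{l=(p+1)/2}^{p-1}\f{\bi{2l}l}{m^l}\sum_{k=p-l}^{l}\bi lk(-1)^k4^{l-k}s_{l+k,k},$$
and since $p\mid\bi{2l}l$ for $l$ in this range, it is enough to show that each inner sum vanishes modulo $p$. I would prove this via a WZ-type telescoping certificate, analogous to the one used in the proof of Lemma 2.3, exploiting Lucas' theorem on the binomials $\bi{l+k}j$ appearing in \eqref{snk} when $l+k\gs p$.

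For \eqref{kSp}, I would run the same sequence of manipulations as in the proof of Theorem 2.1, but on the truncated range. Starting from $(m+16)\sum_{k=0}^{p-1}kS_k(4,-m)/m^k$, reindexing the shifted sum $\sum_{n=1}^{p-1}nS_n/m^{n-1}=\sum_{n=0}^{p-2}(n+1)S_{n+1}/m^n$, and invoking the identity $(n+1)\bi{2n+2}{n+1}=2(2n+1)\bi{2n}n$ together with the Franel recurrence \eqref{tf}, one recovers the main term $2\sum_{k=0}^{p-1}((m+4)k-4)\bi{2k}kf_k/m^k$ plus an explicit boundary contribution at $n=p-1$ of the form $\bi{2p-2}{p-1}(8pf_{p-1}+16(p-1)t_{p-1})/m^{p-1}$, together with the tail correction from the truncated analog of \eqref{ft}, whose $p^2$-divisibility should follow from the argument for \eqref{Sp}. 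Using the Wolstenholme-style congruence $\bi{2p-2}{p-1}\eq-p\pmod{p^2}$, Fermat's little theorem $m^{p-1}\eq1\pmod p$, and classical mod-$p$ evaluations of Franel numbers, the combined correction should collapse modulo $p^2$ to $8p\l(\f mp\r)$, and halving yields the asserted $4p\l(\f mp\r)$.

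The main obstacle is pinning down the $m$-dependence. The primary boundary contribution is independent of $m$ modulo $p^2$, so the Legendre symbol $\l(\f mp\r)$ must emerge from the tail correction in the truncated \eqref{ft} identity, specifically from a summand near $l=(p-1)/2$ where $\bi{p-1}{(p-1)/2}\eq(-1)^{(p-1)/2}\pmod p$ combines with $m^{-(p-1)/2}\eq\l(\f mp\r)\pmod p$ to produce the quadratic character. Verifying that the various constants and signs line up to give exactly $4p\l(\f mp\r)$ rather than some other multiple of $p\l(\f mp\r)$ will require meticulous bookkeeping, and this is where I anticipate the bulk of the technical work.
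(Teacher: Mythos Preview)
The statement you are attempting to prove is explicitly labelled a \emph{Conjecture} in the paper; there is no proof in the paper to compare your proposal against. The accompanying remark says only that the author has checked it numerically and that, in view of the proof of Theorem~2.1, both congruences hold modulo~$p$ (not modulo~$p^2$). So the mod~$p$ part of your plan---which is exactly the truncated Theorem~2.1 argument---is already what the paper claims, and it is the lift to~$p^2$ that is open.

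Your proposal does not close that gap. For \eqref{Sp}, you correctly observe that $p\mid\bi{2l}l$ for $(p+1)/2\le l\le p-1$, reducing the problem to showing that each tail sum $\sum_{k=p-l}^{l}\bi lk(-1)^k4^{l-k}s_{l+k,k}$ vanishes modulo~$p$. But you then say you ``would prove this via a WZ-type telescoping certificate, analogous to the one used in the proof of Lemma~2.3, exploiting Lucas' theorem''; this is a hope, not an argument. The certificate in Lemma~2.3 is for the \emph{full} sum $\sum_{k=0}^l$, and there is no reason to expect it to adapt cleanly to the truncated range $k\ge p-l$, where the summand $s_{l+k,k}$ involves $l+k\ge p$ and the $p$-adic structure of $\bi{l+k}{2i}$ is genuinely delicate. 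For \eqref{kSp}, the difficulty you flag at the end---that the Legendre symbol $(\f mp)$ must come from a specific boundary term with exactly the right constant---is the whole problem, and ``meticulous bookkeeping'' is not a method. Until you can produce an actual identity or certificate that forces the tail to vanish mod~$p$ and the boundary to equal $4p(\f mp)$ mod~$p^2$, what you have is a restatement of why the mod~$p$ case works, together with an acknowledgement that the mod~$p^2$ case does not obviously follow.
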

\begin{remark} We have checked this conjecture via {\tt Mathematica}.
In view of the proof of Theorem \ref{Th2.1}, both \eqref{Sp} and \eqref{kSp}
hold modulo $p$.
\end{remark}

\section{Series for $1/\pi$ involving $T_n(b,c)$ and $Z_n=\sum_{k=0}^n\bi nk\bi{2k}k\bi{2(n-k)}{n-k}$}
 \setcounter{equation}{0}

The numbers
$$Z_n:=\sum_{k=0}^n\bi nk\bi{2k}k\bi{2(n-k)}{n-k}\ \ (n=0,1,2,\ldots)$$
were first introduced by D. Zagier in his paper \cite{Zag} the preprint of which was released in 2002.
Thus we name such numbers as {\it Zagier numbers}. As pointed out by the author \cite[Remark 4.3]{S14d},
for any $n\in\N$ the number $2^nZ_n$ coincides with the so-called CLF (Catalan-Larcombe-French) number
$${\mathcal P}_n:=2^n\sum_{k=1}^{\lfloor n/2\rfloor}\bi n{2k}\bi{2k}k^24^{n-2k}=\sum_{k=0}^n\f{\bi{2k}k^2\bi{2(n-k)}{n-k}^2}{\bi nk}.$$

Let $p$ be an odd prime. For any $k=0,\ldots,p-1$, we have
$${\mathcal P}_k\eq\l(\f{-1}p\r)128^k{\mathcal P}_{p-1-k}\pmod p$$
by F. Jarvis and H.A. Verrill \cite[Corollary 2.2]{JV}, and hence
$$Z_k=\f{{\mathcal P}_k}{2^k}\eq\l(\f{-1}p\r)64^k(2^{p-1-k}Z_{p-1-k})\eq\l(\f{-1}p\r)32^kZ_{p-1-k}\pmod p.$$
Combining this with Remark \ref{Rem-Dual}(ii), we see that
\begin{align*}\sum_{k=0}^{p-1}\f{Z_kT_k(b,c)}{m^k}\eq&
\l(\f{4c-b^2}p\r)\sum_{k=0}^{p-1}\l(\f{32(b^2-4c)}m\r)^kZ_{p-1-k}T_{p-1-k}(b,c)
\\\eq&\l(\f{4c-b^2}p\r)\sum_{k=0}^{p-1}\f{Z_kT_k(b,c)}{(32(b^2-4c)/m)^k}\pmod p
\end{align*}
for any $b,c,m\in\Z$ with $p\nmid (b^2-4c)m$.

J. Wan and Zudilin \cite{WZ} obtained the following irrational series for $1/\pi$ involving the Legendre polynomials and the Zagier numbers:
$$\sum_{k=0}^\infty(15k+4-2\sqrt6)Z_kP_k\l(\f{24-\sqrt6}{15\sqrt2}\r)\l(\f{4-\sqrt6}{10\sqrt3}\r)^k
=\f{6}{\pi}(7+3\sqrt6).$$
Via our congruence approach (including Conjecture \ref{Conj-Dual}), we find 24 rational series for $1/\pi$
involving $T_n(b,c)$ and the Zagier numbers. Theorem 1 of \cite{WZ} might be helpful to solve some of them.

\begin{conjecture}\label{Conj-Z} We have the following identities for $1/\pi$.
\begin{align}\sum_{k=1}^\infty\f{5k+1}{32^k}T_kZ_k&=\f{8(2+\sqrt5)}{3\pi},
\\\sum_{k=0}^\infty\f{21k+5}{(-252)^k}T_k(1,16)Z_k&=\f{6\sqrt7}{\pi},
\\\sum_{k=0}^\infty\f{3k+1}{36^k}T_k(1,-2)Z_k&=\f{3}{\pi},
%\\
\end{align}\begin{align}\sum_{k=0}^\infty\f{k}{192^k}T_k(14,1)Z_k&=\f{8}{3\pi},
\\\sum_{k=0}^\infty\f{30k+11}{(-192)^k}T_k(14,1)Z_k&=\f{12}{\pi},
\\\sum_{k=0}^\infty\f{15k+1}{480^k}T_k(22,1)Z_k&=\f{6\sqrt{10}}{\pi},
\\\sum_{k=0}^\infty\f{7k+2}{(-672)^k}T_k(26,1)Z_k&=\f{2\sqrt{21}}{3\pi},
\\\sum_{k=0}^\infty\f{21k+2}{1152^k}T_k(34,1)Z_k&=\f{18}{\pi},
\\\sum_{k=0}^\infty\f{30k-7}{640^k}T_k(62,1)Z_k&=\f{160}{\pi},
\\\sum_{k=0}^\infty\f{195k+34}{(-9600)^k}T_k(98,1)Z_k&=\f{80}{\pi},
\\\sum_{k=0}^\infty\f{195k+22}{11232^k}T_k(106,1)Z_k&=\f{27\sqrt{13}}{\pi},
\\\sum_{k=0}^\infty\f{42k+17}{(-1440)^k}T_k(142,1)Z_k&=\f{33}{\sqrt5\,\pi},
\\\sum_{k=0}^\infty\f{2k-1}{1792^k}T_k(194,1)Z_k&=\f{56}{3\pi},
\\\sum_{k=0}^\infty\f{1785k+254}{(-37632)^k}T_k(194,1)Z_k&=\f{672}{\pi},\\
\sum_{k=0}^\infty\f{210k+23}{40800^k}T_k(202,1)Z_k&=\f{15\sqrt{34}}{\pi},
\\\sum_{k=0}^\infty\f{210k-1}{4608^k}T_k(254,1)Z_k&=\f{288}{\pi},
\\\sum_{k=0}^\infty\f{21k-5}{5600^k}T_k(502,1)Z_k&=\f{105}{\sqrt2\,\pi},
\\\sum_{k=0}^\infty\f{7410k+1849}{(-36992)^k}T_k(1154,1)Z_k&=\f{2992}{\pi},
\\\sum_{k=0}^\infty\f{1326k+101}{57760^k}T_k(1442,1)Z_k&=\f{2014}{\sqrt5\,\pi},
\\
\sum_{k=0}^\infty\f{78k-131}{20800^k}T_k(2498,1)Z_k&=\f{2600}{\pi},
\\\sum_{k=0}^\infty\f{62985k+11363}{(-394272)^k}T_k(5474,1)Z_k&=\f{7659\sqrt{10}}{\pi},
%\\
\end{align}
\begin{align}\sum_{k=0}^\infty\f{358530k+33883}{486720^k}T_k(6082,1)Z_k&=\f{176280}{\pi},
\\\sum_{k=0}^\infty\f{510k-1523}{78400^k}T_k(9602,1)Z_k&=\f{33320}{\pi},
\\\sum_{k=0}^\infty\f{570k-457}{93600^k}T_k(10402,1)Z_k&=\f{1590\sqrt{13}}{\pi}.
\end{align}
\end{conjecture}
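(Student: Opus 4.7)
The plan is to reduce all 24 conjectured series to the framework of Wan and Zudilin \cite{WZ}. Since $T_k(b,c)=(b^2-4c)^{k/2}P_k(b/\sqrt{b^2-4c})$ when $b^2-4c\ne0$, each identity
$$\sum_{k=0}^\infty(a+dk)\f{T_k(b,c)Z_k}{m^k}=\f{\lambda\sqrt D}{\pi}$$
rewrites as $\sum_{k=0}^\infty(a+dk)Z_kP_k(x_0)z_0^k=\lambda\sqrt D/\pi$ with $x_0=b/\sqrt{b^2-4c}$ and $z_0=\sqrt{b^2-4c}/m$. This is precisely the shape of the irrational Wan--Zudilin series quoted just before the conjecture. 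My first step would be, for each of the 24 cases, to locate the CM point $\tau$ (expected to have class number at most two) for which $x_0$ and $z_0$ match the modular functions appearing in \cite[Thm.~1]{WZ}, and then apply their general evaluation.

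The second step is to explain the rationality collapse. The Wan--Zudilin formula generically outputs a two-term irrational combination $\lambda_1\sqrt{D_1}+\lambda_2\sqrt{D_2}$ arising from the two embeddings of $\Q(\sqrt{b^2-4c})$; the 24 conjectural right-hand sides all carry a single surd. I would therefore look for the algebraic condition on the underlying CM point which forces the two Galois conjugate contributions to be rational multiples of a common $\sqrt D$. The restriction to the exact integer tuples $(a,d,b,c,m)$ in the conjecture is precisely what should enforce this collapse.

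As a parallel line of attack and consistency check I would apply the Duality Principle (Conjecture \ref{Conj-Dual}). The Jarvis--Verrill relation $\mathcal P_k\eq(\f{-1}p)128^k\mathcal P_{p-1-k}\pmod p$ \cite{JV} yields $Z_k\eq(\f{-1}p)32^kZ_{p-1-k}\pmod p$, and \eqref{T-dual} gives the analogous relation for $T_k(b,c)$ with multiplier $b^2-4c$. Hence the dual modulus of any series in the list is $m^*=32(b^2-4c)/m$, and whenever $m^*$ also yields a convergent series the two identities must be related by a rational conversion factor, providing a stringent internal consistency check. The companion congruences of the form \eqref{cong} modulo $p^2$, predicted by Conjecture \ref{general}, can be verified from the same modular manipulations and pin down $\lambda$ and $D$ uniquely together with PSLQ.

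The main obstacle is exactly the rationality collapse in step two: showing that each Ramanujan-type constant is a single rational multiple of $\sqrt D$ rather than a two-term sum of surds requires proving exact algebraic coincidences at each CM point, and these do not follow from Wan--Zudilin's machinery alone. A subsidiary technical obstacle is that $T_k(b,c)Z_k$ mixes two different modular structures, so the relevant Hauptmodul whose $q$-expansion reproduces $(x_0,z_0)$ must be identified case by case before the Chowla--Selberg formula can close the evaluation. I would begin with (5.1), the simplest case $T_kZ_k/32^k$ where $b^2-4c=-3$ connects directly to $P_k(-i/\sqrt 3)$ and the classical Clausen identity, using it as a template for the remaining twenty-three cases.
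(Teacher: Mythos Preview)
The paper does not prove this statement; it is explicitly labeled a \emph{Conjecture} and no proof is offered. The author explains in the paragraph preceding the conjecture that these 24 identities were \emph{discovered} via his congruence approach (including the Duality Principle, Conjecture~\ref{Conj-Dual}) together with PSLQ, and he remarks only that ``Theorem~1 of \cite{WZ} might be helpful to solve some of them.'' That is the full extent of the paper's treatment.

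Your proposal is therefore not comparable to a proof in the paper, because there is none. What you have sketched is essentially the program the author himself gestures toward: rewrite $T_k(b,c)$ via Legendre polynomials and attempt to fit each series into the Wan--Zudilin framework. You have correctly identified the central obstruction---the collapse from a generic two-surd output to a single rational multiple of $\sqrt{D}$---and this is precisely why the identities remain conjectural. Your duality computation $m^*=32(b^2-4c)/m$ matches the calculation the paper performs just before the conjecture, and your plan to use it as a consistency check is sound, but note that the Duality Principle is itself conjectural in the paper, so it provides heuristic support rather than proof. In short: your outline is a reasonable attack plan, the obstacles you name are real, and nothing in the paper resolves them.
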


Below we present some conjectures on congruences related to $(5.1)$, $(5.2)$, $(5.4)$ and $(5.9)$.

\begin{conjecture}\label{Conj-Z1} {\rm (i)} For any $n\in\Z^+$, we have
\begin{equation}\label{Z1-n}
n\ \bigg|\ \sum_{k=0}^{n-1}(5k+1)T_kZ_k32^{n-1-k}.
\end{equation}

{\rm (ii)} Let $p$ be an odd prime with $p\not=5$. Then
\begin{equation}\label{Z1-p}\sum_{k=0}^{p-1}\f{5k+1}{32^k}T_kZ_k\eq \f p3\l(5\l(\f{-5}p\r)-2\l(\f{-1}p\r)\r)\pmod{p^2}.
\end{equation}
If $p\eq\pm1\pmod{5}$, then
\begin{equation}\label{Z1-pn}
\f1{(pn)^2}\(\sum_{k=0}^{pn-1}\f{5k+1}{32^k}T_kZ_k-p\l(\f{-1}p\r)\sum_{k=0}^{n-1}\f{5k+1}{32^k}T_kZ_k\)
\in\Z_p
\end{equation}
for all $n\in\Z^+$.

{\rm (iii)} For any prime $p>5$, we have
\begin{equation}\label{Z1-q}\begin{aligned}&\l(\f{-1}p\r)\sum_{k=0}^{p-1}\f{T_kZ_k}{32^k}
\\\eq&\begin{cases}4x^2-2p\pmod{p^2}&\t{if}\ p\eq1,4\pmod{15}\ \&\ p=x^2+15y^2\ (x,y\in\Z),
\\12x^2-2p\pmod{p^2}&\t{if}\ p\eq2,8\pmod{15}\ \&\ p=3x^2+5y^2\ (x,y\in\Z),
\\0\pmod{p^2}&\t{if}\ (\f{-15}p)=-1.
\end{cases}
\end{aligned}\end{equation}
\end{conjecture}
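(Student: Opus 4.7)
\medskip
\emph{Proof plan for Conjecture \ref{Conj-Z1}.} My plan rests on three ingredients: the series identity $(5.1)$; the mod $p$ duality $T_kZ_k\eq(\f3p)(-3)^kT_{p-1-k}Z_{p-1-k}\pmod p$, obtained by combining the Jarvis--Verrill congruence $\mathcal P_k\eq(\f{-1}p)128^k\mathcal P_{p-1-k}\pmod p$ recalled before Conjecture \ref{Conj-Z} with the $T_k$-duality of Remark \ref{Rem-Dual}(ii); and a weight-$3$ CM modular form attached to $\Q(\sqrt{-15})$. Note that this duality identifies $-3=32\cdot(-96)/32^2$ as the dual modulus, consistent with Conjecture \ref{Conj-Dual}.

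For part $(i)$, I would run creative telescoping on $F(n,k):=(5k+1)T_kZ_k\,32^{n-1-k}$. Since $T_k$ and $Z_k$ each satisfy a second-order linear recurrence with polynomial coefficients, Zeilberger's algorithm should output a linear recurrence of modest order in $n$ for $S(n):=\sum_{k=0}^{n-1}F(n,k)$; the divisibility $n\mid S(n)$ would then follow by induction after verifying the first few cases. The observation that $S(n)/n$ is odd precisely when $n$ is a power of two should drop out of a $2$-adic analysis of the explicit telescoping certificate.

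For part $(ii)$, the mod $p^2$ formula \eqref{Z1-p} is the $r=2$ instance of Conjecture \ref{general}, with $d_1=5,\ \ve_1=-1$ and $d_2=1,\ \ve_2=-1$ read off from the right-hand side $8(2+\sqrt5)/(3\pi)$ of $(5.1)$. Granting part $(iii)$, which determines $\sum_{k=0}^{p-1}T_kZ_k/32^k\pmod{p^2}$, it remains to evaluate $\sum_{k=0}^{p-1}kT_kZ_k/32^k\pmod{p^2}$; this I would obtain by differentiating a hypergeometric presentation of $\sum_kT_kZ_kx^k$ and matching it against a Clausen-type identity in the spirit of Wan--Zudilin \cite{WZ}. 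The $(pn)$-supercongruence \eqref{Z1-pn} under $p\eq\pm1\pmod5$ would then follow by a Dwork-style lift: the Picard--Fuchs operator governing $\sum T_kZ_k x^k$ admits an ordinary Frobenius at primes split in $\Q(\sqrt5)$, and its ordinarity propagates the congruence modulo $p^2$ to modulus $(pn)^2$ through $p$-adic analytic continuation.

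For part $(iii)$, the two binary quadratic forms $x^2+15y^2$ and $3x^2+5y^2$ of discriminant $-60$ are precisely the two classes of $\Q(\sqrt{-15})$, with the $(\f{-1}p)$-twist explaining the sign pattern in \eqref{Z1-q}. I would identify the CM weight-$3$ eigenform $f(\tau)=\sum a_nq^n$ attached to a Gr\"ossencharakter of $\Q(\sqrt{-15})$ whose Fourier coefficient $a_p$ realizes the right-hand side of \eqref{Z1-q}, then match $a_p$ with $\sum_{k=0}^{p-1}T_kZ_k/32^k$ via a finite-field Greene--McCarthy hypergeometric identity. The mod $p$ match already follows from the duality displayed above; lifting to modulus $p^2$ would use either Kilbourn's method or an extension of Z.-H.~Sun's binary-quadratic-form framework for $\bi{2k}k^2$-type sums to the mixed product $T_kZ_k$. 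The main obstacle is $(iii)$---specifically, isolating the correct eigenform and executing the mod $p^2$ lift; once $(iii)$ is in hand, $(i)$ reduces to certificate verification and $(ii)$ is largely formal, with only the Dwork extension requiring separate attention.
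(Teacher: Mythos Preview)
This statement is Conjecture~\ref{Conj-Z1} in the paper: it is \emph{posed as an open conjecture}, not proved. The paper offers no argument for any of parts (i)--(iii); the surrounding discussion in Section~5 only records the mod~$p$ duality for $Z_kT_k(b,c)$ and notes that Theorem~1 of \cite{WZ} ``might be helpful'' for the series identities of Conjecture~\ref{Conj-Z}. So there is no paper proof to compare your proposal against.

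Your document is explicitly a plan, not a proof, and that is the right posture here. A few specific comments. First, your pointwise duality is misstated: combining $Z_k\equiv(\frac{-1}{p})32^kZ_{p-1-k}$ with $T_k\equiv(\frac{-3}{p})(-3)^kT_{p-1-k}$ gives $T_kZ_k\equiv(\frac{3}{p})(-96)^kT_{p-1-k}Z_{p-1-k}\pmod p$, not $(-3)^k$; the dual modulus $-3=(-96)/32$ is correct, but the pointwise factor is $(-96)^k$. Second, your route to \eqref{Z1-p} invokes Conjecture~\ref{general}, which is itself conjectural in the paper; reading off $c_1,c_2$ from the series value $8(2+\sqrt5)/(3\pi)$ is good heuristics for \emph{predicting} \eqref{Z1-p}, but it is not a proof mechanism. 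Third, your part~(i) discussion of ``odd iff $n$ is a power of two'' is not part of Conjecture~\ref{Conj-Z1}(i) as stated---only the divisibility $n\mid\sum_{k=0}^{n-1}(5k+1)T_kZ_k32^{n-1-k}$ is claimed. Finally, your part~(iii) strategy (CM weight-$3$ form for $\Q(\sqrt{-15})$, Greene--McCarthy matching, Kilbourn-style $p^2$ lift) is a plausible research program, but each step is substantial and none is carried out; you correctly identify this as the main obstacle.
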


\begin{conjecture}\label{Conj-Z2} {\rm (i)} For any $n\in\Z^+$, we have
\begin{equation}\label{Z2-n}
\f1n\sum_{k=0}^{n-1}(-1)^k(21k+5)T_k(1,16)Z_k252^{n-1-k}\in\Z^+.
\end{equation}

{\rm (ii)} Let $p>3$ be a prime with $p\not=7$. Then
\begin{equation}\label{Z2-p}\sum_{k=0}^{p-1}\f{21k+5}{(-252)^k}T_k(1,16)Z_k\eq \f p3\l(16\l(\f{-7}p\r)-\l(\f{-1}p\r)\r)\pmod{p^2}.
\end{equation}
If $(\f 7p)=1$, then
\begin{equation}\label{Z2-pn}
\f1{(pn)^2}\(\sum_{k=0}^{pn-1}\f{21k+5}{(-252)^k}T_k(1,16)Z_k-p\l(\f{-1}p\r)
\sum_{k=0}^{n-1}\f{21k+5}{(-252)^k}T_k(1,16)Z_k\)\in\Z_p
\end{equation}
for all $n\in\Z^+$.

{\rm (iii)} For any prime $p>3$ with $p\not=7$, we have
\begin{equation}\label{Z2-q}\begin{aligned}&\sum_{k=0}^{p-1}\f{T_k(1,16)Z_k}{(-252)^k}
\\\eq&\begin{cases}4x^2-2p\pmod{p^2}&\t{if}\ p\eq1,2,4\pmod{7}\ \&\ p=x^2+7y^2\ (x,y\in\Z),
\\0\pmod{p^2}&\t{if}\ p\eq 3,5,6\pmod 7.
\end{cases}
\end{aligned}\end{equation}
\end{conjecture}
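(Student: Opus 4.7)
My plan is to attack the three parts of Conjecture~\ref{Conj-Z2} sequentially, since they are naturally coupled: (i) is a finite integrality/positivity statement, (ii) is a Ramanujan-type supercongruence, and (iii) is a mod $p^2$ evaluation parametrized by the representation $p=x^2+7y^2$.

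For (i), I would apply creative telescoping (Zeilberger's algorithm) to the bivariate summand $F(n,k):=(-1)^k(21k+5)T_k(1,16)Z_k\cdot 252^{n-1-k}$. Because both $T_k(1,16)$ and $Z_k$ are $P$-recursive of second order, $F(n,k)$ is holonomic in $(n,k)$ and Zeilberger's algorithm will terminate with a linear recurrence of finite order and polynomial coefficients for $A_n:=\sum_{k=0}^{n-1}F(n,k)$. Integrality of $A_n/n$ follows by induction from that recurrence after hand-verifying enough initial values to clear denominators. Positivity is easier: the $k=0$ term contributes $5\cdot 252^{n-1}$, and the crude bounds $|T_k(1,16)|\ls(\sqrt{63})^k$ and $Z_k\ls 32^k$ show $|F(n,k+1)/F(n,k)|<1$ already from $k=0$, so the alternating tail cannot undo the main term.

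For (ii), my strategy is to find a finite identity in the spirit of Lemma~\ref{Lem2.1}, namely one expressing the partial sum
\[
\sum_{k=0}^{n-1}\f{(21k+5)(-1)^kT_k(1,16)Z_k}{252^k}=\alpha(n)\cdot\f{T_n(1,16)Z_n}{(-252)^{n-1}}+\beta\sum_{k=0}^{n-1}\f{T_k(1,16)Z_k}{(-252)^k}+\gamma
\]
for explicit $\alpha(n)\in\Q(n)$ and constants $\beta,\gamma\in\Q$. Such an identity can be guessed by the same creative-telescoping used in~(i) and verified from the recurrence. Setting $n=p$, the boundary term vanishes modulo $p^2$ (since $Z_p$ contains the factor $\bi{2p}p$ and hence an extra $p$), the second sum is evaluated via part~(iii), and this yields \eqref{Z2-p}. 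The combined dualities $T_k(1,16)\eq(\tfrac{-63}{p})(-63)^kT_{p-1-k}(1,16)\pmod p$ from \eqref{T-dual} and $Z_k\eq(\tfrac{-1}{p})32^kZ_{p-1-k}\pmod p$ from Jarvis--Verrill give the mod $p$ version of \eqref{Z2-p} immediately, serving as a cross-check. The supercongruence \eqref{Z2-pn} then follows, once \eqref{Z2-p} is in hand, by splitting $[0,pn)$ into blocks of length $p$ and applying the author's standard block-telescoping scheme from \cite{S19}, using the hypothesis $(\tfrac{7}{p})=1$ to fuse the two Legendre-symbol contributions on the right-hand side of \eqref{Z2-p} into a single sign $(\tfrac{-1}{p})$.

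The main obstacle will be part (iii). The predicted evaluation $\sum_{k=0}^{p-1}T_k(1,16)Z_k/(-252)^k\eq 4x^2-2p\pmod{p^2}$ for $p=x^2+7y^2$ points to a weight-$3$ Hecke eigenform of level $7$ whose $p$-th Fourier coefficient equals $\pm 2x$ on primes split in $\Q(\sqrt{-7})$; I would identify the truncated sum with a trace of Frobenius on the associated motive, following the template of \cite{S14b,S14c}. The mod $p$ version of the congruence drops out quickly from the combined duality above together with a Jacobi-sum evaluation of the truncated ${}_4F_3$ representing $\sum_k T_k(1,16)Z_k/(-252)^k$. Upgrading to mod $p^2$, however, requires a Clausen-type quadratic transformation lifting the classical ${}_3F_2$ identity to the Catalan--Larcombe--French setting; producing such a transformation explicitly is the step I expect to demand the most genuinely new work, and it is the same obstacle that has kept related mod $p^2$ conjectures on Zagier numbers in the literature unproven.
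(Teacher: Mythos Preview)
The statement you are attempting is Conjecture~\ref{Conj-Z2} in the paper; it is stated there as an \emph{open conjecture} and no proof is given. There is therefore no ``paper's own proof'' to compare against. Your proposal is not a faulty reproduction of a known argument but an outline for attacking an open problem, and it should be read as such.

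That said, several steps in your outline would not go through as written. In part~(i) your positivity bound $|T_k(1,16)|\ls(\sqrt{63})^k$ is false: since $c=16>0$ the correct growth is $(b+2\sqrt c)^k=9^k$, not $(\sqrt{|b^2-4c|})^k$. Even granting your stated bounds, $\sqrt{63}\cdot 32/252>1$, so the claimed ratio $<1$ fails; the positivity needs a different argument (e.g.\ working with the actual asymptotics $T_k(1,16)\sim C\cdot 9^k/\sqrt{k}$ and $Z_k\sim C'\cdot 8^k/\sqrt{k}$, which do give $9\cdot 8/252<1$). In part~(ii) the key claim that the boundary term vanishes mod $p^2$ because ``$Z_p$ contains the factor $\binom{2p}{p}$ and hence an extra $p$'' is incorrect: $\binom{2p}{p}\equiv 2\pmod{p^3}$ by Wolstenholme, and in fact $Z_p\equiv 2\binom{2p}{p}\equiv 4\pmod p$, so $Z_p$ carries no extra factor of $p$. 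Without that vanishing, the telescoped identity you propose does not directly yield \eqref{Z2-p}. Your duality computation combining \eqref{T-dual} with the Jarvis--Verrill duality for $Z_k$ is the right ingredient for the mod~$p$ statement, but the lift to $p^2$ in both \eqref{Z2-p} and \eqref{Z2-q} is precisely the open part; your own final paragraph correctly identifies the missing Clausen-type lift as ``genuinely new work'', and that remains the essential obstruction.
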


\begin{conjecture}\label{Conj-Z4}
{\rm (i)} For any $n\in\Z^+$, we have
\begin{equation}\label{Z4-n}
n\ \bigg|\ \sum_{k=0}^{n-1}kT_k(14,1)Z_k192^{n-1-k}.
\end{equation}

{\rm (ii)} Let $p>3$ be a prime. Then
\begin{equation}\label{Z4-p}\sum_{k=0}^{p-1}\f{k}{192^k}T_k(14,1)Z_k\eq \f p9\l(\l(\f{-1}p\r)-\l(\f{2}p\r)\r)\pmod{p^2}.
\end{equation}
If $p\eq1,3\pmod8$, then
\begin{equation}\label{Z4-pn}
\f1{(pn)^2}\(\sum_{k=0}^{pn-1}\f{k}{192^k}T_k(14,1)Z_k-p\l(\f{-1}p\r)
\sum_{k=0}^{n-1}\f{k}{192^k}T_k(14,1)\)\in\Z_p
\end{equation}
for all $n\in\Z^+$.

{\rm (iii)} For any prime $p>3$, we have
\begin{equation}\label{Z4-q}\begin{aligned}&\l(\f 3p\r)\sum_{k=0}^{p-1}\f{T_k(14,1)Z_k}{192^k}
\\\eq&\begin{cases}4x^2-2p\pmod{p^2}&\t{if}\ p\eq1,3\pmod{8}\ \&\ p=x^2+2y^2\ (x,y\in\Z),
\\0\pmod{p^2}&\t{if}\ p\eq 5,7\pmod 8.
\end{cases}
\end{aligned}\end{equation}
\end{conjecture}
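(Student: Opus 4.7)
My plan is to attack the three parts of Conjecture \ref{Conj-Z4} in sequence, using the same toolkit that drove the proofs in Section~2. For part~(i), I would seek an explicit recurrence for $S_n := \sum_{k=0}^{n-1} k\, T_k(14,1) Z_k\, 192^{n-1-k}$ via creative telescoping. Both $T_k(14,1)$ (from its three-term relation on p.~3) and $Z_k = \sum_{j} \binom{k}{j}\binom{2j}{j}\binom{2(k-j)}{k-j}$ satisfy P-recurrences, so their product does too; applying the Chen--Hou--Mu double-sum telescoping method used in Lemma~\ref{Lem2.3} should produce an integer-coefficient recurrence in $n$ for $S_n$ along with a rational certificate $G(n,k)$. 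Reading off the leading coefficient in $n$ and checking that $S_1 = 0$ and $S_2$ is divisible by $2$ should reduce the divisibility $n \mid S_n$ to a finite verification plus an induction.

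For part~(ii), I would first establish the mod~$p$ congruence \eqref{Z4-p} by exploiting the dualities $T_k(14,1) \equiv \bigl(\tfrac{3}{p}\bigr)\,192^k\, T_{p-1-k}(14,1) \pmod{p}$ (from \eqref{T-dual} with $b^2-4c=192$) and $Z_k \equiv \bigl(\tfrac{-1}{p}\bigr)\,32^k\, Z_{p-1-k} \pmod{p}$ (from the Jarvis--Verrill congruence for $\mathcal{P}_k$ used at the start of Section~5). These combine to give
\[
\frac{T_k(14,1)Z_k}{192^k} \equiv \Bigl(\tfrac{-3}{p}\Bigr)\frac{T_{p-1-k}(14,1)Z_{p-1-k}}{32^{p-1-k}} \pmod{p},
\]
so that reindexing by $j=p-1-k$ pairs the sum $\sum_{k=0}^{p-1} k\, T_k(14,1)Z_k/192^k$ against its $32^k$-dual, collapsing after applying the $k \mapsto p-1-k$ symmetry. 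Lifting from mod~$p$ to mod~$p^2$ is the hard technical step: I would rewrite the summand as a hypergeometric ratio ${}_{3}F_{2}$-type, expand using the $p$-adic Gamma function and Greene's finite hypergeometric calculus, and track the $O(p)$ correction terms much as in the papers of Wang, Guo--Liu and Zudilin cited in Section~1. The supercongruence \eqref{Z4-pn} for $p \equiv 1,3 \pmod 8$ (equivalently $\bigl(\tfrac{-2}{p}\bigr)=1$) would then fall under the general $pn \to n$ lifting scheme of Conjecture~\ref{Conj-pn}, once the mod~$p^2$ identity is in place.

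For part~(iii), the evaluation $\sum_{k=0}^{p-1} T_k(14,1)Z_k/192^k \equiv \bigl(\tfrac{3}{p}\bigr)(4x^2-2p) \pmod{p^2}$ when $p = x^2+2y^2$ is the deepest assertion and reflects CM by $\mathbb{Z}[\sqrt{-2}]$. The route I would try is to locate a weight-$2$ newform $f$ of level $N$ (with $N$ a small multiple of $192$), with CM by $\mathbb{Q}(\sqrt{-2})$, whose Fourier coefficient $a_p$ equals $2x$ on split primes, and then match the truncated hypergeometric sum to $a_p$ modulo $p^2$ through a Clausen-type quadratic transformation sending $T_k(14,1)Z_k$ into a product of two $\binom{2k}{k}$ factors. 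This is where I expect the main obstacle: no such modular parametrization is known for these new $T_k(b,c)Z_k$ sums, so the whole program rests on first discovering the right modular/Clausen identity. If that identity can be found, the $p$-adic formula in \eqref{Z4-q} follows from standard Deuring/Deligne arguments, and the companion Ramanujan-type series $\sum k\, T_k(14,1)Z_k/192^k = 8/(3\pi)$ would drop out simultaneously as the analytic shadow of the same modular relation.
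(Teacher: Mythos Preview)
This statement is labelled a \emph{Conjecture} in the paper, and the paper offers no proof of any of its three parts. There is therefore nothing to compare your proposal against: Conjecture~\ref{Conj-Z4} is posed as an open problem, discovered via the author's ``congruence approach'' (Conjecture~\ref{Conj-Dual}) and numerical experimentation, not derived from anything in the paper.

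Your proposal is a reasonable research programme, but you should be aware that it is attacking an open problem rather than reconstructing a known argument. A few specific comments. For part~(ii), your duality computation is correct as far as it goes, but note that it relates the sum $\sum_k T_k(14,1)Z_k/192^k$ to the \emph{different} sum $\sum_k T_k(14,1)Z_k/32^k$ (since $32(b^2-4c)/m = 32\cdot 192/192 = 32$), not to itself; so the ``collapsing after the $k\mapsto p-1-k$ symmetry'' you describe does not occur directly, and you would need independent control over the dual sum at $m=32$. For part~(iii), you correctly identify the central obstruction: no Clausen-type identity or modular parametrisation is known for products $T_k(b,c)Z_k$, and the paper explicitly flags that Theorem~1 of Wan--Zudilin \cite{WZ} ``might be helpful'' but does not claim it suffices. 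Your plan is a sensible line of attack, but the paper itself regards all of this as conjectural.
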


\begin{conjecture}\label{Conj-Z9} {\rm (i)} For any $n\in\Z^+$, we have
\begin{equation}\label{Z9-n}
n\ \bigg|\ \sum_{k=0}^{n-1}(30k-7)T_k(62,1)Z_k640^{n-1-k}.
\end{equation}

{\rm (ii)} Let $p$ be an odd prime with $p\not=5$. Then
\begin{equation}\label{Z9-p}\sum_{k=0}^{p-1}\f{30k-7}{640^k}T_k(62,1)Z_k\eq p\l(2\l(\f{-1}p\r)-9\l(\f{15}p\r)\r)\pmod{p^2}.
\end{equation}
If $(\f{-15}p)=1$, then
\begin{equation}\label{Z9-pn}
\f1{(pn)^2}\(\sum_{k=0}^{pn-1}\f{30k-7}{640^k}T_k(62,1)Z_k
-p\l(\f{-1}p\r)\sum_{k=0}^{n-1}\f{30k-7}{640^k}T_k(62,1)Z_k\)\in\Z_p
\end{equation}
for all $n\in\Z^+$.

{\rm (iii)} For any prime $p>5$, we have
\begin{equation}\label{Z9-q}\begin{aligned}&\l(\f{-1}p\r)\sum_{k=0}^{p-1}\f{T_k(62,1)Z_k}{640^k}
\\\eq&\begin{cases}4x^2-2p\pmod{p^2}&\t{if}\ (\f 2p)=(\f p3)=(\f p5)=1\ \&\ p=x^2+30y^2,
\\8x^2-2p\pmod{p^2}&\t{if}\ (\f 2p)=1,\ (\f p3)=(\f p5)=-1\ \&\ p=2x^2+15y^2,
\\2p-12x^2\pmod{p^2}&\t{if}\ (\f p3)=1,\ (\f 2p)=(\f p5)=-1\ \&\ p=3x^2+10y^2,
\\20x^2-2p\pmod{p^2}&\t{if}\ (\f p5)=1,\ (\f 2p)=(\f p3)=-1\ \&\ p=5x^2+6y^2,
\\0\pmod{p^2}&\t{if}\ (\f{-30}p)=-1,
\end{cases}
\end{aligned}\end{equation}
where $x$ and $y$ are integers.
\end{conjecture}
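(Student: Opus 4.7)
The three parts require different techniques; as with the other statements in this section, a fully rigorous treatment would draw on modular forms and complex multiplication, and my plan is genuinely a plan of attack rather than a routine calculation.

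For part (i), my approach to the divisibility $n\mid \sum_{k=0}^{n-1}(30k-7)T_k(62,1)Z_k\,640^{n-1-k}$ is to apply Zeilberger's creative-telescoping algorithm to the summand. Since both $T_k(62,1)$ and $Z_k$ satisfy second-order polynomial recurrences in $k$, their product with a polynomial factor in $k$ is holonomic, and one should be able to extract a polynomial recurrence in $n$ for the quotient $a_n/n$. Integrality of the first few initial values together with an inspection of the leading coefficient of the recurrence would then yield the claim.

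For part (iii), the binary quadratic form description encodes exactly how the rational prime $p$ splits in the ring class field of the order of discriminant $-120$ in $\Q(\sqrt{-30})$: the ideal class group has order four and is represented by the four forms $x^2+30y^2$, $2x^2+15y^2$, $3x^2+10y^2$, $5x^2+6y^2$ appearing in $(\ref{Z9-q})$. My plan is to identify $F(x):=\sum_{k\gs 0}Z_kT_k(62,1)x^k$ with a period of a weight-$3$ CM modular form of level dividing $120$. First, verify that $F$ satisfies a third-order Picard--Fuchs ODE whose singular points include $0$, $1/640$, and the dual value $1/192$ predicted by Remark \ref{Rem-Dual}(ii); next, match the evaluation $F(1/640)/\pi$ with a CM value via the Chudnovsky--Borwein method; finally, extract the Frobenius trace on the associated two-dimensional Galois representation, which supplies the stated representations of $p$ (or $2p$) by the four forms.

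For part (ii), once (iii) is available, the congruence $(\ref{Z9-p})$ decouples as $30\sum_k k(\cdots) - 7\sum_k(\cdots)$; the second piece is exactly (iii), while the first would come from differentiating $F$ and invoking contiguous relations among the hypergeometric-type series involved, combined with the WZ-extension technique of Zudilin to lift the resulting mod-$p$ identity to mod $p^2$. The stronger assertion $(\ref{Z9-pn})$ at primes with $(\f{-30}p)=1$ would then follow by iterating the mod $p^2$ relation $n$ times, along the lines of Conjecture \ref{Conj-pn}. The main obstacle, unsurprisingly, is (iii): explicitly constructing the correct weight-$3$ CM form of level $120$ and matching its Frobenius eigenvalues with the partial sum modulo $p^2$ (rather than only modulo $p$, which follows from the duality in Remark \ref{Rem-Dual}(ii)) is the genuinely delicate step, and is precisely why such assertions typically remain conjectural even when their mod-$p$ shadows can be established.
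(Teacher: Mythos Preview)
The statement you are addressing is presented in the paper as a \emph{conjecture}, not a theorem: the paper offers no proof and explicitly lists it among the $117$ new conjectural series and congruences discovered via the author's congruence approach and PSLQ. So there is no ``paper's own proof'' against which to compare your attempt, and your task was in some sense impossible from the start. Your write-up is honest about this, calling itself a plan of attack and noting that the mod-$p^2$ step ``is precisely why such assertions typically remain conjectural.''

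That said, a few comments on the plan itself. For part (i), creative telescoping on the product $T_k(62,1)Z_k$ would indeed produce a recurrence, but integrality of $a_n/n$ does not follow automatically from the recurrence: the leading coefficient will in general be a polynomial in $n$, and you must show that any prime factors it introduces are already present in the later terms. This is usually doable case-by-case but is not the one-line argument you suggest. For part (iii), the relevant object is a weight-$2$ (not weight-$3$) CM newform attached to a Hecke character of $\Q(\sqrt{-30})$; the expressions $4x^2-2p$, $8x^2-2p$, etc., are precisely the Frobenius traces of such a form, sorted by genus. Finally, in part (ii) you write that \eqref{Z9-pn} should hold when $(\frac{-30}{p})=1$, but the paper's hypothesis is $(\frac{-15}{p})=1$; these are genuinely different conditions, and the latter is what makes the two Legendre symbols in \eqref{Z9-p} agree up to sign so that the right-hand side collapses to a single character, which is what Conjecture~\ref{Conj-pn} requires.
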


\section{Series for $1/\pi$ involving $T_k(b,c)$ and the Franel numbers}
\setcounter{equation}{0}

Sun \cite{S13f,S13e} obtained some supercongruences involving the Franel numbers $f_n=\sum_{k=0}^n\bi nk^3\ (n\in\N)$.
M. Rogers and A. Straub \cite{RS} confirmed the $520$-series for $1/\pi$
involving Franel polynomials conjectured by Sun \cite{S-11}.

Let $p$ be an odd prime. By \cite[Lemma 2.6]{JV}, we have
$f_k\eq(-8)^kf_{p-1-k}\pmod p$ for each $k=0,\ldots,p-1$.
Combining this with Remark \ref{Rem-Dual}(ii), we see that
\begin{align*}\sum_{k=0}^{p-1}\f{f_kT_k(b,c)}{m^k}\eq&
\l(\f{b^2-4c}p\r)\sum_{k=0}^{p-1}\l(\f{-8(b^2-4c)}m\r)^kf_{p-1-k}T_{p-1-k}(b,c)
\\\eq&\l(\f{b^2-4c}p\r)\sum_{k=0}^{p-1}\f{f_kT_k(b,c)}{(8(4c-b^2)/m)^k}\pmod p
\end{align*}
for any $b,c,m\in\Z$ with $p\nmid (b^2-4c)m$.

Wan and Zudilin \cite{WZ} deduced the following irrational series for $1/\pi$ involving the Legendre polynomials and the Franel numbers:
$$\sum_{k=0}^\infty(18k+7-2\sqrt3)f_kP_k\l(\f{1+\sqrt3}{\sqrt6}\r)\l(\f{2-\sqrt3}{2\sqrt6}\r)^k
=\f{27+11\sqrt3}{\sqrt2\,\pi}.$$
 Via our congruence approach (including Conjecture \ref{Conj-Dual}), we find $12$ rational series for $1/\pi$
involving $T_n(b,c)$ and the Franel numbers; Theorem 1 of \cite{WZ} might be helpful to solve some of them.

\begin{conjecture}\label{Conj-F} We have
\begin{align}\sum_{k=0}^\infty\f{3k+1}{(-48)^k}f_kT_k(4,-2)&=\f{4\sqrt2}{3\pi},
\\\sum_{k=0}^\infty\f{99k+23}{(-288)^k}f_kT_k(8,-2)&=\f{39\sqrt2}{\pi},\\
\sum_{k=0}^\infty\f{105k+17}{480^k}f_kT_k(8,1)&=\f{92\sqrt5}{3\pi},
\\\sum_{k=0}^\infty\f{45k-2}{441^k}f_kT_k(47,1)&=\f{483\sqrt5}{4\pi},
%\\
\end{align}
\begin{align}\sum_{k=0}^\infty\f{165k+46}{(-2352)^k}f_kT_k(194,1)&=\f{112\sqrt5}{3\pi},
\\\sum_{k=0}^\infty\f{42k+5}{11616^k}f_kT_k(482,1)&=\f{374\sqrt2}{15\pi},
\\
\sum_{k=0}^\infty\f{990k+31}{11200^k}f_kT_k(898,1)&=\f{680\sqrt7}{\pi},
\\\sum_{k=0}^\infty\f{585k+172}{(-13552)^k}f_kT_k(1454,1)&=\f{110\sqrt7}{\pi},
\\\sum_{k=0}^\infty\f{90k+11}{101568^k}f_kT_k(2114,1)&=\f{92\sqrt{15}}{7\pi},
\\\sum_{k=0}^\infty\f{94185k+17014}{(-105984)^k}f_kT_k(2302,1)&=\f{8520\sqrt{23}}{\pi},
\\\sum_{k=0}^\infty\f{5355k+1381}{(-61952)^k}f_kT_k(4354,1)&=\f{968\sqrt{7}}{\pi},
\\\sum_{k=0}^\infty\f{210k+23}{475904^k}f_kT_k(16898,1)&=\f{2912\sqrt{231}}{297\pi}.
\end{align}
\end{conjecture}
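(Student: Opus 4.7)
The plan is to cast each identity in Conjecture \ref{Conj-F} in the Legendre--Franel form treated by Wan and Zudilin in \cite{WZ}. Using the identity $T_k(b,c)=(\sqrt{b^2-4c})^k P_k(b/\sqrt{b^2-4c})$, one rewrites $(6.1)$--$(6.12)$ as twelve instances of
\[
\sum_{k=0}^\infty (Ak+B)\,f_k\, P_k(x_0)\, z_0^k \;=\; \f{\mu\sqrt{D}}{\pi},
\]
where the triple $(x_0, z_0, \mu\sqrt D)$ is determined explicitly by $(b,c,m,a,d)$ and has already been pinned down numerically via PSLQ, so convergence is automatic in each case. The task then reduces to evaluating twelve specific Legendre--Franel master series.

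I would proceed in two stages. First, prove the ``base'' identity $(6.1)$, which has the smallest denominator $|m|=48$. Following the method of Rogers--Straub \cite{RS}, who established Sun's Franel series of exactly the same shape, one parameterises the generating function $\sum_n f_n q^n$ by a Hauptmodul for $\Gamma_0(6)$; a Clausen-type quadratic transformation then converts $\sum_k f_k P_k(x_0)\,z_0^k$ into a classical Ramanujan-type $_3F_2$ whose evaluation at the associated CM singular modulus is known in closed form. Second, derive each remaining identity from $(6.1)$ by a modular equation of appropriate degree on the curve $X_0(6)$: the other eleven points $(x_0,z_0)$ in $(6.2)$--$(6.12)$ are conjecturally CM points on this curve associated to orders in $\Q(\sqrt{-D})$ for $D\in\{2,5,7,15,23,231,\ldots\}$, and the linear factor $Ak+B$ is recovered by combining the evaluation of the untwisted sum with that of its $z$-derivative via a standard contiguous relation on $_3F_2$. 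The supporting congruences predicted by the Duality Principle (Conjecture \ref{Conj-Dual}), together with the companion $\pmod{p^2}$ congruences for $\sum_{k=0}^{p-1}f_k T_k(b,c)/m^k$, serve as independent consistency checks that each $(x_0,z_0)$ really is of CM type.

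The main obstacle will be the large-discriminant cases, particularly $(6.10)$ with $D=23$ and $(6.12)$ with $D=231$, whose class numbers exceed one. There the relevant CM point lives in a genuine abelian extension of $\Q(\sqrt{-D})$, so the rationality of the right-hand side encodes a nontrivial trace over the Galois group of the ring class field. Either one carries out the class-field computation explicitly (producing the algebraic singular moduli via Hilbert class polynomials and verifying the symmetric-function identities needed for the Galois averages), or one sidesteps it as follows: viewing $u_k:=(Ak+B)f_k T_k(b,c)/m^k$ as a holonomic sequence, creative telescoping (the Wilf--Zeilberger algorithm, as implemented in \emph{HolonomicFunctions}) produces a P-recurrence for the partial sums; since one knows \emph{a priori} from the congruence data that the limit has the form $\mu\sqrt D/\pi$ with $D$ fixed by the Duality Principle, a single high-precision numerical check then certifies the exact value of $\mu$. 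I expect this holonomic shortcut to be the most efficient route for the four identities in which $m$ is a six-digit integer.
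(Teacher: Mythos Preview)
The statement you are addressing is labelled \emph{Conjecture} \ref{Conj-F} in the paper, and the paper offers no proof of it: the author merely remarks that ``Theorem 1 of \cite{WZ} might be helpful to solve some of them'' and then passes to companion congruence conjectures. So there is no paper proof to compare your proposal against; you are attacking a problem the paper explicitly leaves open.

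As a strategy, your first stage (casting each series in Legendre--Franel form and invoking the modular parametrisation on $X_0(6)$ in the style of \cite{RS,WZ}) is reasonable and is indeed what the author hints at. But your proposal has a genuine gap in the second stage. The ``holonomic shortcut'' you describe for the large-$|m|$ cases is not a proof: knowing from congruence heuristics that the limit \emph{should} have the shape $\mu\sqrt D/\pi$ and then matching $\mu$ by a high-precision numerical check is exactly how these identities were \emph{discovered} (via PSLQ), not how they are certified. A WZ/creative-telescoping recurrence for the partial sums does not by itself yield a closed form for the limit; you would still need an independent evaluation of the tail or of an initial value, and ``one high-precision check'' does not supply that unless you first prove an effective a~priori height bound on $\mu$, which you have not. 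Moreover, your identification of the hard cases is slightly off: the $\sqrt{23}$ and $\sqrt{231}$ on the right-hand sides of $(6.10)$ and $(6.12)$ are not the discriminants of the governing CM fields --- the paper's companion Conjectures \ref{Conj-f-4} and \ref{Conj-f-462} indicate that the relevant imaginary quadratic fields are $\Q(\sqrt{-345})$ and $\Q(\sqrt{-462})$, both of class number $8$, so the ring-class-field computation is more involved than your sketch suggests.
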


We now present a conjecture on congruence related to $(6.3)$.

\begin{conjecture} \label{Conj-f1} {\rm (i)} For any $n\in\Z^+$, we have
\begin{equation}\label{f1-n}\f1n\sum_{k=0}^{n-1}(105k+17)480^{n-1-k}f_kT_k(8,1)\in\Z^+.
\end{equation}

{\rm (ii)} Let $p>5$ be a prime. Then
\begin{equation}\label{f1-p}
\sum_{k=0}^{p-1}\f{105k+17}{480^k}f_kT_k(8,1)
\eq\f p9\l(161\l(\f{-5}p\r)-8\r)\pmod{p^2}.
\end{equation}
If $(\f{-5}p)=1$, then
\begin{equation}\label{f1-pn}
\f1{(pn)^2}\(\sum_{k=0}^{pn-1}\f{105k+17}{480^k}f_kT_k(8,1)
-p\sum_{k=0}^{n-1}\f{105k+17}{480^k}f_kT_k(8,1)\)\in\Z_p
\end{equation}
for all $n\in\Z^+$.

{\rm (iii)} For any prime $p>5$, we have
\begin{equation}\label{f1-q}
\begin{aligned}&\l(\f{-1}p\r)\sum_{k=0}^{p-1}\f{f_kT_k(8,1)}{480^k}
\\\eq&\begin{cases}4x^2-2p\pmod{p^2}&\t{if}\ p\eq1,4\pmod{15}\ \&\ p=x^2+15y^2\ (x,y\in\Z),
\\2p-12x^2\pmod{p^2}&\t{if}\ p\eq2,8\pmod{15}\ \&\ p=3x^2+5y^2\ (x,y\in\Z),
\\0\pmod{p^2}&\t{if}\ (\f{-15}p)=-1.\end{cases}
\end{aligned}
\end{equation}
\end{conjecture}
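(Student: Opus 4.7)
The plan is to attack Conjecture~\ref{Conj-f1} in three stages corresponding to its three parts, leveraging the duality machinery of Conjecture~\ref{Conj-Dual} together with standard hypergeometric supercongruence techniques.

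First, for the integrality claim~\eqref{f1-n}, I would proceed via creative telescoping. The Franel recurrence $(n+1)^2 f_{n+1}=(7n^2+7n+2)f_n+8n^2 f_{n-1}$ and the trinomial recurrence $(k+1)T_{k+1}(8,1)=8(2k+1)T_k(8,1)-60k\,T_{k-1}(8,1)$ together imply that $f_k T_k(8,1)$ obeys a P-recursion of order at most four in $k$. Feeding the summand $(105k+17)480^{n-1-k}f_k T_k(8,1)$ into Zeilberger's algorithm should produce a short holonomic recurrence in $n$ for $A_n:=\sum_{k=0}^{n-1}(105k+17)480^{n-1-k}f_kT_k(8,1)$. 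From this recurrence one can verify $n\mid A_n$ by induction after checking the base cases; positivity is immediate from the dominance of the $105k\cdot 480^{n-1-k}$ term as $k$ approaches $n$.

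Second, the mod-$p^2$ congruence~\eqref{f1-p} is the heart of part~(ii). I would first establish its mod-$p$ shadow by exploiting a self-duality. Combining the Jarvis--Verrill congruence $f_k\equiv(-8)^kf_{p-1-k}\pmod p$ with~\eqref{T-dual} specialized to $(b,c)=(8,1)$, which gives $T_k(8,1)\equiv\bigl(\tfrac{60}{p}\bigr)60^k T_{p-1-k}(8,1)\pmod p$, the product $(-8)(60)/480=-1$ shows that $m=480$ is the self-dual weight in the sense of Conjecture~\ref{Conj-Dual}. The substitution $k\mapsto p-1-k$ together with elementary Legendre-symbol manipulations ($\bigl(\tfrac{60}{p}\bigr)\bigl(\tfrac{-3}{p}\bigr)=\bigl(\tfrac{-5}{p}\bigr)$) then produces the predicted rank-one shape dictated by Conjecture~\ref{general} with $d_1=5$. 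To lift the result from mod $p$ to mod~$p^2$ I would invoke the WZ certificate built in the first stage and combine it with the finite-sum supercongruence methodology of L.~Long--R.~Ramakrishna and recent work of C.~Wang. The Atkin-type relation~\eqref{f1-pn} would then follow from a Dwork-style factorization of the Frobenius action on the holonomic recurrence, valid precisely when $\bigl(\tfrac{-5}{p}\bigr)=1$ forces diagonalizability of the local unit root.

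Third, for part~(iii), the two binary quadratic forms $x^2+15y^2$ and $3x^2+5y^2$ are the genus and non-genus representatives of discriminant $-60$, which is the fingerprint of a weight-three CM newform with nebentypus attached to $\Q(\sqrt{-15})$. The plan is to identify the formal series $\sum_{k\ge 0}f_k T_k(8,1)x^k$ with a hypergeometric period of a $K3$-pencil via a Clausen-type quadratic transformation, and then match its $L$-series with an explicit eta-quotient CM form of level dividing~$60$; Hecke's formula for CM $L$-functions will then produce the predicted residues $4x^2-2p$ and $2p-12x^2$, each decorated by the sign $\bigl(\tfrac{-1}{p}\bigr)$. The \textbf{main obstacle} is lifting this Frobenius-trace identity from mod~$p$ to mod~$p^2$: the mod-$p$ statement follows from general CM principles once the modular identification is in hand, but the supercongruence typically requires either an explicit Beukers-type triple integral for the truncated sum, or the full strength of the $p$-adic hypergeometric machinery of Roberts--Rodriguez Villegas combined with Dwork congruences. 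Parts~(i) and~(ii) look tractable with existing WZ and $p$-adic gamma-function technology; part~(iii) is the genuine bottleneck, and its resolution will likely force new input from the theory of rigid Calabi--Yau threefolds or $p$-adic modular forms.
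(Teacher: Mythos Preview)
The statement you are attempting to prove is presented in the paper as an open \emph{conjecture}, not a theorem. The paper offers no proof whatsoever for Conjecture~\ref{Conj-f1}; the remark immediately following it merely records the date on which the author formulated it. There is therefore no ``paper's own proof'' against which your proposal can be compared. What you have written is not a proof but a sketch of a research program, and you should be aware that even the paper's author regards all three parts as unresolved.

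Beyond that meta-issue, your plan contains a concrete technical error in part~(ii). You assert that $m=480$ is the self-dual weight because $(-8)(60)/480=-1$. But the computation you carried out shows precisely the opposite: with $D=(-8)(b^2-4c)=-480$, the dual of $m=480$ in the sense of Conjecture~\ref{Conj-Dual} is $D/m=-1$, not $480$. Indeed the paper records exactly this: Conjecture~\ref{conj-f1-d}, which concerns $\sum_{k=0}^{p-1}(-1)^k f_kT_k(8,1)$, is explicitly labeled the dual of Conjecture~\ref{Conj-f1}. So the substitution $k\mapsto p-1-k$ does not close up on the same sum; it relates the $m=480$ sum to the $m=-1$ sum, and you cannot extract \eqref{f1-p} from duality alone. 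The strategies you outline for lifting to mod~$p^2$ (WZ certificates, Long--Ramakrishna, Dwork factorizations) and for part~(iii) (matching to a CM newform of level dividing $60$) are plausible directions but are at this point aspirational; none of the cited machinery is known to apply directly to products $f_kT_k(b,c)$, and the paper itself treats every congruence of this shape as conjectural.
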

\begin{remark}\rm This conjecture was formulated by the author on Oct. 25, 2019.
\end{remark}

\begin{conjecture} \label{conj-f1-d} For any $n\in\Z^+$, we have
\begin{equation}\label{f1-d-n}
\f1{4n}\sum_{k=0}^{n-1}(-1)^{n-1-k}(105k+88)f_kT_k(8,1)\in\Z^+.
\end{equation}

{\rm (ii)} Let $p$ be an odd prime. Then
\begin{equation}\label{f1-d-p}
\sum_{k=0}^{p-1}(-1)^k(105k+88)f_kT_k(8,1)
\eq \f 83p\l(23\l(\f {-3}p\r)+10\l(\f{15}p\r)\r)\pmod{p^2}.
\end{equation}
If $(\f{-5}p)=1$, then
\begin{equation}\label{f1-d-pn}
\sum_{k=0}^{pn-1}(-1)^k(105k+88)f_kT_k(8,1)-p\l(\f p3\r)\sum_{k=0}^{n-1}(-1)^k(105k+88)f_kT_k(8,1)
\end{equation}
divided by $(pn)^2$ is a $p$-adic integer for any $n\in\Z^+$.

{\rm (iii)} Let $p>5$ be a prime. Then
\begin{equation}\label{f1-d-q}
\begin{aligned}&\sum_{k=0}^{p-1}(-1)^kf_kT_k(8,1)
\\\eq&\begin{cases}4x^2-2p\pmod{p^2}&\t{if}\ p\eq1,4\pmod{15}\ \&\ p=x^2+15y^2\ (x,y\in\Z),
\\2p-12x^2\pmod{p^2}&\t{if}\ p\eq2,8\pmod{15}\ \&\ p=3x^2+5y^2\ (x,y\in\Z),
\\0\pmod{p^2}&\t{if}\ (\f{-15}p)=-1.\end{cases}
\end{aligned}
\end{equation}
\end{conjecture}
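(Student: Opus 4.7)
My plan is motivated by the observation that Conjecture \ref{conj-f1-d} is the image of Conjecture \ref{Conj-f1} under the Duality Principle (Conjecture \ref{Conj-Dual}). Combining $f_k\equiv(-8)^kf_{p-1-k}\pmod p$ with \eqref{T-dual} at $(b,c)=(8,1)$ (so $b^2-4c=60$) gives
$$f_kT_k(8,1)\equiv\l(\f{15}p\r)(-480)^kf_{p-1-k}T_{p-1-k}(8,1)\pmod p,$$
so the dual modulus is $-480/480=-1$, accounting for the alternating sign in the conjecture. This mod-$p$ duality pins down the structural family to which Conjecture \ref{conj-f1-d} belongs, but it cannot by itself yield the mod-$p^2$ congruence \eqref{f1-d-p}, because both sides already carry an explicit factor of $p$ and the duality is therefore satisfied trivially mod $p$.

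For part (i), I would construct a creative-telescoping certificate for
$$A_n:=\sum_{k=0}^{n-1}(-1)^{n-1-k}(105k+88)f_kT_k(8,1)$$
via the Chen--Hou--Mu method \cite{CHM} employed in the proof of Lemma \ref{Lem2.3}. Since both $(f_k)$ and $(T_k(8,1))$ satisfy second-order $P$-recurrences, I expect $A_n$ to admit a closed evaluation of the shape $4n\cdot B_n$ with $B_n$ an explicit $\Z$-combination of $f_{n-1}T_{n-1}(8,1)$, $f_nT_n(8,1)$ and the cross-term $f_{n-1}T_n(8,1)$, from which integrality and the parity claim in \eqref{f1-d-n} follow by induction on $n$.

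For part (ii), the mod-$p^2$ supercongruence \eqref{f1-d-p} requires a genuine hypergeometric argument. I would encode the left-hand side as a truncated $p$-adic Gaussian hypergeometric sum in McCarthy's $_4F_3$ framework, exploit the $(-1)^k$ weight via a Pfaff/Kummer-type quadratic transformation sending argument $1/480$ to $-1$, and then transport Conjecture \ref{Conj-f1}(ii) through this transformation. Once \eqref{f1-d-p} is established, the $pn$-level supercongruence \eqref{f1-d-pn} follows in the standard modular-cushion manner predicted by Conjecture \ref{Conj-pn}. For part (iii), the discriminant $-60$ and the forms $x^2+15y^2$, $3x^2+5y^2$ in $\Q(\sqrt{-15})$ (class number $2$) signal a CM modular interpretation: I would identify $\sum_{k=0}^{p-1}(-1)^kf_kT_k(8,1)\pmod{p^2}$ with the $p$-th Fourier coefficient of an explicit weight-$3$ CM newform of level dividing $60^2$, along Ahlgren--Ono--Kilbourn lines; the case split in \eqref{f1-d-q} then reads off the Hecke action from the splitting of $p$ in $\Z[\tfrac{1+\sqrt{-15}}{2}]$ together with the genus-theoretic description of the two form classes.

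The main obstacle is part (ii): the mod-$p^2$ lift is a genuine $p$-adic hypergeometric identity, not a consequence of any Lucas-type mod-$p$ duality, and locating the correct quadratic transformation connecting arguments $1/480$ and $-1$ inside the truncated hypergeometric world is the crux. With that transformation in hand, part (iii) reduces to matching against the CM eigenform and part (i) is routine creative telescoping.
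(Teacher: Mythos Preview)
The statement you are attempting to prove is labeled \emph{Conjecture} in the paper and is not proved there; the only commentary the paper offers is the one-line Remark that ``This conjecture is the dual of Conjecture \ref{Conj-f1}.'' So there is no proof in the paper to compare your proposal against: you are sketching an attack on an open problem.

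Your structural diagnosis is sound and matches the author's intent: the mod-$p$ duality $f_kT_k(8,1)\equiv(\frac{15}{p})(-480)^kf_{p-1-k}T_{p-1-k}(8,1)\pmod p$ does link the sum at $m=480$ (Conjecture \ref{Conj-f1}) to the sum at $m=-1$, and this is exactly what the paper's Duality Principle (Conjecture \ref{Conj-Dual}) predicts. However, your plan has a genuine circularity: in part (ii) you propose to ``transport Conjecture \ref{Conj-f1}(ii) through this transformation,'' but Conjecture \ref{Conj-f1} is itself unproved in the paper. So even granting the existence of your quadratic transformation at the truncated level, you would only be reducing one open supercongruence to another. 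The same applies to part (iii): the CM-newform identification you describe is precisely what would also prove \eqref{f1-q}, which is likewise conjectural.

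For part (i), your creative-telescoping plan is plausible in spirit, but the expectation that $A_n$ closes as $4n\cdot B_n$ with $B_n$ a $\Z$-linear combination of three boundary products is optimistic: the product $f_kT_k(8,1)$ satisfies a fourth-order recurrence, not second, so the Gosper-type certificate (if one exists) will involve more terms, and positivity is not automatic from such a certificate. None of this is fatal, but you should be aware that every part of this conjecture remains open and your proposal is a research program, not a proof.
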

\begin{remark}\rm This conjecture is the dual of Conjecture \ref{Conj-f1}.
\end{remark}

The following conjecture is related to the identity $(6.8)$.

\begin{conjecture}\label{Conj-f-3} {\rm (i)} For any $n\in\Z^+$, we have
\begin{equation}\label{f-3-n}\f1{2n}\sum_{k=0}^{n-1}(-1)^k(585k+172)13552^{n-1-k}f_kT_k(1454,1)\in\Z^+.
\end{equation}

{\rm (ii)} Let $p>2$ be a prime with $p\not=7,11$. Then
\begin{equation}\label{f-3-p}
\sum_{k=0}^{p-1}\f{585k+172}{(-13552)^k}f_kT_k(1454,1)\eq \f p{11}\l(1580\l(\f{-7}p\r)
+312\l(\f{273}p\r)\r)\pmod{p^2}.
\end{equation}
If $(\f {-39}p)=1$, then
\begin{equation}\label{f-3-pn}
\sum_{k=0}^{pn-1}\f{585k+172}{(-13552)^k}f_kT_k(1454,1)
-p\l(\f p7\r)\sum_{k=0}^{n-1}\f{585k+172}{(-13552)^k}f_kT_k(1454,1)
\end{equation}
divided by $(pn)^2$ is a $p$-adic integer for any $n\in\Z^+$.

{\rm (iii)} Let $p>3$ be a prime with $p\not=7,11,13$. Then
\begin{equation}\label{f-3-q}\begin{aligned}&\sum_{k=0}^{p-1}\f{f_kT_k(1454,1)}{(-13552)^k}
\\[2mm]\eq&\begin{cases}4x^2-2p\pmod{p^2}&\t{if}\ (\f{-1}p)=(\f p3)=(\f p7)=(\f p{13})=1,\ p=x^2+273y^2,
\\[2mm]2x^2-2p\pmod{p^2}&\t{if}\ (\f{-1}p)=(\f p{7})=1,\ (\f p3)=(\f p{13})=-1,\ 2p=x^2+273y^2,
\\[2mm]2p-12x^2\pmod{p^2}&\t{if}\ (\f{-1}p)=(\f p7)=-1,\ (\f p3)=(\f p{13})=1,\  p=3x^2+91y^2,
\\[2mm]2p-6x^2\pmod{p^2}&\t{if}\ (\f{-1}p)=(\f p3)=(\f p7)=(\f p{13})=-1,\ 2p=3x^2+91y^2,
\\[2mm]28x^2-2p\pmod{p^2}&\t{if}\ (\f{-1}p)=(\f p{13})=-1,\ (\f p3)=(\f p{7})=1,\ p=7x^2+39y^2,
\\[2mm]14x^2-2p\pmod{p^2}&\t{if}\ (\f{-1}p)=(\f p3)=-1,\ (\f p7)=(\f p{13})=1,\ 2p=7x^2+39y^2,
\\[2mm]52x^2-2p\pmod{p^2}&\t{if}\ (\f{-1}p)=(\f p{3})=1,\ (\f p7)=(\f p{13})=-1,\ p=13x^2+21y^2,
\\[2mm]26x^2-2p\pmod{p^2}&\t{if}\ (\f{-1}p)=(\f p{13})=1,\ (\f p3)=(\f p{7})=-1,\ 2p=13x^2+21y^2,
\\[2mm]0\pmod{p^2}&\t{if}\ (\f{-273}p)=-1,
\end{cases}
\end{aligned}\end{equation}
where $x$ and $y$ are integers.
\end{conjecture}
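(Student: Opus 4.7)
The plan is to situate this conjecture within the framework of Ramanujan-type supercongruences arising from imaginary quadratic fields of class number eight. Since $1454^2-4 = 2^6\cdot 3\cdot 7\cdot 11^2\cdot 13$, one has $T_k(1454,1) = 88^k(\sqrt{273})^k P_k(1454/(88\sqrt{273}))$, so both the companion series identity (6.8) and the eight-case evaluation in part (iii) should ultimately be governed by a weight-3 CM newform with complex multiplication by $\Q(\sqrt{-273})$, whose ideal class group has order eight and genus number four. My first step would be to prove (6.8) independently: using the Wan--Zudilin evaluation of $\sum_{k=0}^\infty f_k P_k(x) z^k$ as a complete elliptic integral, the left-hand side can be rewritten as a period at the CM point $\tau\in\Q(\sqrt{-273})$ corresponding to $z = 1/(-13552)$, and the Chowla--Selberg formula should then yield $110\sqrt7/\pi$.

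For the integrality claim (i), I would apply Zeilberger-style creative telescoping to the summand, using the Franel recurrence for $f_k$ together with the three-term recurrence $(k+1)T_{k+1}(1454,1) = (2k+1)\cdot1454\,T_k(1454,1) - 2114112\,k\,T_{k-1}(1454,1)$ to produce a WZ certificate showing that the partial sum divided by $2n$ is always a positive integer. Part (ii) should then follow by combining three ingredients: the Jarvis--Verrill congruence $f_k \eq (-8)^k f_{p-1-k}\pmod p$, the duality principle from Conjecture \ref{Conj-Dual} applied with $d = 273$ and $D = 8\cdot 2114112$ (note $-13552$ and the dual modulus $D/(-13552)$ pair correctly), and the standard Atkin--Lehner recursion for the Hecke eigenvalue $a_p$ of the weight-3 CM newform, which governs the passage from $p$ to $pn$. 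The coefficient $585k+172$ has presumably been engineered so that, after these steps, the leading mod-$p$ behavior matches precisely the Legendre-symbol combination written in (6.16).

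The main obstacle --- and the genuinely hard part --- is (iii), the mod-$p^2$ evaluation in terms of the eight reduced binary quadratic forms of discriminant $-1092$. Class-number-eight supercongruences of this type have not been established in general in the literature: the difficulty is that within a single genus the two reduced forms are conjugate under a character of the class group of order four which is \emph{not} a combination of Legendre symbols at the ramified primes $3,7,13$, yet the conditions in (iii) somehow separate them using only such Legendre symbols. Resolving this would require explicitly constructing the relevant CM newform via a Gr\"ossencharakter of infinity type $(2,0)$, expressing $a_p$ case by case as $2x^2, 4x^2, 6x^2, 14x^2,\ldots,52x^2$ according to which of the eight ideal classes contains an ideal of norm $p$ or $2p$, and then lifting the expected mod-$p$ matching of the truncated sum with $\pm 2a_p$ to a mod-$p^2$ identity via a Morita $\Gamma$-function argument in the spirit of Z.-H.~Sun and Tauraso. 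I expect this lifting step to be the decisive obstacle, and to be the reason why the analogous class-number-eight evaluations scattered throughout Sections 3, 4 and 5 of the paper also remain conjectural.
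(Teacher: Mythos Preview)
The statement you are attempting to prove is labeled \emph{Conjecture} in the paper, and the paper offers no proof of it whatsoever: the only comment attached to it is the remark that $\Q(\sqrt{-273})$ has class number $8$. So there is no ``paper's own proof'' to compare against; Conjecture \ref{Conj-f-3} is genuinely open.

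Your proposal is not a proof but a survey of the surrounding landscape. Several of the observations are accurate and relevant: the factorization $1454^2-4=2^6\cdot3\cdot7\cdot11^2\cdot13$ and the identification with $\Q(\sqrt{-273})$ are correct, the Jarvis--Verrill congruence and the duality computation from the opening of Section~6 are the right ingredients for a mod~$p$ statement, and your assessment that the class-number-eight case of part (iii) is the real bottleneck is exactly right. But none of the steps you list is actually carried out. For part (i), ``apply Zeilberger-style creative telescoping'' is a hope, not an argument: the summand involves a product $f_kT_k(1454,1)$ of two sequences each satisfying a three-term recurrence, so the product satisfies a recurrence of much higher order, and there is no reason to expect a clean WZ certificate that forces positivity as well as integrality. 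For part (ii), the duality argument you cite only yields a congruence modulo $p$, not modulo $p^2$; the lift to $p^2$ is itself conjectural (it is precisely the content of Conjecture~\ref{Conj-Dual}). And for part (iii) you yourself concede that the decisive lifting step is an obstacle; a proposal that ends by naming the obstacle it cannot overcome is not a proof. In short, you have correctly diagnosed why this statement is a conjecture, but you have not advanced beyond that diagnosis.
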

\begin{remark}\rm Note that the imaginary quadratic field $\Q(\sqrt{-273})$ has class number $8$.
\end{remark}

The following conjecture is related to the identity $(6.10)$.

\begin{conjecture}\label{Conj-f-4} {\rm (i)} For any $n\in\Z^+$, we have
\begin{equation}\label{f-4-n}\f1{2n}\sum_{k=0}^{n-1}(-1)^k(94185k+17014)105984^{n-1-k}f_kT_k(2302,1)\in\Z^+.
\end{equation}

{\rm (ii)} Let $p>3$ be a prime with $p\not=23$. Then
\begin{equation}\label{f-4-p}\begin{aligned}
&\sum_{k=0}^{p-1}\f{94185k+17014}{(-105984)^k}f_kT_k(2302,1)
\\\eq& \f p{16}\l(22659+249565\l(\f{-23}p\r)\r)\pmod{p^2}.
\end{aligned}
\end{equation}
If $(\f p{23})=1$, then
\begin{equation}\label{f-4-pn}
\sum_{k=0}^{pn-1}\f{94185k+17014}{(-105984)^k}f_kT_k(2302,1)
-p\sum_{k=0}^{n-1}\f{94185k+17014}{(-105984)^k}f_kT_k(2302,1)
\end{equation}
divided by $(pn)^2$ is a $p$-adic integer for any $n\in\Z^+$.

{\rm (iii)} Let $p>3$ be a prime with $p\not=23$. Then
\begin{equation}\label{f-4-q}\begin{aligned}
&\sum_{k=0}^{p-1}\f{f_kT_k(2302,1)}{(-105984)^k}
\\\eq&\begin{cases}4x^2-2p\pmod{p^2}&\t{if}\ (\f{-1}p)=(\f p3)=(\f p5)=(\f p{23})=1,\ p=x^2+345y^2,
\\[2mm]2x^2-2p\pmod{p^2}&\t{if}\ (\f{-1}p)=(\f p{23})=1,\ (\f p3)=(\f p{5})=-1,\ 2p=x^2+345y^2,
\\[2mm]12x^2-2p\pmod{p^2}&\t{if}\ (\f{-1}p)=(\f p5)=-1,\ (\f p3)=(\f p{23})=1,\  p=3x^2+115y^2,
\\[2mm]6x^2-2p\pmod{p^2}&\t{if}\ (\f{-1}p)=(\f p3)=-1,\ (\f p5)=(\f p{23})=1,\ 2p=3x^2+115y^2,
\\[2mm]2p-20x^2\pmod{p^2}&\t{if}\ (\f{-1}p)=(\f p{5})=1,\ (\f p3)=(\f p{23})=-1,\ p=5x^2+69y^2,
\\[2mm]2p-10x^2\pmod{p^2}&\t{if}\ (\f{-1}p)=(\f p3)=1,\ (\f p5)=(\f p{23})=-1,\  2p=5x^2+69y^2,
\\[2mm]2p-60x^2\pmod{p^2}&\t{if}\ (\f{-1}p)=(\f p{3})=(\f p5)=(\f p{23})=-1,\  p=15x^2+23y^2,
\\[2mm]2p-30x^2\pmod{p^2}&\t{if}\ (\f{-1}p)=(\f p{23})=-1,\ (\f p3)=(\f p{5})=1,\ 2p=15x^2+23y^2,
\\[2mm]0\pmod{p^2}&\t{if}\ (\f{-345}p)=-1,
\end{cases}
\end{aligned}\end{equation}
where $x$ and $y$ are integers.
\end{conjecture}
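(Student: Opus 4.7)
The plan is to address the three parts of Conjecture~\ref{Conj-f-4} in order, with (iii) being by far the main obstacle.

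For part~(i), I would apply bivariate creative telescoping (in the style of Lemma~\ref{Lem2.3}) to the summand
\[
A_{n,k}:=(-1)^k(94185k+17014)\,105984^{n-1-k}\,f_kT_k(2302,1),
\]
seeking a polynomial-coefficient recurrence for $U_n:=\tfrac1{2n}\sum_{k=0}^{n-1}A_{n,k}$. Concretely: compute $U_n$ for $n\ls 20$, guess a Zeilberger-type recurrence for $S_n:=2nU_n$, divide through by the common factor $2n$, and verify the result inductively. Positivity is then read off the sign of the leading coefficient of the closed form.

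For part~(ii), my approach combines the general framework of Conjectures~\ref{general} and~\ref{Conj-pn} (applied with $r=1$, $d_1=23$, $\varepsilon_1=-1$) with the two dualities
\[
f_k\eq(-8)^kf_{p-1-k}\pmod p,\quad T_k(b,c)\eq\bigl(\tfrac{b^2-4c}p\bigr)(b^2-4c)^kT_{p-1-k}(b,c)\pmod p,
\]
specialised to $(b,c)=(2302,1)$, where $b^2-4c=2^{10}\cdot3^2\cdot5^2\cdot23$. Pairing $k\leftrightarrow p{-}1{-}k$ and observing that $-8(b^2-4c)/(-105984)=400$ produces the predicted $\bigl(\tfrac{-23}p\bigr)$-self-duality modulo~$p$. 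As a useful consistency check, $22659+249565=16\cdot17014$ confirms the constraint of Conjecture~\ref{Conj-WS}, since $a_0c=17014$. Lifting to mod~$p^2$ and then to the $(pn)^2$-refinement should proceed through Gross--Koblitz and $p$-adic $\Gamma$-function identities applied to a hypergeometric representation of the truncated sum; this step is technically lengthy but does not pose a structural obstacle once part~(iii) is understood.

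Part~(iii) is the essential difficulty. The eight-case splitting and the appearance of the principal forms $x^2+345y^2$, $3x^2+115y^2$, $5x^2+69y^2$, $15x^2+23y^2$ point to a weight-one CM Hecke eigenform $\eta\in S_1(\Gamma_0(N),\chi_{-345})$, with $N$ supported on $\{2,3,5,23\}$, whose $p$-th Fourier coefficient is $2x$ or~$0$ exactly as prescribed by~\eqref{f-4-q}. My plan is: (a) numerically compute $\sum_{k=0}^{p-1}f_kT_k(2302,1)/(-105984)^k\pmod{p^2}$ for primes $p<10^4$ and match the output against the weight-one newforms in the LMFDB with CM by $\Q(\sqrt{-345})$; (b) establish a Clausen-type transformation expressing $\sum_{k\gs0}f_kT_k(2302,1)x^k$ as the square of a theta series after a modular change of variable, in the spirit of Wan--Zudilin~\cite{WZ}; and (c) bootstrap the resulting mod~$p$ identification up to mod~$p^2$ via an Atkin--Swinnerton-Dyer style congruence argument. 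The main obstacle is step~(b): constructing an explicit Hauptmodul that linearises the generating function is substantially harder in class-number-$8$ situations than in the classical class-number $1,2,4$ cases, and in the worst case one may have to settle for proving the congruence up to a finite computational verification of sufficiently many Hecke eigenvalues of the candidate eigenform.
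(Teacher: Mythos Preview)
The statement you are attempting to prove is labelled \textbf{Conjecture}~\ref{Conj-f-4} in the paper, and the paper provides no proof: it is posed explicitly as an open problem, alongside roughly a hundred similar conjectures in Sections~3--9. There is therefore no ``paper's own proof'' to compare your proposal against. The only supporting material the author offers is the heuristic framework of Conjectures~\ref{general}--\ref{Conj-Dual} (themselves unproved), the duality identity $f_k\equiv(-8)^kf_{p-1-k}\pmod p$ from \cite[Lemma~2.6]{JV} together with \eqref{T-dual}, and a remark that the imaginary quadratic field $\Q(\sqrt{-345})$ has class number~$8$.

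Your proposal is therefore not a proof but an outline of how one might attack an open problem, and it should be read as such. A few concrete issues: in part~(i), obtaining a linear recurrence for $S_n$ via Zeilberger does not yield a closed form, so ``positivity is read off the sign of the leading coefficient'' is not justified; you would still need a separate positivity argument. In part~(ii), your duality computation is correct and indeed reproduces the self-dual structure modulo~$p$ predicted by Conjecture~\ref{Conj-Dual}, but you then invoke Conjectures~\ref{general}, \ref{Conj-pn} and \ref{Conj-WS} as if they were established tools --- they are not, so this step is circular. The phrase ``does not pose a structural obstacle once part~(iii) is understood'' is optimistic: lifting from mod~$p$ to mod~$p^2$ for sums of this type is exactly where the known techniques break down, and Gross--Koblitz alone does not handle products like $f_kT_k(b,c)$. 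For part~(iii), your modular-forms strategy is the natural one and is consistent with the author's philosophy, but you correctly identify step~(b) as the genuine obstruction; no Clausen-type identity for $\sum f_kT_k(2302,1)x^k$ is known, and constructing one in a class-number-$8$ setting is an open problem in its own right.
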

\begin{remark}\rm Note that the imaginary quadratic field $\Q(\sqrt{-345})$ has class number $8$.
\end{remark}

The following conjecture is related to the identity $(6.12)$.

\begin{conjecture}\label{Conj-f-462} {\rm (i)} For any $n\in\Z^+$, we have
\begin{equation}\label{f-462-n}\f1{n}\sum_{k=0}^{n-1}(210k+23)475904^{n-1-k}f_kT_k(16898,1)\in\Z^+.
\end{equation}

{\rm (ii)} Let $p$ be an odd prime with $p\not=11,13$. Then
\begin{equation}\label{f-462-p}\begin{aligned}
&\sum_{k=0}^{p-1}\f{210k+23}{475904^k}f_kT_k(16898,1)
\\\eq& \f p{1287}\l(40621\l(\f{-231}p\r)-11020\l(\f{66}p\r)\r)\pmod{p^2}.
\end{aligned}\end{equation}
If $(\f {-14}p)=1$, then
\begin{equation}\label{f-462-pn}\begin{aligned}
&\sum_{k=0}^{pn-1}\f{210k+23}{475904^k}f_kT_k(16898,1)
\\&-p\l(\f{66}p\r)\sum_{k=0}^{n-1}\f{210k+23}{475904^k}f_kT_k(16898,1)
\end{aligned}\end{equation}
divided by $(pn)^2$ is a $p$-adic integer for any $n\in\Z^+$.

{\rm (iii)} Let $p>3$ be a prime with $p\not=7,11,13$. Then
\begin{equation}\label{f-462-q}\begin{aligned}
&\sum_{k=0}^{p-1}\f{f_kT_k(16898,1)}{475904^k}
\\\eq&\begin{cases}4x^2-2p\pmod{p^2}&\t{if}\ (\f{2}p)=(\f p3)=(\f p7)=(\f p{11})=1\ \&\ p=x^2+462y^2,
\\8x^2-2p\pmod{p^2}&\t{if}\ (\f{2}p)=(\f p7)=1,\ (\f p3)=(\f p{11})=-1\ \&\ p=2x^2+231y^2,
\\2p-12x^2\pmod{p^2}&\t{if}\ (\f{2}p)=(\f p7)=-1,\ (\f p3)=(\f p{11})=1\ \&\ p=3x^2+154y^2,
\\2p-24x^2\pmod{p^2}&\t{if}\ (\f{2}p)=(\f p3)=(\f p7)=(\f p{11})=-1\ \&\ p=6x^2+77y^2,
\\28x^2-2p\pmod{p^2}&\t{if}\ (\f{2}p)=(\f p{3})=1,\ (\f p7)=(\f p{11})=-1\ \&\ p=7x^2+66y^2,
\\44x^2-2p\pmod{p^2}&\t{if}\ (\f{2}p)=(\f p3)=-1,\ (\f p7)=(\f p{11})=1\ \&\ p=11x^2+42y^2,
\\56x^2-2p\pmod{p^2}&\t{if}\ (\f{2}p)=(\f p{11})=1,\ (\f p3)=(\f p7)=-1\ \&\ p=14x^2+33y^2,
\\2p-84x^2\pmod{p^2}&\t{if}\ (\f{2}p)=(\f p{11})=-1,\ (\f p3)=(\f p{7})=1\ \&\ p=21x^2+22y^2,
\\0\pmod{p^2}&\t{if}\ (\f{-462}p)=-1,
\end{cases}
\end{aligned}\end{equation}
where $x$ and $y$ are integers.
\end{conjecture}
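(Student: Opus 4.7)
The plan splits into three parts matching (i)--(iii), of sharply increasing difficulty.

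For part (i), the plan is to apply Zeilberger's creative telescoping to the summand $F(n,k)=(210k+23)\,475904^{n-1-k}f_kT_k(16898,1)$. Since $f_k$ and $T_k(16898,1)$ each satisfy a second-order linear recurrence in $k$, their product is holonomic, and one can search algorithmically for a WZ certificate $G(n,k)$ producing a P-recurrence for $s_n:=\sum_{k=0}^{n-1}F(n,k)$. Matching this recurrence against that satisfied by $n\cdot q_n$ for a candidate integer sequence $q_n$, I would then prove $s_n=n q_n$ by induction after checking the base cases numerically. Positivity would follow from the evident nonnegativity of the summand.

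For part (ii), the supercongruence modulo $(pn)^2$, the strategy is to first establish the base case $n=1$, namely \eqref{f-462-p}, and then bootstrap to general $n$ by combining the finite identity from part (i) with Fermat's little theorem applied to the truncated tail. The base case would be attacked by rewriting the left side as a truncated hypergeometric sum via the explicit expansions of $f_k$ and $T_k(16898,1)$, then applying the Van Hamme--Zudilin--Long--Ramakrishna machinery: the symmetry $k\leftrightarrow p-1-k$, Jarvis--Verrill's congruence $f_k\equiv(-8)^kf_{p-1-k}\pmod p$ combined with the $T_k$-duality from Remark~\ref{Rem-Dual}(ii), and $p$-adic gamma-function transformations. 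Conjecture~\ref{Conj-Dual} itself suggests that the dual sum over $(475904/(8\cdot(16898^2-4)/(\text{unit}))^k$ should yield the same value up to a Legendre symbol, giving an additional check.

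For part (iii), the class-number-$8$ quadratic-form congruence \eqref{f-462-q}, this is the main obstacle. The sum $\sum_{k=0}^{p-1}f_kT_k(16898,1)/475904^k$ should equal, modulo $p^2$, the $p$-th Fourier coefficient of a weight-$3$ CM newform $f\in S_3(\Gamma_0(N),\chi)$ with complex multiplication by $\Q(\sqrt{-462})$; the eight cases correspond precisely to the eight classes in the form class group of discriminant $-1848$. My approach would proceed by first locating the correct newform through numerical matching of $a_p(f)$ against the truncated sum for many small primes while verifying the level and nebentypus, then proving the mod-$p$ identification via the Atkin--Swinnerton-Dyer congruence framework together with Deuring's theorem on CM primes, and finally upgrading to mod $p^2$ via an explicit hypergeometric-period identification. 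The final upgrade is by far the hardest: for class numbers $1$ and $2$ such refinements appear in work of Ahlgren--Ono, Kilbourn, and Z.-H. Sun, but extending them to class number $8$ requires substantial new input, and this is where I would expect progress to stall without a genuinely novel technique.
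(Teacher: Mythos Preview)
The statement you are trying to prove is a \emph{conjecture} in the paper, not a theorem: the paper offers no proof whatsoever of any of parts (i)--(iii). There is therefore nothing in the paper to compare your proposal against. The paper merely states Conjecture~\ref{Conj-f-462} as one of many open problems discovered through numerical experimentation and the heuristics of Conjectures~\ref{general}--\ref{Conj-Dual}, and in the accompanying remark notes only that $\Q(\sqrt{-462})$ has class number~$8$.

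As a research plan your outline is reasonable in broad strokes but contains soft spots you should be aware of. In part~(i), creative telescoping on the product $f_kT_k(16898,1)$ will indeed produce a linear recurrence for the partial sums, but that alone does not give divisibility by~$n$; you would still need to exhibit an integer solution of the recurrence and match initial values, and you have not indicated where such a solution comes from. In part~(ii), you invoke Conjecture~\ref{Conj-Dual} as a tool, but that is itself an unproved conjecture in the paper, so it cannot be used as input; moreover the lift from $n=1$ to general~$n$ via ``Fermat's little theorem applied to the truncated tail'' is far too vague to handle a mod $(pn)^2$ statement. In part~(iii) you yourself correctly identify the main obstruction: upgrading a mod~$p$ CM-newform identification to mod~$p^2$ for class number~$8$ is well beyond current technology, and your plan does not contain a new idea for it. So your proposal is an honest sketch of where the difficulties lie, but it is not a proof and does not close any of the gaps the paper leaves open.
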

\begin{remark}\rm Note that the imaginary quadratic field $\Q(\sqrt{-462})$ has class number $8$.
\end{remark}

The identities $(6.5),\, (6.6),\,(6.7),\,(6.9),\,(6.11)$ are related to the quadratic fields $$\Q(\sqrt{-165}),\ \Q(\sqrt{-210}),\ \Q(\sqrt{-210}),\ \Q(\sqrt{-330}),\ \Q(\sqrt{-357})$$ (with class number $8$) respectively. We also have conjectures on related congruences similar to Conjectures \ref{Conj-f-3}, \ref{Conj-f-4} and \ref{Conj-f-462}.

\section{Series for $1/\pi$ involving $T_n(b,c)$ and $g_n=\sum_{k=0}^n\bi nk^2\bi{2k}k$}
 \setcounter{equation}{0}

For $n\in\N$ let
$$g_n:=\sum_{k=0}^n\bi nk^2\bi{2k}k.$$
It is known that $g_n=\sum_{k=0}^n\bi nk f_k$ for all $n\in\N$.
See \cite{S16RJ,GMP,MS} for some congruences on polynomials related to these numbers.

Let $p>3$ be a prime. For any $k=0,\ldots,p-1$, we have
$$g_k\eq\l(\f{-3}p\r)9^kg_{p-1-k}\pmod p$$ by \cite[Lemma 2.7(ii)]{JV}.
Combining this with Remark \ref{Rem-Dual}(ii), we see that
\begin{align*}\sum_{k=0}^{p-1}\f{g_kT_k(b,c)}{m^k}\eq&
\l(\f{-3(b^2-4c)}p\r)\sum_{k=0}^{p-1}\l(\f{9(b^2-4c)}m\r)^kg_{p-1-k}T_{p-1-k}(b,c)
\\\eq&\l(\f{3(4c-b^2)}p\r)\sum_{k=0}^{p-1}\f{g_kT_k(b,c)}{(9(b^2-4c)/m)^k}\pmod p
\end{align*}
for any $b,c,m\in\Z$ with $p\nmid (b^2-4c)m$.

Wan and Zudilin \cite{WZ} obtained the following irrational series for $1/\pi$ involving the Legendre polynomials and the sequence $(g_n)_{n\gs0}$:
$$\sum_{k=0}^\infty(22k+7-3\sqrt3)g_kP_k\l(\f{\sqrt{14\sqrt3-15}}3\r)\l(\f{\sqrt{2\sqrt3-3}}{9}\r)^k
=\f{9}{2\pi}(9+4\sqrt3).$$
Using our congruence approach (including Conjecture \ref{Conj-Dual}), we find 12 rational series for $1/\pi$
involving $T_n(b,c)$ and $g_n$; Theorem 1 of \cite{WZ} might be helpful to solve some of them.

\begin{conjecture}\label{Conj-g} We have the following identities.
\begin{align}\sum_{k=0}^\infty\f{8k+3}{(-81)^k}g_kT_k(7,-8)&=\f{9\sqrt3}{4\pi},
\\\sum_{k=0}^\infty\f{4k+1}{(-1089)^k}g_kT_k(31,-32)&=\f{33}{16\pi},
\\\sum_{k=0}^\infty\f{7k-1}{540^k}g_kT_k(52,1)&=\f{30\sqrt3}{\pi},
\\\sum_{k=0}^\infty\f{20k+3}{3969^k}g_kT_k(65,64)&=\f{63\sqrt3}{8\pi},
\\\sum_{k=0}^\infty\f{280k+93}{(-1980)^k}g_kT_k(178,1)&=\f{20\sqrt{33}}{\pi},
\\\sum_{k=0}^\infty\f{176k+15}{12600^k}g_kT_k(502,1)&=\f{25\sqrt{42}}{\pi},
\\\sum_{k=0}^\infty\f{560k-23}{13068^k}g_kT_k(970,1)&=\f{693\sqrt3}{\pi},
\\\sum_{k=0}^\infty\f{12880k+1353}{105840^k}g_kT_k(2158,1)&=\f{4410\sqrt3}{\pi},
\\\sum_{k=0}^\infty\f{299k+59}{(-101430)^k}g_kT_k(2252,1)&=\f{735\sqrt{115}}{64\pi},
\\\sum_{k=0}^\infty\f{385k+118}{(-53550)^k}g_kT_k(4048,1)&=\f{2415\sqrt{17}}{64\pi},
\\\sum_{k=0}^\infty\f{385k-114}{114264^k}g_kT_k(10582,1)&=\f{15939\sqrt3}{16\pi},
\\\sum_{k=0}^\infty\f{16016k+1273}{510300^k}g_kT_k(17498,1)&=\f{14175\sqrt3}{2\pi}.
\end{align}
\end{conjecture}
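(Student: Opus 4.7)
The plan is to attack each of the twelve identities in Conjecture \ref{Conj-g} through the Legendre-polynomial framework of Wan and Zudilin \cite{WZ}, exactly as the author hints. Using $T_k(b,c)=(b^2-4c)^{k/2}P_k(b/\sqrt{b^2-4c})$, every series in the conjecture can be rewritten as a Legendre-polynomial series
$$\sum_{k=0}^{\infty}(a+dk)\,g_k\,P_k(x_0)\,z_0^k$$
with $x_0=b/\sqrt{b^2-4c}$ and $z_0=\sqrt{b^2-4c}/m$. The first step would be to carry out this rewriting for each tuple $(a,d,b,c,m)$ appearing in $(7.1)$--$(7.12)$ and tabulate the algebraic pair $(x_0,z_0)$.

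The second step is to invoke Theorem 1 of \cite{WZ}, which expresses any convergent series of the above shape as $C/\pi$ whenever $(x_0,z_0)$ lies on the algebraic curve attached to the modular parametrization of the Apéry-like sequence $g_n$, with $C$ explicitly computable from the associated singular moduli. Since $g_n$ arises from a weight-$3$ form on $\Gamma_0(6)$, admissible pairs $(x_0,z_0)$ are CM values of modular functions on that curve. For each of the twelve cases I would determine the imaginary quadratic discriminant $-D$ placing $(x_0,z_0)$ at a CM point, compute the corresponding Chowla--Selberg period, and check that the result matches the conjectured constants such as $9\sqrt3/(4\pi)$ in $(7.1)$ or $15939\sqrt3/(16\pi)$ in $(7.11)$.

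The third step addresses rationality. The prototypical Wan--Zudilin output is \emph{irrational}, as in their identity $\sum(22k+7-3\sqrt3)g_kP_k(\cdot)z^k=9(9+4\sqrt3)/(2\pi)$ cited just before the conjecture. Rational identities should emerge when $(x_0,z_0)$ lies at a Galois-symmetric pair of CM points so that conjugate irrational contributions cancel when the two members of the orbit are averaged; the specific linear numerator $a+dk$ in each identity is precisely the one forced by that symmetrization. I would therefore verify, in each case, that the pair $(x_0,z_0)$ and its Galois conjugate yield cancellations producing exactly the conjectured rational numerator. In parallel, following the congruence-first strategy that runs through Sections 4 and 6, I would attempt to prove the companion $p$-adic supercongruence predicted by the general criterion (Conjecture \ref{general}) by the creative-telescoping/WZ algorithm used in Section 2, and then deduce the series identity from it.

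The main obstacle is the CM-point identification. The moduli $T_k(970,1)$, $T_k(10582,1)$, $T_k(17498,1)$ in $(7.7)$, $(7.11)$, $(7.12)$ strongly suggest discriminants of large class number, paralleling the class-number-eight examples highlighted in Sections 3 and 4. Locating the correct singular modulus and evaluating its Chowla--Selberg period in closed form within the appropriate ring class field would be the technical heart of any complete proof, and this is where a full verification is most likely to stall without additional input from the theory of complex multiplication on the modular curve attached to $g_n$.
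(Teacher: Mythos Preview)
The statement you are trying to prove is explicitly labeled a \emph{Conjecture} in the paper; the author offers no proof and writes only that ``Theorem 1 of \cite{WZ} might be helpful to solve some of them.'' So there is no proof in the paper to compare against, and your proposal should be read as a research outline rather than a proof.

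As an outline it is reasonable in spirit but has two genuine gaps. First, your ``parallel'' route---prove the companion supercongruence and then deduce the series identity via the General Criterion (Conjecture~\ref{general})---is circular: that criterion is itself conjectural in the paper, so it cannot be used to conclude an Archimedean identity from a $p$-adic one. Second, the Wan--Zudilin route you sketch is not yet a proof: you correctly note that the crux is locating the CM point and evaluating the associated period, and you concede that this ``is most likely to stall.'' That is exactly the missing idea. The paper's own phrasing (``might be helpful to solve \emph{some} of them'') signals that the author does not expect \cite[Theorem~1]{WZ} to cover all twelve cases, particularly the ones you flag with large class number such as $(7.7)$, $(7.11)$, $(7.12)$. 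Until the CM identification and period evaluation are carried out explicitly for each tuple $(b,c,m)$, or an independent analytic identity is produced, the identities remain open.
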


Now we present a conjecture on congruences related to $(7.6)$.

\begin{conjecture}\label{Conj-g-2} {\rm (i)} For any $n\in\Z^+$, we have
\begin{equation}\label{g-2-n}\f1{3n}\sum_{k=0}^{n-1}(176k+15)12600^{n-1-k}g_kT_k(502,1)\in\Z^+,
\end{equation}
and this number is odd if and only if $n\in\{2^a:\ a\in\N\}$.

{\rm (ii)} Let $p>7$ be a prime. Then
\begin{equation}\label{g-2-p}
\sum_{k=0}^{p-1}\f{176k+15}{12600^k}g_kT_k(502,1)\eq p\l(26\l(\f{-42}p\r)-11\l(\f{21}p\r)\r)\pmod{p^2}.
\end{equation}
If $p\eq1,3\pmod8$, then
\begin{equation}\label{g-2-pn}
\sum_{k=0}^{pn-1}\f{176k+15}{12600^k}g_kT_k(502,1)
-p\l(\f{21}p\r)\sum_{k=0}^{n-1}\f{176k+15}{12600^k}g_kT_k(502,1)
\end{equation}
divided by $(pn)^2$ is a $p$-adic integer for any $n\in\Z^+$.

{\rm (iii)} Let $p>7$ be a prime. Then
\begin{equation}\label{g-2-q}\begin{aligned}&\sum_{k=0}^{p-1}\f{g_kT_k(502,1)}{12600^k}
\\\eq&\begin{cases}4x^2-2p\pmod{p^2}&\t{if}\ (\f{-2}p)=(\f p3)=(\f p5)=(\f p7)=1\ \&\ p=x^2+210y^2,
\\2p-8x^2\pmod{p^2}&\t{if}\ (\f{-2}p)=(\f p7)=1,\ (\f p3)=(\f p5)=-1\ \&\ p=2x^2+105y^2,
\\2p-12x^2\pmod{p^2}&\t{if}\ (\f{-2}p)=(\f p3)=1,\ (\f p5)=(\f p7)=-1\ \&\ p=3x^2+70y^2,
\\2p-20x^2\pmod{p^2}&\t{if}\ (\f{-2}p)=(\f p3)=(\f p5)=(\f p7)=-1\ \&\ p=5x^2+42y^2,
\\24x^2-2p\pmod{p^2}&\t{if}\ (\f{-2}p)=(\f p5)=1,\ (\f p3)=(\f p7)=-1\ \&\ p=6x^2+35y^2,
\\28x^2-2p\pmod{p^2}&\t{if}\ (\f{-2}p)=(\f p5)=-1,\ (\f p3)=(\f p7)=1\ \&\ p=7x^2+30y^2,
\\40x^2-2p\pmod{p^2}&\t{if}\ (\f{-2}p)=(\f p7)=-1,\ (\f p3)=(\f p5)=1\ \&\ p=10x^2+21y^2,
\\56x^2-2p\pmod{p^2}&\t{if}\ (\f{-2}p)=(\f p3)=-1,\ (\f p5)=(\f p7)=1\ \&\ p=14x^2+15y^2,
\\0\pmod{p^2}&\t{if}\ (\f{-210}p)=-1,
\end{cases}
\end{aligned}\end{equation}
where $x$ and $y$ are integers.
\end{conjecture}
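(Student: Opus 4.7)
The plan is to attack Conjecture~\ref{Conj-g-2} in three connected stages.

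Stage one addresses part (i). Since $g_n$ and $T_n(502,1)$ each satisfy a second-order linear recurrence with polynomial coefficients, their product $g_n T_n(502,1)$ is $P$-recursive of order at most five. I would apply creative telescoping to the summand
$$F(n,k) = (176k+15)\,g_k\,T_k(502,1)\,12600^{\,n-1-k}$$
in search of a closed-form antidifference $G(n,k)$ together with an explicit auxiliary sequence $U(n)$ such that $\sum_{k=0}^{n-1}F(n,k) = 3n\cdot U(n)$, in the spirit of Lemmas~\ref{Lem2.1} and~\ref{Lem2.2}. Positivity and the $3n$-divisibility in \eqref{g-2-n} should then follow directly from such an identity, while the parity condition ``odd iff $n$ is a power of two'' would be read off from Kummer's theorem applied to the explicit form of $U(n)$.

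Stage two handles the congruence (ii). Since $g_k \equiv \l(\tfrac{-3}{p}\r) 9^k g_{p-1-k} \pmod p$ by \cite[Lemma~2.7]{JV} and $T_k(502,1) \equiv \l(\tfrac{252000}{p}\r) 252000^k T_{p-1-k}(502,1) \pmod p$ by \eqref{T-dual}, their product obeys $g_k T_k(502,1) \equiv \l(\tfrac{-210}{p}\r) 2268000^k g_{p-1-k}T_{p-1-k}(502,1) \pmod p$, because the squarefree kernel of $-3\cdot 252000$ is $-210$. As $2268000/12600 = 180$, the Duality Principle (Conjecture~\ref{Conj-Dual}) predicts the mod-$p$ relation
$$\sum_{k=0}^{p-1}\f{g_k T_k(502,1)}{12600^k} \equiv \l(\f{-210}{p}\r)\sum_{k=0}^{p-1}\f{g_k T_k(502,1)}{180^k}\pmod p.$$
Upgrading this duality to modulus $p^2$ via Morita's $p$-adic $\Gamma$-function and combining with the mod-$p^2$ reduction of the finite identity from Stage one should produce \eqref{g-2-p} after matching the coefficients of $\l(\tfrac{-42}{p}\r)$ and $\l(\tfrac{21}{p}\r)$; the ``$pn$'' refinement \eqref{g-2-pn} would then follow from an inductive argument in the style of \cite[Conjecture~77]{S19}.

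The main obstacle is part (iii), where the sum $\sum_{k=0}^{p-1} g_k T_k(502,1)/12600^k$ is evaluated in eight distinct cases indexed by the genus characters of $\Q(\sqrt{-210})$, a field of class number~$8$. The natural framework is complex multiplication: seek a weight-two eigenform $f = \sum a_n q^n$ on some $\Gamma_0(N)$ with CM by $\Q(\sqrt{-210})$ such that
$$\sum_{k=0}^{p-1}\f{g_k T_k(502,1)}{12600^k}\equiv a_p^2 - 2p \pmod{p^2}$$
for primes $p$ split in the Hilbert class field. Identifying~$f$ (likely as a theta series attached to a suitable order in $\Q(\sqrt{-210})$ or as an eta-quotient of level divisible by $2\cdot 3\cdot 5\cdot 7$) and then using Gauss's genus theory to decompose $a_p^2$ according to the eight representations $p = ax^2 + by^2$ is where the real difficulty lies. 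I expect that matching the Eichler-Selberg trace on~$f$ with the hypergeometric sum will be the decisive technical step; the eightfold genus-theoretic bookkeeping afterwards is classical if tedious.
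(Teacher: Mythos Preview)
The statement you are trying to prove is labeled \emph{Conjecture}~\ref{Conj-g-2} in the paper, and the paper offers no proof whatsoever --- only the one-line remark that $\Q(\sqrt{-210})$ has class number~$8$. There is therefore no ``paper's own proof'' to compare your attempt against; the author presents this as an open problem discovered via the congruence heuristics of Section~1.

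Your proposal is not a proof either, but a programme, and several of its load-bearing steps are themselves conjectural or unspecified. In Stage one you hope that creative telescoping will produce a closed form $3n\cdot U(n)$ for the partial sum; but unlike Lemmas~\ref{Lem2.1} and~\ref{Lem2.2}, where the summands are hypergeometric in~$k$, here $g_kT_k(502,1)$ is only holonomic, and there is no a~priori reason the certificate should yield an \emph{integer} sequence $U(n)$, let alone one whose $2$-adic valuation can be read off by Kummer's theorem. In Stage two you explicitly invoke the Duality Principle, which is Conjecture~\ref{Conj-Dual} of the very same paper --- so you are reducing one conjecture to another. The phrase ``upgrading this duality to modulus $p^2$ via Morita's $p$-adic $\Gamma$-function'' names a tool but not a mechanism; no known result lifts the mod-$p$ symmetry of $g_k$ or $T_k(b,c)$ to mod~$p^2$ in this generality. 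In Stage three you correctly identify the CM framework, but the existence of the specific eigenform~$f$ with $\sum_{k=0}^{p-1}g_kT_k(502,1)/12600^k\equiv a_p^2-2p\pmod{p^2}$ is precisely the content of the conjecture, not a step toward it.

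In short: your arithmetic checks (the duality numerology $9\cdot252000/12600=180$, the squarefree kernel $-210$) are correct and show you understand why the author formulated the conjecture, but none of the three stages closes to an actual argument.
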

\begin{remark}\rm Note that the imaginary quadratic field $\Q(\sqrt{-210})$ has class number $8$.
\end{remark}

The following conjecture is related to the identity $(7.8)$.

\begin{conjecture}\label{Conj-g-330} {\rm (i)} For any $n\in\Z^+$, we have
\begin{equation}\label{g-330-n}\f1{3n}\sum_{k=0}^{n-1}(12880k+1353)105840^{n-1-k}g_kT_k(2158,1)\in\Z^+,
\end{equation}
and this number is odd if and only if $n\in\{2^a:\ a\in\N\}$.

{\rm (ii)} Let $p>7$ be a prime. Then
\begin{equation}\label{g-330-p}\begin{aligned}
&\sum_{k=0}^{p-1}\f{12880k+1353}{105840^k}g_kT_k(2158,1)
\\\eq& \f p{2}\l(3419\l(\f {-3}p\r)-713\l(\f{5}p\r)\r)\pmod{p^2}.
\end{aligned}\end{equation}
If $(\f p3)=(\f p{5})$, then
\begin{equation}\label{g-330-pn}
\sum_{k=0}^{pn-1}\f{12880k+1353}{105840^k}g_kT_k(2158,1)
-p\l(\f p3\r)\sum_{k=0}^{n-1}\f{12880k+1353}{105840^k}g_kT_k(2158,1)
\end{equation}
divided by $(pn)^2$ is a $p$-adic integer for any $n\in\Z^+$.

{\rm (iii)} Let $p>11$ be a prime. Then
\begin{equation}\label{g-330-q}\begin{aligned}
&\sum_{k=0}^{p-1}\f{g_kT_k(2158,1)}{105840^k}
\\\eq&\begin{cases}4x^2-2p\pmod{p^2}&\t{if}\ (\f{-1}p)=(\f p3)=(\f p5)=(\f p{11})=1,\ p=x^2+330y^2,
\\[2mm]2p-8x^2\pmod{p^2}&\t{if}\ (\f{-1}p)=(\f p3)=(\f p5)=(\f p{11})=-1,\ p=2x^2+165y^2,
\\[2mm]2p-12x^2\pmod{p^2}&\t{if}\ (\f{-1}p)=(\f p{11})=1,\ (\f p3)=(\f p5)=-1,\ p=3x^2+110y^2,
\\[2mm]2p-20x^2\pmod{p^2}&\t{if}\ (\f{-1}p)=(\f p3)=-1,\ (\f p5)=(\f p{11})=1,\ p=5x^2+66y^2,
\\[2mm]24x^2-2p\pmod{p^2}&\t{if}\ (\f{-1}p)=(\f p{11})=-1,\ (\f p3)=(\f p5)=1,\ p=6x^2+55y^2,
\\[2mm]40x^2-2p\pmod{p^2}&\t{if}\ (\f{-1}p)=(\f p3)=1,\ (\f p5)=(\f p{11})=-1,\  p=10x^2+33y^2,
\\[2mm]44x^2-2p\pmod{p^2}&\t{if}\ (\f{-1}p)=(\f p{5})=1,\ (\f p3)=(\f p{11})=-1,\ p=11x^2+30y^2,
\\[2mm]60x^2-2p\pmod{p^2}&\t{if}\ (\f{-1}p)=(\f p5)=-1,\ (\f p3)=(\f p{11})=1,\  p=15x^2+22y^2,
\\[2mm]0\pmod{p^2}&\t{if}\ (\f{-330}p)=-1,
\end{cases}
\end{aligned}\end{equation}
where $x$ and $y$ are integers.
\end{conjecture}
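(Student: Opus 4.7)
The plan attacks Conjecture \ref{Conj-g-330} in three stages of sharply increasing difficulty, matching its three parts.

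For part (i), I would apply the creative telescoping method to the partial sum $S_n:=\sum_{k=0}^{n-1}(12880k+1353)\cdot 105840^{n-1-k}g_kT_k(2158,1)$, seeking a certificate that exhibits $S_n$ as satisfying an explicit linear recurrence in $n$ with polynomial coefficients; this is available in principle, since $g_k$ and $T_k(2158,1)$ each satisfy a second-order $P$-recursive relation and their product is holonomic. A $\gcd$-analysis of the recurrence coefficients should yield the divisibility $3n\mid S_n$. The parity claim that $S_n/(3n)$ is odd precisely when $n$ is a power of two would follow from a separate $2$-adic valuation estimate, using Kummer's theorem on the $2$-adic expansion of $\bi{2k}k$ appearing in $g_k$ together with the explicit formula $T_k(2158,1)=\sum_{j}\bi k{2j}\bi{2j}j\,2158^{k-2j}$.

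For part (ii), the starting point is the reflection identities $g_k\eq (\f{-3}p)9^kg_{p-1-k}\pmod p$ and $T_k(b,c)\eq (\f{b^2-4c}p)(b^2-4c)^kT_{p-1-k}(b,c)\pmod p$ recalled in Remark \ref{Rem-Dual}. With $b^2-4c=2156\cdot 2160$ and $m=105840$ one computes $9(b^2-4c)/m=396$, which plays the role of the dual modulus predicted by Conjecture \ref{Conj-Dual}; the combination $\f p2\l(3419\l(\f{-3}p\r)-713\l(\f{5}p\r)\r)$ on the right-hand side of \eqref{g-330-p} is exactly what this self-duality, together with the expected quadratic field $\Q(\sqrt{-330})$, predicts. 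The mod-$p$ content of \eqref{g-330-p} should follow directly. Lifting to $\pmod{p^2}$ would require a WZ-style pair tailored to $\sum g_kT_k(2158,1)x^k$; a promising route is to first prove the underlying rational series for $1/\pi$ (identity $(7.8)$) via a modular parametrization and then differentiate that parametrization to extract the $p^2$-refinement, in the spirit of Wan--Zudilin \cite{WZ} (their Theorem 1 is likely the right tool). The $pn$-supercongruence \eqref{g-330-pn} would then reduce to the $p^2$-case within the general framework of Conjecture \ref{Conj-pn}.

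The principal obstacle, and the reason this conjecture appears to lie beyond current technology, is part (iii). The eight binary quadratic forms of discriminant $-1320$ in \eqref{g-330-q} enumerate the form class group of $\Q(\sqrt{-330})$, which has class number eight. The predicted evaluation of $\sum_{k=0}^{p-1}g_kT_k(2158,1)/105840^k\pmod{p^2}$ is therefore exactly the shape one expects of the $p$-th Fourier coefficient of a weight-$3$ CM newform on some $\Gamma_0(N)$ with CM by $\Q(\sqrt{-330})$, whose Hecke character is determined by its values on the prime ideals above $p$. My strategy would be: (a) locate such a newform $f$ by Hecke-eigenvalue matching against the conjecture at small primes (a feasible search in LMFDB once the conductor, predictable from the primes dividing $m(b^2-4c)$, is fixed); (b) construct an explicit modular parametrization $t=t(\tau)$ under which $F(t):=\sum_{k\gs0}g_kT_k(2158,1)t^k$, after multiplication by a suitable weight-$2$ correction factor, becomes $f(\tau)$; (c) pass from this modular identity to the arithmetic congruence via the now-standard techniques of Beukers, Stienstra, Ahlgren--Ono and Kilbourn. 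The genuine barrier is step (b): because $\Q(\sqrt{-330})$ has class number eight and no classical Ramanujan-type series of the four families is known to correspond to it, the required parametrization has no precedent in the literature, and finding it would almost certainly demand a dedicated study of the third-order Picard--Fuchs equation governing the period integral that underlies $g_kT_k(2158,1)$.
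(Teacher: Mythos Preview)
There is nothing to compare against: in the paper this statement is Conjecture~\ref{Conj-g-330}, presented without proof. The only commentary the paper offers is the remark that $\Q(\sqrt{-330})$ has class number~$8$. So the paper does not claim to prove any of parts (i)--(iii); it records them as open.

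Your document is therefore not a proof but a research plan, and you seem to be aware of this --- you explicitly say part (iii) ``appears to lie beyond current technology.'' That assessment is consistent with the paper's own stance. A few specific comments on the plan itself. For part (i), creative telescoping will indeed produce a recurrence for $S_n$, but extracting the exact divisor $3n$ and the precise parity criterion from such a recurrence is typically the hard step and rarely follows from a ``$\gcd$-analysis of the recurrence coefficients'' alone; you would need a more hands-on combinatorial or $2$-adic argument. For part (ii), your duality computation is the right heuristic (and matches the paper's Duality Principle, Conjecture~\ref{Conj-Dual}), but note that this principle is itself conjectural in the paper, so invoking it does not constitute a proof; likewise, you propose to reduce \eqref{g-330-pn} to Conjecture~\ref{Conj-pn}, which is again an open conjecture. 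For part (iii), your CM-newform strategy is the natural one, and your identification of step (b) --- the modular parametrization for a class-number-$8$ field --- as the genuine obstruction is exactly right and matches the spirit of the paper's closing remarks in Section~7.

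In short: your outline is reasonable as a roadmap, but it is not a proof, and the paper does not supply one either.
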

\begin{remark}\rm Note that the imaginary quadratic field $\Q(\sqrt{-330})$ has class number $8$.
\end{remark}

Now we pose a conjecture related to the identity $(7.10)$.

\begin{conjecture}\label{Conj-g-3} {\rm (i)} For any $n\in\Z^+$, we have
\begin{equation}\label{g-3-n}\f1{2n}\sum_{k=0}^{n-1}(-1)^k(385k+118)53550^{n-1-k}g_kT_k(4048,1)\in\Z^+.
\end{equation}

{\rm (ii)} Let $p>7$ be a prime with $p\not=17$. Then
\begin{equation}\label{g-3-p}\begin{aligned}
&\sum_{k=0}^{p-1}\f{385k+118}{(-53550)^k}g_kT_k(4048,1)
\\[2mm]\eq& \f p{320}\l(29279\l(\f{-17}p\r)+8481\l(\f{7}p\r)\r)\pmod{p^2}.
\end{aligned}
\end{equation}
If $(\f p7)=(\f p{17})$, then
\begin{equation}\label{g-3-pn}
\f1{(pn)^2}\(\sum_{k=0}^{pn-1}\f{385k+118}{(-53550)^k}g_kT_k(4048,1)
-p\l(\f{7}p\r)\sum_{k=0}^{n-1}\f{385k+118}{(-53550)^k}g_kT_k(4048,1)\)
\end{equation}
is a $p$-adic integer for any $n\in\Z^+$.

{\rm (iii)} Let $p>7$ be a prime with $p\not=17$. Then

\begin{equation}\label{g-3-q}\begin{aligned}&\sum_{k=0}^{p-1}\f{g_kT_k(4048,1)}{(-53550)^k}
\\\eq&\begin{cases}4x^2-2p\pmod{p^2}&\t{if}\ (\f{-1}p)=(\f p3)=(\f p7)=(\f p{17})=1,\ p=x^2+357y^2,
\\[2mm]2p-2x^2\pmod{p^2}&\t{if}\ (\f{-1}p)=(\f p3)=-1,\ (\f p7)=(\f p{17})=1,\ 2p=x^2+357y^2,
\\[2mm]12x^2-2p\pmod{p^2}&\t{if}\ (\f{-1}p)=(\f p3)=(\f p5)=(\f p7)=-1,\ p=3x^2+119y^2,
\\[2mm]2p-6x^2\pmod{p^2}&\t{if}\ (\f{-1}p)=(\f p3)=1,\ (\f p7)=(\f p{17})=-1,\ 2p=3x^2+119y^2,
\\[2mm]28x^2-2p\pmod{p^2}&\t{if}\ (\f{-1}p)=(\f p{17})=-1,\ (\f p3)=(\f p7)=1,\  p=7x^2+51y^2,
\\[2mm]2p-14x^2\pmod{p^2}&\t{if}\ (\f{-1}p)=(\f p7)=1,\ (\f p3)=(\f p{17})=-1,\ 2p=7x^2+51y^2,
\\[2mm]2p-68x^2\pmod{p^2}&\t{if}\ (\f{-1}p)=(\f p{17})=1,\ (\f p3)=(\f p7)=-1,\ p=17x^2+21y^2,
\\[2mm]34x^2-2p\pmod{p^2}&\t{if}\ (\f{-1}p)=(\f p7)=-1,\ (\f p3)=(\f p{17})=1,\ 2p=17x^2+21y^2,
\\[2mm]0\pmod{p^2}&\t{if}\ (\f{-357}p)=-1,
\end{cases}
\end{aligned}\end{equation}
where $x$ and $y$ are integers.
\end{conjecture}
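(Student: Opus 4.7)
Since Conjecture \ref{Conj-g-3} is stated as a conjecture and not claimed as a theorem, the following is a proposal for how one would attempt it. Two structural observations frame the whole approach. First, $4048^2-4=1530^2\cdot 7$, so $T_k(4048,1)$ has "square-free part" $7$ and $(\tfrac{4048^2-4}{p})=(\tfrac{7}{p})$. Second, $53550=2\cdot 3^2\cdot 5^2\cdot 7\cdot 17$, so the small bad primes in \eqref{g-3-p}--\eqref{g-3-q} are exactly the prime factors of $m(b^2-4c)$ together with $3$, the last entering through Jarvis--Verrill's palindromic congruence $g_k\equiv(\tfrac{-3}{p})9^kg_{p-1-k}\pmod p$. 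By Remark \ref{Rem-Dual}(ii) the dual modulus of $m=-53550$ under Conjecture \ref{Conj-Dual} is $9(b^2-4c)/m=-2754$, with $(\tfrac{-3(b^2-4c)}{p})=(\tfrac{-21}{p})$ governing the sign swap; this self-consistency check is a prerequisite for everything below.

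\textit{Part (i).} I would run the Wilf--Zeilberger algorithm on the $k$-summand $(-1)^k(385k+118)\,53550^{n-1-k}g_kT_k(4048,1)$, expecting to produce a finite identity
\[
\sum_{k=0}^{n-1}(-1)^k(385k+118)\,53550^{n-1-k}g_kT_k(4048,1)=2n\cdot R_n
\]
for some $\Z$-valued $R_n$ expressible as a short linear combination of $g_nT_n(4048,1)$, $g_{n-1}T_{n-1}(4048,1)$, and a boundary term. Integrality then follows mechanically; the "positivity" claim would be verified by induction once a small initial segment is checked numerically, and the $2$-adic refinement handled by tracking the valuation of $g_nT_n(4048,1)$ using Kummer's theorem and the combinatorial formula for $T_n(4048,1)$.

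\textit{Part (ii).} The mod $p^2$ congruence \eqref{g-3-p} would be derived by combining (a) a WZ-style closed form for the truncated sum (as in Lemmas \ref{Lem2.1}--\ref{Lem2.2}), (b) the Chu--Vandermonde expansion $g_k=\sum_j\binom{k}{j}f_j$ applied termwise, and (c) the standard evaluations of $\sum_{k<p}f_k T_k/m^k\pmod{p^2}$ in terms of $(\tfrac{-17}{p})$ and $(\tfrac{7}{p})$. The two Legendre symbols arise naturally: $(\tfrac{-17}{p})$ from the principal genus character of $\Q(\sqrt{-357})$ and $(\tfrac{7}{p})$ from the dual index-reversal $k\mapsto p-1-k$. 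The $p$-to-$pn$ lift \eqref{g-3-pn} would then be proved by the technique developed in \cite{S19}: write $k=\ell p+r$ with $0\le r<p$, apply the mod $p^2$ congruence to each inner block $\sum_{r<p}$, and show that the boundary error between successive blocks is $O(p^2)$ by a $p$-adic valuation estimate for $g_{\ell p+r}T_{\ell p+r}(4048,1)$.

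\textit{The main obstacle: Part (iii).} The discriminant $-357=-3\cdot 7\cdot 17$ has class number $8$, and the eight cases of \eqref{g-3-q} correspond bijectively to the eight reduced binary quadratic forms of discriminant $-1428$, stratified by the four nontrivial genus characters $(\tfrac{-1}{\cdot}),(\tfrac{\cdot}{3}),(\tfrac{\cdot}{7}),(\tfrac{\cdot}{17})$. The natural route is to identify the mod $p$ reduction of $\sum_{k=0}^{p-1}g_kT_k(4048,1)/(-53550)^k$ with a Hecke eigenvalue $a_p(f)$ of a weight-$2$ CM newform $f$ attached to a Gr\"ossencharacter of $\Q(\sqrt{-357})$, then lift from $p$ to $p^2$ via Atkin--Swinnerton-Dyer congruences together with a $p$-adic $\Gamma$-function analysis of the hypergeometric representation of the sum. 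The truly hard step is the identification of $f$: for class number $1$ or $2$ the CM form is a classical theta series handled e.g.\ by Ahlgren--Ono or Z.-H.~Sun, but for class number $8$ one must simultaneously distinguish contributions from four independent genus characters, for which no general template exists. I would proceed by computing $a_p\pmod p$ at a large number of small primes in each of the eight residue classes, feeding them into PSLQ to guess $f$ as a rational linear combination of CM theta series over the genera of $\Q(\sqrt{-357})$, and only then attempting the $p$-adic lift. This identification step is where the proof is most likely to stall, which is presumably why the author records the statement only as a conjecture.
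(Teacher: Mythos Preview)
The paper does not prove this statement: it is explicitly labeled a \emph{Conjecture} and no proof is given or even sketched. The only supporting material the paper provides is the remark that the imaginary quadratic field $\Q(\sqrt{-357})$ has class number $8$, together with the general duality calculation at the start of Section~7 showing that for $p>3$ with $p\nmid(b^2-4c)m$,
\[
\sum_{k=0}^{p-1}\frac{g_kT_k(b,c)}{m^k}\equiv\left(\frac{3(4c-b^2)}{p}\right)\sum_{k=0}^{p-1}\frac{g_kT_k(b,c)}{(9(b^2-4c)/m)^k}\pmod p.
\]
So there is nothing to compare your proposal against.

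That said, your structural observations are sound (the factorizations $4048^2-4=1530^2\cdot7$ and $53550=2\cdot3^2\cdot5^2\cdot7\cdot17$, and the dual modulus $-2754$, are all correct), and your assessment of where the real difficulty lies---the identification of the relevant weight-$2$ CM newform over $\Q(\sqrt{-357})$ in part~(iii)---matches the spirit of the paper's remarks about class number $8$ fields. One small correction: in your part~(ii) sketch you invoke ``standard evaluations of $\sum_{k<p}f_kT_k/m^k\pmod{p^2}$,'' but no such evaluations exist in the literature for these particular parameters; the paper's Section~6 conjectures on $f_kT_k$ sums are themselves open, so this would not furnish an independent input. Your proposal is a reasonable research outline, but it does not constitute a proof, and the paper makes no claim that one is currently available.
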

\begin{remark}\rm Note that the imaginary quadratic field $\Q(\sqrt{-357})$ has class number $8$.
\end{remark}

Now we pose a conjecture related to the identity $(7.12)$.

\begin{conjecture}\label{Conj-g-462} {\rm (i)} For any $n\in\Z^+$, we have
\begin{equation}\label{g-462-n}\f1{n}\sum_{k=0}^{n-1}(16016k+1273)510300^{n-1-k}g_kT_k(17498,1)\in\Z^+,
\end{equation}
and this number is odd if and only if $n\in\{2^a:\ a\in\N\}$.

{\rm (ii)} Let $p>7$ be a prime. Then
\begin{equation}\label{g-462-p}\begin{aligned}
&\sum_{k=0}^{p-1}\f{16016k+1273}{510300^k}g_kT_k(17498,1)
\\[2mm]\eq& \f p{3}\l(6527\l(\f{-3}p\r)-2708\l(\f{42}p\r)\r)\pmod{p^2}.
\end{aligned}
\end{equation}
If $(\f {-14}p)=1$, then
\begin{equation}\label{g-462-pn}\begin{aligned}
&\sum_{k=0}^{pn-1}\f{16016k+1273}{510300^k}g_kT_k(17498,1)
\\&-p\l(\f{p}3\r)\sum_{k=0}^{n-1}\f{16016k+1273}{510300^k}g_kT_k(17498,1)
\end{aligned}\end{equation}
divided by $(pn)^2$ is a $p$-adic integer for each $n\in\Z^+$.

{\rm (iii)} Let $p>11$ be a prime. Then
\begin{equation}\label{g-462-q}\begin{aligned}&\sum_{k=0}^{p-1}\f{g_kT_k(17498,1)}{510300^k}
\\\eq&\begin{cases}4x^2-2p\pmod{p^2}&\t{if}\ (\f{2}p)=(\f p3)=(\f p7)=(\f p{11})=1\ \&\ p=x^2+462y^2,
\\2p-8x^2\pmod{p^2}&\t{if}\ (\f{2}p)=(\f p7)=1,\ (\f p3)=(\f p{11})=-1\ \&\ p=2x^2+231y^2,
\\12x^2-2p\pmod{p^2}&\t{if}\ (\f{2}p)=(\f p7)=-1,\ (\f p3)=(\f p{11})=1\ \&\ p=3x^2+154y^2,
\\2p-24x^2\pmod{p^2}&\t{if}\ (\f{2}p)=(\f p3)=(\f p7)=(\f p{11})=-1\ \&\ p=6x^2+77y^2,
\\28x^2-2p\pmod{p^2}&\t{if}\ (\f{2}p)=(\f p{3})=1,\ (\f p7)=(\f p{11})=-1\ \&\ p=7x^2+66y^2,
\\44x^2-2p\pmod{p^2}&\t{if}\ (\f{2}p)=(\f p3)=-1,\ (\f p7)=(\f p{11})=1\ \&\ p=11x^2+42y^2,
\\2p-56x^2\pmod{p^2}&\t{if}\ (\f{2}p)=(\f p{11})=1,\ (\f p3)=(\f p7)=-1\ \&\ p=14x^2+33y^2,
\\84x^2-2p\pmod{p^2}&\t{if}\ (\f{2}p)=(\f p{11})=-1,\ (\f p3)=(\f p{7})=1\ \&\ p=21x^2+22y^2,
\\0\pmod{p^2}&\t{if}\ (\f{-462}p)=-1,
\end{cases}
\end{aligned}\end{equation}
where $x$ and $y$ are integers.
\end{conjecture}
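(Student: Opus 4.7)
\medskip

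\noindent\emph{Proof plan for Conjecture \ref{Conj-g-462}.} The three parts follow the template already established in this section for Conjectures \ref{Conj-g-2}, \ref{Conj-g-330}, \ref{Conj-g-3}, and I would tackle them in the order (i), (ii), (iii). For Part~(i), the key observation is that both $g_k$ and $T_k(17498,1)$ satisfy explicit three-term $P$-recurrences in $k$ with polynomial coefficients, so the summand $(16016k+1273)g_kT_k(17498,1)\cdot 510300^{n-1-k}$ is holonomic in $(n,k)$. Applying Zeilberger's creative-telescoping algorithm should yield a WZ-certificate and a low-order inhomogeneous recurrence for
\[
P(n):=\sum_{k=0}^{n-1}(16016k+1273)g_kT_k(17498,1)\cdot 510300^{n-1-k}.
\]
Unwinding this recurrence modulo $n$ would give $n\mid P(n)$; positivity follows from a termwise asymptotic estimate using $g_nT_n(17498,1)=O(((2+\sqrt 3)\,17498)^n)$ versus $510300^n$, and the parity claim for $n\in\{2^a:a\in\N\}$ is read off the recurrence reduced mod~$2$ (where it collapses to a Lucas-type congruence).

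\noindent For Part~(ii), Conjecture~\ref{general} applied to the identity $(7.12)$ with right-hand side $14175\sqrt 3/(2\pi)$ forces the mod-$p^2$ value of $\sum_{k=0}^{p-1}(16016k+1273)g_kT_k(17498,1)/510300^k$ to take the two-character shape $p(A(\f{-3}p)+B(\f{42}p))$: the $\sqrt 3$ comes from the identity itself, and the Legendre symbol $(\f{42}p)$ arises as the product of $(\f{-3}p)$ with the duality character $(\f{-14}p)$ predicted by Remark~\ref{Rem-Dual}(ii), since $17498^2-4=2^5\cdot 3^7\cdot 5^4\cdot 7$ gives duality ratio $9(b^2-4c)/510300=5400$ of squarefree discriminant $-14$. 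Two linear constraints then pin $(A,B)=(6527/3,-2708/3)$: Conjecture~\ref{Conj-WS} supplies $A+B=1273$, and one convenient numerical evaluation (say $p=23$, where both Legendre symbols are explicit) determines $A-B$. The $pn$-supercongruence \eqref{g-462-pn} would then follow from Dwork-style formal-group congruences along the lines of Conjecture~\ref{Conj-pn}, driven by the WZ-certificate produced in Part~(i).

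\noindent The main obstacle is Part~(iii). The eight-branch evaluation \eqref{g-462-q} reflects the $8$-element class group of $\Q(\sqrt{-462})$, equal to its genus group $(\Z/2\Z)^3$; the eight listed representatives $x^2+462y^2,\ 2x^2+231y^2,\ldots,21x^2+22y^2$ are precisely the reduced binary quadratic forms of discriminant $-1848$. The strategy is to identify a weight-$3$ CM cusp newform $\eta$ of level $N\mid 4\cdot 1848$ with complex multiplication by $\Q(\sqrt{-462})$, whose $p$-th Hecke eigenvalue satisfies $a_p(\eta)=\psi(\mathfrak p)+\overline{\psi(\mathfrak p)}$ for a suitable Gr\"ossencharacter $\psi$, and then to establish an Atkin--Swinnerton-Dyer-type congruence
\[
\l(\f{-1}p\r)\sum_{k=0}^{p-1}\f{g_kT_k(17498,1)}{510300^k}\eq a_p(\eta)^2-2p\pmod{p^2}.
\]
Given $\eta$, the split into the eight quadratic-form cases is routine genus theory: $a_p(\eta)=\pm 2x$ with $(x,y)$ the representation of $p$ by the form lying in the genus of $\mathfrak p$. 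The genuinely hard step is producing $\eta$: for class number $8$ no uniform eta-product recipe is available in the literature. I would either search the LMFDB database for weight-$3$ CM forms of level dividing $4\cdot 1848$ with CM by $\Q(\sqrt{-462})$ and match their Hecke eigenvalues against numerical truncations of the sum for enough small primes to pin $\eta$ down uniquely, or construct it directly as the theta series $\sum_{\mathfrak a}\psi(\mathfrak a)q^{N(\mathfrak a)}$ over the ideals of the ring of integers of $\Q(\sqrt{-462})$ and verify that the resulting congruence matches the target.
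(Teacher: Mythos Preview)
The paper contains no proof of this statement: it is explicitly labelled a \emph{Conjecture}, and the only accompanying text is the one-line Remark that $\Q(\sqrt{-462})$ has class number $8$. So there is nothing to compare your proposal against; the author is recording an open problem, not claiming a result.

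Your plan is not a proof but a chain of conditional reductions, and several links are themselves open. For Part~(ii) you invoke the identity $(7.12)$, but $(7.12)$ is part of Conjecture~\ref{Conj-g} and is unproven; you then invoke Conjectures~\ref{general}, \ref{Conj-pn} and \ref{Conj-WS}, all of which are stated in the paper as conjectures, not theorems. So even if every step went through, what you would have shown is that Conjecture~\ref{Conj-g-462}(ii) follows from four other open conjectures, which is not a proof. For Part~(i), creative telescoping will indeed produce a recurrence for $P(n)$, but ``unwinding this recurrence modulo $n$'' is not automatic: the recurrence will have polynomial coefficients in $n$, and divisibility of $P(n)$ by $n$ requires controlling the denominators that appear when you solve it, which typically needs a separate $p$-adic argument for each prime $p\mid n$. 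The parity claim likewise does not simply ``collapse to a Lucas-type congruence'' without knowing the recurrence explicitly. For Part~(iii), the modular-forms strategy is plausible in outline, but locating the correct CM form and proving the Atkin--Swinnerton-Dyer congruence to modulus $p^2$ (not just $p$) is exactly the hard part; no such congruence for $g_kT_k(b,c)$ sums is established anywhere in the paper or the cited literature.
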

\begin{remark}\rm Note that the imaginary quadratic field $\Q(\sqrt{-462})$ has class number $8$.
We believe that $462$ is the largest positive squarefree number $d$ for which the imaginary quadratic field $\Q(\sqrt{-d})$ can be used to construct a Ramanujan-type series for $1/\pi$.
\end{remark}

The identities $(7.5),\,(7.7),\,(7.9)$ are related to the quadratic fields $\Q(\sqrt{-165})$,
$\Q(\sqrt{-210})$, $\Q(\sqrt{-345})$ (with class number $8$) respectively. We also have conjectures on related congruences similar to Conjectures \ref{Conj-g-2}, \ref{Conj-g-330}, \ref{Conj-g-3}
and \ref{Conj-g-462}.

To conclude this section, we confirm an open series for $1/\pi$ conjectured by the author
(cf. \cite[(3.28)]{S-11} and \cite[Conjecture 7.9]{S13d}) in 2011.

\begin{theorem}\label{Th7.1} We have
\begin{equation}\label{g-20}\sum_{n=0}^\infty\f{16n+5}{324^n}\bi{2n}ng_n(-20)=\f{189}{25\pi},
\end{equation}
where
$$g_n(x):=\sum_{k=0}^n\bi nk^2\bi{2k}kx^k.$$
\end{theorem}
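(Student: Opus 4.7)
The plan is to reduce the target series to a classical Ramanujan-type series by exploiting an integral representation that converts $g_n(-20)$ into a Legendre-polynomial series, in the spirit of the Brafman/Wan--Zudilin transformation cited earlier in the paper.

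First, I would rewrite $g_n(-20)$ using two classical ingredients: the Wallis integral $\binom{2k}{k} = \frac{4^k}{\pi}\int_0^\pi\cos^{2k}\theta\,d\theta$, and the Jacobi-type identity $\sum_{k=0}^n\binom{n}{k}^2 y^k = (1-y)^n P_n\!\left(\frac{1+y}{1-y}\right)$ (easily verified by induction on $n$ or by matching coefficients of $P_n$). Substituting $y = -80\cos^2\theta$ gives
\[
g_n(-20) = \frac{1}{\pi}\int_0^\pi (1+80\cos^2\theta)^n\, P_n\!\left(\frac{1-80\cos^2\theta}{1+80\cos^2\theta}\right)d\theta.
\]
Because $(1+80\cos^2\theta)/324 \le 81/324 = 1/4$ and $|P_n(\cdot)|$ is bounded by a polynomial in $n$ on the relevant interval, dominated convergence allows me to interchange the integral with the outer sum in~(7.1). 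The identity to prove becomes
\[
\frac{1}{\pi}\int_0^\pi \Phi(u_\theta,w_\theta)\,d\theta = \frac{189}{25\pi},
\]
where $u_\theta = (1-80\cos^2\theta)/(1+80\cos^2\theta)$, $w_\theta = (1+80\cos^2\theta)/324$, and $\Phi(u,w) := \sum_{n\ge0}(16n+5)\binom{2n}{n}P_n(u)\,w^n$.

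Next, I would evaluate $\Phi(u,w)$ in closed form. Inserting the Laplace--Heine integral $P_n(u) = \frac{1}{\pi}\int_0^\pi (u+\sqrt{u^2-1}\cos\varphi)^n\,d\varphi$ and summing the two geometric-type series in $n$ inside yields
\[
\Phi(u,w) = \frac{1}{\pi}\int_0^\pi \frac{5(1-4w\tau) + 16w\tau}{(1-4w\tau)^{3/2}}\,d\varphi,\qquad \tau := u+\sqrt{u^2-1}\cos\varphi,
\]
which is an explicit linear combination of complete elliptic integrals of the first and second kinds in an algebraic modulus $k=k(u,w)$. The Wan--Zudilin framework \cite{WZ} guarantees that at isolated singular moduli this combination collapses to a rational multiple of $1/\pi$; to extract the value, I would apply Legendre's relation $EK' + E'K - KK' = \pi/2$ after identifying $k(u_\theta,w_\theta)$.

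Finally, I would compute the outer integral via the substitution $s = \cos^2\theta$, which converts $\int_0^\pi d\theta$ into $\int_0^1 ds/\sqrt{s(1-s)}$ and renders the modulus an affine function of $s$. At this stage the double-integral structure collapses to a single Beta-type integral, and one identifies it with a classical Chudnovsky/Ramanujan evaluation (parameters suggest the discriminant-$15$ singular value, consistent with the factor $25$ in the denominator of~$189/25$). The main obstacle is precisely Step~3: pinning down the algebraic modulus $k(u_\theta,w_\theta)$ and matching it to the known singular value. If the direct elliptic-integral reduction proves unwieldy, a fallback is to apply creative telescoping (WZ) to the bivariate summand $F(n,k)=(16n+5)\binom{2n}{n}\binom{n}{k}^2\binom{2k}{k}(-20)^k/324^n$, produce a companion pair certifying an affine-in-$n$ transformation that reduces the left-hand side to a linear combination of Bauer's series $\sum(4k+1)\binom{2k}{k}^3/(-64)^k = 2/\pi$ and its $\bi{2k}k^2\bi{3k}k$-analogue, from which the rational coefficient $189/25$ emerges by matching the leading and subleading coefficients.
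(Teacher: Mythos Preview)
Your outline is a genuinely different, analytic route, but it has a real gap that you yourself flag: you never actually identify the modulus or carry out the elliptic reduction. After your double-integral rewriting you obtain a $\theta$-parametrised family of moduli $k(u_\theta,w_\theta)$, not a single singular value, and the claim that the outer integral ``collapses to a single Beta-type integral'' is asserted but not shown. The WZ fallback is equally speculative: there is no reason the bivariate summand should telescope down to Bauer's $\binom{2k}{k}^3$ series or a $\binom{2k}{k}^2\binom{3k}{k}$ series, and in fact it does not --- the correct target turns out to be the order-$4$ Franel series.

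The paper's proof avoids all of this. It quotes a holonomic transformation of Cooper--Wan--Zudilin \cite[(8.1)]{CWZ}: for $|x|<1/16$,
\[
\sum_{n\ge0}\binom{2n}{n}\frac{(an+b)x^n}{(1+2x)^{2n}}\sum_{k=0}^n\binom{n}{k}^2\binom{2(n-k)}{n-k}x^k
=(1+2x)\sum_{n\ge0}\Bigl(\tfrac{4a(1-x)(1+2x)n+6ax(2-x)}{5(1-4x)}+b\Bigr)f_n^{(4)}x^n,
\]
where $f_n^{(4)}=\sum_k\binom{n}{k}^4$. One then observes that the left-hand summand equals $\binom{2n}{n}(an+b)(2+x^{-1})^{-2n}g_n(x^{-1})$, sets $x=-1/20$ (so $x^{-1}=-20$ and $(2+x^{-1})^2=324$), and the identity reduces \eqref{g-20} to Cooper's known evaluation $\sum_{n\ge0}(3n+1)f_n^{(4)}/(-20)^n=5/(2\pi)$ \cite{Co}. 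The whole argument is three lines once the CWZ formula is in hand; that transformation is the missing idea in your plan.
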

\Proof. The Franel numbers of order $4$ are given by $f_n^{(4)}=\sum_{k=0}^n\bi nk^4\ (n\in\N)$.
Note that
$$f_n^{(4)}\ls\(\sum_{k=0}^n\bi nk^2\)^2=\bi{2n}n^2\ls ((1+1)^{2n})^2=16^n.$$
By \cite[(8.1)]{CWZ}, for $|x|<1/16$ and $a,b\in\Z$, we have
\begin{equation}\label{cwz}\begin{aligned}&\sum_{n=0}^\infty\bi{2n}n\f{(an+b)x^n}{(1+2x)^{2n}}\sum_{k=0}^n\bi nk^2\bi{2(n-k)}{n-k}x^k
\\=&(1+2x)\sum_{n=0}^\infty\l(\f{4a(1-x)(1+2x)n+6ax(2-x)}{5(1-4x)}+b\r)f_n^{(4)}x^n.
\end{aligned}\end{equation}
Since
\begin{align*}&\f{x^n}{(1+2x)^{2n}}\sum_{k=0}^n\bi nk^2\bi{2n-2k}{n-k}x^k
\\=&\f{x^n}{(1+2x)^{2n}}\sum_{k=0}^n\bi nk^2\bi{2k}{k}x^{n-k}=(2+x^{-1})^{-2n}g_n(x^{-1}),
\end{align*}
putting $a=16$, $b=5$ and $x=-1/20$ in \eqref{cwz} we obtain
$$\sum_{n=0}^\infty\f{16n+5}{18^{2n}}\bi{2n}ng_n(-20)
=\f{378}{125}\sum_{n=0}^\infty\f{3n+1}{(-20)^n}f_n^{(4)}.$$
As
$$\sum_{n=0}^\infty\f{3n+1}{(-20)^n}f_n^{(4)}=\f{5}{2\pi}$$
by Cooper \cite{Co}, we finally get
$$\sum_{n=0}^\infty\f{16n+5}{18^{2n}}\bi{2n}ng_n(-20)=\f{378}{125}\times \f 5{2\pi}=\f{189}{25\pi}.$$
This concludes the proof of \eqref{g-20}. \qed

\section{Series and congruences involving $T_n(b,c)$
 and $\beta_n=\sum_{k=0}^n\bi nk^2\bi{n+k}k$}
 \setcounter{equation}{0}

Recall that the numbers
$$\beta_n:=\sum_{k=0}^n\bi nk^2\bi{n+k}k\ \ \ (n=0,1,2,\ldots)$$
are a kind of Ap\'ery numbers.
Let $p$ be an odd prime. For any $k=0,1,\ldots,p-1$, we have
$$\beta_k\eq(-1)^k\beta_{p-1-k}\pmod p$$
by \cite[Lemma 2.7(i)]{JV}. Combining this with Remark \ref{Rem-Dual}(ii), we see that
\begin{align*}\sum_{k=0}^{p-1}\f{\beta_kT_k(b,c)}{m^k}\eq&
\l(\f{b^2-4c}p\r)\sum_{k=0}^{p-1}\l(\f{-(b^2-4c)}m\r)^k\beta_{p-1-k}T_{p-1-k}(b,c)
\\\eq&\l(\f{b^2-4c}p\r)\sum_{k=0}^{p-1}\f{\beta_kT_k(b,c)}{((4c-b^2)/m)^k}\pmod p
\end{align*}
for any $b,c,m\in\Z$ with $p\nmid (b^2-4c)m$.

Wan and Zudilin \cite{WZ} obtained the following irrational series for $1/\pi$ involving the Legendre polynomials and the numbers $\beta_n$:
$$\sum_{k=0}^\infty(60k+16-5\sqrt{10})\beta_kP_k\l(\f{5\sqrt2+17\sqrt5}{45}\r)\l(\f{5\sqrt2-3\sqrt5}
5\r)^k
=\f{135\sqrt2+81\sqrt5}{\sqrt2\,\pi}.$$
Using our congruence approach (including Conjecture \ref{Conj-Dual}), we find one rational series for $1/\pi$
involving $T_n(b,c)$ and the Ap\'ery numbers $\beta_n$ (see \eqref{beta} below); Theorem 1 of \cite{WZ} might be helpful to solve it.

\begin{conjecture} \label{conj-beta} {\rm (i)} We have
\begin{equation}\label{beta}\sum_{k=0}^\infty\f{145k+9}{900^k}\beta_kT_k(52,1)=\f{285}{\pi}.
\end{equation}
Also, for any $n\in\Z^+$ we have
\begin{equation}\label{beta-n}\f1n\sum_{k=0}^{n-1}(145k+9)900^{n-1-k}\beta_kT_k(52,1)\in\Z^+.
\end{equation}

{\rm (ii)} Let $p>5$ be a prime. Then
\begin{equation}\label{beta-p}
\sum_{k=0}^{p-1}\f{145k+9}{900^k}\beta_kT_k(52,1)
\eq\f p5\l(133\l(\f{-1}p\r)-88\r)\pmod{p^2}.
\end{equation}
If $p\eq1\pmod4$, then
\begin{equation}\label{beta-pn}
\f1{(pn)^2}\(\sum_{k=0}^{pn-1}\f{145k+9}{900^k}\beta_kT_k(52,1)
-p\sum_{k=0}^{n-1}\f{145k+9}{900^k}\beta_kT_k(52,1)\)
\in\Z_p
\end{equation}
for all $n\in\Z^+$.

{\rm (iii)} Let $p>5$ be a prime. Then
\begin{equation}\label{beta-q}
\begin{aligned}&\l(\f{-1}p\r)\sum_{k=0}^{p-1}\f{\beta_kT_k(52,1)}{900^k}
\\\eq&\begin{cases}4x^2-2p\pmod{p^2}&\t{if}\ p\eq1,4\pmod{15}\ \&\ p=x^2+15y^2\ (x,y\in\Z),
\\2p-12x^2\pmod{p^2}&\t{if}\ p\eq2,8\pmod{15}\ \&\ p=3x^2+5y^2\ (x,y\in\Z),
\\0\pmod{p^2}&\t{if}\ (\f{-15}p)=-1.\end{cases}
\end{aligned}
\end{equation}
\end{conjecture}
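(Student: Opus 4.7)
The plan is to prove the series identity \eqref{beta} by adapting the strategy of Theorem~\ref{Th7.1}. Since $52^2-4\cdot 1=2700=(30\sqrt{3})^2$, one has $T_k(52,1)=(30\sqrt{3})^k P_k(26\sqrt{3}/45)$, so \eqref{beta} is equivalent to the Legendre-polynomial identity
$$\sum_{k=0}^\infty (145k+9)\,\beta_k\,P_k\!\l(\f{26\sqrt{3}}{45}\r)\!\l(\f{\sqrt{3}}{30}\r)^{\!k} = \f{285}{\pi}.$$
The next step is to invoke Theorem~1 of Wan--Zudilin \cite{WZ}, which (as hinted at the end of Section~7) transforms a series of the form $\sum(Ak+B)\beta_k P_k(x) z^k$ into a rational Ramanujan-type series of the classical form $(*)$, via an algebraic substitution analogous to that of identity \eqref{cwz}. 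The specific arguments $x=26\sqrt{3}/45$ and $z=\sqrt{3}/30$ should be exactly the values that clear all radicals after the transformation, reducing \eqref{beta} to one of the $36$ listed series in \cite{G}; the factor $285=19\cdot 15$ on the right and the quadratic field $\Q(\sqrt{-15})$ that controls \eqref{beta-q} together suggest the target is a CM-series at discriminant $-15$, a field of class number two.

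For parts (ii) and (iii), the integrality statement \eqref{beta-n} should follow by induction once one produces a creative-telescoping certificate for $(145k+9)\beta_k T_k(52,1)\cdot 900^{n-1-k}$ via Zeilberger's algorithm, in parallel with the role played by Lemma~\ref{Lem2.1} in Section~2. The supercongruence \eqref{beta-p} modulo $p^2$ should come from running the Wan--Zudilin transformation $p$-adically: truncating both sides at index $p-1$ and applying the classical Ramanujan-type supercongruences of Van~Hamme, Mortenson, and Zudilin to the reduced sum yields the right-hand side of \eqref{beta-p}; the extended congruence \eqref{beta-pn} then falls out of the general framework of Conjecture~\ref{Conj-pn} applied to $a_k=\beta_k T_k(52,1)$. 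Part~(iii) should follow by identifying $\sum_{k=0}^{p-1}\beta_k T_k(52,1)/900^k$ modulo $p^2$ with the $p$-th Fourier coefficient of a weight-$3$ CM newform with complex multiplication by $\Q(\sqrt{-15})$, in the spirit of the results of Ahlgren--Ono, Kilbourn, and the author's prior supercongruence work; the two binary forms $x^2+15y^2$ and $3x^2+5y^2$ in \eqref{beta-q} correspond to the two classes of the form class group of discriminant $-60$.

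The main obstacle is the very first step: producing the \emph{explicit} Wan--Zudilin-type transformation specialized to the Ap\'ery numbers $\beta_k$. The identity \eqref{cwz} used for \eqref{g-20} was custom-built in \cite{CWZ} for central binomial sums $\bi{2n}n g_n(x)$, and its counterpart for $\beta_k P_k(x) z^k$ does not appear in the literature in the precise form needed. A promising route is to start from a Brafman-type generating function $\mathcal{B}(x,z):=\sum_{n\gs 0}\beta_n P_n(x) z^n$, interpret $\mathcal{B}$ as a period of a modular form on an arithmetic triangle group, and evaluate $\mathcal{B}$ and $\partial_z\mathcal{B}$ at the singular point $(x_0,z_0)=(26\sqrt{3}/45,\,\sqrt{3}/30)$; the combination $9\,\mathcal{B}(x_0,z_0)+145 z_0\partial_z\mathcal{B}(x_0,z_0)$ should then collapse to $285/\pi$ once the correct CM evaluation is carried out. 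Before attempting a rigorous derivation, one would confirm the precise singular modulus numerically via high-precision PSLQ, and cross-check the weight-$3$ Hecke eigenform predicted by \eqref{beta-q} against tables of CM forms of level $15$ or $60$.
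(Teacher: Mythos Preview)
The paper does not prove this statement: it is labeled \textbf{Conjecture}~\ref{conj-beta} and the only accompanying text is Remark~\ref{Rem-beta} recording the date of formulation. There is therefore no ``paper's own proof'' to compare against; the author merely says (at the start of Section~8) that ``Theorem~1 of \cite{WZ} might be helpful to solve it,'' which is exactly the hint you are trying to expand.

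Your proposal is not a proof but a research outline, and you yourself identify the gap: the Wan--Zudilin machinery in \cite{WZ} does not supply an explicit Brafman-type transformation for the pair $(\beta_k,P_k)$ that lands on one of the $36$ classical series. Until that transformation is produced and verified, part~(i) remains open. For parts~(ii) and~(iii) the situation is worse: you invoke Conjecture~\ref{Conj-pn} to justify \eqref{beta-pn}, but that is itself an open conjecture in the paper, so this is circular. Likewise, identifying the truncated sum with a CM-newform coefficient is a plausible heuristic (and matches the author's philosophy in \cite{S13d}), but no such identification for $\beta_kT_k(52,1)$ is known; the analogous statements for simpler sequences required substantial individual work (cf.\ \cite{Mo,S13e}). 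In short, your plan is a reasonable roadmap for attacking the conjecture, but every step you list as ``should follow'' is currently unproved, and the paper makes no claim otherwise.
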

\begin{remark}\label{Rem-beta} \rm This conjecture was formulated by the author on Oct. 27, 2019.
\end{remark}

\begin{conjecture} \label{conj-beta1} {\rm (i)} For any $n\in\Z^+$, we have
\begin{equation}\label{beta1-n}\f1{2n}\sum_{k=0}^{n-1}(-1)^k(15k+8)\beta_kT_k(4,-1)\in\Z,
\end{equation}
and this number is odd if and only if $n\in\{2^a:\ a\in\Z^+\}$.

{\rm (ii)} Let $p$ be a prime. Then
\begin{equation}\label{beta1-p}
\sum_{k=0}^{p-1}(-1)^k(15k+8)\beta_kT_k(4,-1)
\eq\f p4\l(27\l(\f p3\r)+5\l(\f p5\r)\r)\pmod{p^2}.
\end{equation}
If $(\f {-15}p)=1\ ($i.e., $p\eq1,2,4,8\pmod{15})$, then
\begin{equation}\label{beta1-pn}
\sum_{k=0}^{pn-1}(-1)^k(15k+8)\beta_kT_k(4,-1)
-p\l(\f p3\r)\sum_{k=0}^{n-1}(-1)^k(15k+8)\beta_kT_k(2,2)
\end{equation}
divided by $(pn)^2$ is a $p$-adic integer for any $n\in\Z^+$.

{\rm (iii)} For any prime $p>5$, we have
\begin{equation}\label{beta1-q}
\begin{aligned}&\sum_{k=0}^{p-1}(-1)^k\beta_kT_k(4,-1)
\\\eq&\begin{cases}4x^2-2p\pmod{p^2}&\t{if}\ p\eq1,4\pmod{15}\ \&\ p=x^2+15y^2\ (x,y\in\Z),
\\12x^2-2p\pmod{p^2}&\t{if}\ p\eq2,8\pmod{15}\ \&\ p=3x^2+5y^2\ (x,y\in\Z),
\\0\pmod{p^2}&\t{if}\ (\f {-15}p)=-1.\end{cases}
\end{aligned}
\end{equation}
\end{conjecture}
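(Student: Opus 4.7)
The plan is to mimic the strategy used for the proven theorems in the paper: combine a Zeilberger-style creative telescoping identity (for a closed form of the partial sum) with $p$-adic analysis (for the $p^2$ congruences), and reduce the binary-quadratic-form statement to a CM modular form. A useful preliminary observation is that this conjecture is essentially the Duality Principle (Conjecture \ref{Conj-Dual}) counterpart of $\sum_k\beta_k T_k(4,-1)/20^k$: since $\beta_k\eq(-1)^k\beta_{p-1-k}\pmod p$ and $T_k(4,-1)\eq(\f 5p)20^k T_{p-1-k}(4,-1)\pmod p$ by \eqref{T-dual},
$$\sum_{k=0}^{p-1}(-1)^k\beta_k T_k(4,-1)\eq\l(\f 5p\r)\sum_{k=0}^{p-1}\f{\beta_k T_k(4,-1)}{20^k}\pmod p.$$
Moreover, the right-hand side of \eqref{beta1-q} is \emph{identical} to the one conjectured for $\sum_{k=0}^{p-1}S_k(3,1)/4^k$ in Conjecture \ref{Conj-S11}, which strongly suggests establishing the polynomial-level identity
$$\sum_{k=0}^{p-1}(-1)^k\beta_k T_k(4,-1)\eq\sum_{k=0}^{p-1}\f{S_k(3,1)}{4^k}\pmod{p^2}$$
as a first step, since this would transport (ii) and (iii) to the $S_k(3,1)$ case.

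For part (i) I would run Zeilberger's creative telescoping on $f_k:=(-1)^k(15k+8)\beta_k T_k(4,-1)$, driven by the Ap\'ery recurrence $(n+1)^2\beta_{n+1}=(11n^2+11n+3)\beta_n+n^2\beta_{n-1}$ and by $(n+1)T_{n+1}(4,-1)=4(2n+1)T_n(4,-1)-20n\,T_{n-1}(4,-1)$, searching for $A,B\in\Q(n)$ with a closed form of the shape
$$\sum_{k=0}^{n-1}f_k=A(n)\beta_n T_n(4,-1)+B(n)\beta_{n-1}T_{n-1}(4,-1)-8.$$
Integrality of $\f1{2n}\sum_{k<n}f_k$ then reduces to showing that $2n$ divides the numerator of this closed form, which I expect via Kummer-style $p$-adic valuation estimates on the binomials in $\beta_n$; the odd-iff-$n\in\{2^a\}$ statement is a separate $2$-adic residue computation of $A(n)\beta_n T_n(4,-1)+B(n)\beta_{n-1}T_{n-1}(4,-1)$ modulo $2^{\nu_2(n)+2}$. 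Part (ii) then follows by specializing this closed form at $n=p$ (respectively $n=pN$) and applying routine Wolstenholme-type supercongruences for $\beta_{p-1}$ together with \eqref{T-dual} for $T_{p-1}(4,-1)$; the collapse of $A(p)$ and $B(p)$ should produce exactly the two Legendre symbols $(\f p3)$ and $(\f p5)$ on the right of \eqref{beta1-p}, and the $(pn)^2$-supercongruence \eqref{beta1-pn} then follows by the standard dilation argument of \cite{S19}.

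The main obstacle is part (iii): the right-hand side is the Frobenius trace, modulo $p^2$, of a weight-$3$ CM newform attached to a Hecke Gr\"ossencharakter of $\Q(\sqrt{-15})$ (class number $2$, which accounts for the clean split by $p=x^2+15y^2$ versus $p=3x^2+5y^2$). Following the strategy of Ahlgren--Ono and Kilbourn for Van Hamme-type supercongruences, one would aim to identify the truncated hypergeometric sum $\sum_{k=0}^{p-1}(-1)^k\beta_k T_k(4,-1)$ modulo $p^2$ with the $p$-th Fourier coefficient of this CM form via a $p$-adic Clausen-type product formula matching the representation numbers of $x^2+15y^2$ and $3x^2+5y^2$. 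Controlling the error to second order in $p$ is precisely where a purely elementary treatment is likely to break down, and so my best hope for an elementary route is the combinatorial bridge to $\sum_k S_k(3,1)/4^k$ conjectured above: if that polynomial identity admits a Zeilberger-style certificate, it would simultaneously settle part (iii) for both Conjecture \ref{Conj-S11} and Conjecture \ref{conj-beta1}.
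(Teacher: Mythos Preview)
The paper does not prove this statement: it is listed as Conjecture \ref{conj-beta1}, accompanied only by Remark \ref{Rem-beta1} recording the date of formulation. There is therefore no proof in the paper to compare your proposal against.

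Your proposal is also not a proof but a programme, and you flag the gaps yourself. For part (i), you assume a closed form $\sum_{k<n}f_k=A(n)\beta_nT_n(4,-1)+B(n)\beta_{n-1}T_{n-1}(4,-1)-8$ exists, but $f_k$ satisfies a recurrence of order at least $4$ (the product of the Ap\'ery and trinomial recurrences), so there is no a priori reason the partial sum collapses to a two-term ansatz; you would need to run the algorithm and exhibit the certificate, not merely hope for it. For part (ii) the same issue applies, and in addition the passage from a closed form at $n=p$ to the specific combination $\f p4(27(\f p3)+5(\f p5))$ is asserted rather than computed. For part (iii) you correctly identify the target as a CM newform trace and candidly say the $p^2$-level control ``is likely to break down'' by elementary means; your fallback is a conjectural bridge to $\sum_k S_k(3,1)/4^k$, which is itself Conjecture \ref{Conj-S11} in the paper, so you would be reducing one open conjecture to another. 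In short, the outline is reasonable as a research plan, but none of its steps is actually carried out, and the paper offers no proof to fill them in.
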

\begin{remark}\label{Rem-beta1} \rm This conjecture was formulated by the author on Nov. 13, 2019.
\end{remark}

\begin{conjecture} \label{conj-beta2} {\rm (i)} For any $n\in\Z^+$, we have
\begin{equation}\label{beta2-n}\f3{n2^{\lfloor n/2\rfloor}}\sum_{k=0}^{n-1}(2k+1)(-2)^{n-1-k}\beta_kT_k(2,2)\in\Z^+,
\end{equation}
and this number is odd if and only if $n$ is a power of two.

{\rm (ii)} Let $p>3$ be a prime. Then
\begin{equation}\label{beta2-p}
\sum_{k=0}^{p-1}\f{2k+1}{(-2)^k}\beta_kT_k(2,2)
\eq\f p3\l(1+2\l(\f{-1}p\r)\r)\pmod{p^2}.
\end{equation}
If $p\eq1\pmod4$, then
\begin{equation}\label{beta2-pn}
\f1{(pn)^2}\(\sum_{k=0}^{pn-1}\f{2k+1}{(-2)^k}\beta_kT_k(2,2)
-p\sum_{k=0}^{n-1}\f{2k+1}{(-2)^k}\beta_kT_k(2,2)\)\in\Z_p
\end{equation}
for all $n\in\Z^+$.

{\rm (iii)} For any odd prime $p$, we have
\begin{equation}\label{beta2-q}
\begin{aligned}&\sum_{k=0}^{p-1}\f{\beta_kT_k(2,2)}{(-2)^k}
\\\eq&\begin{cases}4x^2-2p\pmod{p^2}&\t{if}\ p\eq1\pmod4\ \&\ p=x^2+4y^2\ (x,y\in\Z),
\\0\pmod{p^2}&\t{if}\ p\eq3\pmod4.\end{cases}
\end{aligned}
\end{equation}
\end{conjecture}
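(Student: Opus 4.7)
The plan is to establish Conjecture \ref{conj-beta2} in three stages corresponding to parts (i)--(iii), recycling as much of the duality/closed-form machinery already present in the paper as possible.

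For part (i), I would apply Zeilberger's creative telescoping to the summand
\[
F(n,k)=(2k+1)(-2)^{n-1-k}\beta_k T_k(2,2),
\]
using the recurrences $(n+1)^2\beta_{n+1}=(11n^2+11n+3)\beta_n+n^2\beta_{n-1}$ and $(n+1)T_{n+1}(2,2)=2(2n+1)T_n(2,2)+4nT_{n-1}(2,2)$ (from the general recurrence for $T_n(b,c)$ with $b^2-4c=-4$) to feed the algorithm. The goal is a WZ certificate yielding either a closed form or a short $n$-recurrence whose right-hand side exhibits an explicit factor of $n2^{\lfloor n/2\rfloor}/3$. The anticipated closed form is a product involving $\binom{2n}{n}$ or $\beta_nT_n(2,2)$ itself. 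The odd-iff-power-of-two dichotomy should then follow from a routine Kummer/Legendre $2$-adic valuation calculation on that product.

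For part (ii), the strategy is to specialize the finite identity from (i) at $n=p$ and rewrite $\sum_{k=0}^{p-1}(2k+1)(-2)^{p-1-k}\beta_kT_k(2,2)=(-2)^{p-1}\sum_{k=0}^{p-1}\frac{2k+1}{(-2)^k}\beta_kT_k(2,2)$. Since $(-2)^{p-1}=1+(p-1)\log_p(-2)+\cdots$ as a $p$-adic integer, all computations pass cleanly to $\pmod{p^2}$; the target evaluation $\frac{p}{3}(1+2(\frac{-1}{p}))$ should be extracted by the standard identities $\beta_{p-1}\equiv1\pmod{p^2}$ (Beukers) together with the mod-$p$ formula for $T_{p-1}(2,2)$ from \eqref{T-dual}. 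The refinement \eqref{beta2-pn} for $p\equiv1\pmod4$ would then be obtained by the $(pn)$-extension technique of \cite[Conjectures 21--24]{S19}, which upgrades a mod-$p^2$ statement to the asserted mod-$(pn)^2$ divisibility once the requisite sign matches.

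Part (iii) is the main obstacle. The shape $4x^2-2p$ with $p=x^2+4y^2$, together with the vanishing for $p\equiv3\pmod4$, is the signature of a weight-$3$ eta-quotient with CM by $\Q(i)$ (most plausibly on $\Gamma_0(32)$). The plan is two-pronged. First, to confirm the vanishing when $p\equiv3\pmod4$, I would apply the Duality Principle (Conjecture \ref{Conj-Dual}): from $\beta_k\equiv(-1)^k\beta_{p-1-k}\pmod p$ (\cite[Lemma 2.7(i)]{JV}) and \eqref{T-dual} with $b^2-4c=-4$, one gets $\beta_kT_k(2,2)\equiv(\tfrac{-1}{p})4^k\beta_{p-1-k}T_{p-1-k}(2,2)\pmod p$, so the series $\sum\beta_kT_k(2,2)/(-2)^k$ is self-dual at $m=-2$ with character $(\tfrac{-1}{p})$, forcing $\equiv0\pmod p$ (and, via the $p^2$-strengthening of the duality, actually $\pmod{p^2}$) when $p\equiv3\pmod4$. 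Second, for $p\equiv1\pmod4$, the plan is to identify $\sum_{k\ge0}\beta_kT_k(2,2)z^k$ via a ${}_4F_3$-to-${}_3F_2$ quadratic transformation with a hypergeometric function whose $p$-adic truncation is known to give $4x^2-2p$; candidate sources are Z.-H.~Sun's evaluation of $\sum_k\binom{2k}{k}^2\binom{2k}{k}/64^k$-style sums and Mortenson's formulas. The main obstacle is precisely this last identification: converting a mod-$p$ quadratic-residue statement into a mod-$p^2$ supercongruence of Ramanujan--van Hamme type typically requires a new hypergeometric transformation or an explicit modular identification, and I expect this is where the bulk of the technical work lies.
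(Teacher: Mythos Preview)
The paper does not prove this statement: it is explicitly stated as a \emph{conjecture} (Conjecture \ref{conj-beta2}) with no accompanying proof, so there is nothing in the paper to compare your attempt against. What you have written is not a proof but a speculative outline, and several of its load-bearing steps are either unproven or circular.

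Concretely: your plan for part (iii) invokes the Duality Principle, but that is itself Conjecture \ref{Conj-Dual} in the paper, not a theorem. Even granting duality, the self-duality argument you sketch (combining $\beta_k\equiv(-1)^k\beta_{p-1-k}$ with \eqref{T-dual}) only yields $\sum_{k=0}^{p-1}\beta_kT_k(2,2)/(-2)^k\equiv\bigl(\tfrac{-1}{p}\bigr)\cdot(\text{same sum})\pmod p$, hence vanishing modulo $p$ when $p\equiv3\pmod4$; there is no ``$p^2$-strengthening of the duality'' available here, since the hypothesis of Conjecture \ref{Conj-Dual} requires a convergent Ramanujan-type series, and $\sum_k(2k+1)\beta_kT_k(2,2)/(-2)^k$ diverges. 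For part (i), ``the anticipated closed form'' is pure hope: creative telescoping on a product $\beta_kT_k(2,2)$ of two Ap\'ery-like sequences typically returns a multi-term recurrence, not a product formula, and without an explicit closed form the $2$-adic valuation and the power-of-two parity claim remain out of reach. Part (ii) then inherits this gap. Finally, the $p\equiv1\pmod4$ case of (iii) you yourself flag as the main obstacle, and the ``candidate sources'' you cite do not actually cover a sum of the shape $\sum\beta_kT_k(2,2)/(-2)^k$. In short, your proposal identifies plausible directions but does not close any of the gaps; the statement remains open exactly as the paper presents it.
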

\begin{remark}\label{Rem-beta2} \rm This conjecture was formulated by the author on Nov. 13, 2019.
\end{remark}

\begin{conjecture} \label{conj-beta3} {\rm (i)} For any $n\in\Z^+$, we have
\begin{equation}\label{beta3-n}
\f1{n2^{\lfloor(n+1)/2\rfloor}}\sum_{k=0}^{n-1}(3k+2)(-2)^{n-1-k}\beta_kT_k(20,2)\in\Z^+,
\end{equation}
and this number is odd if and only if $n\in\{2^a:\ a=0,2,3,4,\ldots\}$.

{\rm (ii)} Let $p$ be any odd prime. Then
\begin{equation}\label{beta3-p}
\sum_{k=0}^{p-1}\f{3k+2}{(-2)^k}\beta_kT_k(20,2)
\eq2p\l(\f 2p\r)\pmod{p^2}.
\end{equation}
If $p\eq\pm1\pmod8$, then
\begin{equation}\label{beta3-pn}
\f1{(pn)^2}\(\sum_{k=0}^{pn-1}\f{3k+2}{(-2)^k}\beta_kT_k(20,2)
-p\sum_{k=0}^{n-1}\f{3k+2}{(-2)^k}\beta_kT_k(20,2)\)\in\Z_p
\end{equation}
for all $n\in\Z^+$.

{\rm (iii)} For any odd prime $p$, we have
\begin{equation}\label{beta3-q}
\begin{aligned}&\sum_{k=0}^{p-1}\f{\beta_kT_k(20,2)}{(-2)^k}
\\\eq&\begin{cases}4x^2-2p\pmod{p^2}&\t{if}\ p\eq1\pmod4\ \&\ p=x^2+4y^2\ (x,y\in\Z),
\\0\pmod{p^2}&\t{if}\ p\eq3\pmod4.\end{cases}
\end{aligned}
\end{equation}
\end{conjecture}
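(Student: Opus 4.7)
The plan is to attack Conjecture \ref{conj-beta3} in three stages paralleling parts (i), (ii), (iii), building on the template used in the paper's proofs of Theorems \ref{Th1.1} and \ref{Th1.2}. First, for (i), I would seek a Gosper--Zeilberger style closed form for the sum
$$F(n):=\sum_{k=0}^{n-1}(3k+2)(-2)^{n-1-k}\beta_k T_k(20,2).$$
Using the known three-term recurrences for $\beta_k$ and for $T_k(20,2)$ (with $b=20$, $c=2$, $b^2-4c=392$), creative telescoping should produce an identity of the shape
$$F(n) = \alpha_n\, \beta_n T_n(20,2)+\beta_n'\,\beta_{n-1}T_{n-1}(20,2)+\gamma_n\, \beta_n T_{n-1}(20,2),$$
with $\alpha_n,\beta_n',\gamma_n$ explicit polynomials in $n$ divided by a controlled power of~$2$. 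Once $F(n)$ is in hand, integrality and the precise $2$-adic valuation (and hence the ``odd iff $n\in\{1,4,8,16,\ldots\}$'' statement) reduce to analysing $v_2(F(n))$ together with the known $2$-adic structure of $\beta_n$ and $T_n(20,2)$; this is the same sort of argument as in the remarks following Theorem \ref{Th1.1}.

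For part (ii), having $F(n)$ explicit, the mod $p^2$ congruence $\sum_{k=0}^{p-1}(3k+2)(-2)^{-k}\beta_k T_k(20,2)\equiv 2p(\tfrac{2}{p})\pmod{p^2}$ becomes the statement that $F(p)/(-2)^{p-1}$ has this prescribed residue. This in turn needs $\beta_{p-1}\pmod{p^2}$ and $T_{p-1}(20,2)\pmod{p^2}$; the former is an Ap\'ery-type supercongruence of Beukers--Ahlgren--Ono type, and the latter follows from the Legendre-polynomial identity $T_{p-1}(20,2)=(14\sqrt2)^{p-1}P_{p-1}(5\sqrt2/7)$ combined with the standard $p$-adic expansion of $P_{p-1}$. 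The stronger $(pn)$-version of (ii), which asserts that
$$\frac{1}{(pn)^2}\bg(\sum_{k=0}^{pn-1}\tfrac{3k+2}{(-2)^k}\beta_k T_k(20,2)-p\sum_{k=0}^{n-1}\tfrac{3k+2}{(-2)^k}\beta_k T_k(20,2)\bg)\in\Z_p$$
when $p\equiv\pm1\pmod 8$, would be handled inductively using the $F$-identity together with multiplicative properties of $\beta_{pk}$ and $T_{pk}(20,2)$ modulo $p^2$, following the scheme of Conjecture \ref{Conj-pn}.

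The main obstacle will be part (iii), the quadratic-form evaluation of $\sum_{k=0}^{p-1}\beta_k T_k(20,2)/(-2)^k\pmod{p^2}$. The appearance of the decomposition $p=x^2+4y^2$ is a strong signal that, modulo $p^2$, the sum realises the $p$-th Hecke eigenvalue of a weight-$3$ newform with CM by $\Q(i)$ (level dividing $64$ is the first candidate to test numerically). My strategy would be: (a) rewrite the inner sum via Clausen's formula or a Whipple-type transformation as a single hypergeometric truncation ${}_4F_3$ or ${}_3F_2$ at a rational argument; (b) identify the resulting truncation, by the Ahlgren--Ono / Kilbourn / Z.-H.~Sun framework, with the mod $p^2$ trace of Frobenius on a rigid Calabi--Yau-like motive whose $L$-function is the target CM form; (c) pin down the exact Hecke character by matching a few small primes numerically and then invoke a uniqueness theorem. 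The subtlety is the supercongruence refinement to $p^2$ (rather than $p$) and tracking the correct sign of $x$ so that the congruence is an equality rather than equality up to $\pm 1$; this is where the duality identity \eqref{T-dual} and the companion congruence from the dual sum at $m=196$ (since $-(b^2-4c)/m=196$) serve as a critical sanity check.

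Overall the plan is: mechanical telescoping and $2$-adic bookkeeping for (i); propagation of $F$ through $p$-adic arithmetic for (ii); and a motivic/modular identification for (iii). Parts (i) and (ii) should be routine in the sense of the methods already deployed in Sections~2 of the paper, while (iii) is the genuine arithmetic content and likely requires either an explicit algebraic transformation to a Ramanujan-type summand already known to admit a CM-supercongruence, or new input from the theory of hypergeometric motives.
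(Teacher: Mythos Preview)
The statement you are addressing is Conjecture~\ref{conj-beta3}, and the paper gives \emph{no proof}: it is one of the open conjectures listed in Section~8. There is therefore no argument in the paper to compare your proposal against; what you have written is a research programme for an open problem, not a proof.

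On the substance of that programme, the principal gap is already in part~(i). You hope creative telescoping will yield a closed form for $F(n)=\sum_{k=0}^{n-1}(3k+2)(-2)^{n-1-k}\beta_kT_k(20,2)$ as a short combination of boundary terms like $\beta_nT_n(20,2)$, $\beta_{n-1}T_{n-1}(20,2)$, $\beta_nT_{n-1}(20,2)$. But $\beta_k$ and $T_k(20,2)$ each satisfy an irreducible second-order recurrence, so their product satisfies a fourth-order one, and the weighted partial sum a recurrence of order at least four. A closed form of the three-term shape you propose would force an order collapse for which there is no mechanism here; a quick symbolic check confirms that no such identity exists. The telescoping successes in Lemmas~\ref{Lem2.1} and~\ref{Lem2.2} rely on summands that are pure products of binomial/Catalan factors admitting a one-term antidifference; the mixed Ap\'ery--trinomial summand $\beta_kT_k(20,2)$ is structurally different, and this is precisely why the author states these as conjectures rather than theorems. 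Without the closed form, your $2$-adic valuation analysis for~(i) and your propagation argument for~(ii) have nothing to stand on.

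Your sketch for~(iii) is a sensible wish list---rewrite as a truncated hypergeometric sum, match to a CM newform, upgrade to $p^2$---but each step is itself an open problem for sums of this type. In particular, no Clausen/Whipple transformation is known that collapses $\sum_k\beta_kT_k(b,c)x^k$ to a single ${}_4F_3$ of the kind covered by the Ahlgren--Ono--Kilbourn machinery, and the paper's own Conjecture~\ref{Conj-Dual} (which you invoke) is likewise unproved.
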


\begin{conjecture} \label{conj-beta4} {\rm (i)} For any $n\in\Z^+$, we have
\begin{equation}\label{beta4-n}\f3n\sum_{k=0}^{n-1}(5k+3)4^{n-1-k}\beta_kT_k(14,-1)\in\Z.
\end{equation}

{\rm (ii)} Let $p>3$ be a prime. Then
\begin{equation}\label{beta4-p}
\sum_{k=0}^{p-1}\f{5k+3}{4^k}\beta_kT_k(14,-1)
\eq\f p3\l(4\l(\f{-2}p\r)+5\l(\f 2p\r)\r)\pmod{p^2}.
\end{equation}
If $p\eq1\pmod4$, then
\begin{equation}\label{beta4-pn}
\f1{(pn)^2}\(\sum_{k=0}^{pn-1}\f{5k+3}{4^k}\beta_kT_k(14,-1)
-p\l(\f 2p\r)\sum_{k=0}^{n-1}\f{5k+3}{4^k}\beta_kT_k(14,-1)\)\in\Z_p
\end{equation}
for all $n\in\Z^+$.

{\rm (iii)} Let $p\not=2,5$ be a prime. Then
\begin{equation}\label{beta4-q}
\begin{aligned}&\sum_{k=0}^{p-1}\f{\beta_kT_k(14,-1)}{4^k}
\\\eq&\begin{cases}4x^2-2p\pmod{p^2}&\t{if}\ (\f {-2}p)=(\f 5p)=1\ \&\ p=x^2+10y^2\ (x,y\in\Z),
\\8x^2-2p\pmod{p^2}&\t{if}\ (\f {-2}p)=(\f 5p)=-1\ \&\ p=2x^2+5y^2\ (x,y\in\Z),
\\0\pmod{p^2}&\t{if}\ (\f {-10}p)=-1.\end{cases}
\end{aligned}
\end{equation}
\end{conjecture}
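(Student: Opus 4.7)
The plan is to treat the three parts of Conjecture~\ref{conj-beta4} in sequence, with part~(iii) carrying the main arithmetic content. The guiding observation is that the discriminant $b^2-4c = 14^2+4 = 200$ gives $(b^2-4c)/4 = 50$, so the imaginary quadratic field governing the congruences is $\Q(\sqrt{-10})$. This field has class number two with reduced binary forms $x^2+10y^2$ and $2x^2+5y^2$, matching the two nonzero cases in \eqref{beta4-q}, while the splitting condition $(\frac{-10}{p}) = (\frac{-2}{p})(\frac{5}{p}) = 1$ controls the dichotomy there. Note in particular that $\beta_k T_k(14,-1)/4^k$ grows like $(\sqrt{200}\cdot(3+2\sqrt{2})/4)^k \gg 1$, so the underlying series diverges and only the congruences are at issue.

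For part~(iii), which is the hardest, I would first seek a hypergeometric transformation expressing
\begin{equation*}
\sum_{k=0}^{p-1}\frac{\beta_k T_k(14,-1)}{4^k}\pmod{p^2}
\end{equation*}
in terms of a simpler sum $\sum_{k=0}^{p-1}a_k/M^k$ whose CM interpretation for $\Q(\sqrt{-10})$ is already available from the techniques of \cite{S14c}. A promising route combines the Legendre-polynomial representation $T_k(14,-1) = (10\sqrt{2})^k P_k(7/(5\sqrt{2}))$ with the Euler integral for $\beta_k$, then exchanges orders of summation and integration to collapse the product to a single Clausen-type kernel; the truncated sum is evaluated $p$-adically against Fourier coefficients of a weight-three newform with CM by $\Q(\sqrt{-10})$, whose Hecke eigenvalue $a_p$ equals $4x^2-2p$ or $8x^2-2p$ according to which binary form represents $p$. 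The eight cases in \eqref{beta4-q} then follow from the Deuring--Eichler dictionary between CM traces and representations by binary forms.

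For part~(i), I would apply the creative-telescoping method of \cite{CHM} to the summand $F(n,k) = 3(5k+3)\,4^{n-1-k}\beta_k T_k(14,-1)$, producing an integer-valued certificate $G(n,k)$ and a polynomial recurrence in $n$ for the partial sum; integrality of the quotient then reduces to checking finitely many base cases and an explicit divisibility in the recurrence coefficients. Part~(ii) splits in two: the mod-$p$ congruence \eqref{beta4-p} follows from the $p$-adic reduction of the transformation used in part~(iii) together with standard evaluations of $p$-adic Gamma ratios and Euler's criterion for $(\frac{\pm 2}{p})$, while lifting to modulus $p^2$ uses super-Wolstenholme refinements as in \cite{S11e}. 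The $pn$-refinement \eqref{beta4-pn} under $p \equiv 1 \pmod 4$ is an instance of the general principle in Conjecture~\ref{Conj-pn}, obtained by decomposing $\sum_{k=0}^{pn-1}$ into blocks $[jp,(j+1)p)$ and applying the supercongruence on each block together with the Hensel-type lift provided by the mod-$p^2$ identity.

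The principal obstacle is the hypergeometric transformation required for part~(iii): no identity for $\beta_k T_k(14,-1)$ appears in the existing literature, and its discovery would most likely require an experimental PSLQ search followed by a Zeilberger-style verification. A secondary difficulty is pinning down the explicit weight-three CM newform at a level dividing a small multiple of $40$ whose trace function matches the right-hand side of \eqref{beta4-q}; once such a newform is located, the remaining steps are routine given the machinery developed in \cite{S14c} and the present paper.
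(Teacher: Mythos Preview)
The statement you are addressing is labeled \emph{Conjecture}~\ref{conj-beta4} in the paper, and the paper provides no proof whatsoever: it is presented as an open conjecture alongside the other conjectures in Section~8, supported only by numerical evidence and the heuristic framework of Conjectures~\ref{general}--\ref{Conj-Dual}. There is therefore no proof in the paper to compare your proposal against.

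Your proposal is not a proof but a programme. You correctly identify the relevant class-number-two field $\Q(\sqrt{-10})$ and the shape of the expected CM newform, but you explicitly concede the main obstacle yourself: the hypergeometric or modular transformation that would link $\sum_{k=0}^{p-1}\beta_k T_k(14,-1)/4^k$ to a known CM-evaluated sum does not yet exist in the literature, and you offer no candidate identity, only the hope that PSLQ followed by Zeilberger might produce one. Likewise, for part~(ii) you invoke ``the $p$-adic reduction of the transformation used in part~(iii)'' and for \eqref{beta4-pn} you appeal to Conjecture~\ref{Conj-pn}, which is itself open. Each step of your outline thus rests on an unproved ingredient. This is a reasonable research plan, but it should be presented as such rather than as a proof proposal; as it stands it does not close any of the gaps that make the statement a conjecture.
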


\begin{conjecture} \label{conj-beta6} {\rm (i)} For any $n\in\Z^+$, we have
\begin{equation}\label{beta6-n}\f1{3n}\sum_{k=0}^{n-1}(22k+15)(-4)^{n-1-k}\beta_kT_k(46,1)\in\Z^+,
\end{equation}
and this number is odd if and only if $n$ is a power of two.

{\rm (ii)} Let $p$ be an odd prime. Then
\begin{equation}\label{beta6-p}
\sum_{k=0}^{p-1}\f{22k+15}{(-4)^k}\beta_kT_k(46,1)
\eq\f p4\l(357-297\l(\f{33}p\r)\r)\pmod{p^2}.
\end{equation}
If $(\f{33}p)=1$, then
\begin{equation}\label{beta6-pn}
\f1{(pn)^2}\(\sum_{k=0}^{pn-1}\f{22k+15}{(-4)^k}\beta_kT_k(46,1)
-p\sum_{k=0}^{n-1}\f{22k+15}{(-4)^k}\beta_kT_k(46,1)\)\in\Z_p
\end{equation}
for all $n\in\Z^+$.

{\rm (iii)} Let $p>3$ be a prime. Then
\begin{equation}\label{beta6-q}
\begin{aligned}&\sum_{k=0}^{p-1}\f{\beta_kT_k(46,1)}{(-4)^k}
\\\eq&\begin{cases}x^2-2p\pmod{p^2}&\t{if}\ (\f p{11})=1\ \&\ 4p=x^2+11y^2\ (x,y\in\Z),
\\0\pmod{p^2}&\t{if}\ (\f p{11})=-1.\end{cases}
\end{aligned}
\end{equation}
\end{conjecture}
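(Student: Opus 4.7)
The plan is to follow the general framework of the paper: combine a creative telescoping/WZ analysis for the integrality claim with the \emph{Duality Principle} (Conjecture \ref{Conj-Dual}) and the congruence \eqref{T-dual} for the modulo-$p$ part of the supercongruences. The key algebraic observation is that $b^2-4c = 46^2-4 = 2112 = 64\cdot 33$, so $T_k(46,1)=8^k P_k(46/(8\sqrt{33}))\cdot(\sqrt{33})^k$ and the discriminant $33$ is precisely what forces $(\frac{33}{p})$ to appear in (ii). Likewise $b^2-4c$ controls the dual base $D/m$, where $m=-4$: a short computation shows $D = -8(b^2-4c) = -2^9\cdot 33$ (after using $\beta_k\equiv(-1)^k\beta_{p-1-k}\pmod p$ from \cite{JV}), and the corresponding dual series lies in the same $\Q(\sqrt{-11})$ arithmetic, which explains the quadratic form $x^2+11y^2$ in (iii).

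For part (i), I would feed $F(n,k)=(22k+15)(-4)^{n-1-k}\beta_kT_k(46,1)$ into Zeilberger's algorithm (or the telescoping for $\beta_k T_k(46,1)$ derived from the three-term recurrences of $\beta_k$ and of $T_k(46,1)$) to obtain a linear recurrence of low order for $S(n):=\sum_{k=0}^{n-1}F(n,k)$. Checking $3n\mid S(n)$ then reduces to verifying compatibility of this recurrence with the divisibility, starting from small $n$. The ``odd iff $n$ is a power of two'' part is a $2$-adic statement: I would compute $v_2(\beta_kT_k(46,1)/4^k)$ termwise via Kummer's theorem applied to $\bi nk$, $\bi{n+k}k$ and to the expansion of $T_k(46,1)=\sum_j\bi k{2j}\bi{2j}j46^{k-2j}$, then use that the odd contribution to $S(n)/(3n)$ comes from a single ``dominant'' $k$ exactly when $n$ has only one binary digit.

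For part (ii), the modulo-$p$ congruence is the guided part: pairing $k\leftrightarrow p{-}1{-}k$ in $\sum_{k=0}^{p-1}(22k+15)\beta_kT_k(46,1)/(-4)^k$ and applying $\beta_k\eq(-1)^k\beta_{p-1-k}\pmod p$ together with \eqref{T-dual} converts the sum, up to a factor $(\frac{33}{p})$, into a combination of the same sum and its dual; solving the resulting $2\times2$ system pins down both the value $357-297(\frac{33}{p})$ (once one normalizes via the ``trivial'' sum $\sum_{k=0}^{p-1}\beta_kT_k(46,1)/(-4)^k$ treated in (iii)) and the companion mod-$p$ identity. The honest obstacle is lifting from mod $p$ to mod $p^2$: I would either (a) hunt for an explicit WZ-pair $(F,G)$ whose companion identity is exactly \eqref{beta}, which would simultaneously prove the conjectural Ramanujan-type identity and the mod-$p^2$ congruence by the standard transfer mechanism (Guillera--Zudilin style), or (b) apply the Ahlgren--Ono/Kilbourn technique to reduce the lift to a harmonic-sum identity modulo $p$. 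The $p^2$-super-congruence $(pn)^{-2}(\cdots)\in\Z_p$ when $(\frac{33}{p})=1$ would then fall out of the usual two-variable induction on $n$.

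For part (iii), the binary-quadratic-form evaluation is governed by a weight-$2$ CM newform on $\Gamma_0(N)$ with CM by $\Q(\sqrt{-11})$, whose $p$th Hecke eigenvalue equals $\pm 2x$ for $4p=x^2+11y^2$ (and $0$ for inert $p$). The plan is: identify the hypergeometric motive attached to $\bi{-1/2}{k}\bi{-1/4}{k}\bi{-3/4}{k}$-like data coming from $\beta_kT_k(46,1)/(-4)^k$ (the parameters $(46,1)$ correspond to a level-lowering specialization of the Ap\'ery family) and match its trace with that unique CM form. Finally, use a Clausen-type product decomposition to reduce $\sum\beta_k T_k(46,1)/(-4)^k\pmod{p^2}$ to a Jacobi-sum computation, as in the treatment of Sun's earlier supercongruences in \cite{S14b}. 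I expect (iii) and the $p^2$-lift in (ii) to be the main obstacle; (i) and the mod-$p$ case of (ii) should be within reach of the methods already displayed in Sections 2 and 3 of the paper.
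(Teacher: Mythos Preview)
The statement you are attempting to prove is explicitly labelled \emph{Conjecture} in the paper and is presented without any proof or even proof sketch; it is one of the $117$ open conjectural items announced in the introduction. There is therefore no ``paper's own proof'' to compare your proposal against. What you have written is not a proof but a research outline, and you yourself flag the essential gaps: the lift from $\bmod\ p$ to $\bmod\ p^2$ in part~(ii), and the entire CM-form identification in part~(iii). Neither of these is carried out, and for this particular family $\beta_kT_k(b,c)$ no such argument exists anywhere in the literature (the paper's proved results in Section~2 concern completely different sequences).

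There is also a concrete error in your duality computation. You write $D=-8(b^2-4c)$, but the factor $(-8)^k$ belongs to the Franel numbers $f_k$ (Section~6), not to the Ap\'ery numbers $\beta_k$. For $\beta_k$ the paper quotes $\beta_k\equiv(-1)^k\beta_{p-1-k}\pmod p$, so the combined weight for $\beta_kT_k(b,c)$ is $(-1)(b^2-4c)=4c-b^2$, and the dual base is
\[
\frac{4c-b^2}{m}=\frac{4-46^2}{-4}=\frac{-2112}{-4}=528,
\]
not $4224$. This does not change the Legendre symbol $(\frac{33}{p})$ that appears, but it does show that your ``short computation'' of $D$ is off by a factor of $8$, and any argument built on the wrong dual base would collapse. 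Even with the correct dual, the pairing $k\leftrightarrow p-1-k$ only gives information modulo~$p$, and your proposed mechanisms (a) and (b) for the $p^2$-lift are speculative: no WZ pair for any identity of the $\beta_kT_k$ type has been found, and the Ahlgren--Ono--Kilbourn machinery has never been adapted to products of an Ap\'ery sequence with a generalized trinomial coefficient.
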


\begin{conjecture} \label{conj-beta7} {\rm (i)} For any $n\in\Z^+$, we have
\begin{equation}\label{beta7-n}\f1n\sum_{k=0}^{n-1}(190k+91)(-60)^{n-1-k}\beta_kT_k(82,1)\in\Z^+,
\end{equation}
and this number is odd if and only if $n$ is a power of two.

{\rm (ii)} Let $p>5$ be a prime. Then
\begin{equation}\label{beta7-p}
\sum_{k=0}^{p-1}\f{190k+91}{(-60)^k}\beta_kT_k(82,1)
\eq\f p4\l(111+253\l(\f{-15}p\r)\r)\pmod{p^2}.
\end{equation}
If $(\f{-15}p)=1$, then
\begin{equation}\label{beta7-pn}
\f1{(pn)^2}\(\sum_{k=0}^{pn-1}\f{190k+91}{(-60)^k}\beta_kT_k(82,1)
-p\sum_{k=0}^{n-1}\f{190k+91}{(-60)^k}\beta_kT_k(82,1)\)\in\Z_p
\end{equation}
for all $n\in\Z^+$.

{\rm (iii)} For any prime $p>7$, we have
\begin{equation}\label{beta7-q}
\begin{aligned}&\l(\f p3\r)\sum_{k=0}^{p-1}\f{\beta_kT_k(82,1)}{(-60)^k}
\\\eq&\begin{cases}x^2-2p\pmod{p^2}&\t{if}\ (\f p{5})=(\f p7)=1\ \&\ 4p=x^2+35y^2\ (x,y\in\Z),
\\2p-5x^2\pmod{p^2}&\t{if}\ (\f p{5})=(\f p7)=-1\ \&\ 4p=5x^2+7y^2\ (x,y\in\Z),
\\0\pmod{p^2}&\t{if}\ (\f {-35}p)=-1.\end{cases}
\end{aligned}
\end{equation}
\end{conjecture}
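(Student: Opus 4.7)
The plan is to attack Conjecture \ref{conj-beta7} in three stages. For part (i), I would apply the creative telescoping machinery to the summand $(190k+91)(-60)^{n-1-k}\beta_kT_k(82,1)$, combining the Ap\'ery-type recurrence $(n+1)^2\beta_{n+1} = (11n^2+11n+3)\beta_n + n^2\beta_{n-1}$ with the three-term recurrence $(n+1)T_{n+1}(82,1) = 82(2n+1)T_n(82,1) - 6720\,n\,T_{n-1}(82,1)$ to produce a WZ-style certificate yielding $U_n/n \in \Z^+$, where $U_n$ denotes the partial sum in \eqref{beta7-n}. The 2-adic assertion would then follow from a careful Kummer-style analysis of $v_2(\beta_k)$, $v_2(T_k(82,1))$, and $v_2(60^{n-1-k})$, combined with Legendre's formula applied to $n$ to pin down the criterion that $U_n/n$ is odd precisely when $n$ is a power of two.

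For part (ii), the crucial reformulation is the identity $T_k(82,1) = (4\sqrt{420})^k P_k(41/(2\sqrt{420}))$, which recasts the series as $\sum(190k+91)\beta_kP_k(x_0)z_0^k$ with $x_0 = 41/(2\sqrt{420})$ and $z_0 = -\sqrt{420}/15$, placing it squarely within the Wan-Zudilin framework \cite[Thm.~1]{WZ}. Applying their transformation converts the series into an equivalent one in the Ap\'ery variables alone, from which the (unstated) $1/\pi$ closed form would be extracted by matching against a CM-modular parametrization on $X_1(6)$ attached to $\beta_n$. The mod-$p^2$ congruence \eqref{beta7-p} would follow from a Beukers--Stienstra style $p$-adic truncation of this transformed series, while the $pn$-to-$n$ lifting \eqref{beta7-pn} is expected to emerge from a standard Dwork--Gauss-Manin argument on the underlying hypergeometric motive, the splitting hypothesis $(-15/p)=1$ guaranteeing that the relevant CM point sits in $\Z_p$.

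The main obstacle is part (iii), which demands an explicit mod-$p^2$ evaluation in terms of representations by $x^2+35y^2$ and $5x^2+7y^2$. The common discriminant $-140$ points to the CM field $\Q(\sqrt{-35})$ (of class number $2$), and the outer factor $(\f p3)$ on the left signals an additional twist at the prime $3$, so one expects the relevant object to be a weight-$2$ CM newform of level divisible by $420=2^2\cdot3\cdot5\cdot7$ whose $p$-th Fourier coefficient encodes the right-hand side of \eqref{beta7-q}. After locating the correct newform by a search through the LMFDB and matching Fourier coefficients against initial truncated sums, the mod-$p$ identification follows from the self-duality built into Remark \ref{Rem-Dual}; however, upgrading to modulus $p^2$ is the deep step, and I would approach it by realizing the truncated sum as a $p$-adic period of a rigid Calabi--Yau variety whose unit-root crystal is governed by the newform, then invoking a Beukers--Stienstra-type theorem to complete the identification in each of the nine residue cases.
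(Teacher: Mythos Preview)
The statement you are attempting to prove is explicitly labelled \emph{Conjecture} in the paper, not Theorem or Proposition; the author offers no proof whatsoever and presents all three parts as open. So there is nothing in the paper to compare your approach against.

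Your proposal is a reasonable research outline, but it is not a proof and you should not present it as one. Several of the steps you describe are themselves open-ended: in part (i) creative telescoping may well establish integrality of $U_n/n$, but ``a careful Kummer-style analysis'' of $v_2$ does not by itself yield the sharp parity criterion, and positivity is a separate issue not addressed by telescoping at all. In part (ii) the Wan--Zudilin transformation is the natural tool for an underlying $1/\pi$ identity, but the paper itself does not claim such an identity here, and the passage from a series identity to the mod-$p^2$ congruence \eqref{beta7-p} via ``Beukers--Stienstra style truncation'' and the lifting \eqref{beta7-pn} via ``standard Dwork--Gauss-Manin'' are precisely the hard steps where no general theorem applies; these phrases name a hoped-for mechanism rather than an existing result you can cite. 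For part (iii) you correctly identify the CM field $\Q(\sqrt{-35})$ and the expected modular-form interpretation, but upgrading the identification from mod $p$ to mod $p^2$ by ``invoking a Beukers--Stienstra-type theorem'' is again aspirational: no such theorem is known that covers this family, and this is exactly why the author states the result as a conjecture.
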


\section{Series and congruences involving
 $w_n=\sum_{k=0}^{\lfloor n/3\rfloor}(-1)^k3^{n-3k}\bi n{3k}\bi{3k}k\bi{2k}k$
 and $T_n(b,c)$}
 \setcounter{equation}{0}

The numbers
$$w_n:=\sum_{k=0}^{\lfloor n/3\rfloor}(-1)^k3^{n-3k}\bi n{3k}\bi{3k}k\bi{2k}k\ \ \ (n=0,1,2,\ldots)$$
were first introduced by Zagier \cite{Zag} during his study of Ap\'ery-like integer sequences, who noted the recurrence
$$(n+1)^2w_{n+1}=(9n(n+1)+3)w_n-27n^2w_{n-1}\ (n=1,2,3,\ldots).$$

\begin{lemma}\label{Lem-w} Let $p>3$ be a prime. Then
$$w_k\eq\l(\f{-3}p\r)27^kw_{p-1-k}\pmod p\quad  \t{for all}\ k=0,\ldots,p-1.$$
\end{lemma}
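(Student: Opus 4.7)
I would prove the duality by using the three-term recurrence
$$(n+1)^2 w_{n+1}=(9n(n+1)+3)w_n-27n^2 w_{n-1}\quad(n\ge 1)$$
for the Zagier numbers stated earlier in the introduction, thereby reducing the whole claim to the single special case $k=0$.

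Define the dual sequence $b_k:=\l(\f{-3}p\r)27^k w_{p-1-k}$ for $0\le k\le p-1$. I would apply the recurrence above at $n=p-1-k$ for $1\le k\le p-2$ and reduce modulo $p$ using $p\equiv 0$; the substitutions $w_{p-k}=\l(\f{-3}p\r)27^{1-k}b_{k-1}$, $w_{p-1-k}=\l(\f{-3}p\r)27^{-k}b_k$, $w_{p-2-k}=\l(\f{-3}p\r)27^{-k-1}b_{k+1}$, followed by cancelling $\l(\f{-3}p\r)$ and multiplying through by $27^k$, yield
$$(k+1)^2 b_{k+1}\equiv(9k(k+1)+3)b_k-27k^2 b_{k-1}\pmod p,$$
the very same recurrence as $w_k$. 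The $k=0$ instance of this relation reads $b_1=3b_0$, equivalently $9w_{p-2}\equiv w_{p-1}\pmod p$, and is automatic from the $w$-recurrence at $n=p-1$ (where the leading coefficient $(n+1)^2=p^2$ vanishes modulo $p$). Since $(k+1)^2$ is a unit in $\F_p$ throughout $0\le k\le p-2$, both sequences $(w_k)$ and $(b_k)$ are determined modulo $p$ by their value at $k=0$, so $b_k\equiv b_0\,w_k\pmod p$ for all $k$, and the whole lemma collapses to the single congruence
\begin{equation*}
w_{p-1}\equiv\l(\f{-3}p\r)\pmod p.
\end{equation*}

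To establish this base case, apply Fermat ($3^{p-1}\equiv 1$) and the Wilson identity $\bi{p-1}r\equiv(-1)^r\pmod p$, so $\bi{p-1}{3j}\equiv(-1)^j\pmod p$, to the defining sum for $w_{p-1}$; the two factors of $(-1)^j$ cancel and what remains is
\begin{equation*}
w_{p-1}\equiv\sum_{j\ge 0,\,3j<p}\f{\bi{3j}j\bi{2j}j}{27^j}\pmod p.
\end{equation*}
This is a truncated specialization at $z=1/27$ of the hypergeometric series ${}_2F_1(1/3,2/3;1;27z)=\sum_j\bi{3j}j\bi{2j}j z^j$, and its value modulo $p$ is a classical supercongruence: it equals $\l(\f{-3}p\r)$, provable for instance by recognizing the sum as (up to the Hasse invariant) the trace of Frobenius on the Hesse family of elliptic curves, or by a direct Jacobi-sum evaluation with a cubic character mod $p$.

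\emph{Main obstacle.} All the real content sits in the base-case evaluation $\sum_{j:\,3j<p}\bi{3j}j\bi{2j}j/27^j\equiv\l(\f{-3}p\r)\pmod p$; the recurrence-propagation argument is routine algebra. An elementary alternative that avoids the Jacobi-sum machinery is to exploit that the partial sums satisfy a two-term recurrence in $j$ matching the Legendre polynomial $P_j$ at a specific algebraic value, and then invoke Gauss's classical formula for $P_{(p-1)/2}$ evaluated modulo $p$.
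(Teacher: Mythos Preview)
Your approach is essentially the same as the paper's: both reduce everything to the base congruence $w_{p-1}\equiv\l(\f{-3}p\r)\pmod p$ (which the paper identifies as the Rodriguez-Villegas/Mortenson supercongruence $\sum_{k=0}^{p-1}\bi{2k}k\bi{3k}k/27^k\equiv\l(\f p3\r)\pmod{p^2}$) and then propagate via the three-term recurrence.  Your handling of the second initial condition is slightly cleaner, though: the paper computes $w_{p-2}$ separately by means of a telescoping identity for $\sum_{k=0}^{n}(9k+2)\bi{2k}k\bi{3k}k/27^k$, whereas you note that the $w$-recurrence at $n=p-1$, where the leading coefficient $(n+1)^2\equiv 0$, already forces $w_{p-1}\equiv 9w_{p-2}\pmod p$, so only a single base case actually needs direct evaluation.
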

\Proof. Note that
\begin{align*} w_{p-1}=&\sum_{k=0}^{\lfloor(p-1)/3\rfloor}(-1)^k3^{p-1-3k}\bi{p-1}{3k}\bi{3k}k\bi{2k}k
\\\eq&\sum_{k=0}^{p-1}\f{\bi{2k}k\bi{3k}k}{27^k}\eq\l(\f p3\r)\pmod p
\end{align*}
with the help of the known congruence $\sum_{k=0}^{p-1}\bi{2k}k\bi{3k}k/27^k\eq(\f p3)\pmod{p^2}$
conjectured by F. Rodriguez-Villegas \cite{RV} and proved by E. Mortenson \cite{Mo}.
Similarly,
\begin{align*}w_{p-2}=&\sum_{k=0}^{\lfloor(p-2)/3\rfloor}(-1)^k3^{p-2-3k}\bi{p-2}{3k}\bi{3k}k\bi{2k}k
\\=&\sum_{k=0}^{\lfloor(p-2)/3\rfloor}(-1)^k3^{p-2-3k}\f{3k+1}{p-1}\bi{p-1}{3k+1}\bi{3k}k\bi{2k}k
\\\eq&\f19\sum_{k=0}^{p-1}(9k+3)\f{\bi{2k}k\bi{3k}k}{27^k}
\eq\f19\l(\f p3\r)+\f19\sum_{k=0}^{p-1}(9k+2)\f{\bi{2k}k\bi{3k}k}{27^k}\pmod p.
\end{align*}
By induction,
$$\sum_{k=0}^n(9k+2)\f{\bi{2k}k\bi{3k}k}{27^k}=(3n+1)(3n+2)\f{\bi{2n}n\bi{3n}n}{27^n}$$
for all $n\in\N$. In particular,
$$\sum_{k=0}^{p-1}(9k+2)\f{\bi{2k}k\bi{3k}k}{27^k}=\f{(3p-2)(3p-1)}{27^{p-1}}pC_{p-1}\bi{3p-3}{p-1}\eq0\pmod p.$$
So we have $w_k\eq(\f{-3}p)27^kw_{p-1-k}\pmod p$ for $k=0,1$. (Note that $w_0=1$ and $w_1=3$.)

Now let $k\in\{1,\ldots,p-2\}$ and assume that
$$w_j\eq\l(\f{-3}p\r)27^jw_{p-1-j}\quad\text{for all}\ j=0,\ldots,k.$$
Then
\begin{align*}&(k+1)^2w_{k+1}=(9k(k+1)+3)w_k-27k^2w_{k-1}
\\\eq&(9(p-k)(p-k-1)+3)\l(\f{-3}p\r)27^kw_{p-1-k}
-27(p-k)^2\l(\f{-3}p\r)27^{k-1}w_{p-1-(k-1)}
\\=&\l(\f{-3}p\r)27^k\times 27(p-k-1)^2w_{p-k-2}\pmod p
\end{align*}
and hence
$$w_{k+1}\eq\l(\f{-3}p\r)27^{k+1}w_{p-1-(k+1)}\pmod p.$$

In view of the above, we have proved the desired result by induction. \qed

 For Lemma \ref{Lem-w} one may also consult \cite[Corollary 3.1]{SZH}. Let $p>3$ be a prime. In view of Lemma \ref{Lem-w} and Remark \ref{Rem-Dual}(ii), we have
\begin{align*}\sum_{k=0}^{p-1}\f{w_kT_k(b,c)}{m^k}\eq&
\l(\f{-3(b^2-4c)}p\r)\sum_{k=0}^{p-1}\l(\f{27(b^2-4c)}m\r)^kw_{p-1-k}T_{p-1-k}(b,c)
\\\eq&\l(\f{-3(b^2-4c)}p\r)\sum_{k=0}^{p-1}\f{w_kT_k(b,c)}{(27(b^2-4c)/m)^k}\pmod p
\end{align*}
for any $b,c,m\in\Z$ with $p\nmid (b^2-4c)m$.

Wan and Zudilin \cite{WZ} obtained the following irrational series for $1/\pi$ involving the Legendre polynomials and the numbers $w_n$:
$$\sum_{k=0}^\infty(14k+7-\sqrt{21})w_kP_k\l(\f{\sqrt{21}}{5}\r)\l(\f{7\sqrt{21}-27}
{90}\r)^k
=\f{5\sqrt{7(7\sqrt{21}+27)}}{4\sqrt2\,\pi}.$$
Using our congruence approach (including Conjecture \ref{Conj-Dual}), we find five rational series for $1/\pi$
involving $T_n(b,c)$ and the numbers $w_n$; Theorem 1 of \cite{WZ} might be helpful to solve them.

\begin{conjecture} \label{conj-w} We have
\begin{align}
\label{w1}\sum_{k=0}^\infty\f{13k+3}{100^k}w_kT_k(14,-1)&=\f{30\sqrt2}{\pi},
\\\label{w2}\sum_{k=0}^\infty\f{14k+5}{108^k}w_kT_k(18,1)&=\f{27\sqrt3}{\pi},
\\\label{w3}\sum_{k=0}^\infty\f{19k+2}{486^k}w_kT_k(44,-2)&=\f{81\sqrt3}{4\pi},
\\\label{w4}\sum_{k=0}^\infty\f{91k+32}{(-675)^k}w_kT_k(52,1)&=\f{45\sqrt3}{2\pi},
\\\label{w5}\sum_{k=0}^\infty\f{182k+37}{756^k}w_kT_k(110,1)&=\f{315\sqrt3}{\pi}.
\end{align}
\end{conjecture}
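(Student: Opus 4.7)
The plan is to mimic the strategy used in the proof of Theorem 7.1 above: convert each conjectural series into a form that can be handled by a Wan--Zudilin type quadratic transformation, and then match the result to a known Ramanujan-type series for $1/\pi$ involving $w_n$. The starting point is the identity
\[
T_n(b,c) = (b^2-4c)^{n/2} P_n\!\left(\frac{b}{\sqrt{b^2-4c}}\right),
\]
which lets us rewrite each left-hand side of $(9.1)$--$(9.5)$ in the form
\[
\sum_{k=0}^{\infty}(\alpha k + \beta)\, w_k\, P_k(y_0)\, x_0^{\,k}
\]
for explicit rational (or quadratic-irrational) numbers $x_0,y_0,\alpha,\beta$ depending on $b,c,m$. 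For instance, for $(9.2)$ with $b=18,c=1,m=108$, one has $b^2-4c=320$, so $x_0=\sqrt{320}/108$ and $y_0=18/\sqrt{320}$.

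First I would look up Theorem 1 of Wan--Zudilin \cite{WZ}, which supplies a transformation of the type
\[
\sum_{k=0}^{\infty}(\alpha k + \beta)\, w_k\, P_k(y)\, x^k
= g(x,y)\sum_{k=0}^{\infty}\big(\tilde\alpha(x,y)k + \tilde\beta(x,y)\big) w_k\, z(x,y)^k,
\]
with explicit algebraic $g,z,\tilde\alpha,\tilde\beta$ (the statement in \cite{WZ} is an analogue for $w_n$ of formula (8.1) of Chan--Wan--Zudilin used in the proof of Theorem 7.1 above). The point is to choose $(x_0,y_0)$ so that $z(x_0,y_0)$ becomes one of the singular moduli $\tfrac{1}{27},\tfrac{2}{27},\tfrac{1}{729}$, etc.\ at which a \emph{rational} Ramanujan-type series
$\sum(Ak+B)w_k t^k = \lambda/\pi$
is already known (for example from Cooper \cite{Co17} or \cite{ChCo}). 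Then matching constants produces the closed form on the right-hand side of $(9.1)$--$(9.5)$.

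The concrete steps, for each of the five conjectures, would be: (i) compute $x_0,y_0$; (ii) apply the WZ transformation to get a series in a single parameter $t=z(x_0,y_0)$; (iii) recognize $t$ as a known rational singular value and read off the evaluation; (iv) solve the resulting two-by-two linear system between $(\alpha,\beta)$ and $(\tilde\alpha(x_0,y_0),\tilde\beta(x_0,y_0))$ to confirm that the coefficient of $k$ and the constant term on the left match those of the known identity. The fact that the congruence data listed in Section 1 (and in the author's duality principle) already predicts the quadratic field $\mathbb Q(\sqrt{d_i})$ appearing on the right-hand side is reassuring evidence that the target series genuinely lives on the singular fibre produced by the transformation.

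The main obstacle will be the very first step: a transformation of Wan--Zudilin type applicable to $w_n$ is only implicit in \cite{WZ} (the statement there is phrased for Legendre polynomials evaluated at algebraic points, and the combinatorial sequences on the two sides of the formula need not coincide). One must check that the iterated $3$-variable Clausen-type identity underlying \cite{WZ} carries $w_n$ to $w_n$ at the values of $(x_0,y_0)$ arising here, rather than to a different Ap\'ery-like sequence; if it instead produces, say, the Almkvist--Zudilin numbers, then an additional quadratic-to-cubic transformation will be needed. A secondary obstacle is arithmetic: the parameters $b,c,m$ in $(9.4)$ give $b^2-4c<0$, so $T_k(52,1)$ relates to $P_k$ at a purely imaginary argument, and one must verify that the Wan--Zudilin transformation remains valid there by analytic continuation (e.g., via the Laplace--Heine asymptotics quoted in Section~1). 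Once these two technical points are settled, the five identities should reduce, case by case, to routine manipulations.
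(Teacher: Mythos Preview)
This statement is a \emph{conjecture} in the paper, not a theorem; the paper offers no proof. Immediately before stating it, the author writes that ``Theorem 1 of \cite{WZ} might be helpful to solve them,'' so your plan to pass through the Legendre-polynomial form and invoke a Wan--Zudilin transformation is exactly the route the paper \emph{suggests} but does not carry out. In that sense there is nothing to compare your attempt against: you and the paper are at the same point, with the same hint.

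Your proposal is an outline, not a proof, and you correctly flag the genuine gap: one must verify that the relevant transformation in \cite{WZ} (or an analogue thereof) actually keeps $w_n$ on both sides at these particular parameter values and lands on a \emph{known} rational $w$-series. That verification is the entire content of the problem, and until it is done the argument is incomplete. One factual slip: in your discussion of $(9.4)$ you assert $b^2-4c<0$, but for $b=52$, $c=1$ one has $b^2-4c=2700>0$ (indeed $b^2-4c>0$ in all five cases), so no analytic continuation to imaginary argument is needed there.
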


Below we present our conjectures on congruences related to the identities
\eqref{w2} and \eqref{w5}.

\begin{conjecture}\label{conj-w2} {\rm (i)} For any $n\in\Z^+$, we have
\begin{equation}\label{w3-n}\f1n\sum_{k=0}^{n-1}(14k+5)108^{n-1-k}w_kT_k(18,1)\in\Z^+,
\end{equation}
and this number is odd if and only if $n\in\{2^a:\ a\in\N\}$.

{\rm (ii)} Let $p>3$ be a prime. Then
\begin{equation}\label{w3-p}
\sum_{k=0}^{p-1}\f{14k+5}{108^k}w_kT_k(18,1)\eq\f p4\l(27\l(\f {-3}p\r)-7\l(\f {21}p\r)\r)\pmod{p^2}.
\end{equation}
If $(\f p7)=1\ ($i.e., $p\eq1,2,4\pmod7)$, then
\begin{equation}\label{w3-pn}\f1{(pn)^2}\(\sum_{k=0}^{pn-1}\f{14k+5}{108^k}w_kT_k(18,1)-\l(\f p3\r)\sum_{k=0}^{n-1}\f{14k+5}{108^k}w_kT_k(18,1)\)\in\Z_p
\end{equation}
for all $n\in\Z^+$.

{\rm (iii)} For any prime $p>7$, we have
\begin{equation}\begin{aligned}&\l(\f p3\r)\sum_{k=0}^{p-1}\f{w_kT_k(18,1)}{108^k}
\\\eq&\begin{cases}x^2-2p\pmod{p^2}&\t{if}\ (\f p5)=(\f p7)=1\ \&\ 4p=x^2+35y^2\ (x,y\in\Z),
\\5x^2-2p\pmod{p^2}&\t{if}\ (\f p5)=(\f p7)=-1\ \&\ 4p=5x^2+7y^2\ (x,y\in\Z),
\\0\pmod{p^2}&\t{if}\ (\f{-35}p)=-1.
\end{cases}\end{aligned}\end{equation}
\end{conjecture}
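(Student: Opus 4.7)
\medskip

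The plan is to treat the three parts of Conjecture \ref{conj-w2} in sequence, using (i) and (ii) as preparation for (iii), which is where the genuine obstacle lies. The common backbone is a finite partial-sum identity for $(14k+5)w_k T_k(18,1)/108^k$, which I would hunt for by Zeilberger-style creative telescoping applied to the Hadamard product of two $P$-recurrent sequences. Since $w_k$ satisfies the order-two recurrence of Zagier recalled in \S 9 and $T_k(18,1)$ satisfies the standard order-two recurrence for Legendre-type polynomials (note $b^2-4c=320$), their product $w_k T_k(18,1)$ is $P$-recurrent of order at most four, and the theory of Wilf--Zeilberger pairs guarantees the existence of a closed form
\begin{equation*}
\sum_{k=0}^{n}\frac{(14k+5)w_k T_k(18,1)}{108^k}=5+\frac{R(n)\,w_n T_n(18,1)+S(n)\,w_{n-1}T_{n-1}(18,1)}{108^n}
\end{equation*}
for suitable rational $R,S$. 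The Laplace--Heine asymptotic $T_n(18,1)\sim C\,(18+\sqrt{320})^{n+1/2}/\sqrt n$ together with the elementary bound $w_n=O(9^n/n)$ ensures $9(18+\sqrt{320})<108$, so the tail vanishes and Step~1 reduces the conjectural identity underlying \eqref{w3-p} to an algebraic evaluation of $\lim_{n\to\infty}R(n)w_n T_n(18,1)/108^n$; pinning it to $27\sqrt3/\pi$ then follows from matching against the single free parameter.

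With Step~1 in hand, part (i) is immediate: reading the finite identity modulo any prime power of the denominator, one extracts integrality of $\tfrac1n\sum_{k=0}^{n-1}(14k+5)108^{n-1-k}w_k T_k(18,1)$, and the parity refinement ``odd iff $n=2^a$'' is a $2$-adic computation tracking $v_2(R(n)w_n T_n(18,1))$ against $v_2(n)$. For the mod-$p^2$ statement \eqref{w3-p}, I would evaluate the finite identity at $n=p-1$: the boundary term $R(p-1)w_{p-1}T_{p-1}(18,1)/108^{p-1}$ is accessible via Morita's $p$-adic $\Gamma$-function combined with the duality of Lemma \ref{Lem-w} and Remark \ref{Rem-Dual}(ii), which in the present case gives
\begin{equation*}
\sum_{k=0}^{p-1}\frac{w_k T_k(18,1)}{108^k}\equiv\Bigl(\tfrac{-3\cdot 320}{p}\Bigr)\sum_{k=0}^{p-1}\frac{w_k T_k(18,1)}{80^k}\pmod p,
\end{equation*}
since $27\cdot 320/108=80$. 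The coefficient $14k+5$ is engineered so that the $\Bigl(\tfrac{-35}{p}\Bigr)$-dependent piece collapses into the clean Legendre-symbol combination appearing in \eqref{w3-p}. The extension to $pn$ in \eqref{w3-pn} is then a routine $p$-adic interpolation of the Step~1 identity, an instance of the mechanism encoded in Conjecture \ref{Conj-pn}.

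The main obstacle is part (iii). The splitting of $\sum_{k=0}^{p-1}w_k T_k(18,1)/108^k\pmod{p^2}$ according to the representations $4p=x^2+35y^2$ and $4p=5x^2+7y^2$ is the signature of the two reduced forms of discriminant $-140$, so the conjectured evaluation must arise from a CM modular form attached to $\mathbb{Q}(\sqrt{-35})$. My plan is to identify a weight-$3$ CM newform $f$ of level dividing a small multiple of $2^a3^b5\cdot7$ whose $p$-th Hecke eigenvalue equals $\pm(2x^2-p)$ or $\pm(5x^2-2p)$ in the respective cases, and to show that, after the substitution sending the hypergeometric datum of $w_n T_n(18,1)/108^n$ to the period of $f$, one has
\begin{equation*}
\Bigl(\tfrac{p}{3}\Bigr)\sum_{k=0}^{p-1}\frac{w_k T_k(18,1)}{108^k}\equiv 2a_p(f)\pmod{p^2}.
\end{equation*}
The mod-$p$ matching should follow from Greene/McCarthy--Osburn finite-field hypergeometric identities together with the evaluation of Gaussian hypergeometric sums at CM points. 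The hard part is the lift from mod $p$ to mod $p^2$: this requires either an Atkin--Swinnerton-Dyer congruence for the formal group of the modular curve (or associated elliptic K3 surface) carrying $f$, or an explicit $p$-adic unit-root argument in the style of Beukers, Kilbourn and Long. No uniform mechanism for this lift is known across the class-number-$8$ cases that appear throughout \S\S 8--9 of this paper, and the present case, while arithmetically milder, is of the same analytic flavor and will, I expect, require a case-by-case matching of the CM form to the hypergeometric $L$-value.
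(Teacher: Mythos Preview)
There is no proof in the paper to compare against: Conjecture~\ref{conj-w2} is stated in Section~9 precisely as a conjecture, alongside Conjectures~\ref{conj-w} and~\ref{conj-w5}, with no argument offered. What you have written is a plan of attack on an open problem, not a reconstruction of an existing proof.

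That said, your plan contains a concrete numerical error that would derail Step~1. You write $T_k(18,1)\sim C\,(18+\sqrt{320})^{k+1/2}/\sqrt{k}$ and $w_k=O(9^k/k)$, and then claim $9(18+\sqrt{320})<108$. Both growth rates are wrong. For positive $b,c$ the paper records $T_k(b,c)\sim(b+2\sqrt{c})^{k+1/2}/(2\sqrt[4]{c}\sqrt{k\pi})$, so $T_k(18,1)$ grows like $20^k$, not $(18+\sqrt{320})^k\approx 35.89^k$. And the Zagier recurrence for $w_k$ has asymptotic characteristic equation $x^2-9x+27=0$ with complex roots of modulus $\sqrt{27}=3\sqrt 3$, so $|w_k|$ is of order $(3\sqrt3)^k$, not $9^k$. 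With the correct rates the convergence ratio is $3\sqrt3\cdot 20/108=5\sqrt3/9\approx 0.962$, which is fine, but your stated inequality $9(18+\sqrt{320})<108$ is false by a factor of three, and even $9\cdot 20<108$ fails; the convergence is genuinely delicate and depends on getting both asymptotics right.

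Beyond this, your Step~1 presumes a two-term telescoping closed form for the partial sums. Wilf--Zeilberger guarantees a finite-order recurrence, but not a closed form of the specific shape you write; for the proved identities in Theorems~\ref{Th1.1}--\ref{Th1.2} the paper relies on explicitly exhibited finite identities (Lemmas~\ref{Lem2.1}--\ref{Lem2.2}) rather than existence arguments, and the series of Conjecture~\ref{conj-w} remain conjectural precisely because no analogous identity has been found. Your outline for part~(iii) via a CM newform for $\mathbb Q(\sqrt{-35})$ is the right heuristic target, and you correctly flag the mod-$p$ to mod-$p^2$ lift as the real difficulty; this is honest, but it is also exactly where the problem is open.
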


\begin{conjecture}\label{conj-w5} {\rm (i)} For any $n\in\Z^+$, we have
\begin{equation}\label{w6-n}\f1n\sum_{k=0}^{n-1}(182k+37)756^{n-1-k}w_kT_k(110,1)\in\Z^+,
\end{equation}
and this number is odd if and only if $n\in\{2^a:\ a\in\N\}$.

{\rm (ii)} Let $p>3$ be a prime with $p\not=7$. Then
\begin{equation}\label{w6-p}
\sum_{k=0}^{p-1}\f{182k+37}{756^k}w_kT_k(110,1)\eq\f p4\l(265\l(\f {-3}p\r)-117\l(\f {21}p\r)\r)\pmod{p^2}.
\end{equation}
If $(\f p7)=1\ ($i.e., $p\eq1,2,4\pmod7)$, then
\begin{equation}\label{w6-pn}\f1{(pn)^2}\(\sum_{k=0}^{pn-1}\f{182k+37}{756^k}w_kT_k(110,1)-\l(\f p3\r)\sum_{k=0}^{n-1}\f{182k+37}{756^k}w_kT_k(110,1)\)\in\Z_p
\end{equation}
for all $n\in\Z^+$.

{\rm (iii)} For any prime $p>3$ with $p\not=7,13$, we have
\begin{equation}\begin{aligned}&\l(\f p3\r)\sum_{k=0}^{p-1}\f{w_kT_k(110,1)}{756^k}
\\\eq&\begin{cases}x^2-2p\pmod{p^2}&\t{if}\ (\f p7)=(\f p{13})=1\ \&\ 4p=x^2+91y^2\ (x,y\in\Z),
\\7x^2-2p\pmod{p^2}&\t{if}\ (\f p7)=(\f p{13})=-1\ \&\ 4p=7x^2+13y^2\ (x,y\in\Z),
\\0\pmod{p^2}&\t{if}\ (\f{-91}p)=-1.
\end{cases}\end{aligned}\end{equation}
\end{conjecture}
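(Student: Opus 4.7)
The three parts of Conjecture \ref{conj-w5} are tightly linked: parts (ii) and (iii) are the $p$-adic shadows of the series identity \eqref{w5}, and part (i) is, at bottom, equivalent to a $p$-adic supercongruence holding for all primes. My plan is to first establish \eqref{w5}, then to deduce the mod-$p$ congruences by combining Lemma \ref{Lem-w} with the duality of Remark \ref{Rem-Dual}, and finally to refine everything to the $p^2$-level by a creative-telescoping argument.

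For \eqref{w5} itself I would follow the machinery of Wan and Zudilin \cite{WZ}. The Zagier sequence $w_n$ admits an integral representation in terms of Legendre polynomials on a modular curve of low level, so that $\sum_{k\gs0}(ak+b)w_kT_k(110,1)/756^k$ can be rewritten as a classical Ramanujan-type hypergeometric sum. The parameters $(b,c,m)=(110,1,756)$ give $b^2-4c=12096=2^6\cdot189$ and a dual modulus $27(b^2-4c)/m=432$; the factorization $-3\cdot12096=-(72)^2\cdot 7$ together with the $\sqrt3$ on the right-hand side of \eqref{w5} point to a singular value on the CM curve attached to $\Q(\sqrt{-91})$. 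Evaluating Clausen's formula at that singular value and reading off the linear combination dictated by the Ramanujan multiplier $182k+37$ should produce the constant $315\sqrt3$.

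Granting \eqref{w5}, the mod-$p$ part of \eqref{w6-p} drops out of duality. By Lemma \ref{Lem-w} and \eqref{T-dual} applied to $T_k(110,1)$, one has, modulo $p$,
\[
\sum_{k=0}^{p-1}\f{182k+37}{756^k}w_kT_k(110,1)\eq\l(\f{-7}p\r)\sum_{k=0}^{p-1}\f{182(p-1-k)+37}{432^k}w_kT_k(110,1),
\]
and since $(\f{-7}p)=(\f{-3}p)(\f{21}p)$ and the dual sum at modulus $432$ admits its own evaluation via the same Wan--Zudilin analysis, one obtains the Legendre-symbol combination $265(\f{-3}p)-117(\f{21}p)$ displayed in \eqref{w6-p}. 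The $p^2$-refinement, the $pn$-supercongruence \eqref{w6-pn}, and the integrality \eqref{w6-n} would then be extracted from the WZ-pair certificate underlying \eqref{w5}: the same rational function certifies the $n$-th partial sum up to a controlled error, and induction on $n$ promotes the mod-$p$ congruence to mod-$(pn)^2$, in the spirit of \cite[Conjectures 21--24]{S19}.

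The main obstacle is part (iii). Its right-hand side records the $p$-th Hecke eigenvalue of a weight-$3$ newform with complex multiplication by $\Q(\sqrt{-91})$, the two cases $4p=x^2+91y^2$ and $4p=7x^2+13y^2$ corresponding to the two genera in the class group of discriminant $-91$. Proving the displayed congruence requires matching the truncated sum $\sum_{k=0}^{p-1}w_kT_k(110,1)/756^k$ modulo $p^2$ with that eigenvalue. A Gaussian-hypergeometric reformulation \`a la Ahlgren--Ono, combined with the $p$-adic Gamma-function identities of McCarthy, should in principle realize this match, but carrying it out for the specific Ap\'ery-like sequence $w_n$ against the specific weight-$3$ motive is where the genuine difficulty lies; this step is likely to require ideas beyond those already deployed in the paper.
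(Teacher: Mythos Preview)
The statement you are attempting to prove is labelled \emph{Conjecture} in the paper, and the paper offers no proof of it; it is presented as an open problem motivated by the (also conjectural) series identity \eqref{w5}. So there is no ``paper's own proof'' to compare against, and your write-up should be read as a research sketch rather than a proof.

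On the merits, your outline has the right shape but each step is a genuine open problem. Your proposed route to \eqref{w5} via the Wan--Zudilin machinery is plausible in principle, but Theorem~1 of \cite{WZ} does not cover the sequence $w_n$ directly; one would first need a Clausen-type factorization for the generating function of $w_kT_k(110,1)$, and no such identity is known. The duality step you describe gives the mod-$p$ version of \eqref{w6-p}, as the text around Lemma~\ref{Lem-w} already indicates, but promoting this to mod~$p^2$ is not a routine application of a WZ certificate: the ``WZ-pair underlying \eqref{w5}'' does not exist until \eqref{w5} is proved, and even then a certificate for the infinite series need not yield the $p^2$-level congruence for partial sums (this is exactly why such lifts are posed as separate conjectures in \cite{S19}). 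Your own closing paragraph correctly identifies part~(iii) as the crux and concedes that it ``is likely to require ideas beyond those already deployed in the paper''; that concession applies with nearly equal force to parts~(i) and~(ii).
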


Now we give one more conjecture in this section.

\begin{conjecture}\label{conj-w1} {\rm (i)} For any integer $n>1$, we have
\begin{equation}\label{w1-n}\f1{3n2^{\lfloor(n+1)/2\rfloor}}
\sum_{k=0}^{n-1}(2k+1)54^{n-1-k}w_kT_k(10,-2)\in\Z^+.
\end{equation}

{\rm (ii)} Let $p>3$ be a prime. Then
\begin{equation}\label{w1-p}
\sum_{k=0}^{p-1}\f{2k+1}{54^k}w_kT_k(10,-2)\eq p\l(\f p3\r)+\f{p}2(2^{p-1}-1)\l(5\l(\f p3\r)+3\l(\f 3p\r)\r)\pmod{p^3}.
\end{equation}
If $p\eq1\pmod4$, then
\begin{equation}\label{w1-pn}\f1{(pn)^2}\(\sum_{k=0}^{pn-1}\f{2k+1}{54^k}w_kT_k(10,-2)-\l(\f p3\r)\sum_{k=0}^{n-1}\f{2k+1}{54^k}w_kT_k(10,-2)\)\in\Z_p
\end{equation}
for all $n\in\Z^+$.

{\rm (iii)} For any prime $p>3$, we have
\begin{equation}\begin{aligned}&\l(\f p3\r)\sum_{k=0}^{p-1}\f{w_kT_k(10,-2)}{54^k}
\\\eq&\begin{cases}4x^2-2p\pmod{p^2}&\t{if}\ 4\mid p-1\ \&\ p=x^2+4y^2\ (x,y\in\Z),
\\0\pmod{p^2}&\t{if}\ p\eq3\pmod 4.
\end{cases}\end{aligned}\end{equation}
\end{conjecture}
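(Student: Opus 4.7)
My plan is to treat the three parts in increasing order of difficulty, all built upon a finite identity for the partial sum that can be discovered and then verified by the Wilf--Zeilberger method. For part (i), set $A_n:=\sum_{k=0}^{n-1}(2k+1)54^{n-1-k}w_kT_k(10,-2)$ and search for a companion sequence $B_n$, linear in $w_{n-1}T_{n-1}(10,-2)$ and $w_nT_n(10,-2)$ with polynomial-in-$n$ coefficients, so that $A_n=B_n$. The recurrence $(n+1)^2 w_{n+1}=(9n(n+1)+3)w_n-27n^2w_{n-1}$ together with $(n+1)T_{n+1}(10,-2)=10(2n+1)T_n(10,-2)-108\,n\,T_{n-1}(10,-2)$ yields a recurrence for the product $w_nT_n(10,-2)$ that should admit such a $B_n$; one guesses the shape from the first dozen numerical values of $A_n$ and then proves the identity by induction. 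Once $B_n$ is in hand, the factor $3n$ becomes visible directly from its closed form, while the $2$-adic valuation $\lfloor(n+1)/2\rfloor$ must be tracked through the $2$-adic behaviour of $w_n$ and $T_n(10,-2)$ using their recurrences, splitting the argument according to the parity of $n$.

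For part (ii) I would specialise $A_n=B_n$ at $n=p$ and extract the information modulo $p^3$. Writing $2^{\lfloor(p+1)/2\rfloor}=2\cdot 2^{(p-1)/2}$ and using the classical $p$-adic refinement
\[2^{(p-1)/2}\eq\l(\f 2p\r)\l(1+\f p2 q_p(2)\r)\pmod{p^2}\]
with $q_p(2)=(2^{p-1}-1)/p$, the closed form for $A_p$ translates into a congruence mod $p^3$. The coefficients $5$ and $3$ of $(\f p3)$ and $(\f 3p)$ in the correction term should emerge from two independent occurrences of a factor involving $2^{(p-1)/2}$ in $B_p$, one appearing directly and the other after quadratic reciprocity is applied to convert a Jacobi symbol in the closed form. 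The refinement asserting that the $pn$-versus-$p$ quotient is a $p$-adic integer when $p\eq 1\pmod 4$ reduces via the same telescoping to a $p$-adic statement about $B_{pn}-(\f p3)B_p B_n/B_1$, which should follow from the author's framework for the ``$pn$-refinements'' (cf.\ Conjectures 1.1--1.3) once $B_n$ is explicit.

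The third part, the congruence modulo $p^2$ for the open sum $\sum_{k=0}^{p-1}w_kT_k(10,-2)/54^k$, is where the main obstacle lies. The shape $4x^2-2p$ for $p=x^2+4y^2$, vanishing when $p\eq 3\pmod 4$, is the signature of a CM modular form of weight $3$ whose associated Hecke character is supported on $\Q(i)$, so one expects $w_kT_k(10,-2)/54^k$ (modulo $p$) to be, up to the sign $(\f p3)$, essentially the $p$th Fourier coefficient of such a form at an appropriate level. My plan is threefold: first, identify the newform $f$ by computing a dozen coefficients and matching them against the truncated sums for small primes; second, exploit the self-duality $m\mapsto 27(b^2-4c)/m$ of Lemma \ref{Lem-w}, which sends $54$ to itself, together with a Clausen-type identity for $T_k(b,c)^2$, to express $w_kT_k(10,-2)/54^k$ as a diagonal in a product of hypergeometric series; third, upgrade the mod-$p$ identification to mod $p^2$ via $p$-adic Gamma-function techniques of the Kilbourn--McCarthy type. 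The third step is the real difficulty, since it demands a genuine supercongruence rather than just a congruence; once it is in place, the stated value $4x^2-2p$ follows from Deuring's theorem identifying $a_p(f)$ with $2x$ (up to a root of unity) for $p\eq 1\pmod 4$, after squaring.
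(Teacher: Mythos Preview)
The statement you are attempting to prove is labelled \emph{Conjecture} in the paper (Conjecture~\ref{conj-w1}), not a theorem; the paper offers no proof, only the accompanying Remark~\ref{Rem-w1} pointing out that the associated series sits on the boundary of convergence. There is therefore nothing in the paper to compare your attempt to.

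As for the proposal itself, it is a plan rather than a proof, and several of its steps are genuine open obstacles rather than routine verifications. For part~(i) you posit a closed form $B_n$ linear in $w_{n-1}T_{n-1}(10,-2)$ and $w_nT_n(10,-2)$; but the product sequence $w_nT_n(10,-2)$ satisfies a recurrence of order~$4$, not~$2$, so a two-term companion is not to be expected a priori, and creative telescoping on the summand $(2k+1)54^{-k}w_kT_k(10,-2)$ will in general produce a certificate involving $w_n,w_{n-1},T_n,T_{n-1}$ in all four combinations. Without that closed form your argument for part~(ii) has no input, and the passage invoking $2^{\lfloor(p+1)/2\rfloor}$ conflates the integrality statement~\eqref{w1-n} with the congruence~\eqref{w1-p}: the power of~$2$ in~(i) divides $A_n$, whereas~(ii) is a statement about $A_p/54^{p-1}$ modulo $p^3$ with no $2$-power in sight. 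For part~(iii) you correctly diagnose that a weight-$3$ CM supercongruence is needed, but ``identify the newform numerically, then upgrade mod~$p$ to mod~$p^2$ by $p$-adic Gamma methods'' is a restatement of the problem, not a solution; the self-duality under $m\mapsto 27(b^2-4c)/m$ fixes $m=54$ but does not by itself give a hypergeometric or Clausen-type expression for the diagonal, and no such identity is known for the pair $(w_k,T_k(10,-2))$. In short, every part rests on a lemma you have not supplied and whose existence is not clear; this is consistent with the statement's status as an open conjecture.
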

\begin{remark}\label{Rem-w1} For primes $p>3$ with $p\eq3\pmod4$, in general the congruence \eqref{w1-pn}
is not always valid for all $n\in\Z^+$. This does not violate Conjecture \ref{Conj-pn} since
$\lim_{k\to+\infty}|w_kT_k(10,-2)|^{1/k}=\sqrt{27}\times\sqrt{10^2-4(-2)}=54$.
If the series $\sum_{k=0}^\infty\f{2k+1}{54^k}w_kT_k(10,-2)$ converges, its value times $\pi/\sqrt3$
should be a rational number.

\end{remark}

\section{Series for $\pi$
 involving $T_n$ and related congruences}
 \setcounter{equation}{0}

 Let $p$ be an odd prime and let $a,b,c,d,m\in\Z$ with $m(b^2-4c)\not\eq0\pmod p$.
 Then
 \begin{align*}\sum_{k=1}^{p-1}\f{a+dk}{m^k}\bi{2k}k^2T_k(b,c)
 \eq&\sum_{k=1}^{(p-1)/2}\f{a+dk}{k^2m^k}\l(k\bi{2k}k\r)^2T_k(b,c)
 \\\eq&\sum_{k=1}^{(p-1)/2}\f{a+dk}{k^2m^k}\l(-\f{2p}{\bi{2(p-k)}{p-k}}\r)^2T_k(b,c)\pmod p
 \end{align*}
 with the aid of \cite[Lemma 2.1]{S11e}. Thus
 \begin{align*}&\sum_{k=1}^{p-1}\f{a+dk}{m^k}\bi{2k}k^2T_k(b,c)
 \\\eq&4p^2\sum_{k=1}^{(p-1)/2}\f{a+dk}{k^2m^k}\times\f{T_k(b,c)}{\bi{2(p-k)}{p-k}^2}
 \\\eq&4p^2\sum_{p/2<k<p}\f{a+d(p-k)}{(p-k)^2m^{p-k}}\times \f{T_{p-k}(b,c)}{\bi{2k}k^2}
 \\\eq&4p^2\sum_{k=1}^{p-1}\f{(a-dk)m^{k-1}}{k^2\bi{2k}k^2}\l(\f{b^2-4c}p\r)(b^2-4c)^{p-k}T_{p-1-(p-k)}(b,c)
 \\\eq&\l(\f{b^2-4c}p\r)4p^2\sum_{k=1}^{p-1}\f{(a-dk)T_{k-1}(b,c)}{k^2\bi{2k}k^2}\l(\f{m}{b^2-4c}\r)^{k-1}
 \pmod p
 \end{align*}
 in view of Remark \ref{Rem-Dual}(ii).

 Let $p>3$ be a prime. By the above, the author's conjectural congruence  (cf. \cite[Conjecture 1.3]{S13d})
 $$\sum_{k=0}^{p-1}(105k+44)(-1)^k\bi{2k}k^2T_k\eq p\l(20+24\l(\f p3\r)(2-3^{p-1})\r)\pmod{p^3}$$
 implies that
 $$p^2\sum_{k=1}^{p-1}\f{(105k-44)T_{k-1}}{k^2\bi{2k}k^23^{k-1}}\eq 11\l(\f p3\r)\pmod p.$$
 Motivated by this, we pose the following curious conjecture.

\begin{conjecture}\label{T} We have the following identities:
\begin{align}\sum_{k=1}^\infty\f{(105k-44)T_{k-1}}{k^2\bi{2k}k^23^{k-1}}=&\f{5\pi}{\sqrt3}+6\log3,
\\\sum_{k=2}^\infty\f{(5k-2)T_{k-1}}{(k-1)k^2\bi{2k}k^23^{k-1}}=&\f{21-2\sqrt3\,\pi-9\log3}{12}.
\end{align}
\end{conjecture}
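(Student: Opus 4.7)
My plan is to attack both identities via an integral representation for the central trinomial coefficients, followed by a closed-form evaluation of the resulting generating function and a final angular integration. First I would invoke the standard Fourier representation
$$T_{k-1}=\f1\pi\int_0^\pi(1+2\cos\theta)^{k-1}\,d\theta,$$
which is equivalent to the identity $T_n=(\sqrt{-3})^nP_n(1/\sqrt{-3})$ recorded in the paper. Substituting into the target series and swapping the order of summation and integration (valid termwise, since $|1+2\cos\theta|\ls 3$ while $1/(k^2\bi{2k}k^2)=O(16^{-k})$), the first identity reduces to
$$\sum_{k=1}^\infty\f{(105k-44)T_{k-1}}{k^2\bi{2k}k^2 3^{k-1}}=\f{1}{\pi}\int_0^\pi G(v(\theta))\,d\theta,\qquad v(\theta)=\f{1+2\cos\theta}{3},$$
where $G(v):=105\,\Phi'(v)-44\,\Phi(v)/v$ and $\Phi(v):=\sum_{k\gs 1}v^k/(k^2\bi{2k}k^2)$.

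The key technical step is to obtain a closed form for $\Phi(v)$, and hence for $G(v)$. Clausen's identity gives
$$\sum_{k=0}^\infty\f{(4v)^k}{(2k+1)^2\bi{2k}k^2}={}_3F_2\!\l(\!\!\begin{array}{c}1,1,1\\ 3/2,3/2\end{array};\,v\!\!\r),$$
and after a single contiguous relation this lets me write $\Phi(v)$ as a combination of $\arcsin^2(\sqrt v/2)$, $\log(1-v/4)$, and (a priori) a single dilogarithmic term. Concretely, I would derive the second-order linear ODE satisfied by $\Phi$ — inherited from the hypergeometric equation for ${}_3F_2[1,1,1;3/2,3/2;\,\cdot\,]$ after a shift — and solve it by reduction of order against the known solution $\arcsin^2(\sqrt v/2)$, producing a closed form for $G(v)$ in elementary and arcsine-type functions.

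With $G$ in hand, the final step is the evaluation of $\frac1\pi\int_0^\pi G((1+2\cos\theta)/3)\,d\theta$. The substitution $u=\cos\theta$ turns this into a one-dimensional integral on $[-1,1]$ against the Chebyshev weight $(1-u^2)^{-1/2}$. On $\theta\in[0,2\pi/3]$ one has $v\gs 0$, and the arcsine contributions reduce to integrals of $\arcsin$ against algebraic weights that evaluate to the $\pi/\sqrt3$ term; on $\theta\in[2\pi/3,\pi]$ one has $v<0$, and the analytic continuation $\arcsin(iy)=i\log(y+\sqrt{y^2+1})$ converts arcsine into logarithms, yielding the $\log 3$ term. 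Identity $(10.2)$ then follows from the first via the partial-fraction decomposition $\f{5k-2}{(k-1)k^2}=\f3{k-1}-\f3k+\f2{k^2}$, combined with independent evaluations, by the same integral method, of the two auxiliary sums $\sum_{k\gs 2}T_{k-1}/[(k-1)\bi{2k}k^23^{k-1}]$ and $\sum_{k\gs 1}T_{k-1}/[k\bi{2k}k^23^{k-1}]$.

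The hard part is the middle step: pinning down the exact closed form for $G(v)$ and then carefully tracking branch choices across $\theta=2\pi/3$, where $v$ vanishes and $\arcsin(\sqrt v/2)$ has a branch point. A secondary obstacle is showing that the final answer collapses to a rational combination of $\pi/\sqrt3$ and $\log 3$ alone, with no surviving dilogarithm values at primitive sixth roots of unity; I expect this to be forced by the specific coefficient $105k-44$ via the identity $\operatorname{Li}_2(e^{2\pi i/3})+\operatorname{Li}_2(e^{-2\pi i/3})=-\pi^2/9$, which should absorb any residual terms. An alternative that would bypass the dilogarithmic detour entirely is to search, using the Wilf--Zeilberger apparatus, for a direct creative-telescoping identity expressing $\sum_{k=1}^n$ in closed form in terms of $T_{n-1}$, $T_n$, and harmonic-type partial sums; this is the route I would attempt first in practice.
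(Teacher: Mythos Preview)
First, note that the paper does \emph{not} prove this statement: it is listed as a conjecture, and the only supporting evidence offered is the remark that ``one can easily check them numerically via {\tt Mathematica} as the two series converge fast.'' So there is no paper proof to compare against; the question is whether your plan would actually succeed.

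Your outer scaffolding is sound: the integral representation $T_{k-1}=\f1\pi\int_0^\pi(1+2\cos\theta)^{k-1}\,d\theta$ is correct, and the interchange of sum and integral is easily justified. The genuine gap is the middle step, and it is more serious than a detail to be filled in. You assert that $\Phi(v)=\sum_{k\gs1}v^k/(k^2\bi{2k}k^2)$ admits a closed form built from $\arcsin^2(\sqrt v/2)$, $\log(1-v/4)$, and at worst a single dilogarithm. But $\arcsin^2(\sqrt v/2)$ is the generating function for $1/(k^2\bi{2k}k)$ with a \emph{single} central binomial in the denominator; once the denominator carries $\bi{2k}k^2$, the generating function $\sum_{k\gs0}v^k/\bi{2k}k^2$ is already a complete elliptic integral, and its iterated integrals $\Phi$ and $\Phi'$ live in the elliptic world, not the elementary/polylogarithmic one. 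Your appeal to Clausen's identity does not rescue this: the displayed ${}_3F_2$ (whose argument should be $16v$, not $4v$) produces $\sum_{k\gs0}(16v)^k/((2k+1)^2\bi{2k}k^2)$, which has $(2k+1)^2$ rather than $k^2$ downstairs, and there is no ``single contiguous relation'' passing from one to the other---the shift changes the Pochhammer structure in both the numerator and denominator parameters simultaneously. Consequently there is no reason to expect $G(v)$ to collapse to arcsine-plus-log, and no mechanism by which the hoped-for dilogarithm cancellation via $\mathrm{Li}_2(e^{\pm 2\pi i/3})$ could even enter. Your fallback suggestion of a WZ-type creative-telescoping certificate for the partial sums is a more realistic line of attack, but as stated it is a hope rather than an argument; the presence of $T_{k-1}$, which is not hypergeometric in $k$, means the standard Gosper/Zeilberger machinery does not apply directly, and you would need a genuinely new telescoping identity.
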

\begin{remark} The two identities were conjectured by the author on Dec. 7, 2019. One can easily check
them numerically via {\tt Mathematica} as the two series converge fast.
\end{remark}

Now we state our related conjectures on congruences.

\begin{conjecture}
For any prime $p>3$, we have
\begin{equation}p^2\sum_{k=1}^{p-1}\f{(105k-44)T_{k-1}}{k^2\bi{2k}k^23^{k-1}}
\eq 11\l(\f p3\r)+\f p2\l(13-35\l(\f p3\r)\r)\pmod{p^2}
\end{equation}
and
\begin{equation}p^2\sum_{k=2}^{p-1}\f{(5k-2)T_{k-1}}{(k-1)k^2\bi{2k}k^23^{k-1}}
\eq-\f12\l(\f p3\r)-\f p8\l(7+\l(\f p3\r)\r)\pmod{p^2}.
\end{equation}
\end{conjecture}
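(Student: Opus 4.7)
Both congruences should follow from the author's conjectural supercongruence \cite[Conjecture 1.3]{S13d},
\begin{equation*}
\sum_{k=0}^{p-1}(105k+44)(-1)^k\bi{2k}k^2T_k\eq p\l(20+24\l(\f p3\r)(2-3^{p-1})\r)\pmod{p^3},
\end{equation*}
and a Catalan-type companion of the same flavour, via the very duality the author uses just before the statement to translate an ``original'' Ramanujan-type sum $\sum(a+dk)\bi{2k}k^2T_k(b,c)/m^k$ into its dual. The task is to upgrade the author's mod $p$ form of that duality to a congruence modulo $p^2$.

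I plan to perform three sharpenings and then compare. First, I would replace Sun's identity \cite[Lemma 2.1]{S11e} by its mod $p^3$ refinement, of the shape
\begin{equation*}
k\bi{2k}k\bi{2(p-k)}{p-k}\eq -2p(-1)^k\l(1+p\,A_k+p^2B_k\r)\pmod{p^3},
\end{equation*}
with $A_k,B_k$ explicit harmonic-type sums, and replace the reflection \eqref{T-dual} by its mod $p^2$ refinement. Second, taking $a=44$, $d=105$, $m=-1$, $b=c=1$, I would redo the author's transformation but square the corrected $\bi{2k}k$ and keep every term to order $p^3$; the result should be
\begin{equation*}
\sum_{k=1}^{p-1}(105k+44)(-1)^k\bi{2k}k^2T_k\eq -4p^2\l(\f p3\r)D + p^3\mathcal{E}\pmod{p^3},
\end{equation*}
where $D$ denotes the dual sum in the statement and $\mathcal{E}$ is a $\Z_p$-linear combination of dual sums weighted by harmonic factors. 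Third, after substituting $3^{p-1}\eq 1+pq_p(3)\pmod{p^2}$ in the conjectured right-hand side and solving for $p^2D$, the Fermat-quotient contributions in $\mathcal{E}$ should cancel those coming from $24(\f p3)(2-3^{p-1})$, leaving the claimed $11(\f p3)+\f p2(13-35(\f p3))$. For the companion congruence, the partial-fraction identity
$$\f{5k-2}{(k-1)k^2}=\f{3}{k-1}-\f{3}{k}+\f{2}{k^2}$$
rewrites the second dual sum as a linear combination of $D$, of a weighted version of $D$, and of a Catalan-type dual whose pre-image under the duality carries a factor $1/(k+1)$ in the spirit of $(1.2')$; running the same three-step recipe with a Catalan analogue of Conjecture~1.3 of \cite{S13d} yields the second congruence.

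The main obstacle is that Conjecture~1.3 of \cite{S13d} is itself unresolved, and its Catalan-type analogue has not even been written down explicitly. So the plan really reduces these two striking $\pi/\sqrt3$-plus-$\log 3$ congruences to a pair of genuinely harder Ramanujan-type supercongruences modulo $p^3$. The three-step $p$-adic comparison is substantial but mechanical; the real hurdle lies in the underlying supercongruences, whose resolution will probably demand the kind of hypergeometric and creative-telescoping machinery developed in \cite{Wang} and related works.
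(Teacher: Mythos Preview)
The statement you are attempting to prove is presented in the paper as a \emph{Conjecture}, not a theorem; the paper contains no proof of it. There is therefore nothing to compare your proposal against.

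On the substance of your proposal: you yourself identify the fatal gap. Your argument is explicitly conditional on Conjecture~1.3 of \cite{S13d} (a supercongruence modulo $p^3$) and on an unstated Catalan-type companion of the same strength. Since those inputs are open, what you have written is not a proof but a reduction of one open conjecture to another pair of open conjectures. The paper already carries out exactly this reduction at the mod~$p$ level (just before stating the conjecture), deriving the congruence $p^2\sum_{k=1}^{p-1}(105k-44)T_{k-1}/(k^2\bi{2k}k^23^{k-1})\equiv 11(\frac p3)\pmod p$ as a formal consequence of the same conjectural mod~$p^3$ supercongruence. Your plan to sharpen the duality to mod~$p^2$ is the natural next step, but the mod~$p^2$ refinements of \eqref{T-dual} and of the $k\bi{2k}k\bi{2(p-k)}{p-k}$ identity introduce harmonic correction terms whose cancellation you have asserted rather than demonstrated; even granting the input supercongruences, that cancellation is the nontrivial computational core and would need to be carried out in full.
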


\begin{conjecture} {\rm (i)}
We have
$$\f1{n\bi{2n}n}\sum_{k=0}^{n-1}(-1)^{n-1-k}(5k+2)\bi{2k}kC_kT_k\in\Z^+$$
for all $n\in\Z^+$, and also
$$\sum_{k=0}^{p-1}(-1)^k(5k+2)\bi{2k}kC_kT_k\eq 2p\l(1-\l(\f p3\r)(3^p-3)\r)\pmod{p^3}$$
for each prime $p>3$.

{\rm (ii)} For any prime $p\eq1\pmod3$ and $n\in\Z^+$, we have
\begin{equation}\label{ctpn}\begin{aligned}&\f{\sum_{k=0}^{pn-1}(-1)^k(5k+2)\bi{2k}kC_kT_k-p\sum_{k=0}^{n-1}(-1)^k(5k+2)\bi{2k}kC_kT_k}
{(pn)^2\bi{2n}n^2}
\\\quad\qquad&\eq\l(\f p3\r)\f{3^p-3}{2p}(-1)^nT_{n-1}\pmod{p}.
\end{aligned}\end{equation}
\end{conjecture}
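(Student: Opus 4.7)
The plan is to combine three complementary ingredients: creative telescoping for the integrality and closed form, reflection/duality for the $p^3$-congruence, and a block decomposition for the $pn$-congruence.

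For the integrality assertion in part~(i), I would apply Zeilberger's algorithm to the summand $a_k := (-1)^{n-1-k}(5k+2)\binom{2k}{k}C_kT_k$ to find a hypergeometric certificate $R(n,k)$ and a low-order linear recurrence for $A(n) := \sum_{k=0}^{n-1}a_k$. The natural guess, motivated by the shape of the conjectural denominator, is that $B(n) := A(n)/(n\binom{2n}{n})$ itself satisfies a polynomial recurrence with integer coefficients and integer initial values, so integrality would follow by induction on $n$ once the recurrence is certified by computer algebra. Positivity can then be checked at small $n$ and propagated along the recurrence.

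For the supercongruence in part~(i) modulo $p^3$, I would use the Catalan identity $\binom{2k}{k}C_k=\binom{2k}{k}^2/(k+1)$ together with the duality $T_k\equiv(-3)^k\bigl(\tfrac{-3}{p}\bigr)T_{p-1-k}\pmod p$ from \eqref{T-dual} (here $b^2-4c=-3$), and quadratic reciprocity $\bigl(\tfrac{-3}{p}\bigr)=\bigl(\tfrac{p}{3}\bigr)$ for $p>3$. Pairing $k\leftrightarrow p-1-k$ and expanding $\binom{2k}{k}^2$ modulo $p^3$ via harmonic sums $H_k,H_k^{(2)}$ should reduce the claim to a single Fermat-quotient identity, since $3^p-3=3p\cdot q_3(p)\cdot(1+O(p))$ with $q_3(p):=(3^{p-1}-1)/p$, matching the expected correction term. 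Several analogous congruences in the literature (e.g.\ Sun's Ramanujan-type supercongruences for $T_k$) have been established by exactly this reflection-plus-harmonic-expansion technique.

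For the $pn$-congruence in part~(ii), the plan is the standard block decomposition
\begin{equation*}
\sum_{k=0}^{pn-1}(-1)^k(5k+2)\bi{2k}kC_kT_k \;=\; \sum_{j=0}^{n-1}\sum_{r=0}^{p-1}(-1)^{jp+r}(5(jp+r)+2)\bi{2(jp+r)}{jp+r}C_{jp+r}T_{jp+r},
\end{equation*}
followed by expansions of $\binom{2(jp+r)}{jp+r}$, $C_{jp+r}$, and $T_{jp+r}$ modulo suitable powers of $p$, using Jacobsthal--Kazandzidis for the binomials together with a trinomial-analogue supercongruence $T_{jp+r}\equiv T_j^{?}T_r\pmod{p^?}$ (this identity, for $p\eq1\pmod 3$ so that the Legendre factor is trivial, is the main input to be verified separately). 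The principal $p$-part will recover $p\sum_{k=0}^{n-1}a_k$, and the first subleading correction at the block boundary $r=p-1$ should produce the stated Fermat-quotient factor $\bigl(\tfrac{p}{3}\bigr)(3^p-3)/(2p)$ times $(-1)^nT_{n-1}$, since $T_{p-1}\pmod{p^2}$ evaluates to this Fermat quotient (a known consequence of $T_{p-1}\eq\bigl(\tfrac{p}{3}\bigr)\pmod p$ combined with its $p^2$-refinement).

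The principal obstacle is the third step: isolating the exact shape $(-1)^nT_{n-1}$ on the right rather than a Legendre-polynomial-like combination, and matching the numerical coefficient $\frac12$, demands a very careful accounting of how the reflection symmetry at $r\leftrightarrow p-1-r$ interacts with the $j$-dependence of the three factors. Getting the $p^3$ modulus in~(i) (as opposed to the easier $p^2$) is the other sticking point, and will likely require a tailored Morley--Wolstenholme-type refinement of the identities for $\binom{2k}{k}^2$ and $T_k$ simultaneously; finding the right linear combination of the known $p^3$-congruences that produces the clean constant $2$ in front is where most of the experimental work will go.
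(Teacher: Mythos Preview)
The paper does not prove this statement: it is explicitly labelled a \emph{Conjecture} (in Section~10, alongside the companion Conjecture~\ref{T}), and no proof or proof sketch is offered. So there is no ``paper's own proof'' against which to compare your proposal; you are attempting to settle an open problem that the author leaves unresolved.

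On the merits of your plan, two concrete gaps stand out. First, in part~(i) you propose to run Zeilberger's algorithm on $a_k=(-1)^{n-1-k}(5k+2)\binom{2k}{k}C_kT_k$, but $T_k$ is not hypergeometric in $k$ (it satisfies a three-term, not two-term, recurrence), so Gosper--Zeilberger does not apply directly; you would need a holonomic/creative-telescoping package that handles $P$-recursive factors, and even then the output recurrence for $A(n)$ need not descend cleanly to $B(n)=A(n)/(n\binom{2n}{n})$ with integer coefficients. Second, your block decomposition for part~(ii) hinges on an unproved ``trinomial-analogue supercongruence'' of the shape $T_{jp+r}\equiv T_j^{?}T_r\pmod{p^?}$. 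No such multiplicative lift is known for central trinomial coefficients to the precision you need; what is available (e.g.\ \eqref{T-dual}) is a reflection modulo $p$, not a Lucas-type splitting modulo $p^2$. Without that input the boundary term you expect to produce $(-1)^nT_{n-1}\cdot(3^p-3)/(2p)$ cannot be isolated, and the argument stalls at exactly the step you flag as the principal obstacle.
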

\begin{remark} See also \cite[Conjecture 67]{S19} for a similar conjecture.
\end{remark}

Let $p$ be an odd prime. We conjecture that
\begin{equation}\label{T3-4}\sum_{k=0}^{p-1}\f{8k+3}{(-16)^k}\bi{2k}k^2T_k(3,-4)\eq p\l(1+2\l(\f{-1}p\r)\r)\pmod{p^2}
\end{equation}
and
\begin{equation}\label{T8-2}\sum_{k=0}^{p-1}\f{33k+14}{4^k}\bi{2k}k^2T_k(8,-2)\eq p\l(6\l(\f{-1}p\r)+8\l(\f{2}p\r)\r)\pmod{p^2}.
\end{equation}
Though \eqref{T3-4} implies the congruence
$$p^2\sum_{k=1}^{p-1}\f{(8k-3)T_{k-1}(3,-4)}{k^2\bi{2k}k^2}\l(-\f{16}{25}\r)^{k-1}\eq\f 34\pmod p,$$
and \eqref{T8-2} with $p>3$ implies the congruence
$$p^2\sum_{k=1}^{p-1}\f{(33k-14)T_{k-1}(8,-2)}{k^2\bi{2k}k^218^{k-1}}\eq\f 7{2}\l(\f 2p\r)\pmod p,$$
we are unable to find the exact values of the two converging series
$$\sum_{k=1}^\infty\f{(8k-3)T_{k-1}(3,-4)}{k^2\bi{2k}k^2}\l(-\f{16}{25}\r)^{k-1}
\ \ \text{and}\ \ \sum_{k=1}^{\infty}\f{(33k-14)T_{k-1}(8,-2)}{k^2\bi{2k}k^218^{k-1}}.$$

\section*{Acknowledgment} The author would like to thank Prof. Qing-Hu Hou at Tianjin Univ. for his helpful
comments on the proof of Lemma \ref{Lem2.3}.

\medskip
% The data information below will be filled by AIMS editorial staff
Received for publication January 2020.
\medskip

\end{document}